\documentclass[11pt, a4paper, twoside, reqno]{amsart}
\usepackage[utf8]{inputenc}
\usepackage[T1]{fontenc}
\usepackage{lmodern}
\usepackage{microtype}
\usepackage[frak=boondox]{mathalpha}
\usepackage{dsfont}
\usepackage{bbold}
\usepackage{upgreek}
\usepackage{mathrsfs}
\usepackage{euscript}
\usepackage{amssymb}

\usepackage[top=1in, bottom=1in, left=1in, right=1in, headheight=15pt, footskip=40pt]{geometry}
\usepackage{enumitem}
\usepackage{parskip}
\usepackage{float}
\usepackage{caption}
\usepackage{tabto}
\usepackage{color}
\usepackage{mdframed}
\usepackage{hyphenat}
\usepackage{comment}

\usepackage{amsmath, amssymb, amsthm, mathtools}
\usepackage{extarrows}
\usepackage{tikz-cd}
\usepackage[all,cmtip]{xy} 
\usepackage{graphicx}

\usepackage[numbers]{natbib}
\setcitestyle{open={},close={}}
\makeatletter
\renewcommand{\@biblabel}[1]{[#1]\hfill}
\makeatother

\usepackage[hidelinks]{hyperref}
\usepackage[nameinlink]{cleveref}
\newtheoremstyle{mystyle}
  {\topsep}    
  {\topsep}   
  {\itshape}   
  {}           
  {\bfseries}   
  {.}          
  {.5em}        
  {}            

\newtheoremstyle{spacedremark} 
  {\topsep}    
  {\topsep}     
  {\normalfont} 
  {}           
  {\bfseries}   
  {.}          
  {.5em}        
  {}               

\theoremstyle{mystyle}
\newtheorem{thm}{Theorem}[subsection]\crefname{thm}{Theorem}{Theorems}
\newtheorem{lem}[thm]{Lemma}\crefname{lem}{Lemma}{Lemmas}

\newtheorem{prop}[thm]{Proposition}\crefname{prop}{Proposition}{Propositions}
\newtheorem{cor}[thm]{Corollary}\crefname{cor}{Corollary}{Corollaries}
\newtheorem{con}[thm]{Construction}\crefname{con}{Construction}{Construction}
\newtheorem{obs}[thm]{Observation}\crefname{obs}{Observation}{Observations}

\newtheorem{defn}[thm]{Definition}\crefname{defn}{Definition}{Definitions}

\newtheorem{inner-recall-star}{Theorem}
\newenvironment{recall*}[1]
  {\begin{inner-recall-star}[see \Cref{#1}]}
  {\end{inner-recall-star}}
\theoremstyle{spacedremark}
\newtheorem{rem}[thm]{Remark}\crefname{rem}{Remark}{Remarks}
\newtheorem{ex}[thm]{Example}\crefname{ex}{Example}{Examples}
\crefname{coex}{Counterexample}{Counterexamples}

\DeclareMathOperator*{\colim}{colim}

\newcommand{\dcolim}{\varinjlim}
\newcommand{\plim}{\varprojlim}

\DeclareMathAlphabet{\duc}{U}{dutchcal}{m}{n}
\SetMathAlphabet{\duc}{bold}{U}{dutchcal}{b}{n}
\DeclareFontFamily{U}{BOONDOX-calo}{\skewchar\font=45 }
\DeclareFontShape{U}{BOONDOX-calo}{m}{n}{<-> s*[1.0] BOONDOX-r-calo}{}
\DeclareFontShape{U}{BOONDOX-calo}{b}{n}{<-> s*[1.0] BOONDOX-b-calo}{}
\DeclareMathAlphabet{\mal}{U}{BOONDOX-calo}{m}{n}
\SetMathAlphabet{\mal}{bold}{U}{BOONDOX-calo}{b}{n}

\newcommand{\esc}{\EuScript}

\setlist[enumerate]{leftmargin=*, nosep}
\begin{document}

\title[Connectivity of the slice filtration]{Connectivity of the slice filtration}

\author[Dipankar Maity]{Dipankar Maity}
\address{Department of Mathematical Sciences \\
Indian Institute of Science Education and Research (IISER) Mohali \\ Knowledge city, Sector 81, SAS Nagar, Manauli PO 140306 \\
India}
\email{abstracthomotopies@gmail.com}
\subjclass[2020]{Primary 14F42; Secondary 18N60, 55P47}
\keywords{Motivic homotopy theory, Stable $\infty$-categories, $t$-structures, Infinite loop spaces}

\begin{abstract}
Using methods similar to Morel's stable connectivity theorem, we prove that, over an arbitrary field, the Tate truncation functors $f_{0/n}$ preserve motivic connectivity of $ S^1$-spectra. An analogous result for complexes yields a Hurewicz theorem for the $L^{p,n}$- and $L_{bir}^n$-localizations over perfect fields. Over such fields, we establish a stronger connectivity property for categories of correspondences, showing that $f_n^\mathcal{C}$ preserves connectivity of motivic spectra with $\mathcal{C}$-transfers, whenever $\mathcal{C}$ satisfies cancellation. We then use the motivic reconstruction theorem to deduce that $f_n$, and consequently $s_n$ and $f_{0/n}$, also preserve connectivity for effective (and thereby, $\mathbb{P}^1$-) motivic spectra. Along the way, we also establish the slice analog of the motivic (effective) reconstruction theorem, as well as the slice analog of motivic $S^1$- and $\mathbb{P}^1$-recognition theorems.
\end{abstract}
\maketitle

\tableofcontents
\section{Introduction}
\subsection{Motivation to the paper}
 
A defining feature of the stable motivic homotopy category $\mathcal{SH}^{S^1}(k)$ (or $\mathcal{SH}^{\mathbb{P}^1}(k)$) is that it possesses two different circle objects: the simplicial circle $S^1$ and the algebraic circle $\mathbb{G}_m$, which yield the famous bigrading on homotopy invariant cohomology theories. This inherent bigrading induces two distinct filtrations.

The first is Morel's Postnikov filtration [\cite{morel2005stable}], governed by $S^1$-suspension, which equips $\mathcal{SH}^{S^1}(k)$ with its standard homotopy $t$-structure. For any object $\mal{X} \in \mathcal{SH}^{S^1}(k)$, this yields the Postnikov tower of connective covers:$$\dots \to \tau_{\ge n+1}\mal{X} \to \tau_{\ge n}\mal{X} \to \tau_{\ge n-1}\mal{X} \to \dots \to \mal{X}$$ whose successive cofibers yield the shifted homotopy sheaves ${\pi}^{nis,sp}_n(\mal{X})[n]$ and the total cofiber yields the postnikov truncation:
$$\tau_{\ge n}\mal{X}  \to \mal{X}\to \tau_{<n}\mal{X}.$$

The second is Voevodsky's slice filtration [\cite{voevodsky2002possible}], [\cite{voevodsky2002open}], governed by the $\mathbb{G}_m$-suspension coordinate. Filtering by the localizing subcategories $\mathcal{SH}^{S^1}(k)\wedge \mathbb{G}^{n}$, this produces the slice tower of tate connective covers for $\mal{X}$:$$\dots \to f_{n+1}\mal{X} \to f_n \mal{X} \to f_{n-1}\mal{X} \to \dots \to \mal{X}$$ whose successive cofibers $s_n \mal{X} \simeq \text{cofib}(f_{n+1}\mal{X} \to f_n \mal{X})$ isolate the motivic weights and whose total quotients are (to be called) Tate quotients:
$$f_n\mal{X}\to \mal{X}\to f_{0/n}\mal{X}.$$

A fundamental structural question concerns the compatibility of these filtrations, specifically, whether the respective truncation and connective covering functors preserve each other's connective subcategories and quotients. While such compatibilities are degenerate in classical topology, the cross-compatibility of these two distinct filtrations in the motivic setting is highly nontrivial and, to some extent, governed by Voevodsky's slice conjectures [\cite{voevodsky2002possible}].

The main objective of this paper is to prove that the slice-truncation functors ($f_{0/n}$) preserve topological connectivity. For $S^1$-spectra over perfect fields, a version of this connectivity was previously established by Bachmann [\cite{MR4173925}, see the first paragraph of \S6] based on ([Footnote 10 in loc.cit.]) the validity of a variant of the slice conjecture proven by Levine in [\cite{levine2008homotopy}]); see \Cref{bachmann gm conservativity} below for a quick summary of Bachmann's argument. Our first main result bypasses this requirement, providing an algebraic proof valid over arbitrary fields. We achieve this by identifying the $n$-th Tate truncation $f_{0/n}$ as a localization at the smooth affine sphere $\mathrm{S}^{n-1}_\mathbb{A} := \mathbb{A}^n \setminus \{0\}$ and then adapting the proof of Morel's stable connectivity theorem [\cite{morel2005stable}] to the Tate setting (as was done in the $\mathbb{P}^1$-local setting by Ayoub in [\cite{ayoub2020P1}]).

As a primary application of these connectivity properties, we obtain various $t$-structures on the slice quotients and the stable birational categories (over perfect fields) of spectra and complexes, and thus obtain a Hurewicz theorem for motivic nullification [\cite{asok2023p}] and $n$-birational localization [\cite{sfbat}]. 

Lifting these connectivity results from $S^1$-spectra directly to the setting of $\mathbb{P}^1$-spectra is hindered by the slice conjectures (which have been verified over perfect fields in multiple works [\cite{levine2008homotopy}], [\cite{bachmann2019voevodsky}]). We provide a structurally independent proof of connectivity for effective spectra (and thereby for $\mathbb{P}^1$-spectra), using the motivic reconstruction theorem (and its slice variant) and verifying a similar connectivity property for motivic $S^1$-spectra with framed transfers.

While obtaining the contents of the last paragraph, we adapt the motivic infinite loop space machine [\cite{elmanto2021motivic}] to the slice filtration: we establish an $S^1$-recognition principle for connective Tate slice quotients and lift it to a $\mathbb{P}^1$-recognition principle. This yields a monoidal equivalence between generalized ($n$-fold) slice [\cite{MR3743071}] quotients in $\mathcal{SH}^{veff}(k)$ and group-like framed motivic spaces whose homotopy groups have vanishing ($n$-fold) $\mathbb{G}_m$-contractions.
 
\subsection{Notations and terminologies}
Throughout this paper, $S$ denotes a qcqs scheme and $k$ denotes a field, which is not assumed to be perfect unless specified. All morphisms of schemes are assumed to be separated.

We freely use the language of $\infty$-categories without choosing a specific model; a standard reference is [\cite{lurie2009higher}]. For an $\infty$-category $\mathcal{C}$, let $\mathcal{P}(\mathcal{C})$ denote the $\infty$-category of space-valued presheaves on $\mathcal{C}$. We set $\mathcal{S}pt(\mathcal{C}) := \mathrm{Stab}(\mathcal{P}(\mathcal{C})) \simeq \mathcal{P}(\mathcal{C}, \mathcal{S}pt)$. For a symmetric monoidal $\infty$-category $\mathcal{D}$, $\mathrm{CMon}(\mathcal{D})$ denotes the category of commutative monoid objects in $\mathcal{D}$, and the superscript $\mathrm{gp}$ indicates group completion.

When $\mathcal{C} = Sm_S$, we abbreviate $\mathcal{P}(Sm_S)$ as $\mathcal{P}(S)$ and $\mathcal{S}pt(Sm_S)$ as $\mathcal{S}pt(S)$. A topology $\tau$ is indicated via subscripts (e.g., $\mathcal{P}_\tau$, $\mathcal{S}pt_\tau$). We denote by $\mathcal{SH}^{S^1}(S)$ the full subcategory of $\mathcal{S}pt_{\mathrm{nis}}(S)$ consisting of $\mathbb{A}^1$-local Nisnevich local objects, with $L_{\mathrm{mot}}^{sp}$ denoting the corresponding localization functor. Combined localizations are designated by a list of appropriate subscripts on $L$. $\mathcal{SH}^{\mathbb{P}^1}(S)$ denotes the monoidal inversion of $\mathbb{G}:= (\mathbb{G}_m,1)$ on $\mathcal{SH}^{S^1}(S)$. This is equivalently the stable $\infty$-category of motivic local $\mathbb{P}^1$-spectra on $Sm_S$. There are canonical adjunctions:
\[
\xymatrix{
\mathcal{H}^{\mathbb{A}^1}(S) \ar@<0.5ex>[r]^-{\Sigma^\infty_{S^1}} & \mathcal{SH}^{S^1}(S) \ar@<0.5ex>[l]^-{\Omega^\infty_{S^1}} \ar@<0.5ex>[r]^-{\Sigma^\infty_{\mathbb{G}}} & \mathcal{SH}^{\mathbb{P}^1}(S) \ar@<0.5ex>[l]^-{\Omega^\infty_{\mathbb{G}}}
}
\]
We denote the composite adjunction by $\Sigma^{\infty}_{\mathbb{P}^1}\dashv \Omega^\infty_{\mathbb{P}^1}$. Furthermore, we denote the essential image of $\Sigma^\infty_\mathbb{G}$ by $\mathcal{SH}^{eff}(S)$. This breaks the second adjunction into two pieces:
\[
\xymatrix{
\mathcal{SH}^{S^1}(S) \ar@<0.5ex>[r]^-{\sigma^\infty_{S^1}} & \mathcal{SH}^{eff}(S) \ar@<0.5ex>[l]^-{\omega^\infty_{S^1}} \ar@<0.5ex>[r]^-{i_0} & \mathcal{SH}^{\mathbb{P}^1}(S) \ar@<0.5ex>[l]^-{r_0}
}
\]

For $n \in \mathbb{N}$, let $$B(n) := \{U \hookrightarrow X \in \mathrm{Mor}(Sm_S) \mid \mathrm{codim}_X(X \setminus U) \ge n + 1\}$$ denote the set of $n$-birational open immersions. Here, a dense open immersion $U \hookrightarrow X$ is $n$-birational if $U$ contains all $x \in X$ with $\mathrm{codim}_X(x) \le n$. We denote by $\mathcal{H}^n(S)$ the localization of $\mathcal{H}^{\mathbb{A}^1}(S)$ at $B(n)$, with corresponding localization functor $L_{\mathrm{bir}}^n$. Similarly, $L^{p,q}$ denotes the $S^{p,q}$-nullification functor (as in [\cite{asok2023p}]), given by localization at the projection maps $\{S^{p,q} \times U \to U\}_{U \in Sm_S}$).

We write $\pi_n$ for the presheaf of homotopy groups, and $\pi_n^{\mathrm{nis}}$ for its Nisnevich sheafification. As usual, $\pi_i^{\mathbb{A}^1} := \pi_i^{\mathrm{nis}} L_{\mathrm{mot}}$. More generally, for a localization functor $L_X$, we set $\pi_i^X := \pi_i^{\mathrm{nis}} L_X$. We put the superscript $sp$ to indicate the stabilized version of each of these.

$\mathcal{H}^{fr}(S)$ and $\mathcal{SH}^{fr,S^1}(S)$ stands for the motivic localization of the $\infty$-category of presheaves on the $\infty$-category of framed correspondences [\cite{elmanto2021motivic}] of spaces and spectra respectively. More generally, if $\mathcal{C}$ is an $\infty$-category of correspondences over $S$ (in the sense of [\cite{bachmann2021cancellation}]), then we denote by $\mathcal{H}^{\mathcal{C}}(S)$ and $\mathcal{SH}^{\mathcal{C},S^1}(S)$ for the motivic localization of presheaves of on $\mathcal{C}$ of spaces and spectra respectively.
\subsection{Details of the paper}

The main objective of this paper, as indicated in the introduction, is to show that slice-truncation functors preserve topological connectivity, in analogy with their topological counterparts (where these hold almost tautologically). We divide the proof into two cases: (S) $S^1$-spectra and (P) $\mathbb{P}^1$-spectra. For (S), our approach adapts Morel's stable connectivity theorem [\cite{morel2005stable}]; this is carried out in \S 2. For (P), we present two independent methods:
\begin{enumerate}
\item combining the $S^1$-case with the slice conjectures;     
\item using the reconstruction theorem alongside an $S^1$-connectivity result obtained for framed $S^1$-spectra.
\end{enumerate}
We elaborate on each of these approaches below.

\textbf{(S)}\textit{\textbf{Slice Connectivity for $S^1$-spectra.}} In Section 2, we recall the slice filtration of $S^1$-spectras and construct explicit models for $f_{0/n}$ for every $n\geq 0$. The idea is that the $n$-th Tate truncation is equivalent to localization at the smooth affine sphere $\mathrm{S}^{n-1}_\mathbb{A}:=\mathbb{A}^{n}\setminus 0$:

\begin{recall*}{formula for a1 sn nis localization}
Over an arbitrary qcqs base scheme $S$, there is an equivalences of localization functors (for all $m\geq 0 $) $$L_{mot}^{sp}\simeq\Phi_{\mathbb{A}^1}^\infty L_{nis}^{sp},\quad f_{0/n}\simeq \dcolim_{m}\big( L_{mot}^{sp}\Phi_{\mathrm{S}^{n-1}_\mathbb{A}}\big )^m$$ where the notation $\Phi_A$ is from \Cref{model for loc in stab} and is computed in $Spt^{S^1}(S)$ for $A=\Sigma^\infty_+\mathrm{S}^{n-1}_\mathbb{A}$.
\end{recall*}
Using this along with some ideas from [\cite{ayoub2020P1}] and [\cite{morel2005stable}] we then show that:
\begin{inner-recall-star}[see \Cref{slice connectivity}, \Cref{slices takes connected to connective} and \Cref{zero slice connectivity}]
Over any field $f_{0/n}:\mathcal{SH}^{S^1}(k)\to \mathcal{SH}^{S^1}(k)$ preserves connectivity. In particular $s_0=f_{0/1}$ preserves connectivity. It follows that the higher Tate slices $s_n$ take connected spectra to connective spectra. 
\end{inner-recall-star}

It is well known that connectivity results of this type suffice to construct appropriate $t$-structures on localized categories, which we formulate in an abstract setting at the beginning of \S 4. Combined with the Tate connectivity theorems, this yields a $t$-structure on the slice localization, induced from the standard motivic homotopy $t$-structure [\cite{morel2005stable}]. To state these results, we define $n$-spherical motivic sheaves of abelian groups as those strictly $\mathbb{A}^1$ invariant sheaves of abelian groups whose deloopings are $\mathrm{S}^{n-1}_\mathbb{A}$-local.

   \begin{recall*}{t structure on n spherical motivic spectras}
      Let $k$ be a field. The $n$-th tate truncated category $\mathcal{SH}^{S^1}/f_{n}$ have accessible complete t-structures. The inclusion functor $\mathcal{SH}^{S^1}/f_n\subset \mathcal{SH}^{S^1} $ is $t$-exact. The heart of this $t$-structure is the category of strictly $(n-1)$-spherical invariant strictly $\mathbb{A}^1$-invariant sheaves of abelian groups (when $k$ is a perfect field, this is equivalent to the kernel of the functor $(-)_{-{n}}: Ab_k^{\mathbb{A}^1}\to Ab_k^{\mathbb{A}^1}$).
 \end{recall*}
 
We denote by $Ab^{\mathbb{A}^1}_k/f_n$ the kernel from the above theorem (i.e., full subcategory of $Ab^{\mathbb{A}^1}_k$ with trivial $n$-fold $\mathbb{G}_m$-contraction). The following standard counterpart of [\cite{morel2005stable}, Theorem 6.2.7] is then immediate:
\begin{recall*}{characterizing n bir spectra}
    Let $\esc{X}$ be a motivic spectrum over a perfect field. The following are then equivalent:
    \begin{enumerate}
        \item $\esc{X}$ is the $n$-th tate truncation of a motivic space
        \item For all $i$ the nisnevich connective covers $\tau^{nis}_{\geq i}\esc{X}$ are $n$-tate truncation of a motivic spectra.
        \item For all $i$ we have $\pi_i^{nis}\esc{X}\in Ab^{\mathbb{A}^1}_k/f_n$.
    \end{enumerate}
\end{recall*}

To conclude this section, we discuss the homological algebraic topology of Tate truncation of motivic complexes. We do not include proofs, since they follow similarly to the case of spectra, but we state and prove the Hurewicz theorem for motivic nullification:
  \begin{recall*}{slice hurewicz theorem}
Let $k$ be a perfect field and $\esc{X}$ be a pointed $n$-birationally $(m-1)$-connected space ($m\geq 0$).  Let $$\pi_i^{p,n}\esc{X}:=\pi_i^{nis}L^{p,n}\esc{X},,\quad H^{0/n}_i\esc{X}:=H_i^{nis}f_{0/n}^{ch}\Gamma \esc{X}.$$ 
Then the Hurewicz map $$\pi^{p,n}_m\esc{X}\to H^{0/n}_m\esc{X}$$ is an isomorphism for $m\geq \max{(p-n,2)}$; for $1\leq m<p-n$ it is the universal strictly $\mathbb{A}^1$-invariant, strictly $n$-spherical (i.e., having trivial $n$-fold $\mathbb{G}_m$ contraction) sheaf of abelian groups associated to the sheaf of groups $\pi^{p,n}_m\esc{X}$, and for $m=0$ it is the free such sheaf of abelian groups generated by $\pi_0^{p,n}$.
\end{recall*}
In the third section, we study some consequences of \S2. As an example, we introduce the $n$ birational localization of the $S^1$ stable motivic homotopy category and then show that over perfect fields it agrees with the $n$th Tate truncated motivic category. This immediately yields:
\begin{recall*}{n birational stable connectivity}
    Over a perfect field, $L_{bir}^{n,sp}$ preserves connectivity.
\end{recall*}
Actually, all the results from \S2 apply immediately to $n$-birational localization over perfect fields. Throughout \S3.1, we state them without proof.

In the 4th section, we conduct a similar study for the motivic homotopy category construction of generalized correspondences (a suitable notion of correspondences that encompasses most examples of categories of correspondences, including Voevodsky's finite correspondence, Milnor-Witt correspondence, and framed correspondence, etc.; see [\cite{MR4324462}]). Instead of using the same techniques as in \S2, we adopt a different method specific to cancellative correspondences. In fact, we show that there is an easy formula for $f_n$ in such cases:

\begin{recall*}{model for connective cover for canc corr}
 Suppose $\mathcal{C} $ is a category of correspondence over $k$ satisfying cancellation \textup{[\cite{bachmann2021cancellation}, Definition 2.11]}. Then, for every $n\geq 0$, there is a canonical equivalence:
    $$f_{n}^{\mathcal{C}}\simeq (-)^{\mathbb{G}_\mathcal{C}^n}\otimes \mathbb{G}_\mathcal{C}^n.$$    
    \end{recall*}

Using standard properties of the functors for adding and forgetting transfers, we show that:
\begin{recall*}{cancellative slice connectivity}
 Let $\mathcal{C}$ be a cancellative category of correspondences over a perfect field $k$. Then, for every $n\geq 0$, the functors $f_n^\mathcal{C}$, $f_{0/n}^\mathcal{C}$ and $s_n^\mathcal{C}$ are right $t$-exact as endo functors of $\mathcal{SH}^{\mathcal{C},S^1}(k)$ for the homotopy $t$-structure generated by smooth schemes.
\end{recall*}
\textbf{(P)}\textit{\textbf{Slice Connectivity for $\mathbb{P}^1$-spectra.}} As indicated at the beginning, we prove the connectivity property for the effective (and $\mathbb{P}^1$) slice filtration in two different ways. In \S 3.2, we use the validity of the slice conjectures over perfect fields to reduce the question to $S^1$-spectra, concluding via the results of \S 2. Since the findings in that subsection rely on the slice conjectures and are less general, we do not highlight them further here.

The main result for effective and $\mathbb{P}^1$-spectra is established in \S 5.2, where we combine the slice connectivity for framed motives from \S 4 (\Cref{fr slice connectivity}) with the slice variant of the motivic reconstruction theorem (\Cref{slice reconst}) to yield the desired connectivity result:
\begin{inner-recall-star}[see \Cref{eff fn preserves conn} for the effecive case, and \Cref{p1 fn preserves conn} for the $\mathbb{P}^1$ case]
  Let $k$ be a perfect field and $n\geq 0$ a natural number. Then for the homotopy $t$-structure on $\mathcal{SH}^{eff}(k)$ (and $\mathcal{SH}^{\mathbb{P}^1}(k)$), the slice endo functors enjoy the following properties:
\begin{enumerate}
    \item The colocalization functor $f_{n}$ is $t$-exact.
     \item The functor $s_{n}$ is right $t$-exact.
     \item The functor $f_{0/n}$ is right $t$-exact. 
\end{enumerate} 

\end{inner-recall-star}
In addition to proving connectivity, this paper adapts the motivic infinite loop space machine to the slice filtration. In \S 5.1, we establish an $S^1$-recognition principle for connective Tate slice quotients, in the spirit of [\cite{elmanto2021motivic}, \S 3.1]:

\begin{recall*}{slice recognition over perfect}
    Over a perfect field, there is an equivalence 
    $$\mathrm{B}^\infty_{{nis}} : \mathrm{CMon}_{n}(\mathcal{H}^{\mathbb{A}^1}(k))^{{gp}} \leftrightarrows \mathcal{SH}^{S^1}(k) / f_{n} : \Omega^\infty_{S^1}$$
    where the left-hand side consists of $\mathbb{A}^1$-invariant Nisnevich local grouplike commutative monoids $X$ whose $\pi_0^{\mathbb{A}^1} X$ is strongly $\mathbb{A}^1$-invariant and $(\pi_i^{\mathbb{A}^1} X)_{-n} = 0$ (i.e., $\pi_i^{\mathbb{A}^1} X \in {Ab}^{\mathbb{A}^1}_k/f_n$), and the right-hand side is the full subcategory of $\mathcal{SH}^{S^1}(k)$ generated under colimits by $\Sigma^\infty_+ X / f_n$ for $X\in Sm_k$.
\end{recall*}
In \S5.2 we rewrite the $\mathbb{P}^1$-recognition principle for the slice filtration:
\begin{inner-recall-star}[\Cref{p1 recognition for slices}, \Cref{rhs of p1 slice recognition}]
    Let $k$ be a perfect field. Then the motivic recognition equivalence restricts to an equivalence:
    $$\gamma_*\Sigma_{fr}^{\infty}:\mathcal{H}^{fr}_n(k)^{gp}\leftrightarrows\mathcal{SH}^{veff}(k)/\tilde{f}_n:\Omega^\infty_\mathbb{P}\gamma^*$$ 
    where the left hand is the full subcategory of $\mathcal{H}^{fr}(k)$ consisting of those $\esc{Y}\in \mathcal{H}^{fr}(k)$ such that for all $i\geq 0$ $(\pi_i^{nis}(\gamma_*\esc{Y}))_{-n}=0$ while the right hand side is the generalized slice \textup{[\cite{MR3743071}]} quotient of  $\mathcal{SH}^{veff}(k)$.
\end{inner-recall-star}

\section{Connectivity of the slice filtration of motivic \texorpdfstring{$S^1$}{} spectra}
Let $Spt := \mathrm{Stab}(Spc)$ denote the $\infty$-category of spectra. Classically, the Postnikov slice filtration on $Spt$ is generated by the spheres $S^n$, and the corresponding slice functors are intrinsically connectivity-preserving. In the motivic setting, Thom spaces introduce an additional weight grading, yielding Voevodsky's motivic slice filtration [\cite{voevodsky2002possible}], [\cite{voevodsky2002open}]. A central question is whether the motivic slice functors along Thom spheres preserve topological connectivity. While this holds over perfect base fields [\cite{MR4173925}, Corollary 6.2], the existing proof relies essentially on the slice conjectures (see \Cref{bachmann gm conservativity}). This section establishes this connectivity preservation over arbitrary base fields for $S^1$-spectra, entirely independent of the slice conjectures.

\subsection{Slice filtration of motivic \texorpdfstring{$S^1$}{} spectras}\mbox{}
We begin by recalling the construction of the slice filtration for motivic $S^1$-spectra. While originally introduced over fields in [\cite{voevodsky2002possible}], we formulate the construction here over an arbitrary base scheme.

\underline{\textbf{Motivic $S^1$ spectras.}} 

Let $S$ be a Qcqs scheme. The stable homotopy category of $S^1$-spectras over $S$ is defined as:
\begin{flalign*}
    \mathcal{S}pt^{S^1}(S)&:=\mathcal{P}(Sm_S,Spt)\\
    &\simeq \mathrm{Stab}(\mathcal{P}(S))\equiv \mathrm{Stab}(\mathcal{P}(Sm_S,Spc)).
\end{flalign*}
We denote by $\mathcal{S}pt^{S^1}_{nis}(S)$ the full subcategory of $\mathcal{S}pt^{S^1}(S)$ consisting of presheaves satisfying Nisnevich descent. Since the Nisnevich topology is a cd topology, a presheaf $\mal{F}\in \mathcal{S}pt^{S^1}(S)$ satisfies Nisnevich descent if and only if, for every Nisnevich cd square $Q$, the square $\mal{F}(Q)$ is a pullback square of spectra:
\[Q : \quad
\vcenter{\hbox{$
\xymatrix@R=2pc@C=2pc{
W \ar[r] \ar[d] & V \ar[d]^p \\
U \ar[r]_j & X
}
$}}
\quad \xrightarrow{\quad \mal{F} \quad} \quad
\vcenter{\hbox{$
\xymatrix@R=2pc@C=2pc{
\mal{F}(X) \ar[r] \ar[d] & \mal{F}(U) \ar[d] \\
\mal{F}(V) \ar[r] & \mal{F}(W)
}
$}}
\quad : \mal{F}(Q)\]
Since filtered colimits commute with finite limits in the $\infty$-category $Spt$, we immediately obtain the following observation:\begin{obs}
    The inclusion $\mathcal{S}pt^{S^1}_{nis}(S)\subset \mathcal{S}pt^{S^1}_{}(S)$ is closed under filtered colimits.
\end{obs}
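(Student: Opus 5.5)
The plan is to use the cd-square criterion recalled just above: $\mathcal{S}pt^{S^1}_{nis}(S)$ is cut out of $\mathcal{S}pt^{S^1}(S)$ by finitely-indexed limit conditions, and filtered colimits in $\mathcal{S}pt^{S^1}(S)$ are computed objectwise. Concretely, let $\mal{F}\simeq\colim_{i\in I}\mal{F}_i$ be a filtered colimit in $\mathcal{S}pt^{S^1}(S)=\mathcal{P}(Sm_S,Spt)$ with each $\mal{F}_i$ Nisnevich local. Since colimits in a presheaf $\infty$-category are computed pointwise, for every $X\in Sm_S$ we have $\mal{F}(X)\simeq\colim_i\mal{F}_i(X)$, and this identification is natural in $X$. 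Applying it to a Nisnevich cd square $Q$ shows that the square $\mal{F}(Q)$ is the colimit, in $\mathrm{Fun}(\Delta^1\times\Delta^1,Spt)$, of the squares $\mal{F}_i(Q)$.

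The key step is to show that a filtered colimit of pullback squares of spectra is again a pullback square. The cleanest route uses stability. In $Spt$ a commutative square is a pullback if and only if the induced map from $\mal{F}(X)$ to the fibre product, equivalently the total fibre
\[
\mathrm{tfib}\,\mal{F}(Q)=\mathrm{fib}\big(\mathrm{fib}(\mal{F}(X)\to\mal{F}(U))\to\mathrm{fib}(\mal{F}(V)\to\mal{F}(W))\big),
\]
vanishes. Fibres in $Spt$ are shifted cofibres, so they commute with all colimits, filtered ones in particular. Hence $\mathrm{tfib}\,\mal{F}(Q)\simeq\colim_i\mathrm{tfib}\,\mal{F}_i(Q)\simeq\colim_i 0\simeq 0$. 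Alternatively, one can simply invoke the general fact that filtered colimits commute with finite limits in $Spt$, which is the statement quoted before the observation.

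Finally, since $Q$ was an arbitrary Nisnevich cd square, the criterion shows that $\mal{F}$ satisfies Nisnevich descent. One should also check the empty-scheme condition $\mal{F}(\emptyset)\simeq 0$ if it is part of the convention; it is a colimit of zeros and so holds as well. This gives closure under filtered colimits. There is essentially no obstacle. The only point needing care is that the descent criterion involves only finitely many values, so that exchanging the colimit with the pullback is legitimate; this is exactly where we need the cd-structure description rather than descent for arbitrary (possibly infinite) hypercovers.
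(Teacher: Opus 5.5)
Your proposal is correct and follows the same route as the paper, whose one-line justification is exactly that Nisnevich descent is detected by the cd-square pullback condition and that filtered colimits commute with finite limits in $Spt$. Your total-fibre argument is a stable-category unpacking of that fact; it actually shows closure under all small colimits, which is more than the observation asks for and is the kind of statement the paper later assembles from filtered colimits plus finite coproducts in \Cref{slice localization is colimit closed}.
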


Since $Spt:=\mathrm{Stab}(Spc)$, there is an equivalent way of describing $\mathcal{S}pt^{S^1}_{nis}(S)$, namely $$\mathcal{S}pt^{S^1}_{nis}(S)\simeq \mathrm{Stab}(\mathcal{P}_{nis}(S))\equiv \mathrm{Stab}(L_{nis}\mathcal{P}(Sm_S,Spc)).$$ 
That is $\mathcal{S}pt^{S^1}_{nis}(S)$ is the colimit of the following tower in $Pr^L$:
$$\mathcal{P}_{nis}(S)_\bullet\xrightarrow{\Sigma}\mathcal{P}_{nis}(S)_\bullet\xrightarrow{\Sigma}\mathcal{P}_{nis}(S)_\bullet\xrightarrow{\Sigma}\cdots$$
Therefore, this comes with canonical adjunctions:
\begin{flalign*}
    \Sigma^\infty_+:\mathcal{P}_{nis}(S)&\leftrightarrows \mathcal{S}pt^{S^1}_{nis}(S):\Omega^\infty\\
  L^{sp}_{nis} : \mathcal{S}pt^{S^1}(S)&\mathrel{\substack{\hookleftarrow\\[-0.4ex] \rightarrow}}\mathcal{S}pt^{S^1}_{nis}(S)
\end{flalign*}

Finally, the motivic stable homotopy category of $S^1$-spectra, denoted $\mathcal{SH}^{S^1}(S)$, is defined as the localization $L^{sp}_{mot}\mathcal{S}pt^{S^1}_{nis}(S)$, i.e., the localization of $\mathcal{S}pt^{S^1}_{nis}(S)$ at the (essentially small) set of $\mathbb{A}^1$-projections:
$$\{\Sigma^\infty_+{\mathbb{A}^1_X}\to \Sigma^\infty_+ X\}_{X\in Sm_S}.$$ Equivalently, this is the stabilization of the motivic homotopy category $$\mathcal{H}(S):=L_{\mathbb{A}^1}L_{nis}\mathcal{P}(S)$$, defined and studied in [\cite{morel19991}]. In other words, $\mathcal{SH}^{S^1}(S)$ is the inverse limit of the following tower in $Cat_\infty$:
$$\mathcal{H}(S)_\bullet \xrightarrow{\Sigma}\mathcal{H}(S)_\bullet\xrightarrow{\Sigma}\mathcal{H}(S)_\bullet\xrightarrow{\Sigma}\cdots.$$ We still denote the corresponding infinite suspension by $\Sigma^\infty:\mathcal{H}_\bullet(S)\to \mathcal{SH}^{S^1}(S)$.
\begin{rem}\label[rem]{monoidal of slice localization}
  All the stable $\infty$-categories mentioned so far admit a canonical symmetric monoidal structure such that the functors $\Sigma^\infty_+$ are monoidal with respect to the cartesian monoidal structure on the unpointed categories. This operadic extension arises from recognizing these stable $\infty$-categories as the monoidal inversion of the pointed topological circle $S^1$. Namely, stabilization corresponds to universally inverting $S^1$ with respect to the smash product on the associated pointed spaces ([\cite{robalo2012noncommutative}]):$$Spt^\otimes \simeq Spc_\bullet^{\wedge}[(S^1)^{-1}], \quad \mathcal{SH}^{S^1,\otimes}(S) \simeq \mathcal{H}(S)_\bullet^\wedge[(S^1)^{-1}].$$ 
\end{rem}

\underline{\textbf{Motivic slices}}

The $n$-th Thom connective part of the motivic stable homotopy category, denoted $\mathcal{SH}^{S^1}(S)(n)$, is the localizing tensor-ideal of $\mathcal{SH}^{S^1}(S)$ generated by $T^n$. In other words, it is the essential image of the endofunctor $${T^n}\wedge -:\mathcal{SH}^{S^1}(S)\to \mathcal{SH}^{S^1}(S).$$

In [\cite{voevodsky2002possible}], Voevodsky observes (though in the language of triangulated categories) that the inclusion $$\mathcal{SH}^{S^1}(S)(n)\subset \mathcal{SH}^{S^1}(S),$$ being a colimit-closed embedding of compactly generated stable $\infty$-categories, is a colocalization.
 
Denote these colocalization functors by $$f_n: \mathcal{SH}^{S^1}(S) \to \mathcal{SH}^{S^1}(S)(n).$$ For every $n\geq 0$, the inclusion $$\mathcal{SH}^{S^1}(S)(n+1) \subset \mathcal{SH}^{S^1}(S)(n)$$ induces a canonical natural transformation $f_{n+1} \to f_n$, yielding the slice tower: $$\cdots \to f_{n+1} \to f_n \to \cdots \to f_0 = \mathrm{id}.$$ The $n$-th slice functor $s_n$ is then defined as the cofiber of this natural transformation, yielding the canonical cofiber sequence: $$f_{n+1} \to f_n \to s_n.$$
 
Dually, the $n$th slice localization of the motivic homotopy category is the orthogonal complement of the subcategory $\mathcal{SH}^{S^1}(S)(n)\subset \mathcal{SH}^{S^1}(S)$ denoted by $$\mathcal{SH}^{S^1}(S)/f^{}_{n}\subset \mathcal{SH}^{S^1}(S).$$ So that it is given by the localization $$f_{0/n}: \mathcal{SH}^{S^1}(S)\to \mathcal{SH}^{S^1}(S)/f^{}_{n}.$$ 
Clearly, the tower mentioned above, upon taking cofibers along $f_0=1$, yields another tower:
$$\cdots \to f_{0/n+1}\to f_{0/n}\to \cdots\to f_{0/0}=1$$
\begin{lem}\label[lem]{slices as eilenberg layers}
    The slices $s_n$ are identically given as the fibers of the above tower, i.e., there are fiber sequences $$s_n\to f_{0/n+1}\to f_{0/n}.$$\end{lem}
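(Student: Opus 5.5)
We obtain the fiber sequence by a diagram chase with the octahedral axiom (equivalently, pasting of bicartesian squares) in the stable $\infty$-category of exact endofunctors of $\mathcal{SH}^{S^1}(S)$. Write $\epsilon_n : f_n \to f_0 = \mathrm{id}$ for the counit of the colocalization, so that $f_{0/n} = \mathrm{cofib}(\epsilon_n)$ functorially. The natural transformation $f_{n+1}\to f_n$ is the unique map over $\mathrm{id}$, so $\epsilon_{n+1} = \epsilon_n \circ (f_{n+1}\to f_n)$. Uniqueness holds because $f_{n+1}$ is the right adjoint to the inclusion of $\mathcal{SH}^{S^1}(S)(n+1)$, which is contained in $\mathcal{SH}^{S^1}(S)(n)$. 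The map $f_{0/n+1}\to f_{0/n}$ in the tower is then exactly the map on cofibers induced by the commutative triangle $f_{n+1}\to f_n \to \mathrm{id}$.

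Apply the octahedral axiom to the composable pair $f_{n+1}\xrightarrow{a} f_n \xrightarrow{\epsilon_n}\mathrm{id}$ with composite $\epsilon_{n+1}$. It gives a cofiber sequence
$$\mathrm{cofib}(a)\to \mathrm{cofib}(\epsilon_{n+1}) \to \mathrm{cofib}(\epsilon_n),$$
that is, $s_n \to f_{0/n+1}\to f_{0/n}$. The second map here is the induced map on cofibers, so it agrees with the tower map described above. In a stable $\infty$-category cofiber sequences are fiber sequences, so $s_n \simeq \mathrm{fib}(f_{0/n+1}\to f_{0/n})$, which is the claimed statement. For an $\infty$-categorical rather than triangulated version, I would take the $3\times 3$ diagram of cofibers of the map of arrows from $(f_{n+1}\to \mathrm{id})$ to $(f_n\to\mathrm{id})$. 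Its rows and columns are cofiber sequences, the column over $\mathrm{id}\to\mathrm{id}$ has zero cofiber, and the remaining identifications follow.

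The one point needing care is naturality: the construction must be performed in $\mathrm{Fun}^{ex}(\mathcal{SH}^{S^1}(S),\mathcal{SH}^{S^1}(S))$, which is stable, and we must check that the maps $f_{0/n+1}\to f_{0/n}$ produced this way are the ones defining the tower. Both points follow from the functoriality of cofibers in that functor category together with the uniqueness of maps over $\mathrm{id}$ noted above. I expect no real obstacle beyond this bookkeeping.
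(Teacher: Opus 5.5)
Your proof is correct and is essentially the paper's argument: the paper also commutes cofibers in the square $f_{n+1}\to f_0$ over $f_n\to f_0$, obtaining $\mathrm{cofib}(f_{0/n+1}\to f_{0/n})\simeq \mathrm{cofib}(s_n\to 0)\simeq \Sigma s_n$, which is your $3\times 3$ (octahedral) diagram. Your remarks on working in $\mathrm{Fun}^{ex}$ and on checking that the induced map is the tower map fill in bookkeeping the paper leaves implicit.
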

\begin{proof}
This follows from a snake lemma-type argument. Namely, 
    \begin{flalign*}
        cofib(f_{0/{n+1}}\to f_{0/n})&= cofib\big(cofib(f_{{n+1}}\to f_{0})\to cofib(f_{{n}}\to f_{0}))\\&\simeq cofib\big(cofib(f_{{n+1}}\to f_{n})\to cofib(f_0\to f_0)\big)\\& \simeq cofib(s_n\to 0)\\&\simeq \Sigma s_n
    \end{flalign*}    
\end{proof}

\subsection{Motivic slices through spherical co-localization}
The main goal of this paper is to show that when $S$ is the spectrum of a field, $f_{0/n}$ preserves connectivity for every $n\geq 0$. To do so, the key insight is that these categories can be constructed as further schematic localizations of the motivic stable homotopy category. More precisely, let $\mathrm{S}_\mathbb{A}^n:=\mathbb{A}^{n+1}\setminus 0$ be called the affine $n$-sphere. Then,

\begin{lem}\label[lem]{slice quotient is spherical localization}
    There is a canonical equivalence $$\mathcal{SH}^{S^1}(S)/f_n\simeq L_{\mathrm{S}^{n-1}_\mathbb{A}}\mathcal{SH}^{S^1}(S),$$ where $L_{\mathrm{S}^{n-1}_\mathbb{A}}$ stands for the localization at the set of $\mathrm{S}^{n-1}_\mathbb{A}$ projection maps:
    $$\{\Sigma ^\infty_+{( X\times \mathrm{S}^{n-1}_\mathbb{A}})\to \Sigma ^\infty _+X\}.$$ 
\end{lem}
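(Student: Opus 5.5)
Both sides are full subcategories of $\mathcal{SH}^{S^1}(S)$ cut out by orthogonality conditions, so the plan is to show they coincide. By definition, $\mathcal{SH}^{S^1}(S)/f_n$ is the right orthogonal of the localizing tensor-ideal $\mathcal{SH}^{S^1}(S)(n)$. Since $\mathcal{SH}^{S^1}(S)$ is compactly generated by $\Sigma^\infty_+X$ for $X\in Sm_S$, this ideal is the localizing subcategory generated by the objects $\Sigma^\infty_+X\wedge T^n$. On the other side, $L_{\mathrm{S}^{n-1}_\mathbb{A}}\mathcal{SH}^{S^1}(S)$ consists of those $\mal{F}$ for which every map $\mal{F}(X)\to \mal{F}(X\times \mathrm{S}^{n-1}_\mathbb{A})$ is an equivalence. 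Because the category is stable, this is the same as saying that $\mal{F}$ is right orthogonal to the cofibers of the projections $\Sigma^\infty_+(X\times \mathrm{S}^{n-1}_\mathbb{A})\to \Sigma^\infty_+X$, together with all their shifts. It therefore suffices to show that the localizing subcategory generated by these cofibers equals the one generated by $\Sigma^\infty_+X\wedge T^n$.

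The key computation is to identify the cofiber of the projection. Since $\mathrm{S}^{n-1}_\mathbb{A}$ has the rational point $(1,0,\dots,0)$, the projection $\Sigma^\infty_+(X\times \mathrm{S}^{n-1}_\mathbb{A})\to\Sigma^\infty_+X$ splits, giving
$$\Sigma^\infty_+(X\times \mathrm{S}^{n-1}_\mathbb{A})\simeq \Sigma^\infty_+X\oplus \big(\Sigma^\infty_+X\wedge \Sigma^\infty(\mathrm{S}^{n-1}_\mathbb{A},pt)\big).$$
The cofiber is thus $\Sigma\,\Sigma^\infty_+X\wedge \Sigma^\infty \mathrm{S}^{n-1}_\mathbb{A}$. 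Next I will use the standard motivic equivalence $\mathbb{A}^n\setminus 0\simeq S^{n-1}\wedge \mathbb{G}_m^{\wedge n}$ in $\mathcal{H}(S)_\bullet$ (Morel–Voevodsky), which holds over any base by Nisnevich (indeed Zariski) descent and $\mathbb{A}^1$-invariance. Combined with $T\simeq S^1\wedge\mathbb{G}_m\simeq \mathbb{A}^1/(\mathbb{A}^1\setminus 0)$, it gives
$$\Sigma\,\Sigma^\infty \mathrm{S}^{n-1}_\mathbb{A}\simeq \Sigma^\infty(\mathbb{A}^n/(\mathbb{A}^n\setminus 0))\simeq \Sigma^\infty T^{\wedge n}.$$
This identification is the main step to check carefully. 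Both the cofiber sequence $\mathbb{A}^n\setminus 0\to \mathbb{A}^n\to T^n$ and $\mathbb{A}^1$-contractibility of $\mathbb{A}^n$ are standard, and I would invoke them directly; the only thing to watch is that the pointing is compatible with the splitting above. With this in hand, the cofibers of the projection maps are exactly $\Sigma^\infty_+X\wedge T^n$ for $X\in Sm_S$, so the two generating sets agree up to equivalence and suspension.

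It remains to note that the localizing subcategory generated by the $\Sigma^\infty_+X\wedge T^n$ is the essential image of $T^n\wedge-$. This holds because $T^n\wedge-$ is an exact, colimit-preserving autoequivalence onto its image and the $\Sigma^\infty_+X$ generate the whole category. That image is automatically a tensor ideal, since $\Sigma^\infty_+X\wedge\Sigma^\infty_+Y\simeq\Sigma^\infty_+(X\times Y)$. Hence the two right orthogonals coincide as full subcategories of $\mathcal{SH}^{S^1}(S)$, which gives the canonical equivalence; under it $f_{0/n}$ is identified with $L_{\mathrm{S}^{n-1}_\mathbb{A}}$. The case $n=0$ is degenerate and is handled by the convention $\mathrm{S}^{-1}_\mathbb{A}=\emptyset$, where both sides are zero.
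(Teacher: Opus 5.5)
Your argument is essentially the paper's. Both rest on the cofiber sequence $(\mathrm{S}^{n-1}_\mathbb{A},1)\to\mathbb{A}^n\to T^{\wedge n}$, which gives $T^{\wedge n}\simeq\Sigma(\mathrm{S}^{n-1}_\mathbb{A},1)$ stably, so both subcategories are right orthogonals of the same localizing ideal. You additionally make explicit, via the splitting from the rational point, the passage from the projection maps to their cofibers $\Sigma^\infty_+X\wedge T^n$, which the paper leaves implicit.

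One correction to your last paragraph: $T^n\wedge-$ is \emph{not} fully faithful on $\mathcal{SH}^{S^1}(S)$. This is exactly the failure of cancellation for $S^1$-spectra; over a perfect field, the unit map induces $\mathbb{Z}\to(K^{MW}_n)_{-n}\cong\underline{\mathrm{GW}}$ on $\pi_0$, which is not an isomorphism. That step is superfluous, however. Your first paragraph already identifies the ideal with the localizing subcategory generated by the $\Sigma^\infty_+X\wedge T^n$, and its right orthogonal is just the right orthogonal of these objects and their shifts.
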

\begin{proof}
To prove this claim, it suffices to establish another claim that the stable $\infty$-category $\mathcal{SH}^{S^1}(S)(n)$ is equivalent to the localizing $\otimes$-ideal of $\mathcal{SH}^{S^1}(S)$ generated by $(\mathrm{S}^{n-1}_\mathbb{A},1)$, i.e., 
$$\mathcal{SH}^{S^1}(S)(n)=\mathcal{SH}^{S^1}(S)\wedge \mathrm{S}^{n-1}_\mathbb{A}.$$
Recall that, by [\cite{morel19991}, Proposition 3.2.17], there is a cofiber sequence
$$(\mathrm{S}^{n-1}_\mathbb{A},1)\to \mathbb{A}^n\to T^{\wedge n}$$ and hence an equivalence $T^n\simeq \Sigma (\mathrm{S}^{n-1}_\mathbb{A},1)$ in $\mathcal{SH}^{S^1}(S)$. The above claim thus follows from the fact that $\Sigma$ is an equivalence on $\mathcal{SH}^{S^1}(S)$, meaning that $T^{\wedge n}$ and $(\mathrm{S}^{n-1}_\mathbb{A},1)$ generate the same localizing $\otimes$-ideal.  
\end{proof}

\begin{rem}\label[rem]{slice quotient is Lpn}
    Using the same principle, it follows that for every $p$, the localization $L^{p,n}_{sp}$ at the motivic sphere $S^{p,n}$ [\cite{asok2023p}] yields nothing but $\mathcal{SH}^{S^1}(k)/f_n.$ 
\end{rem}
We shall call the localization $L_{\mathrm{S}^{n}_\mathbb{A}}\mathcal{SH}^{S^1}(S)$ the $n$-spherical motivic stable homotopy category, the local objects $n$-spherical motives, and the localization functor the $n$-spherical localization functor. We denote the combined localization $\mathcal{S}pt^{S^1}(S)\to L_{\mathrm{S}^{n}_\mathbb{A}}\mathcal{SH}^{S^1}(S)$ by $L_{\mathbb{A}^1,\mathrm{S}^{n}_\mathbb{A},nis}$. Thus, the above Lemma establishes an equivalence 
\begin{flalign}\label{tate truncation is spherical localization}
    f_{0/n}\simeq {L_{\mathbb{A}^1,\mathrm{S}^{n-1}_\mathbb{A},nis}}_{\mid \mathcal{SH}^{S^1}(S)}.
\end{flalign} So, using \Cref{slices as eilenberg layers}, we obtain fiber sequences $$s_n\to  L_{\mathbb{A}^1,\mathrm{S}^{n}_\mathbb{A},nis}\to  L_{\mathbb{A}^1,\mathrm{S}^{n-1}_\mathbb{A},nis}.$$
Through this lens, $f_{0/n}$'s serve as the Tate version of the "Postnikov truncation," while $s_n$'s serve as Tate "Eilenberg-MacLane
layers".   
\begin{cor}\label[cor]{monoidal structure of tate truncations}
 There is a monoidal structure on $\mathcal{SH}^{S^1}(S)/f_n$ such that the slice truncation functor $$f_{0/n}:\mathcal{SH}^{S^1}(S)\to \mathcal{SH}^{S^1}(S)/f_n$$ can be promoted to a monoidal localization. 
\end{cor}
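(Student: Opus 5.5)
The plan is to apply the standard criterion for when a Bousfield localization of a presentably symmetric monoidal stable $\infty$-category is compatible with the tensor product. Recall the general principle (Lurie, \emph{Higher Algebra}, Proposition 2.2.1.9, together with Example 2.2.1.7). Let $L\colon \mathcal{C}\to L\mathcal{C}$ be an accessible localization of a symmetric monoidal $\infty$-category. Suppose that for every $L$-equivalence $f$ and every object $Z$, the map $f\otimes Z$ is again an $L$-equivalence. Then $L\mathcal{C}$ inherits a unique symmetric monoidal structure making $L$ symmetric monoidal, with tensor product $L(-\otimes -)$. Since $f_{0/n}$ is the localization whose kernel is $\mathcal{SH}^{S^1}(S)(n)$, it suffices to check that the $f_{0/n}$-acyclics form a $\otimes$-ideal.

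\textbf{Step 1: reduce to a statement about acyclics.} The kernel of $f_{0/n}$ is exactly $\mathcal{SH}^{S^1}(S)(n)$, the orthogonal complement of $\mathcal{SH}^{S^1}(S)/f_n$. A map $f$ is an $f_{0/n}$-equivalence iff $\mathrm{cofib}(f)\in \mathcal{SH}^{S^1}(S)(n)$; this uses stability and the fact that $f_{0/n}$ is exact. Since $-\otimes Z$ is exact, we have $\mathrm{cofib}(f\otimes Z)\simeq \mathrm{cofib}(f)\otimes Z$. So the condition above reduces to showing that $\mathcal{SH}^{S^1}(S)(n)$ is closed under tensoring with arbitrary objects.

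\textbf{Step 2: the ideal property.} By definition, $\mathcal{SH}^{S^1}(S)(n)$ is the localizing tensor-ideal generated by $T^{\wedge n}$. Equivalently, by the proof of \Cref{slice quotient is spherical localization}, it is the one generated by $(\mathrm{S}^{n-1}_\mathbb{A},1)$. Being a $\otimes$-ideal, it is closed under $-\otimes Z$ by construction. If one instead takes the definition "essential image of $T^n\wedge-$", there is a one-line check: $(T^n\wedge Y)\otimes Z\simeq T^n\wedge(Y\otimes Z)$ by associativity, so this image is already a $\otimes$-ideal. The image is also closed under colimits because $T^n\wedge -$ preserves colimits.

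\textbf{Step 3: conclude.} Apply the criterion to get the symmetric monoidal structure on $\mathcal{SH}^{S^1}(S)/f_n$, with unit $f_{0/n}\mathbb{1}$. Its tensor product is $\mathcal{X}\otimes_{0/n}\mathcal{Y}=f_{0/n}(\mathcal{X}\otimes\mathcal{Y})$, and $f_{0/n}$ is promoted to a symmetric monoidal functor. Equivalently, via \eqref{tate truncation is spherical localization}, this is the monoidal localization at the set of $\mathrm{S}^{n-1}_\mathbb{A}$-projections $\Sigma^\infty_+(X\times \mathrm{S}^{n-1}_\mathbb{A})\to\Sigma^\infty_+X$. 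Here one can also verify the criterion directly on generators. Tensoring such a projection with $\Sigma^\infty_+Y$ gives the projection for $X\times Y$, and the $\Sigma^\infty_+Y$ generate under colimits and shifts. The only slightly delicate point is matching the two descriptions of the acyclics, and that is exactly what \Cref{slice quotient is spherical localization} provides. I expect no real obstacle.
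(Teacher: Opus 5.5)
Your argument is correct, but it takes a different route from the paper. The paper never works with the acyclics directly. It uses \Cref{slice quotient is spherical localization} to rewrite $f_{0/n}$ as the localization $L_{\mathbb{A}^1,\mathrm{S}^{n-1}_\mathbb{A},nis}$ at two explicit sets of projections: $\Sigma^\infty_+(X\times \mathrm{S}^{n-1}_\mathbb{A})\to\Sigma^\infty_+X$ and $\Sigma^\infty_+(X\times\mathbb{A}^1)\to\Sigma^\infty_+X$. It then observes that these sets are stable under smashing with the compact generators $\Sigma^\infty_+Y$, and concludes with Lurie's generator-level compatibility criterion (DAG, Proposition 1.3.9). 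That is essentially the alternative you sketch at the end of Step 3.

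Your main line uses only two facts: the kernel of $f_{0/n}$ is the localizing $\otimes$-ideal $\mathcal{SH}^{S^1}(S)(n)$, and $-\otimes Z$ is exact. So it needs no geometric input. In particular it does not use the cofiber sequence $(\mathrm{S}^{n-1}_\mathbb{A},1)\to\mathbb{A}^n\to T^{\wedge n}$ behind \Cref{slice quotient is spherical localization}. It also works verbatim for the localization away from any set-generated localizing $\otimes$-ideal. The paper's version gives something in return: it presents the monoidal localization as a localization at maps of smooth schemes that is compatible with the tensor product already on presheaves of spectra. That is the form exploited later in the models $\Phi^\infty_A$ of \Cref{formula for a1 sn nis localization}.

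One side remark in your Step 2 is wrong and should be dropped. The essential image of $T^n\wedge-$ is not automatically closed under colimits just because $T^n\wedge-$ preserves colimits. The reason is that $T^n\wedge-$ is not fully faithful on $\mathcal{SH}^{S^1}(S)$, so a diagram in the image need not lift. This does not affect your proof. Take the localizing-ideal definition, or just the kernel of $f_{0/n}$, which is automatically localizing. Then the ideal property follows from the usual argument: the class of $W$ with $W\otimes Z$ in the ideal is localizing and contains the generators $T^n\wedge Y$, since $(T^n\wedge Y)\otimes Z\simeq T^n\wedge(Y\otimes Z)$.
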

\begin{proof}
    Since the sets of projection maps \begin{flalign*}
    \{\Sigma ^\infty_+{( X\times \mathrm{S}^{n-1}_\mathbb{A}})\to \Sigma_+ ^\infty X\}_{X\in Sm_S},\\\{\Sigma ^\infty_+{( X\times \mathbb{A}}^1)\to \Sigma ^\infty_+ X\}_{X\in Sm_S}\end{flalign*} are stable under smash products with the compact generators $\Sigma^\infty_+Y$ (for smooth $Y/S$), the localization $L_{\mathbb{A}^1,\mathrm{S}^{n-1}_\mathbb{A},nis}$ is a compatible localization. By [\cite{lurie2007derived}, Proposition 1.3.9], it follows that $L_{\mathbb{A}^1,\mathrm{S}^{n-1}_\mathbb{A},nis}$ is a mononoidal localization. The result thus follows from \Cref{tate truncation is spherical localization}.
\end{proof}
\begin{rem}
    This method is insufficient to analyze the monoidal nature of the remaining functors, namely the $ f_n$'s or $ s_n$'s. As our primary goal is to analyze the connectivity of the slice filtration, we will not require this full monoidal machinery here. For relevant work, we refer the reader to [\cite{pelaez2008multiplicative}].
\end{rem}

\begin{cor}\label[cor]{slice localization is colimit closed}
 Each possible composition in the inclusion $$\mathcal{SH}^{S^1}(k)/f_n\subset \mathcal{SH}^{S^1}(k) \subset \mathcal{S}pt_{nis}^{S^1}(k) $$  are closed under all limits and colimits.
\end{cor}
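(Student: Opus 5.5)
We must show that each of the inclusions $\mathcal{SH}^{S^1}(k)/f_n\subset \mathcal{SH}^{S^1}(k)$, $\mathcal{SH}^{S^1}(k)\subset \mathcal{S}pt^{S^1}_{nis}(k)$ and their composite is closed under all limits and all colimits. The plan treats limits and colimits separately and reduces everything to facts about the three localizations (Nisnevich, $\mathbb{A}^1$, and $\mathrm{S}^{n-1}_\mathbb{A}$) already in hand.

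\emph{Limits.} Each inclusion is the inclusion of local objects for a reflective localization: for the first this is \Cref{slice quotient is spherical localization} together with \eqref{tate truncation is spherical localization}, and for the second it is the definition of $L^{sp}_{mot}$. A full subcategory of local objects is always closed under limits, since local objects are those $\mal{F}$ for which $\mathrm{Map}(s,\mal{F})$ is an equivalence for each map $s$ in the localizing set, and $\mathrm{Map}(s,-)$ commutes with limits. Composites of limit-closed inclusions are again limit-closed, so no further work is needed here.

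\emph{Colimits.} All categories involved are stable, so the inclusions are exact and preserve finite colimits. It therefore suffices to show that they preserve filtered colimits, or equivalently arbitrary direct sums. I will check this against the generating maps.
\begin{itemize}
\item The $\mathbb{A}^1$-projections $\Sigma^\infty_+\mathbb{A}^1_X\to\Sigma^\infty_+X$ have compact source and target in $\mathcal{S}pt^{S^1}_{nis}(k)$, because $\Sigma^\infty_+X$ is compact there. This uses that Nisnevich sheafification preserves compactness of representables of qcqs smooth schemes (finite Krull dimension over a field, or the cd-structure description together with the Observation that Nisnevich-local spectra are closed under filtered colimits in $\mathcal{S}pt^{S^1}(k)$).
\item Hence $\mathrm{Map}(\Sigma^\infty_+X,-)$ and $\mathrm{Map}(\Sigma^\infty_+\mathbb{A}^1_X,-)$ commute with filtered colimits computed in $\mathcal{S}pt^{S^1}_{nis}(k)$. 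A filtered colimit of $\mathbb{A}^1$-local objects is therefore $\mathbb{A}^1$-local, and it agrees with the colimit computed in $\mathcal{SH}^{S^1}(k)$.
\item The same argument applies to the projections $\Sigma^\infty_+(X\times\mathrm{S}^{n-1}_\mathbb{A})\to\Sigma^\infty_+X$, since $X\times \mathrm{S}^{n-1}_\mathbb{A}$ is again a smooth finite-type $k$-scheme and so has compact suspension spectrum.
\item The composite inclusion then preserves colimits as well.
\end{itemize}

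\emph{Main obstacle.} The one step requiring care is the compactness of $\Sigma^\infty_+X$ in $\mathcal{S}pt^{S^1}_{nis}(k)$, i.e.\ that filtered colimits of Nisnevich sheaves of spectra can be computed sectionwise. The Observation stated earlier is precisely this: the inclusion into presheaves is closed under filtered colimits. Evaluation at $X$ commutes with colimits of presheaves, and the Nisnevich-local $\mathrm{Map}(\Sigma^\infty_+X,-)$ is evaluation at $X$. This settles the step without any finite-dimensionality hypothesis.
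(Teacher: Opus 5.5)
Your proof is correct and follows essentially the paper's route. Limits are handled by reflectivity. For colimits, stability reduces you to filtered colimits, which you control using the Observation that Nisnevich-local spectra are closed under filtered colimits in presheaves, together with the fact that all the localizations are at maps between suspension spectra of smooth schemes. The only cosmetic difference is that you phrase the sectionwise check as compactness of $\Sigma^\infty_+X$ in $\mathcal{S}pt^{S^1}_{nis}(k)$, whereas the paper observes that schematic localizations of $\mathcal{S}pt(k)$ are colimit-closed and intersects with the Nisnevich-local objects.
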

\begin{proof}
  The limit part is clear by definition.
  
  Since colimits in $\mathcal{S}pt(k)$ are computed sectionwise, schematic localizations of $\mathcal{S}pt(k)$ are closed under all colimits. On the other hand, because the Nisnevich topology is generated by a cd-structure, $\mathcal{S}pt_{nis}(k)\subset\mathcal{S}pt(k)$ is closed under filtered colimits. It follows that the inclusions in the statement are closed under filtered colimits, since both the first category (by \Cref{slice quotient is spherical localization}) and the second are constructed as localizations at schemes. The claim then follows from the standard fact that, in a stable $\infty$-category, every small colimit can be expressed as a filtered colimit of finite coproducts (which are also finite products).
\end{proof}
\subsection{Models for motivic Slices}
In this subsection, we aim to develop models for the slice localization functors $f_{0/n}$. The crucial insight from the previous section is that this process is equivalent to localizing with respect to projections along schemes with rational points. We will explore a more general approach to localizing a stable $\infty$-category at objects with global sections, then apply it to our specific context. But before we do that, let us state some simple observations. 

For a scheme $A \in Sm_S$, let $L_A^{sp}$ denote the localization of $Spt^{S^1}(S)$ with respect to the class of projections $\{X\times_S A \to X\}_{X\in Sm_S}$, and let $L^{sp}_{A,nis}$ denote the further localization at Nisnevich equivalences. Then,
\begin{lem}\label[lem]{LA preserves nis locality}
    There is an equivalence,
    $$L^{sp}_{A,nis}\simeq L_{A}^{sp}\circ L_{nis}^{sp}.$$ In other words, $L^{sp}_A$ preserves Nisnevich locality of $S$-spectra.
\end{lem}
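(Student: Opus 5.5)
We need to prove that $L^{sp}_A$ preserves Nisnevich locality, i.e. that if $\mal{F}$ is Nisnevich local then so is $L^{sp}_A\mal{F}$. Granting this, the equivalence $L^{sp}_{A,nis}\simeq L^{sp}_A\circ L^{sp}_{nis}$ is formal. For any $\mal{G}$, the map $\mal{G}\to L^{sp}_{nis}\mal{G}\to L^{sp}_A L^{sp}_{nis}\mal{G}$ is a composite of a Nisnevich equivalence and an $A$-equivalence. Its target is $A$-local, and by the claim it is also Nisnevich local. It is therefore the localization at the union of both classes of maps.

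For the claim I would use the standard explicit model for $L^{sp}_A$ (the $\Phi_A$-construction the paper refers to as \Cref{model for loc in stab}). In the stable presheaf setting, localizing at the projections $X\times A\to X$ can be computed as a filtered colimit of iterates of functors built from the sections $\mal{F}(X\times A^{\bullet})$. One candidate is the Suslin-type geometric realization
\[
\mal{F}\mapsto \colim_{\Delta^{op}}\mal{F}((-)\times A^{\times \bullet}).
\]
Another, using the stable structure, is
\[
\Phi_A\mal{F}=\mathrm{cofib}\bigl(\mathrm{fib}(\mal{F}(-\times A)\to \mal{F}(-))\to \mal{F}\bigr),
\]
iterated countably often, where the map $\mal{F}(-\times A)\to\mal{F}(-)$ is induced by a section of $A$ when one exists. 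In both cases $L^{sp}_A\mal{F}$ is a (filtered or sifted) colimit of presheaves of the form $X\mapsto \mal{F}(X\times A^{\times m})$, together with finite limits and colimits of these.

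Next I would check Nisnevich locality term by term. If $Q$ is a Nisnevich cd square over $S$, then $Q\times_S A^{\times m}$ is again a Nisnevich cd square. Étaleness, open immersions and the isomorphism over the reduced closed complement are all stable under base change along smooth $A^{\times m}\to S$. Hence $X\mapsto\mal{F}(X\times A^{\times m})$ is Nisnevich local whenever $\mal{F}$ is. Because $Spt$ is stable, Nisnevich local presheaves, characterized by the cd-square pullback condition, are closed under all finite limits and colimits. By the Observation above they are also closed under filtered colimits, so the iterated cofibers and the sequential colimit remain Nisnevich local. Sifted colimits of spectra commute with finite limits as well, since they are filtered colimits of finite colimits in the stable setting, so the geometric-realization model is also handled.

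The main obstacle is making sure the model for $L^{sp}_A$ really is built only from these operations. This is what the construction in \Cref{model for loc in stab} should supply. The point needing care is that the objects $\mal{F}(-\times A^{\times m})$ are $A$-local in the limit, which is the standard argument that the colimit of iterates kills the projections. Once that model is in place, the Nisnevich part reduces to the base-change stability of cd squares together with the commutation of filtered colimits with finite limits in $Spt$.
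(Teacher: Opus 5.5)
Your outer argument matches the paper's. You write $L^{sp}_A$ as a colimit of the presheaves $\mal{F}(-\times_S A^{m})$, note that $Q\times_S A^{m}$ is again a cd square, and use that colimits commute with pullbacks in $Spt$. Your formal deduction of $L^{sp}_{A,nis}\simeq L^{sp}_A\circ L^{sp}_{nis}$ from the preservation statement is also correct.

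The step you flag as the main obstacle is a genuine gap, and \Cref{model for loc in stab} cannot fill it. That construction is not $\mathrm{cofib}\bigl(\mathrm{fib}(\mal{F}(-\times A)\to\mal{F})\to\mal{F}\bigr)$; there is no natural map of that shape, since composing with restriction along the section gives zero. It is the cofiber of the counit $A_\bullet\otimes\mal{F}^{A_\bullet}\to\mal{F}$. The internal hom $\mal{F}^{A_\bullet}$ is indeed $\mathrm{fib}(\mal{F}(-\times A)\to\mal{F})$. The tensor $A_\bullet\otimes G$ in $Spt(S)$, however, is the sectionwise smash product $Y\mapsto\Sigma^\infty(\mathrm{Hom}(Y,A),a)\wedge G(Y)$, a direct sum of copies of $G(Y)$ indexed by $\mathrm{Hom}(Y,A)\setminus\{a\}$. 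This is not built from the $G(-\times A^m)$ by finite limits and colimits. It is also not Nisnevich local even when $G$ is: the descent condition for $Y_1\sqcup Y_2$ already fails. So the terms $\Phi^m_A\mal{F}$ are in general not Nisnevich local, and no termwise argument can work. This is why, in \Cref{formula for a1 sn nis localization}, the paper interleaves $L^{sp}_{nis}$ with $\Phi_{\mathbb{A}^1}$ and then invokes the present lemma to remove it. The $\Phi$-model also requires a section of $A\to S$, which the lemma does not assume.

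Your other candidate, $\colim_{\Delta^{op}}\mal{F}((-)\times A^{\times\bullet})$, does not exist as stated for general $A$. The natural structure on the powers of $A$ is the \v{C}ech nerve of $A\to S$. That is a simplicial scheme, so the sections form a cosimplicial spectrum, whose natural invariant is a totalization, not a realization. A geometric realization needs a cosimplicial scheme. Its $A$-invariance needs an interval structure contracting $A$, as multiplication does for $\mathbb{A}^1$. Nothing of this kind is available for $A=\mathbb{A}^{n+1}\setminus 0$, the case the paper actually needs.

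The missing ingredient is a colimit formula for $L_A$ valid for arbitrary $A$. The paper imports it from Hoyois (Proposition 3.4 of the six-operations paper). There $L_A\mal{X}(Y)$ is the colimit of $\mal{X}(A^n\times_S Y)$ over the opposite of the subcategory of $(Sm_S)_{/Y}$ spanned by the objects $A^n\times_S Y$, using all maps over $Y$ rather than a chosen cosimplicial structure. Granting that formula, the remaining step is the base-change and commutation argument that you and the paper both give.
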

\begin{proof}
By [\cite{hoyois2017six}, Proposition 3.4], there is a formula for localization at the set $\{X\times_S A \to X\}_{X\in Sm_S}$, namely, $$ L_{A}\mal{X}(Y) =\colim_{(\delta^n_{A}\times_S Y)^{op}} \mal{X}(\delta^n_{A}\times_S Y)$$ where $\delta^\bullet_{A}\times Y$ is the sub-diagram of the slice category $({Sm_S})_{/Y}$ spanned by objects of the form $$\delta^n_{A}\times Y:=A^{ n}\times _S Y$$ with structure maps given by the projections to $Y$ and the colimit is computed in the stable $\infty$-category $Spt$. But since (arbitrary) colimits commute with finite limits and Nisnevich locality is checked by a pullback condition, it follows from the above formula that $L_A^{sp}$ preserves Nisnevich local spectra.
\end{proof}

It turns out that, in the unstable case, the formula for $L_A$ as above is sufficient to prove an unstable connectivity result, provided $A$ has a rational point (see [\cite{p1algtop}, Proposition 2.1.10]). However, the same kind of formula is insufficient to produce a stable connectivity result because stable $\pi_0$ fails to preserve colimits in general (see [\cite{p1algtop}, Remark 5.1.4] as well). We shall therefore need a different model for $L^{sp}_A$ to achieve the results of this section. This can be done more generally for an appropriate localization of presentable stable $\infty$-categories, which we state now.

The following is an abstract synthesis of an analogous result from [\cite{ayoub2020P1}].
\begin{con}\label[con]{model for loc in stab}
    Let $\mathscr{C}$ be a presentable, monoidal closed, stable $\infty$-category with a monoidally closed set of generators. Let $A$ be a compact object that is a retract of the monoidal unit $1$, with retraction $$r: A\leftrightarrows1: s.$$

    Choose a set of generators of $\mathscr{C}$ that is closed under monoidal products, contains the unit object and the object $ A$, and call it $\mathcal{G}$. Let $$\mathcal{S}_ A:=\{X\otimes A\xrightarrow[]{X\otimes r} X\}_{X\in \mathcal{G}}.$$ Consider the accessible localization of $\mathscr{C}$ at $\mathcal{S}_ A$. Since the monoidal structure is closed, the localization functor is monoidal.  
    
     Let $ A_\bullet$ be a summand of the above retract (we can do this by stability). Define $\Phi_ A$ as the cofiber of the evaluation $$ A_\bullet\otimes (-)^{ A_\bullet}\to 1_\mathscr{C}.$$ By construction, this yields a canonical cofiber sequence $$ A_\bullet\otimes (-)^{ A_\bullet}\to 1_\mathscr{C}\to \Phi_ A.$$ Let $\Phi^n_A$ denote the $n$-fold composition of $\Phi_A$, and correspondingly, let $$\Phi^\infty_ A:=\dcolim_{m}\Phi^n_ A$$ denote the sectionwise colimit of the tower $$1_\mathscr{C}\to \Phi_ A\to \Phi^2_ A\to \cdots.$$
\end{con}
\begin{prop}\label[prop]{model for stable bousfield localization}
    $\Phi^\infty_ A$ is a model for the monoidal localization with respect to the class $\mathcal{S}_A$.
\end{prop}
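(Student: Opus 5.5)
Write $L$ for the localization of $\mathscr{C}$ at $\mathcal{S}_A$, $\eta\colon 1_\mathscr{C}\to\Phi_A$ for the unit transformation, and $A=1\oplus A_\bullet$ for the splitting given by $r,s$, so that $X\otimes r\colon X\otimes A\to X$ is the projection onto $X$. Two facts drive the argument: $\Phi^\infty_A$ lands in $L$-local objects, and each map $M\to\Phi_A M$ is an $L$-equivalence. Given both, $M\to\Phi^\infty_A M$ is a transfinite composite (a sequential colimit) of $L$-equivalences with $L$-local target. Hence it is the $L$-localization, and $\Phi^\infty_A\simeq L$.

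\textbf{Step 1: identify the local objects.} An object $M$ is $\mathcal{S}_A$-local iff $\mathrm{Map}(X,M)\to \mathrm{Map}(X\otimes A,M)$ is an equivalence for all $X\in\mathcal{G}$. Since $\mathrm{Map}(X\otimes A,M)\simeq \mathrm{Map}(X,M)\oplus\mathrm{Map}(X\otimes A_\bullet,M)$, this says $\mathrm{Map}(X\otimes A_\bullet,M)\simeq \mathrm{Map}(X,M^{A_\bullet})=0$ for all generators $X$, i.e.\ $M^{A_\bullet}\simeq 0$. Since $\mathcal{G}$ is closed under $\otimes$, the local objects are closed under $(-)^{Y}$ for $Y\in\mathcal{G}$.

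\textbf{Step 2: $M\to\Phi_A M$ is an $L$-equivalence.} Its fiber is $A_\bullet\otimes M^{A_\bullet}$. Because $L$ is monoidal, $L(A_\bullet\otimes N)\simeq LA_\bullet\otimes LN$, and $LA_\bullet\simeq \mathrm{cofib}(L(A)\to L(1))\simeq 0$, since $1\otimes r\colon A\to 1$ lies in $\mathcal{S}_A$. So $L(\mathrm{fib})=0$.

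\textbf{Step 3: $\Phi^\infty_A M$ is local.} This is the main obstacle, and it is where compactness of $A$ enters. Because $A$ (hence $A_\bullet$) is compact, $(-)^{A_\bullet}$ commutes with filtered colimits. Therefore
\[(\Phi^\infty_A M)^{A_\bullet}\simeq \dcolim_n (\Phi^n_A M)^{A_\bullet},\]
and it suffices to show that each transition map $(\Phi^n_A M)^{A_\bullet}\to(\Phi^{n+1}_A M)^{A_\bullet}$ is null, or at least that the colimit vanishes. Apply $(-)^{A_\bullet}$ to the cofiber sequence $A_\bullet\otimes N^{A_\bullet}\to N\to\Phi_A N$. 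The map $N^{A_\bullet}\to(\Phi_AN)^{A_\bullet}$ becomes null once we show that $N^{A_\bullet}\to N^{A_\bullet}$ factors through the evaluation. Concretely, the identity of $N^{A_\bullet}$ should factor as
\[N^{A_\bullet}\to (A_\bullet\otimes N^{A_\bullet})^{A_\bullet}\to N^{A_\bullet},\]
where the second map is induced by evaluation. I would try to build the first map from the coevaluation and the retraction structure, in the style of Ayoub's $\mathbb{P}^1$ argument. The key input is the idempotent $e=s\circ r$ on $A$ and the diagonal-type map making $A_\bullet$ a nonunital coalgebra. Explicitly, I would use $A_\bullet\to A_\bullet\otimes A_\bullet$, obtained by projecting $A\to A\otimes A$. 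Here I am implicitly assuming, as in the intended case $A=\Sigma^\infty_+\mathrm{S}^{n-1}_\mathbb{A}$, that $A$ carries a compatible diagonal; if the abstract hypotheses do not provide this, I would add it.

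If this factorization exists, the transition maps are null after applying $(-)^{A_\bullet}$. Then $(\Phi^\infty_A M)^{A_\bullet}=0$, so $\Phi^\infty_A M$ is local by Step 1. Combining with Step 2, as outlined at the start, proves the proposition.
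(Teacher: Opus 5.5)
Your Steps 1 and 2 coincide with the paper's argument: locality means $M^{A_\bullet}\simeq 0$, and monoidality of $L$ kills $A_\bullet\otimes(-)^{A_\bullet}$. Your reduction in Step 3 is also the right one: it suffices that $(\mathrm{ev}_N)^{A_\bullet}\colon (A_\bullet\otimes N^{A_\bullet})^{A_\bullet}\to N^{A_\bullet}$ has a section.

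\textbf{The gap.} Step 3 is left conditional. You do not produce the section, and you propose to get it from a diagonal $A_\bullet\to A_\bullet\otimes A_\bullet$, which adds a hypothesis the construction does not contain. The diagonal is a red herring. You already name the right map, the coevaluation, i.e.\ the unit $\eta$ of the adjunction $A_\bullet\otimes(-)\dashv(-)^{A_\bullet}$ at $N^{A_\bullet}$. The triangle identity $G\epsilon\circ\eta G=\mathrm{id}_G$ for $G=(-)^{A_\bullet}$ says exactly that
\[
N^{A_\bullet}\xrightarrow{\ \eta\ }(A_\bullet\otimes N^{A_\bullet})^{A_\bullet}\xrightarrow{\ (\mathrm{ev}_N)^{A_\bullet}\ }N^{A_\bullet}
\]
is the identity. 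This holds for every $N$, with no extra structure on $A_\bullet$. It is precisely what the paper invokes (``by the triangular identity of adjunction, this last map splits'').

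\textbf{With this inserted, your proof closes.} Apply $(-)^{A_\bullet}$ to $A_\bullet\otimes N^{A_\bullet}\to N\to\Phi_AN$. Precomposing $N^{A_\bullet}\to(\Phi_AN)^{A_\bullet}$ with $(\mathrm{ev}_N)^{A_\bullet}\circ\eta=\mathrm{id}$ writes it as a composite through two consecutive maps of a fiber sequence, so it is null. Hence, if the tower is built from the units $\eta_{\Phi^n_AM}$, every transition map of $\big((\Phi^n_AM)^{A_\bullet}\big)_n$ is null. A sequential colimit of null maps vanishes (telescope, or Milnor sequence), so $(\Phi^\infty_AM)^{A_\bullet}\simeq 0$.

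\textbf{Comparison with the paper.} The paper instead uses $\Phi_A\Phi^\infty_A\simeq\Phi^\infty_A$ to conclude that the evaluation on $\Phi^\infty_AM$ is zero, then applies the triangle identity once to $N=\Phi^\infty_AM$. That step needs the unit $\Phi^\infty_AM\to\Phi_A\Phi^\infty_AM$ itself, not just some abstract equivalence, to be invertible. Your levelwise version needs only that each transition map is null.

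\textbf{Shared caveat.} Both routes use that $(-)^{A_\bullet}$ commutes with sequential colimits. This requires the generators $X$ and the objects $X\otimes A_\bullet$ to be compact, as they are for $\Sigma^\infty_+$ of smooth schemes. Compactness of $A_\bullet$ alone does not give it.
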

\begin{proof}
Since $\mathscr{C}$ is stable, a morphism is an $ A$-equivalence if and only if its (co)fiber is $ A$-contractible, i.e., has contractible $S_ A$-localization. Since $ A\to 1$ is an $ A$-equivalence by definition, the cofiber $ A_\bullet$ is $ A$-contractible, i.e., $L_{ A}{A_\bullet}$ is contractible. But since $L_{ A}$ is a monoidal localization, this implies that $ A_\bullet\otimes (-)^{ A_\bullet}$ is also $ A$-contractible. Thus, by the first line of this proof, we conclude that $1_\mathscr{C}\to \Phi_ A$ and hence $1_\mathscr{C}\to \Phi_ A^\infty$ is an $ A$-equivalence. 

It remains to argue that $\Phi_ A^\infty$ is $ A$-local (i.e., lands in the full subcategory of $ A$-local objects). First, observe that, due to stability, $\esc{X}\in \mathscr{C}$ is $ A$-local iff $\esc{X}^{ A_\bullet}$ is contractible. Now, since $$\Phi_ A\circ\Phi^\infty_ A\simeq \Phi_ A^\infty, $$ the map $$ A_\bullet\otimes (\Phi^\infty_ A)^{ A_\bullet}\to \Phi^\infty_ A$$ is the $0$ map, and hence so is the map $$( A_\bullet\otimes(\Phi^\infty_ A)^{ A_\bullet})^{ A_\bullet}\to (\Phi^\infty_ A)^{ A_\bullet} .$$ By the triangular identity of adjunction, this last map splits. Consequently, $(\Phi^\infty_ A)^{ A_\bullet}$ must be contractible. 
\end{proof}
Now let $\Phi_{\mathbb{A}^1}$ denote the above construction carried out for $A=\mathbb{A}^1$ carried out in $Spt(k)$. We thus obtain the following:
\begin{thm}\label{formula for a1 sn nis localization}
    Over an arbitrary base, there are equivalences of localization functors $$L_{mot}^{sp}\simeq {( L_{nis}^{sp}\Phi_{\mathbb{A}^1})}^\infty L^{sp}_{nis}\simeq\Phi_{\mathbb{A}^1}^\infty L_{nis}^{sp} ,$$  
    \begin{flalign*}
       f_{0/n+1}L^{sp}_{mot}\simeq L^{sp}_{\mathbb{A}^1,\mathrm{S}^{n}_\mathbb{A}, nis}&\simeq \dcolim_{m}\big(\Phi_{\mathrm{S}^{n}_\mathbb{A}}^\infty L_{mot}^{sp}\big )^m\\&\simeq \big (\dcolim_{m}\big(\Phi_{\mathrm{S}^{n}_\mathbb{A}}^\infty \Phi_{\mathbb{A}^1}^\infty\big )^m\big)\circ L_{nis}^{sp}\\
       &\simeq \dcolim_{m}\big( L_{mot}^{sp}\Phi_{\mathrm{S}^{n}_\mathbb{A}}\big )^m\circ L^{sp}_{mot}
    \end{flalign*}
\end{thm}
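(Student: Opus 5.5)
The plan is to derive every formula from \Cref{model for stable bousfield localization} together with two facts: $L^{sp}_A$ preserves Nisnevich locality (\Cref{LA preserves nis locality}), and $f_{0/n+1}$ is spherical localization (\Cref{slice quotient is spherical localization}, in the form \eqref{tate truncation is spherical localization}).

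\textbf{Step 1: applying the construction.} First I check that \Cref{model for loc in stab} applies in $\mathscr{C}=Spt^{S^1}(S)$ to $A=\Sigma^\infty_+\mathbb{A}^1$ and to $A=\Sigma^\infty_+\mathrm{S}^n_\mathbb{A}$. Both are compact. When $S$ has a section, $\mathbb{A}^1$ and $\mathrm{S}^n_\mathbb{A}$ have $S$-points (the origin, resp.\ $(1,0,\dots,0)$), so each is a retract of the unit $\Sigma^\infty_+S$. The generators $\Sigma^\infty_+X$, $X\in Sm_S$, are closed under products, and $\mathcal{S}_A$ is then exactly the set of projections. So \Cref{model for stable bousfield localization} identifies $\Phi_A^\infty$ with $L^{sp}_A$ on all of $Spt^{S^1}(S)$.

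\textbf{Step 2: the motivic formula.} Since $L^{sp}_{\mathbb{A}^1}$ preserves Nisnevich locality (\Cref{LA preserves nis locality}), for $\mal{X}$ Nisnevich local the object $\Phi^\infty_{\mathbb{A}^1}\mal{X}\simeq L^{sp}_{\mathbb{A}^1}\mal{X}$ is both Nisnevich local and $\mathbb{A}^1$-local. It is also motivically equivalent to $\mal{X}$. Hence $L^{sp}_{mot}\simeq \Phi^\infty_{\mathbb{A}^1}L^{sp}_{nis}$.

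For the middle expression $(L^{sp}_{nis}\Phi_{\mathbb{A}^1})^\infty L^{sp}_{nis}$, I note three things:
\begin{itemize}
\item Each map in the tower is a motivic equivalence.
\item Each stage is Nisnevich local.
\item By \Cref{slice localization is colimit closed}, Nisnevich local objects are closed under filtered colimits.
\end{itemize}
It then remains to show the colimit is $\mathbb{A}^1$-local. For this I rerun the argument of \Cref{model for stable bousfield localization}: the relation $L_{nis}\Phi\circ(\text{colim})\simeq \text{colim}$ forces $(-)^{A_\bullet}$ to vanish. This uses that $(-)^{A_\bullet}$ commutes with $L_{nis}$ on Nisnevich sheaves and with filtered colimits, by compactness of $A_\bullet$.

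\textbf{Step 3: the spherical formulas.} I iterate two localizations. Start from a motivic spectrum and apply $\Phi^\infty_{\mathrm{S}^n_\mathbb{A}}\simeq L^{sp}_{\mathrm{S}^n_\mathbb{A}}$. By \Cref{LA preserves nis locality} the output is Nisnevich local and $\mathrm{S}^n_\mathbb{A}$-local, but it may no longer be $\mathbb{A}^1$-local. So I alternate with $L^{sp}_{mot}$ and take the colimit over $m$.

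The colimit is Nisnevich local by \Cref{slice localization is colimit closed}. It is $\mathbb{A}^1$-local because it is a filtered colimit of $\mathbb{A}^1$-local objects and $(-)^{\mathbb{A}^1_\bullet}$ commutes with filtered colimits. By the same argument it is $\mathrm{S}^n_\mathbb{A}$-local. Every map in the tower is an $(\mathbb{A}^1,\mathrm{S}^n_\mathbb{A},nis)$-equivalence, so the colimit is $L^{sp}_{\mathbb{A}^1,\mathrm{S}^n_\mathbb{A},nis}$. By \eqref{tate truncation is spherical localization}, this is $f_{0/n+1}L^{sp}_{mot}$.

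Substituting Step 2 gives the version with $\Phi^\infty_{\mathbb{A}^1}$ and $L^{sp}_{nis}$. Replacing $\Phi^\infty_{\mathrm{S}^n_\mathbb{A}}$ by a single $\Phi_{\mathrm{S}^n_\mathbb{A}}$ in each round gives the last formula, since the interleaved colimit is cofinal with the doubly indexed one.

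\textbf{Main obstacle.} The hardest point is the retract hypothesis over an arbitrary qcqs base, together with commuting $(-)^{A_\bullet}$ past $L_{nis}$ and filtered colimits. I would handle the retract by using the unit section of $\mathbb{A}^1$ and the point $(1,0,\dots,0)$ of $\mathbb{A}^{n+1}\setminus0$, both defined over any $S$. I would handle the commutation by the cd-structure/compactness argument of \Cref{slice localization is colimit closed}.
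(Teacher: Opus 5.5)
Your route is the paper's in all but one place. You identify $\Phi^\infty_A\simeq L^{sp}_A$ via \Cref{model for stable bousfield localization}, pass to Nisnevich-local objects with \Cref{LA preserves nis locality}, invoke \eqref{tate truncation is spherical localization}, and obtain the alternating formulas from closure of local objects under filtered colimits and of local equivalences under colimits. Your hand re-run for $(L^{sp}_{nis}\Phi_{\mathbb{A}^1})^\infty$ amounts to the paper's observation that $L^{sp}_{nis}\Phi_{\mathbb{A}^1}$ is the $\Phi$-construction carried out inside $\mathcal{S}pt^{S^1}_{nis}(S)$. There you do not need $(-)^{A_\bullet}$ to commute with $L^{sp}_{nis}$. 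You only need that it preserves Nisnevich-local objects and commutes with filtered colimits.

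The justification that does not hold up is the one for the last formula. The single-step tower $Y\to L^{sp}_{mot}\Phi_{\mathrm{S}^n_\mathbb{A}}Y\to(L^{sp}_{mot}\Phi_{\mathrm{S}^n_\mathbb{A}})^2Y\to\cdots$ is not cofinal in the doubly indexed diagram computing $\dcolim_m(\Phi^\infty_{\mathrm{S}^n_\mathbb{A}}L^{sp}_{mot})^m$. The two diagrams have different stages. For example, nothing in the doubly indexed diagram maps the first-round stage $\Phi^2_{\mathrm{S}^n_\mathbb{A}}L^{sp}_{mot}Y$ to any stage of the form $(L^{sp}_{mot}\Phi_{\mathrm{S}^n_\mathbb{A}})^mL^{sp}_{mot}Y$. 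So cofinality cannot identify the two colimits.

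Argue locality of the single-step colimit directly instead, as you did in Step 2.
\begin{enumerate}
\item For motivic-local $Y$, apply $(-)^{A_\bullet}$ to the cofiber sequence $A_\bullet\otimes Y^{A_\bullet}\to Y\to\Phi_{\mathrm{S}^n_\mathbb{A}} Y$. The triangle identity makes the first map split surjective.
\item Hence $Y^{A_\bullet}\to(\Phi_{\mathrm{S}^n_\mathbb{A}} Y)^{A_\bullet}$ is zero, and so is $Y^{A_\bullet}\to(L^{sp}_{mot}\Phi_{\mathrm{S}^n_\mathbb{A}} Y)^{A_\bullet}$.
\item Since $A_\bullet$ is compact, the colimit of the single-step tower is therefore $\mathrm{S}^n_\mathbb{A}$-local.
\item It is motivic local because every stage is, and every transition map is an $(\mathbb{A}^1,\mathrm{S}^n_\mathbb{A},nis)$-equivalence.
\end{enumerate}

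This is what the paper does in one line. On $\mathcal{SH}^{S^1}(S)$, the functor $L^{sp}_{mot}\Phi_{\mathrm{S}^n_\mathbb{A}}$ is exactly the $\Phi$ of \Cref{model for loc in stab} formed inside $\mathcal{SH}^{S^1}(S)$ for $A=L^{sp}_{mot}\Sigma^\infty_+\mathrm{S}^n_\mathbb{A}$. This holds because $L^{sp}_{mot}$ is monoidal and internal homs into motivic-local objects are motivic local. So \Cref{model for stable bousfield localization} applies directly in $\mathcal{SH}^{S^1}(S)$.
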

\begin{proof}
For the first equivalence in the first formula, we note that $L_{nis}^{sp}\Phi_{\mathbb{A}^1}$ is the candidate for $\Phi$ in $Spt^{S^1}_{nis}(S)$ for the object $\mathbb{A}^1$. Since colimits in $Spt^{S^1}_{nis}(S)$ are computed in $Spt^{S^1}(S)$, ${( L_{nis}^{sp}\Phi_{\mathbb{A}^1})}^\infty$ is nothing but $\Phi^\infty_A$ in $Spt^{S^1}_{nis}(S)$ for $A=\Sigma^\infty_+\mathbb{A}^1$.

For the second formula in the first equivalence, note that by \Cref{model for stable bousfield localization} $\Phi^\infty_{\mathbb{A}^1}$ is a model for $\mathbb{A}^1$-localization in $Spt^{S^1}(S)$. So this is just a consequence of \Cref{LA preserves nis locality}.
    
We now turn to the formula for $f_{0/n+1}L^{sp}_{mot}$. The relative slices $f_{0/n+1}$ coincide with $(\mathrm{S}^{n}_\mathbb{A})$-localization, as stated in \Cref{tate truncation is spherical localization}. The formula $\dcolim_{m}\big(\Phi_{\mathrm{S}^{n}_\mathbb{A}}^\infty L_{mot}^{sp}\big )^m$ is standard: use the fact that $\mathcal{SH}^{S^1}(k),L_{\mathrm{S}^n}Spt_{}(S)$ are closed under (filtered) colimits in $Spt^{S^1}(S)$, while saturation classes of local equivalences are closed under (all) colimits. 

We need to argue that the third term provide simultaneous $(\mathrm{S}^{n}_\mathbb{A})$-localization and $\mathbb{A}^1$-localization functors in $\mathcal{S}pt_{nis}(S)$. In this, the term $\dcolim_{m}\big(\Phi_{\mathrm{S}^{n}_\mathbb{A}}^\infty \Phi_{\mathbb{A}^1}^\infty\big )^m$ is clearly a formula for simultenious  $(\mathrm{S}^{n}_\mathbb{A})$-localization and $\mathbb{A}^1$-localization in $Spt^{S^1}(S)$. The claim follows again from \Cref{LA preserves nis locality} (or more generally from the method of its proof).

The case of the last formula is similar to the argument of the first paragraph of this proof: $\dcolim_{m}\big( L_{mot}^{sp}\Phi_{\mathrm{S}^{n}_\mathbb{A}}\big )^m$ is nothing but  $\Phi^\infty_A$ in $\mathcal{SH}^{S^1}(S)$ for $A=\Sigma^\infty_+\mathrm{S}^n_\mathbb{A}$.
\end{proof}

\subsection{Slice connectivity}
In this section, we shall finally prove the connectivity property for $f_{0/n}$ over fields. Due to \Cref{tate truncation is spherical localization}, this is equivalent to studying the connectivity property of the motivic spherical nullification functor $L_{{\mathbb{A}^1}, \mathrm{S}^n_\mathbb{A}, nis}^{sp}$. Thus, the aim of this section is to prove the following version of the stable connectivity theorem.
\begin{recall*}{slice connectivity}
When the base is the spectrum of a field, the localization $L_{{\mathbb{A}^1}, \mathrm{S}^n_\mathbb{A}, nis}^{sp}$ preserves Nisnevich-local connectivity. It follows that $f_{0/n+1}$ preserves motivic connectivity. 
\end{recall*}
To do so, let us recall the following terminology from [\cite{ayoub2020P1}], where $k$ is an arbitrary field:
\begin{defn}
 A presheaf of spectra $\mal{A}\in Spt^{S^1}(k)$ is said to be:
 \begin{enumerate}
     \item $m$-pre-connected if, for all essentially smooth schemes $\mathcal{O}/k$, the spectra $\mal{A}(\mathcal{O})$ are $(m-dim\mathcal{O})$-connected.
     \item weakly $m$-connected (also called generically connected in \textup{[\cite{ayoub2020P1}]}) if for every $K\in \mathcal{F}_k$ the spectra $\mal{A}(K)$ is $m$-connected.
 \end{enumerate}
\end{defn}
For $X\in Sm_k$ with a rational point $x$, let us denote by $\mathbb{X}:=(X,x)$ the representable presheaf pointed by the global section $x$.
\begin{lem}\label[lem]{function space of pre-connected is -dim pre-conneceted}
     Suppose $X$ has Krull dimension at least $1$. Then for any $m$-pre-connected presheaf of spectra $\mal{A}$, the associated function spectra $\mal{A}^{\Sigma^\infty\mathbb{X}}$ is $(m-dimX)$-pre-connected.
\end{lem}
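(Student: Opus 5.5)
Since $\mathbb{X}=(X,x)$ is pointed by a rational point, $\Sigma^\infty\mathbb{X}$ is the cofiber of $\Sigma^\infty_+ S\xrightarrow{x}\Sigma^\infty_+X$, which splits. Hence for any essentially smooth $\mathcal{O}/k$ we get
\[
\mal{A}^{\Sigma^\infty\mathbb{X}}(\mathcal{O})\simeq \mathrm{fib}\big(\mal{A}(\mathcal{O}\times X)\xrightarrow{x^*}\mal{A}(\mathcal{O})\big).
\]
This fiber is a retract of $\mal{A}(\mathcal{O}\times X)$, because $x^*$ has the section induced by the projection $\mathcal{O}\times X\to\mathcal{O}$. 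So it suffices to show that $\mal{A}(\mathcal{O}\times X)$ is $(m-\dim X-\dim\mathcal{O})$-connected. The defining condition is then checked object by object: the fiber is a retract of a spectrum with the required connectivity.

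The key step is to evaluate the presheaf $\mal{A}$ on $\mathcal{O}\times_k X$. This is an essentially smooth $k$-scheme, since it is a localization of the smooth scheme $Y\times X$ when $\mathcal{O}$ is an essentially smooth localization of some $Y\in Sm_k$. Its Krull dimension is at most $\dim\mathcal{O}+\dim X$; for a product of an essentially finite type local scheme with a variety over a field this is the standard dimension formula. The $m$-pre-connectedness of $\mal{A}$ therefore shows that $\mal{A}(\mathcal{O}\times X)$ is $(m-\dim(\mathcal{O}\times X))$-connected, hence $(m-\dim\mathcal{O}-\dim X)$-connected, since connectivity is monotone in the bound. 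Passing to the fiber of a split surjection, or equivalently to a retract, preserves this connectivity. Thus $\mal{A}^{\Sigma^\infty\mathbb{X}}(\mathcal{O})$ is $((m-\dim X)-\dim\mathcal{O})$-connected, which is exactly $(m-\dim X)$-pre-connectedness.

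The main obstacle is making sure the definition of pre-connectedness really applies to $\mathcal{O}\times X$. If $\mathcal{O}$ is meant to range only over essentially smooth \emph{local} (or henselian local) schemes, then $\mathcal{O}\times X$ is not local. In that case I would reduce to its localizations, or use that $\mal{A}$ is evaluated on essentially smooth schemes via a colimit over smooth models, and then bound the dimensions of those localizations by $\dim\mathcal{O}+\dim X$. If the definition is read literally for all essentially smooth $\mathcal{O}$, the argument is immediate. Here the hypothesis $\dim X\ge1$ is harmless: it only ensures the shift is nontrivial and matches the intended usage for $X=\mathrm{S}^n_\mathbb{A}$ and $\mathbb{A}^1$. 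Finally, I would verify the dimension inequality $\dim(\mathcal{O}\times_k X)\le\dim\mathcal{O}+\dim X$ for essentially finite type schemes over a field, using transcendence degrees of generic points together with the fact that the fibers of the projection to $\mathcal{O}$ have dimension at most $\dim X$.
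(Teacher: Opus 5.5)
Your argument is correct and is essentially the paper's: both use the cofiber sequence $\Sigma^\infty_+\mathrm{Spec}\,k\to\Sigma^\infty_+X\to\Sigma^\infty\mathbb{X}$ together with $\mal{A}^{\Sigma^\infty_+X}(\mathcal{O})\simeq\mal{A}(\mathcal{O}\times X)$, where $\mathcal{O}\times X$ is essentially smooth of dimension at most $\dim\mathcal{O}+\dim X$ (the inequality you use is all that is needed). The only difference is how the fiber is handled: you use the splitting coming from $X\to\mathrm{Spec}\,k$ to realize it as a retract, whereas the paper runs the long exact sequence and needs $\dim X\geq 1$ to know that $\mal{A}$ is $(m-\dim X+1)$-pre-connected, so your variant also shows that this hypothesis is unnecessary.
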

\begin{proof}
First, note that the spectrum $$\mal{A}^{\Sigma_+^\infty X}$$ is $(m-dimX)$-preconnected, since the product of $X$ with any essentially smooth scheme of dimension $d$ is essentially smooth of dimension $d+dimX$. Moreover, the spectrum $\mal{A}$, being $m$-preconnected, is automatically $m-dimX+1$ ($\leq m$)-preconnected. Now, the cofiber sequence $$\Sigma^\infty_+ Speck\xrightarrow{x_+}\Sigma^\infty _+ X\to \Sigma^\infty \mathbb{X}$$ yields the fiber sequence $$\mal{A}^{\Sigma^\infty \mathbb{X}}\to \mal{A}^{\Sigma_+^\infty X}\to \mal{A}.$$ From this, it follows that $\mal{A}^{\Sigma^\infty\mathbb{X}}$ is $(m-dimX)$-preconnected (for example, via a long exact sequence of stable homotopy groups). 
\end{proof}
\begin{lem}\label[lem]{presheaf phiA preserve pre-connectivity}
    For a scheme $X$ of dimension $1$ with a rational point, the associated spectral localization $L_{X}^{sp}$ preserves $m$-pre-connectivity. 
\end{lem}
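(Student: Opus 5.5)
We use the model of \Cref{model for stable bousfield localization}: $L_X^{sp}\simeq \Phi_X^\infty=\dcolim_m \Phi_X^m$, computed in $Spt^{S^1}(k)$ with $A=\Sigma^\infty_+X$. The unit $1=\Sigma^\infty_+\mathrm{Spec}\,k$ and $A$ form a retract via the rational point $x$ and the structure map. The complementary summand is $A_\bullet=\Sigma^\infty\mathbb{X}$, so $(-)^{A_\bullet}=(-)^{\Sigma^\infty\mathbb{X}}$.

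Evaluation at an essentially smooth $\mathcal{O}$ is computed sectionwise, and filtered colimits of $k$-connected spectra are $k$-connected. Hence $m$-pre-connectivity is stable under sequential colimits in $Spt^{S^1}(k)$. It therefore suffices to show that the single functor $\Phi_X$ preserves $m$-pre-connectivity, and then induct on the number of iterates.

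For this, take the cofiber sequence
$$\Sigma^\infty\mathbb{X}\otimes \mal{A}^{\Sigma^\infty\mathbb{X}}\to \mal{A}\to \Phi_X\mal{A}.$$
Since $\mathscr{C}$ is stable, a cofiber of a map between $m$-pre-connected objects is again $m$-pre-connected. This is checked sectionwise on the long exact sequence: the cofiber of a map of $k$-connected spectra is $k$-connected. So it is enough to show that $\Sigma^\infty\mathbb{X}\otimes \mal{A}^{\Sigma^\infty\mathbb{X}}$ is $m$-pre-connected. By \Cref{function space of pre-connected is -dim pre-conneceted}, with $\dim X=1$, the object $\mal{B}:=\mal{A}^{\Sigma^\infty\mathbb{X}}$ is $(m-1)$-pre-connected. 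The task is thus the key estimate: if $\mal{B}$ is $(m-1)$-pre-connected, then $\Sigma^\infty\mathbb{X}\otimes\mal{B}$ is $m$-pre-connected. In other words, smashing with a pointed curve regains one degree of pre-connectivity.

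To prove the estimate, we first note that the presheaf tensor is Day convolution, i.e.\ the left Kan extension of $(U,V)\mapsto U\times V$. Thus $\Sigma^\infty_+X\otimes\mal{B}$ is the left Kan extension of $\mal{B}$ along $-\times X$, and $\Sigma^\infty\mathbb{X}\otimes\mal{B}$ is the cofiber of $\mal{B}\to \Sigma^\infty_+X\otimes\mal{B}$ induced by $x$. Evaluated at an essentially smooth local $\mathcal{O}$ of dimension $d$, the first term is a colimit over the comma category of $\mathcal{O}\to U\times X$ of the values $\mal{B}(U)$. We then attempt the following dimension count, which is the analogue of Ayoub's argument in the $\mathbb{P}^1$-case. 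Any map $\mathcal{O}\to U\times X$ factors through a $U$ for which the composite $\mathcal{O}\to X$ is either constant or dominant onto the curve. In the dominant case one can take $U$ of dimension at most $d-1$, by passing to the fiber over the generic point of $X$, or more carefully to a henselization. Then $\mal{B}(U)$ is $(m-1-(d-1))=(m-d)$-connected. In the constant-at-a-rational-point case, the contribution is cancelled by the cofiber against $\mal{B}$, up to the other rational points, which again factor through $\mathrm{Spec}\,k\to X$ and are connected by $\mathbb{A}^1$-chains, or handled similarly. Either way, every surviving term in the colimit is $(m-d)$-connected. Since colimits of spectra preserve connectivity, $(\Sigma^\infty\mathbb{X}\otimes\mal{B})(\mathcal{O})$ is $(m-d)$-connected, as required.

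This comma-category analysis is the main obstacle: controlling the left Kan extension along $-\times X$ on essentially smooth local schemes, and checking that the $x$-summand exactly absorbs the dimension-preserving part of the colimit. If that direct approach is too delicate, we would instead use the cofinal description of \Cref{LA preserves nis locality}, i.e.\ the formula $L_X\mal{A}(Y)=\colim\mal{A}(X^n\times Y)$, only for the $\Sigma^\infty_+X$ term. We would combine this with the observation that evaluation on fields and on dimension-$d$ local schemes commutes with the relevant colimits.
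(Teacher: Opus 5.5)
Your outline agrees with the paper up to the cofiber sequence: the model $L^{sp}_X\simeq\Phi_X^\infty$, the reduction to a single $\Phi_X$, and $(m-1)$-pre-connectivity of $\mal{B}=\mal{A}^{\Sigma^\infty\mathbb{X}}$ from the function-spectrum lemma. After that you lose a degree. You only use that a cofiber of a map between $m$-pre-connected objects is $m$-pre-connected, and so you demand that $\Sigma^\infty\mathbb{X}\otimes\mal{B}$ be $m$-pre-connected. That ``key estimate'' is false, and the proposed dimension count cannot work.

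The tensor product on $\mathcal{P}(Sm_k,Spt)$ is Day convolution for the cartesian product on $Sm_k$, which is just the sectionwise smash product. In your comma-category description, the colimit collapses onto $U=\mathcal{O}$, one copy for each $f\in X(\mathcal{O})$. Hence
\[(\Sigma^\infty\mathbb{X}\otimes\mal{B})(\mathcal{O})\simeq\bigoplus_{X(\mathcal{O})\smallsetminus\{x\}}\mal{B}(\mathcal{O}),\]
with no dimension drop, and nothing is cancelled by the $x$-summand. The appeal to $\mathbb{A}^1$-chains is also unavailable, since nothing in $Spt^{S^1}(k)$ here is $\mathbb{A}^1$-invariant.

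Here is a concrete counterexample, even for $\mal{B}$ of the special form $\mal{A}^{\Sigma^\infty\mathbb{X}}$. Take $X=\mathbb{P}^1$, $m=0$, and let $\mal{A}$ be the Eilenberg--MacLane presheaf of $\mathrm{Pic}$ in degree $0$. It is $0$-pre-connected because $\mathrm{Pic}$ vanishes on zero-dimensional essentially smooth schemes. Yet $\mal{B}(k)\simeq\mathrm{fib}(H\mathrm{Pic}(\mathbb{P}^1)\to H\mathrm{Pic}(k))\simeq H\mathbb{Z}$, so $\pi_0$ of $(\Sigma^\infty\mathbb{X}\otimes\mal{B})(k)$ is $\bigoplus_{\mathbb{P}^1(k)\smallsetminus\{x\}}\mathbb{Z}\neq 0$.

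The missing observation is that the stronger estimate was never needed. Let $\mathcal{O}$ have dimension $d$. The sequence $\pi_i\mal{A}(\mathcal{O})\to\pi_i\Phi_X\mal{A}(\mathcal{O})\to\pi_{i-1}(\Sigma^\infty\mathbb{X}\otimes\mal{B})(\mathcal{O})$ is exact. Both outer terms vanish for $i\le m-d$ as soon as $\Sigma^\infty\mathbb{X}\otimes\mal{B}$ is merely $(m-1)$-pre-connected. In other words, the cofiber of a map from an $(m-1)$-pre-connected presheaf to an $m$-pre-connected one is $m$-pre-connected.

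The required $(m-1)$-pre-connectivity is immediate from the sectionwise formula above. Equivalently, $\Sigma^\infty\mathbb{X}$ is sectionwise $(-1)$-connected, so smashing with it does not lower connectivity. This is exactly the paper's argument, and with it the comma-category analysis and your fallback via $\colim\mal{A}(X^n\times Y)$ become unnecessary.
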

\begin{proof}
Note that the existence of a rational section implies that we have a model of $X$ localization given by $L_X\simeq \Phi_X^\infty$ (\Cref{model for stable bousfield localization}). Since connectivity is stable under filtered colimits, it suffices to show that $\Phi_X$ takes $m$-preconnected spectra to $m$-preconnected spectra. 

So let $\mal{A}$ be a $m$-preconnected $k$-spectra. From the above lemma we know that $\mal{A}^{\Sigma^\infty \mathbb{X}}$ is $(m-1)$ pre-connected. Since $\Sigma^\infty \mathbb{X}$ is sectionwise $-1$ connected it follows at once that $$\Sigma^\infty\mathbb{X}\wedge \mal{A}^{\Sigma^\infty \mathbb{X}}$$ is $(m-1)-1+1=m-1$ pre-connected. Because $\Phi_X$ is defined as the cofiber of $$\Sigma^\infty\mathbb{X}\wedge \mal{A}^{\Sigma^\infty \mathbb{X}}\to \mal{A}$$ the claim follows again by the long exact sequence of presheaves of homotopy groups. 
\end{proof}

\begin{lem}\label[lem]{nis conn to pre conn}
   For a presheaf of spectra $\mal{A}$, the nisnevich localization $L_{nis}^{sp}\mal{A}$ is $m$-pre-connected under either of the following hypotheses:
   \begin{enumerate}
       \item   $\mal{A}$ is nisnevich $m$-connected
       \item  $\mal{A}$ is $m$ pre-connected.
   \end{enumerate}
\end{lem}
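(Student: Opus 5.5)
The natural approach is to compute the sections of $L_{nis}^{sp}\mal{A}$ on an essentially smooth $k$-scheme $\mathcal{O}$ of dimension $d$ through the Nisnevich descent spectral sequence:
$$E_2^{p,q}=H^p_{nis}(\mathcal{O},\pi^{nis}_q\mal{A})\Rightarrow \pi_{q-p}\big(L_{nis}^{sp}\mal{A}(\mathcal{O})\big).$$
Nisnevich cohomological dimension of $\mathcal{O}$ is at most $d$, so $E_2^{p,q}$ vanishes for $p>d$. The spectral sequence converges for spectra over a space of finite Nisnevich cohomological dimension. This holds for essentially smooth $\mathcal{O}$, taking limits of the finite-dimensional case; we may also use the known fact that $L_{nis}^{sp}$ is computed by a convergent Postnikov-tower argument, since Nisnevich $\infty$-topoi of finite Krull-dimensional noetherian schemes are hypercomplete and have homotopy dimension $\le d$.

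\textbf{Hypothesis (1).} Nisnevich $m$-connectivity means the sheaves $\pi_q^{nis}\mal{A}$ vanish for $q\le m$. A nonzero contribution to $\pi_j$ with $j=q-p$ therefore needs $q\ge m+1$ and $p\le d$, which forces $j\ge m+1-d$. Hence $L_{nis}^{sp}\mal{A}(\mathcal{O})$ is $(m-d)$-connected, i.e. $L_{nis}^{sp}\mal{A}$ is $m$-pre-connected. Equivalently, in the language of $t$-structures: $L_{nis}^{sp}\mal{A}$ lies in $\tau^{nis}_{\ge m+1}$, and on an $\infty$-topos of homotopy dimension $\le d$ global sections carry $(m+1)$-connective objects to $(m+1-d)$-connective spectra.

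\textbf{Hypothesis (2).} Here I would reduce to (1). The key observation is that $m$-pre-connectivity implies Nisnevich $m$-connectivity. The stalks of $\pi_q^{nis}\mal{A}$ at points of $X\in Sm_k$ are $\pi_q\mal{A}(\mathcal{O}^h_{X,x})$. These henselian local schemes are essentially smooth, but of dimension $\dim\mathcal{O}_{X,x}$, so pre-connectivity only gives $(m-\dim)$-connectivity of the stalks. That is not enough for a naive reduction.

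So the reduction must be refined. The plan is to use the Gersten/coniveau-type filtration implicit in the Nisnevich descent, i.e. Morel's and Ayoub's argument. Consider the Brown–Gersten hypercover spectral sequence, or equivalently induct on $\dim\mathcal{O}$ using Nisnevich cd-squares $Q$. For $\mathcal{O}$ of dimension $d$ one has the coniveau spectral sequence with
$$E_1^{p,*}=\bigoplus_{x\in \mathcal{O}^{(p)}} \pi_*\big(\mal{A}_x(\mathcal{O}^h_x)\text{ with supports}\big).$$
Each term with supports is a fiber of $\mal{A}(\mathcal{O}^h_x)\to\mal{A}(\mathcal{O}^h_x\setminus x)$. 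Both sides are essentially smooth: the first of dimension $p$, the second of dimension $p-1$ (or a limit thereof). By pre-connectivity the fiber is $(m-p)$-connected, and shifting by $p$ in the coniveau degree still yields $(m-d)$-connectivity overall. Running the same cd-square induction with $L_{nis}^{sp}\mal{A}$, whose values on henselian local schemes agree with those of $\mal{A}$, gives the claim.

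The main obstacle is making this coniveau argument rigorous. In particular, one must check that the sections of $L_{nis}^{sp}\mal{A}$ on the punctured henselizations $\mathcal{O}^h_x\setminus x$ are controlled. I would handle this by induction on dimension: $\mathcal{O}^h_x\setminus x$ has smaller dimension, so the inductive hypothesis applies, and the cd-square pullback $\mathcal{O}\leftarrow$ (neighbourhood of $x$, $\mathcal{O}\setminus x$) propagates the connectivity bound.
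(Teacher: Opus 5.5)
Your argument for hypothesis (1) is the same as the paper's: the Nisnevich descent spectral sequence together with the bound $\dim X$ on Nisnevich cohomological dimension.

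For hypothesis (2) you take a genuinely different route. The paper first reduces to $m=-1$ and keeps the same spectral sequence $H^a_{nis}(X,\pi^{sp,nis}_{-b}\mal{A})\Rightarrow\pi_{-a-b}L^{sp}_{nis}\mal{A}(X)$. For $b>0$ it only needs $H^a_{nis}(X,\pi^{sp,nis}_{-b}\mal{A})=0$ when $a>\dim X-b$. Pre-connectivity makes the presheaf $\pi_{-b}\mal{A}$ vanish on local schemes of dimension $<b$, so this vanishing is [\cite{ayoub2020P1}, Lemma 4.9]: a sheaf supported in codimension $\geq b$ has no cohomology above degree $\dim X-b$. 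So the stalkwise bound you set aside does suffice. It does not reduce (2) to (1), but combined with this support lemma it gives the result directly at the level of individual homotopy sheaves, with no induction.

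Your coniveau induction is also correct. The $p$-th graded piece of $L^{sp}_{nis}\mal{A}(\mathcal{O})$ is $\bigoplus_{x\in\mathcal{O}^{(p)}}\mathrm{fib}\big(\mal{A}(\mathcal{O}^h_x)\to L^{sp}_{nis}\mal{A}(\mathcal{O}^h_x\setminus x)\big)$. The source is $(m-p)$-connected by pre-connectivity, the target is $(m-p+1)$-connected by induction, and $m-p\geq m-d$. In effect this reproves the support lemma for spectra.

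The cost is foundational care that the paper avoids:
\begin{enumerate}
\item Your displayed $E_1$-term and the sentence after it must be about $L^{sp}_{nis}\mal{A}$, not $\mal{A}$. Splitting into points and passing to henselizations both use Zariski and Nisnevich excision.
\item There is no shift by $p$ in total degree.
\item The induction must range over a class of noetherian schemes closed under punctured henselizations, for instance cofiltered limits of smooth $k$-schemes along affine \'etale maps. On this class, $L^{sp}_{nis}\mal{A}$ is extended by continuity, and pre-connectivity of $\mal{A}$ on the henselizations comes from the essentially smooth case by filtered colimits.
\end{enumerate}
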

\begin{proof}
 It suffices to consider $m=-1$. We must show that if $X$ is an essentially smooth scheme, then $$\pi_l^s(L_{nis}^{sp}\mal{A})(X)=0\text{ for all }l<-dim X.$$ Recall that there is a strongly convergent nisnevich descent spectral sequence associated to $L_{nis}^{sp}\mal{A}$:
 $$H_{nis}^a(X,\pi_{-b}^{sp,nis}L_{nis}^{sp}\mal{A})\implies \pi_{-b-a}^{sp}L_{nis}^{sp}\mal{A}$$
    from which it suffices to show that $$H_{nis}^a(X,\pi_{-b}^{sp,nis}L_{nis}^{sp}\mal{A})\simeq H_{nis}^a(X,\pi_{-b}^{sp,nis}\mal{A}) \cong H_{nis}^a(X,\pi_{-b}^{sp}\mal{A})=0$$ for all $l=-b-a< -dim X$ i.e. $-b+dimX<a$. Since the Nisnevich cohomological dimension of $X$ is bounded by $dimX$ [\cite{bachmann2024strongly}, Theorem 1.14], the demand is met for $b\leq 0$. 
    
    On the other hand, if $b> 0$ then $-b+dimX<a$ implies that $a<dimX$. In this situation, the following cases can occur: 
    \begin{enumerate}
        \item \textbf{Case 1:} If $\mal{A}$ is Nisnevich $(-1)$-connected, then for all $b>0$ we have $\pi_{-b}^{sp,nis}\mal{A}=0$. Thus, the requirement is satisfied.
        \item {\textbf{Case 2}}: If $\mal{A}$ is $(-1)$-preconnected, let $F=\pi_{-b}^{sp}\mal{A}$. Let $Y/X$ be etale, and let $y\in Y$ be a point such that $cod_Y(Y)<b$, so that $dim(Y_y)=cod_Y(y)<b$. Because $-b<-dim(Y_y)$, we know from the $(-1)$-preconnectivity of $\mal{A}$ that $F(Y_y)=0$. As $X$ is noetherian, by [\cite{ayoub2020P1}, Lemma 4.9] it follows that $$H_{nis}^a(X,F)=0\text{ for all }a>dimX-b $$ and we are done in this case as well, since by definition $$H_{nis}^a(X,\pi_{-b}^{sp,nis}\mal{A})\cong H_{nis}^a(X,F).$$
    \end{enumerate}
\end{proof}
\begin{cor}\label[cor]{nis to pre to weak}
    Let $\mal{A}$ be a pre-sheaf of spectras and consider the statements:
    \begin{enumerate}
        \item $\mal{A}$ is nisnevich $m$ connected
        \item $\mal{A} $ is $m$ pre connected.
        \item $\mal{A}$ is weakly $m$-connected.
    \end{enumerate}
    Then (2)$\implies$(3). If $\mal{A}$ is Nisnevich local, then (1)$\implies$(2).
\end{cor}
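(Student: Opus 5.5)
The two implications are independent, and each is essentially a direct reading of definitions or of \Cref{nis conn to pre conn}.

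For (2)$\implies$(3), I will just unwind the definitions. A field $K\in\mathcal{F}_k$, viewed as $\mathrm{Spec}\,K$, is an essentially smooth $k$-scheme of Krull dimension $0$. When $K$ is finitely generated over $k$ (the case relevant to $\mathcal{F}_k$), it is the generic point of a smooth $k$-variety. In this case $\mathrm{Spec}\,K$ is a cofiltered limit of smooth affine schemes with étale (indeed open) transition maps, which is exactly the situation covered by the phrase ``essentially smooth'' in the definition of pre-connectivity. So if $\mal{A}$ is $m$-pre-connected, evaluating at $\mathcal{O}=\mathrm{Spec}\,K$ gives that $\mal{A}(K)$ is $(m-0)$-connected. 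That is precisely weak $m$-connectivity. The only point to check is that the paper's convention evaluates presheaves on essentially smooth schemes by the colimit over smooth models. This is the convention already implicitly used in \Cref{nis conn to pre conn}, Case 2, when $F(Y_y)$ is considered.

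For (1)$\implies$(2) under the hypothesis that $\mal{A}$ is Nisnevich local, I will invoke \Cref{nis conn to pre conn}(1). It says that $L^{sp}_{nis}\mal{A}$ is $m$-pre-connected whenever $\mal{A}$ is Nisnevich $m$-connected. Since $\mal{A}$ is Nisnevich local, the unit $\mal{A}\to L^{sp}_{nis}\mal{A}$ is an equivalence, so $\mal{A}$ itself is $m$-pre-connected.

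There is no real obstacle; the substantive work, namely the descent spectral sequence and the cohomological dimension bound, was already done in \Cref{nis conn to pre conn}. The only mildly delicate step is the bookkeeping in (2)$\implies$(3): making sure fields count among the essentially smooth test schemes and have dimension $0$. If needed, I would spell this out by writing $K=\colim U$ over open neighbourhoods $U$ of the generic point of a smooth model, and using that $\mal{A}(K):=\colim \mal{A}(U)$.
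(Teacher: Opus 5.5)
Your proposal is correct and follows the paper's own argument. You obtain (2)$\implies$(3) from the definition by taking $\mathcal{O}=\mathrm{Spec}\,K$ of dimension $0$, and (1)$\implies$(2) by applying \Cref{nis conn to pre conn}(1) to $\mal{A}\simeq L^{sp}_{nis}\mal{A}$.

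One small correction: a finitely generated extension $K/k$ is the generic point of a smooth $k$-variety only when it is separable, which is automatic for perfect $k$. Over an imperfect field, you should therefore take $\mathcal{F}_k$ to consist of the essentially smooth extensions at which $\mal{A}(K)$ is defined; this is the convention the paper uses implicitly.
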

\begin{proof}
     (2)$\implies$(3) follows from the definition. Since Nisnevich localization preserves Nisnevich connectedness (trivially, by definition), (1)$\implies$(2) follows from the lemma above \Cref{nis conn to pre conn}.
\end{proof}

\begin{cor}\label[cor]{spectra A nis localization preserves pre connectivity}
  For a scheme $A$ of dimension $1$ with a rational point, the localization $L_{A}^{nis}$ preserves $m$-pre-connectivity. Consequently, $L_{A}^{nis}$ maps nisnevich $m$-connected spectra to $m$-pre-connected spectra. In particular, $L_{mot}^{sp}$ preserves $m$-pre-connectivity and maps nisnevich $m$-connected spectra to $m$-pre-connected spectra. 
\end{cor}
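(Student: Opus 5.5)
The plan is to combine \Cref{presheaf phiA preserve pre-connectivity} with \Cref{nis conn to pre conn} through the factorization of \Cref{LA preserves nis locality}. By that lemma, $L_A^{nis}:=L^{sp}_{A,nis}\simeq L_A^{sp}\circ L_{nis}^{sp}$.

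Let $\mal{A}$ be $m$-pre-connected. First, by \Cref{nis conn to pre conn}(2), $L_{nis}^{sp}\mal{A}$ is again $m$-pre-connected. Second, since $A$ has dimension $1$ and a rational point, \Cref{presheaf phiA preserve pre-connectivity} shows that $L_A^{sp}$ preserves $m$-pre-connectivity. Hence $L_A^{sp}L_{nis}^{sp}\mal{A}\simeq L^{nis}_A\mal{A}$ is $m$-pre-connected.

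For the second assertion, let $\mal{A}$ be Nisnevich $m$-connected. Then \Cref{nis conn to pre conn}(1) makes $L_{nis}^{sp}\mal{A}$ $m$-pre-connected, and applying $L_A^{sp}$ keeps it $m$-pre-connected, as in the first step.

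For the motivic case, take $A=\mathbb{A}^1$, which has dimension $1$ and the rational point $0$. By \Cref{formula for a1 sn nis localization}, $L^{sp}_{mot}\simeq\Phi^\infty_{\mathbb{A}^1}L^{sp}_{nis}$, and \Cref{model for stable bousfield localization} identifies $\Phi^\infty_{\mathbb{A}^1}$ with $L^{sp}_{\mathbb{A}^1}$. So $L^{sp}_{mot}=L^{nis}_{\mathbb{A}^1}$, and both claims follow from the general case.

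The only point needing care is that $L_A^{sp}$ is applied to a Nisnevich-local input yet must produce the combined localization. This is exactly the content of \Cref{LA preserves nis locality}: $L_A^{sp}$ preserves Nisnevich-local objects, and $L_{nis}^{sp}$ inverts only Nisnevich equivalences, which are also $(A,nis)$-equivalences. Once that is checked, the corollary is a formal composition of the preceding lemmas, so I expect no real obstacle here.
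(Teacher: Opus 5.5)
Your proposal is correct and matches the paper's proof. Both arguments compose \Cref{nis conn to pre conn} and \Cref{presheaf phiA preserve pre-connectivity} through the factorization $L^{nis}_A\simeq L_A^{sp}L^{sp}_{nis}$ from \Cref{LA preserves nis locality}, and both handle the Nisnevich $m$-connected case with part (1) of \Cref{nis conn to pre conn}; your explicit specialization to $A=\mathbb{A}^1$ simply writes out the ``in particular'' that the paper leaves implicit.
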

\begin{proof}
    Because $L_{nis}^{sp} $ (by \Cref{nis conn to pre conn}) and $L_A$ (by \Cref{presheaf phiA preserve pre-connectivity}) both preserve $m$ pre-connectivity, the claim follows from the formulae $$L^{nis}_A\simeq L_AL_{nis}^{sp}.$$

    Since $L^{nis}_A\simeq L^{nis}_AL_{nis}^{sp}$, the other claim follows from the first one and \Cref{nis conn to pre conn}.
\end{proof}
\begin{rem}
    This generalizes Morel's $\mathbb{A}^1$-connectivity theorem and is used to establish the $\mathbb{P}^1$-local stable connectivity theorem in another work of the author [\cite{p1algtop}].
\end{rem}
 \begin{lem}[Pre-connectivity theorem of slices]\label[lem]{preservation of pre n conn for spherical loca}
Over any field the spherical motivic localization $L_{{\mathbb{A}^1}, \mathrm{S}^n_\mathbb{A}, nis}^{sp}$ takes $m$-pre-connected spectras to $m$-pre-connected spectras.
\end{lem}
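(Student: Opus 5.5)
Use the model from \Cref{formula for a1 sn nis localization}:
$$L^{sp}_{\mathbb{A}^1,\mathrm{S}^n_\mathbb{A},nis}\simeq \Big(\dcolim_{m}\big(\Phi_{\mathrm{S}^{n}_\mathbb{A}}^\infty \Phi_{\mathbb{A}^1}^\infty\big )^m\Big)\circ L_{nis}^{sp}.$$
Connectivity of spectra, hence $m$-pre-connectivity (a sectionwise condition on essentially smooth $\mathcal{O}$, with sections of presheaves on essentially smooth schemes computed as filtered colimits), is stable under filtered colimits. So it suffices to show that $L^{sp}_{nis}$, $\Phi_{\mathbb{A}^1}$ and $\Phi_{\mathrm{S}^n_\mathbb{A}}$ each preserve $m$-pre-connectivity. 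For $L^{sp}_{nis}$ this is \Cref{nis conn to pre conn}(2). For $\Phi_{\mathbb{A}^1}$ it is \Cref{presheaf phiA preserve pre-connectivity}, since $\mathbb{A}^1$ has dimension $1$ and a rational point; alternatively use \Cref{spectra A nis localization preserves pre connectivity}.

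The case $n=0$ is easy. Here $\mathrm{S}^0_\mathbb{A}=\mathbb{A}^1\setminus 0=\mathbb{G}_m$ has dimension $1$ and the rational point $1$, so \Cref{presheaf phiA preserve pre-connectivity} applies verbatim.

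The main obstacle is $n\geq 1$. The sphere $\mathrm{S}^n_\mathbb{A}$ has dimension $n+1$, and the naive estimate of \Cref{function space of pre-connected is -dim pre-conneceted} only gives that $\mal{A}^{\Sigma^\infty\mathbb{X}}$ is $(m-n-1)$-pre-connected. Smashing with $\Sigma^\infty\mathbb{X}$, which is only sectionwise $(-1)$-connected, then loses $n+1$ degrees. The fix is to work in $\mathcal{SH}^{S^1}$, i.e.\ to use the last formula $\dcolim_m(L^{sp}_{mot}\Phi_{\mathrm{S}^n_\mathbb{A}})^m$ restricted to motivic spectra. There, by the proof of \Cref{slice quotient is spherical localization}, $\Sigma^\infty(\mathrm{S}^n_\mathbb{A},1)\simeq \Sigma^{-1}T^{n+1}\simeq \Sigma^{n}\mathbb{G}^{\wedge n+1}$, with $\mathbb{G}=(\mathbb{G}_m,1)$.

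I would proceed in two steps, for an $m$-pre-connected motivic spectrum $\mal{A}$.

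\textbf{Step 1.} Iterating \Cref{function space of pre-connected is -dim pre-conneceted} with $X=\mathbb{G}_m$ (dimension $1$) $n+1$ times shows that $\mal{A}^{\mathbb{G}^{\wedge n+1}}$ is $(m-n-1)$-pre-connected. Hence $\mal{A}^{\Sigma^n\mathbb{G}^{\wedge n+1}}=\Omega^n\mal{A}^{\mathbb{G}^{\wedge n+1}}$ is $(m-2n-1)$-pre-connected.

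\textbf{Step 2.} Smashing with $\Sigma^n\mathbb{G}^{\wedge n+1}$ should recover the lost degrees. The $\Sigma^n$ gives $+n$. Each $\mathbb{G}$-smash should give $+1$ in pre-connectivity after $L^{sp}_{mot}$, reflecting Morel's computation that $\mathbb{G}_m$-contraction lowers dimension and that $\mathbb{G}\wedge-$ raises pre-connectivity by one on motivic spectra. The result is $(m-2n-1)+n+(n+1)=m$. The key sublemma is therefore: for $\mal{B}$ $k$-pre-connected, $L^{sp}_{mot}(\Sigma^\infty\mathbb{G}\wedge\mal{B})$ is $(k+1)$-pre-connected. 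I would try to prove it by writing $\Sigma^\infty\mathbb{G}\wedge\mal{B}$ as the cofiber of $\mal{B}\to \mal{B}\otimes\Sigma^\infty_+\mathbb{G}_m$. On a local scheme $\mathcal{O}$ of dimension $d$, the sections of $\mal{B}\otimes\Sigma^\infty_+\mathbb{G}_m$ can be controlled through $\mathcal{O}\times\mathbb{G}_m$, which has dimension $d+1$. A Gersten/Nisnevich-descent argument as in \Cref{nis conn to pre conn} (Case 2, via [\cite{ayoub2020P1}, Lemma 4.9]) should then give the shift. Granting this, $\Phi_{\mathrm{S}^n_\mathbb{A}}\mal{A}$ is the cofiber of a map from an $m$-pre-connected object to $\mal{A}$, hence $m$-pre-connected by the long exact sequence. $L^{sp}_{mot}$ preserves this by \Cref{spectra A nis localization preserves pre connectivity}, and passing to the colimit finishes the proof. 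If the sublemma proves too hard, a fallback is to use the motivic cofiber sequence $\mathbb{A}^n\setminus0\to\mathbb{A}^{n+1}\setminus 0\to \cdots$ to induct on $n$, reducing to the $\mathbb{G}_m$ case already handled.
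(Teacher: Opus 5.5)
Your reduction is the same as the paper's. You use $\dcolim_m(L^{sp}_{mot}\Phi_{\mathrm{S}^n_\mathbb{A}})^m\circ L^{sp}_{mot}$, and for motivic $m$-pre-connected $\mal{A}$ you need the fiber $L^{sp}_{mot}\big(\Sigma^\infty(\mathrm{S}^n_\mathbb{A},1)\wedge\mal{A}^{\mathrm{S}^n_\mathbb{A}}\big)$ of $\mal{A}\to L^{sp}_{mot}\Phi_{\mathrm{S}^n_\mathbb{A}}\mal{A}$ to be $(m-1)$-pre-connected. The gap is the key sublemma of Step 2, which is false. Take $\mal{B}=L^{sp}_{mot}\Sigma^\infty_+\mathrm{Spec}\,k$. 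It is motivic and Nisnevich connective, hence $(-1)$-pre-connected by \Cref{nis conn to pre conn}. But $L^{sp}_{mot}(\Sigma^\infty\mathbb{G}\wedge\mal{B})\simeq L^{sp}_{mot}\Sigma^\infty\mathbb{G}$ has nonzero $\pi_0$ at the field $K=k(t)$. Indeed, the map to the Eilenberg--MacLane spectrum of the strictly $\mathbb{A}^1$-invariant sheaf $\mathbb{G}_m$, adjoint to the identity of $(\mathbb{G}_m,1)$, factors through $L^{sp}_{mot}$ and sends the class of $t$ to $t\neq 1$ (in fact this $\pi_0$ is $K^{MW}_1(K)$). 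So smashing with $\mathbb{G}$ does not raise pre-connectivity, already at points of dimension $0$. No Gersten or descent argument can supply the extra $+1$ per $\mathbb{G}$-factor. Without it your count is $(m-2n-1)+n=m-n-1$, short by $n$. The fallback induction is not specified enough to close this.

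The degrees are actually lost in Step 1. Since $\mal{A}$ is motivic, $\mal{A}^{\Sigma^n\mathbb{G}^{\wedge n+1}}\simeq\mal{A}^{\Sigma^\infty(\mathrm{S}^n_\mathbb{A},1)}$. Applying \Cref{function space of pre-connected is -dim pre-conneceted} directly to $X=\mathrm{S}^n_\mathbb{A}$ (dimension $n+1$, with a rational point) shows this is $(m-n-1)$-pre-connected. Passing through $\Omega^n$ and $n+1$ separate $\mathbb{G}_m$-contractions gives only $(m-2n-1)$.

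So keep $\mathrm{S}^n_\mathbb{A}$ in the exponent, and use $\Sigma^\infty(\mathrm{S}^n_\mathbb{A},1)\simeq\Sigma^n\Sigma^\infty\mathbb{G}^{\wedge n+1}$ only in the smash factor. By monoidality of $L^{sp}_{mot}$, the fiber is $L^{sp}_{mot}\big(\Sigma^n\Sigma^\infty\mathbb{G}^{\wedge n+1}\wedge\mal{A}^{\mathrm{S}^n_\mathbb{A}}\big)$, and the unlocalized presheaf $\Sigma^n\Sigma^\infty\mathbb{G}^{\wedge n+1}$ is sectionwise $(n-1)$-connected. The presheaf smash product is computed sectionwise, so on $\mathcal{O}$ of dimension $d$ you get $(n-1)+(m-n-1-d)+1=m-1-d$. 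That is $(m-1)$-pre-connectivity, which $L^{sp}_{mot}$ preserves by \Cref{spectra A nis localization preserves pre connectivity}.

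This is the paper's count $(m-n-1)+(n-1)+1=m-1$. It should be run with this sectionwise $(n-1)$-connected representative rather than with the Nisnevich connectivity of $L^{sp}_{mot}\Sigma^\infty\mathrm{S}^n_\mathbb{A}$, since the dimension shifts of two merely pre-connected factors would add. Your obstacle paragraph already had both ingredients; you only needed to replace the smash factor, not the exponent.
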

\begin{proof}
Since pre connectivity is stable under colimits, under theorem \Cref{formula for a1 sn nis localization}, specifically that $$L_{{\mathbb{A}^1}, \mathrm{S}^n_\mathbb{A}, nis}^{sp} \simeq \colim_{p}(L_{mot}^{sp}\Phi_{\mathrm{S}^m_{\mathbb{A}}}\big )^p\circ L^{sp}_{mot},$$ it suffices to show that $L_{mot}^{sp}\Phi_{\mathrm{S}^m_{\mathbb{A}}}$
takes $m$ pre-connected motivic spectra $\mal{A}$ to $m$ pre connected spcetras (since $L^{sp}_{mot}$ preserves connectivity by \Cref{spectra A nis localization preserves pre connectivity}). We have to show that the fiber defining $$L^{sp}_{mot}\mal{A}\to L_{mot}^{sp}\Phi_{\mathrm{S}^m_{\mathbb{A}}}\mal{A}$$ is $(m-1)$ pre-connected. Now the fiber can be identified as $$L_{mot}^{sp}\big(L_{mot}^{sp}\Sigma^\infty\mathrm{S}^n_\mathbb{A}\wedge L^{sp}_{mot}\mal{A}^{\mathrm{S}^n_\mathbb{A}}\big).$$ From \cref{function space of pre-connected is -dim pre-conneceted}, we know that $\mal{A}^{\mathrm{S}^n_\mathbb{A}}$ is $(m-n-1)$-pre-connected, and hence so is $L^{sp}_{mot}\mal{A}^{\mathrm{S}^n_\mathbb{A}}$. On the other hand, since $$L_{mot}\mathrm{S}^{n}_\mathbb{A}\simeq \Sigma^{n}L_{mot}\mathbb{G}^{n+1}$$ (which is is $(n-1)$ connected), by stable motivic connectivity (\Cref{spectra A nis localization preserves pre connectivity}) we have that $L_{mot}^{sp}\Sigma^\infty\mathrm{S}^n_\mathbb{A}$ is $(n-1)$-connected.

Since smash product raises connectivity by $1$, we find that $L_{mot}^{sp}\Sigma^\infty\mathrm{S}^n_\mathbb{A}\wedge L^{sp}_{mot}\mal{A}^{\mathrm{S}^n_\mathbb{A}}$ is $$(m-n-1)+(n-1)+1=(m-1)$$-connected. So the $(m-1)$ pre-connectivity of the fiber follows from the stable motivic pre-connectivity theorem, \Cref{spectra A nis localization preserves pre connectivity}.
\end{proof}
\begin{cor}\label[cor]{slice connectivity}
    Over any field, $L_{{\mathbb{A}^1}, \mathrm{S}^n_\mathbb{A}, nis}^{sp}$ and hence $f_{0/n+1}$ preserves connectivity. 
\end{cor}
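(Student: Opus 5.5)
We deduce the corollary from the pre-connectivity theorem, \Cref{preservation of pre n conn for spherical loca}, by translating between Nisnevich connectivity and pre-connectivity, as in Morel's original argument. Throughout, ``connectivity'' means Nisnevich (homotopy sheaf) connectivity. Since $\Sigma$ is an equivalence and commutes with all localizations in sight, it suffices to treat one value of $m$, say $m=-1$; we keep $m$ general in the notation.

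\textbf{Step 1: from connected input to pre-connected image.} Let $\mal{A}\in \mathcal{S}pt^{S^1}(k)$ be Nisnevich $m$-connected. By \Cref{spectra A nis localization preserves pre connectivity}, $L^{sp}_{mot}\mal{A}$ is $m$-pre-connected. Note that $L_{{\mathbb{A}^1}, \mathrm{S}^n_\mathbb{A}, nis}^{sp}\simeq L_{{\mathbb{A}^1}, \mathrm{S}^n_\mathbb{A}, nis}^{sp}\circ L^{sp}_{mot}$. Applying \Cref{preservation of pre n conn for spherical loca} to $L^{sp}_{mot}\mal{A}$, we conclude that $\mal{B}:=L_{{\mathbb{A}^1}, \mathrm{S}^n_\mathbb{A}, nis}^{sp}\mal{A}$ is $m$-pre-connected.

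\textbf{Step 2: from pre-connected to Nisnevich connected.} The spectrum $\mal{B}$ is Nisnevich local. We need its homotopy sheaves $\pi^{sp,nis}_l\mal{B}$ to vanish for $l\le m$. Nisnevich sheafification can be computed on stalks, i.e.\ on henselian local essentially smooth schemes $\mathcal{O}$. Pre-connectivity only yields $\pi_l\mal{B}(\mathcal{O})=0$ for $l\le m-\dim\mathcal{O}$, which is weaker than needed when $\dim\mathcal{O}>0$, so we must use that $\mal{B}$ is motivic.

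Since $\mal{B}$ is $\mathbb{A}^1$-local, Morel's theory (or Gabber-type presentation arguments) shows that its homotopy sheaves are strictly $\mathbb{A}^1$-invariant. A strictly $\mathbb{A}^1$-invariant sheaf $F$ over a field injects into its value at the generic point: $F(\mathcal{O})\hookrightarrow F(\mathrm{Frac}\,\mathcal{O})$ for local essentially smooth $\mathcal{O}$ (Morel's unramifiedness; over imperfect fields one uses the version in \textup{[\cite{bachmann2024strongly}]}). Hence it suffices to show that $\pi_l\mal{B}(K)=0$ for $l\le m$ and all finitely generated fields $K/k$, i.e.\ that $\mal{B}$ is weakly $m$-connected. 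This follows from \Cref{nis to pre to weak}, (2)$\implies$(3), because a field has dimension $0$. We also need that the stalks of the homotopy sheaves agree with the homotopy of the sections on essentially smooth local schemes, which holds because $\mal{B}$ is Nisnevich local.

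\textbf{Step 3: the statement for $f_{0/n+1}$.} By \eqref{tate truncation is spherical localization}, $f_{0/n+1}$ is the restriction of $L_{{\mathbb{A}^1}, \mathrm{S}^n_\mathbb{A}, nis}^{sp}$ to $\mathcal{SH}^{S^1}(k)$. Motivic connectivity of an object of $\mathcal{SH}^{S^1}(k)$ is Nisnevich connectivity, so the second claim follows immediately from the first.

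The main obstacle is Step 2: the unramifiedness of strictly $\mathbb{A}^1$-invariant sheaves over a possibly imperfect field. The paper already relies on \textup{[\cite{bachmann2024strongly}]}, so I would cite that reference. Alternatively, one could argue directly with a Gersten/Cousin-type argument using \textup{[\cite{ayoub2020P1}, Lemma 4.9]}.
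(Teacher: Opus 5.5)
Your proposal is correct and follows essentially the same route as the paper: pass to pre-connectivity, apply \Cref{preservation of pre n conn for spherical loca}, deduce weak connectivity from \Cref{nis to pre to weak}, and conclude using that $\mathbb{A}^1$-local spectra are connected iff they are generically connected. You reach pre-connectivity via $L^{sp}_{mot}$ and \Cref{spectra A nis localization preserves pre connectivity} rather than via $L^{sp}_{nis}$ and \Cref{nis conn to pre conn}(1), which makes no difference since the spherical localization absorbs both; and your Step 2 simply unpacks the paper's citation of Morel's Lemma 6.1.6 into strict $\mathbb{A}^1$-invariance of the homotopy sheaves plus unramifiedness.
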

\begin{proof}
  Since $$L_{{\mathbb{A}^1}, \mathrm{S}^n_\mathbb{A}, nis}^{sp}\simeq L_{{\mathbb{A}^1}, \mathrm{S}^n_\mathbb{A}, nis}^{sp}L_{nis}^{sp}$$ and $L_{nis}^{sp}$ takes nisnevich $n$-connected spectras to $m$-pre-connected spectras (\Cref{nis conn to pre conn}(1)), it follows from \Cref{preservation of pre n conn for spherical loca} that if $\mal{A} $ is nisnevich $n$-connected, then $L_{{\mathbb{A}^1}, \mathrm{S}^n_\mathbb{A}, nis}^{sp} \mal{A}$ is $m$-pre-connected. By \Cref{nis to pre to weak}, we deduce that $L_{{\mathbb{A}^1}, \mathrm{S}^n_\mathbb{A}, nis}^{sp} \mal{A}$ is weakly (/generically) $n$-connected. Since $L_{{\mathbb{A}^1}, \mathrm{S}^n_\mathbb{A}, nis}^{sp} \mal{A}$ is motivic local by definition/construction and connectivity of motivic spectras is identical to generic connectivity [\cite{morel2005stable}, Lemma 6.1.6], we immediately obtain nisnevich $n$-connectivity of $L_{{\mathbb{A}^1}, \mathrm{S}^n_\mathbb{A}, nis}^{sp} \mal{A}$ as desired. 
\end{proof}
\begin{cor}\label[cor]{slices takes connected to connective}
 Let $k $ be a field. Then, for every $n\geq 0$
 \begin{enumerate}
     \item $s_n:\mathcal{SH}^{S^1}(k)\to \mathcal{SH}^{S^1}(k)$ takes connected spectras to connective spectras. 
     \item $f_n:\mathcal{SH}^{S^1}(k)\to \mathcal{SH}^{S^1}(k)$ takes connected spectras to connective spectras. 
 \end{enumerate}
\end{cor}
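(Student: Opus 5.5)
We derive both statements from \Cref{slice connectivity} and the two cofiber sequences relating $f_n$, $f_{0/n}$ and $s_n$. Throughout, ``connective'' means Nisnevich $(-1)$-connected (equivalently $0$-connective in the homotopy $t$-structure) and ``connected'' means Nisnevich $0$-connected. We use two standard facts about the homotopy $t$-structure on $\mathcal{SH}^{S^1}(k)$ from [\cite{morel2005stable}]: connective objects are closed under extensions and cofibers, and if $\mal{B}$ is $m$-connected then $\Sigma^{-1}\mal{B}$ is $(m-1)$-connected.

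For (2), I would start from the defining cofiber sequence $f_n\mal{X}\to \mal{X}\to f_{0/n}\mal{X}$, which rotates to a fiber sequence
$$\Sigma^{-1}f_{0/n}\mal{X}\to f_n\mal{X}\to \mal{X}.$$
Let $\mal{X}$ be connected. For $n\geq 1$, \Cref{slice connectivity}, applied with $n-1$ in place of $n$, shows that $f_{0/n}$ preserves connectivity, so $f_{0/n}\mal{X}$ is connected. Hence $\Sigma^{-1}f_{0/n}\mal{X}$ is connective. Since $\mal{X}$ is also connective, $f_n\mal{X}$ is an extension of two connective objects. The long exact sequence of Nisnevich homotopy sheaves then gives $\pi_i^{nis}f_n\mal{X}=0$ for $i<0$, so $f_n\mal{X}$ is connective. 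The case $n=0$ is trivial, since $f_0=\mathrm{id}$.

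For (1), I would use \Cref{slices as eilenberg layers}, which gives a fiber sequence $s_n\to f_{0/n+1}\to f_{0/n}$. Equivalently, there is a cofiber sequence
$$\Sigma^{-1}f_{0/n}\mal{X}\to s_n\mal{X}\to f_{0/n+1}\mal{X}.$$
If $\mal{X}$ is connected, then $f_{0/n+1}\mal{X}$ is connected by \Cref{slice connectivity}. Also $f_{0/n}\mal{X}$ is connected, and for $n=0$ this holds because $f_{0/0}=0$. So $\Sigma^{-1}f_{0/n}\mal{X}$ is connective, and the same long exact sequence argument shows that $s_n\mal{X}$ is connective. Alternatively, one can use $f_{n+1}\to f_n\to s_n$ together with (2), but that route only yields connectivity up to a shift. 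The route through the $f_{0/n}$ tower is sharper, so I would prefer it.

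The only real obstacle is bookkeeping: the index shift between $f_{0/n+1}$ and $\mathrm{S}^n_\mathbb{A}$ in \Cref{slice connectivity}, and checking the degenerate case $n=0$. Everything else is formal $t$-structure manipulation. One point needs care: ``preserves connectivity'' in \Cref{slice connectivity} must be read as preserving Nisnevich $m$-connectivity for every $m$, in particular for $m=0$. Its proof is indeed uniform in $m$, with the $m$ playing the role of the paper's $n$ in that proof.
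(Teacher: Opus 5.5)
Your proof is correct and follows the paper's argument: you derive (1) from the fiber sequence $s_n\to f_{0/n+1}\to f_{0/n}$ of \Cref{slices as eilenberg layers} together with \Cref{slice connectivity}, and (2) from $f_n\to 1\to f_{0/n}$ together with \Cref{slice connectivity}. Two small corrections to your asides: the route through $f_{n+1}\to f_n\to s_n$ is not weaker, because connective objects are closed under cofibers, so (2) for $n$ and $n+1$ already makes $s_n\mal{X}=\mathrm{cofib}(f_{n+1}\mal{X}\to f_n\mal{X})$ connective; and your worry about the connectivity level $m$ is moot, since $f_{0/n}$ is exact and commutes with $\Sigma$.
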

\begin{proof}
    (1) follows immediately from \Cref{slices as eilenberg layers} and the slice connectivity theorem stated above \Cref{slice connectivity}. 

    (2) follows similarly from \Cref{slices as eilenberg layers} and the fiber sequence defining $f_{0/n}$, namely
    $$f_n\to 1\to f_{0/n}.$$
\end{proof}

\begin{cor}\label[cor]{zero slice connectivity}
    Over a field $k $ the zeroth slice $s_0:\mathcal{SH}^{S^1}(k)\to \mathcal{SH}^{S^1}(k)$ preserves connectivity.
\end{cor}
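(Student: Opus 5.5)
The zeroth slice is the first Tate truncation, and I would reduce the statement to \Cref{slice connectivity} through this identification. The slice tower starts at $f_0=\mathrm{id}$, so the defining cofiber sequence $f_1\to f_0\to s_0$ gives $s_0\simeq \mathrm{cofib}(f_1\to \mathrm{id})$. By definition this cofiber is the Tate quotient $f_{0/1}$, as in the sequence $f_n\to 1\to f_{0/n}$ with $n=1$.

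The same identification also drops out of \Cref{slices as eilenberg layers} at $n=0$. That lemma gives a fiber sequence $s_0\to f_{0/1}\to f_{0/0}$, and $f_{0/0}=\mathrm{cofib}(f_0\to f_0)=0$, so $s_0\simeq f_{0/1}$. I would state this equivalence of endofunctors of $\mathcal{SH}^{S^1}(k)$ explicitly, because it also justifies the remark $s_0=f_{0/1}$ in the introduction.

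Next I apply \Cref{slice connectivity} with $n=0$. It says that $L^{sp}_{\mathbb{A}^1,\mathrm{S}^0_\mathbb{A},nis}$, and hence $f_{0/1}$, preserves Nisnevich connectivity over any field; here $\mathrm{S}^0_\mathbb{A}=\mathbb{A}^1\setminus 0=\mathbb{G}_m$. Consequently, if $\mal{X}\in\mathcal{SH}^{S^1}(k)$ is $m$-connected, then $s_0\mal{X}\simeq f_{0/1}\mal{X}$ is $m$-connected as well.

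I expect no real obstacle. The only point to check is that the $n=0$ instance of \Cref{preservation of pre n conn for spherical loca} is legitimate. In that instance one uses $L_{mot}\mathrm{S}^0_\mathbb{A}\simeq L_{mot}\mathbb{G}$, which is $(-1)$-connected, together with \Cref{function space of pre-connected is -dim pre-conneceted} applied to the one-dimensional scheme $\mathbb{G}_m$ pointed at $1$. The connectivity count $(m-1)+(-1)+1=m-1$ goes through unchanged, so the argument already given covers this case.
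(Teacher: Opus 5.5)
Your proposal is correct and follows the paper's proof: identify $s_0\simeq f_{0/1}$ (which holds because $f_0=\mathrm{id}$, or equivalently from the $n=0$ case of the Eilenberg--MacLane layer lemma with $f_{0/0}=0$), then apply the slice connectivity corollary at $n=0$. The paper states only this one-line reduction. Your extra check of the $n=0$ connectivity count with $\mathrm{S}^0_\mathbb{A}=\mathbb{G}_m$ is consistent with the general argument but is not needed.
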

\begin{proof}
 This is immediate from the fact that $s_0=f_{0/1}$, and from connectivity of $f_{0/1}$ (\Cref{slice connectivity}).
\end{proof}

 \underline{\textbf{Known proofs of slice connectivity in the literature}}

\begin{rem}\label[rem]{levine}
 Over infinite perfect fields, one can also use Levine's model of slices.

First, recall the following coneavue constructions from [\cite{levine2008homotopy}] for a presheaf of spectra $\mal{E}:$\begin{flalign}
&{X\mapsto \esc{E}^{(n)}(X):=\Big | \colim_{ W\in S^{(n)}(X,\bullet)}\esc{E}^{ W}(X\times \Delta ^\bullet) \Big |}\\
  &{X\mapsto\esc{E}^{(0/n)}(X):=\Big | \colim_{Z\in S^{(0)}(X,\bullet), W\in S^{(n)}(X,\bullet)}\esc{E}^{Z\setminus W}(X\times \Delta ^\bullet\setminus W) \Big |}
  \end{flalign}
  \end{rem} 

\begin{cor}\label[cor]{tate truncation coniveau model}
  Let $k$ be an infinite perfect field. We then have the following exact models of tate truncation for presheaves of spectras on $Sm_k$:
  $$f_{0/n+1}\simeq (L_{mot}^{sp})^{^{(0/n+1)}}\simeq \dcolim_{m}\big(\Phi_{\mathrm{S}^n_\mathbb{A}} \Phi_{\mathbb{A}^1} L_{nis}^{sp}\big )^m.$$
\end{cor}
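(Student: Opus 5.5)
Two identifications are needed: first $f_{0/n+1}\simeq (L_{mot}^{sp})^{(0/n+1)}$, then $(L_{mot}^{sp})^{(0/n+1)}\simeq \dcolim_m(\Phi_{\mathrm{S}^n_\mathbb{A}}\Phi_{\mathbb{A}^1}L^{sp}_{nis})^m$. I would obtain the second from \Cref{formula for a1 sn nis localization} and use Levine's homotopy coniveau results for the first.

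\textbf{Step 1 (the colimit formula).} By \Cref{formula for a1 sn nis localization},
\[
f_{0/n+1}L^{sp}_{mot}\simeq \Big(\dcolim_{m}\big(\Phi^\infty_{\mathrm{S}^n_\mathbb{A}}\Phi^\infty_{\mathbb{A}^1}\big)^m\Big)\circ L^{sp}_{nis}.
\]
I would first interleave the two infinite composites. Each $\Phi^\infty$ is a sequential colimit of finite iterates, and the transition maps $1\to\Phi$ are natural. A cofinality argument on the $\mathbb{N}\times\mathbb{N}$-indexed diagram then identifies $\dcolim_m(\Phi^\infty_{\mathrm{S}^n_\mathbb{A}}\Phi^\infty_{\mathbb{A}^1})^m$ with $\dcolim_m(\Phi_{\mathrm{S}^n_\mathbb{A}}\Phi_{\mathbb{A}^1})^m$. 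This requires each $\Phi_A$ to commute with filtered colimits, which holds because $A$ is compact, so $(-)^{A_\bullet}$ preserves filtered colimits.

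Next I would move the Nisnevich localization inside. By \Cref{LA preserves nis locality} and its proof, $\Phi_A$ preserves Nisnevich-local presheaves, since cofibers and cotensors with $A_\bullet$ commute with the cd-pullback conditions. Hence
\[
(\Phi_{\mathrm{S}^n_\mathbb{A}}\Phi_{\mathbb{A}^1})^mL^{sp}_{nis}\simeq(\Phi_{\mathrm{S}^n_\mathbb{A}}\Phi_{\mathbb{A}^1}L^{sp}_{nis})^m,
\]
which gives the right-hand formula. Finally, since $f_{0/n+1}$ inverts motivic equivalences, $f_{0/n+1}\simeq f_{0/n+1}L^{sp}_{mot}$ on presheaves.

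\textbf{Step 2 (the coniveau model).} Here I would invoke Levine's theorem [\cite{levine2008homotopy}]: over an infinite perfect field, for a motivic (homotopy invariant, Nisnevich excisive) $\esc{E}$, the homotopy coniveau tower $\esc{E}^{(n)}$ models $f_n\esc{E}$. Applying this to $\esc{E}=L^{sp}_{mot}\mal{E}$ gives $f_{n+1}L_{mot}^{sp}\simeq (L^{sp}_{mot})^{(n+1)}$.

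The construction $\esc{E}^{(0/n+1)}$ is, levelwise in $\bullet$, the cofiber of the support maps
\[
\esc{E}^{W}(X\times\Delta^\bullet)\to \esc{E}(X\times\Delta^\bullet),
\]
after using the localization sequence $\esc{E}^{Z\setminus W}(X\times\Delta^\bullet\setminus W)$. Here the choice $Z$ of codimension $\geq 0$ is cofinal, so effectively $Z=X\times\Delta^\bullet$. Geometric realization is exact and $\esc{E}(X\times\Delta^\bullet)$ realizes to $\esc{E}(X)$ by homotopy invariance. So $\esc{E}^{(0/n+1)}\simeq\mathrm{cofib}(\esc{E}^{(n+1)}\to\esc{E})$, and the fiber sequence $f_{n+1}\to 1\to f_{0/n+1}$ yields $f_{0/n+1}\simeq(L^{sp}_{mot})^{(0/n+1)}$.

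\textbf{Main obstacle.} The delicate part is making Step 2 precise. I expect two difficulties:
\begin{enumerate}
\item checking that the cofiber in Levine's construction is compatible with the colimit over supports, which are filtered posets, so the colimit is exact;
\item checking that the identification is natural in $\esc{E}$, so that it is an equivalence of functors and not merely objectwise.
\end{enumerate}
For both I would rely on Levine's moving lemmas, which need $k$ infinite perfect. This is where that hypothesis enters.
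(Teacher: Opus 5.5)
Your two-step structure matches the paper's proof. The coniveau identification is Levine's [Theorem 7.1.1], and your Step 2 is fine, including the identification of $\esc{E}^{(0/n+1)}$ with $\mathrm{cofib}(\esc{E}^{(n+1)}\to\esc{E})$. The colimit formula is meant to come from \Cref{formula for a1 sn nis localization}. The gap is in Step 1.

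\textbf{The false step.} To move $L^{sp}_{nis}$ inside the iteration you claim that $\Phi_A$ preserves Nisnevich-local presheaves. That claim is false. \Cref{LA preserves nis locality} is about $L_A$, computed by Hoyois' formula $\colim \mal{F}(A^{n}\times -)$; it says nothing about $\Phi_A$. In $\Phi_A\mal{F}=\mathrm{cofib}(A_\bullet\otimes\mal{F}^{A_\bullet}\to\mal{F})$ the cotensor $\mal{F}^{A_\bullet}=\mathrm{fib}(\mal{F}(-\times A)\to\mal{F})$ does preserve descent, but the tensor does not. On presheaves the Day convolution is the sectionwise smash product, so $(A_\bullet\otimes\mal{G})(X)=\Sigma^\infty(\mathrm{Hom}(X,A),s)\wedge\mal{G}(X)$. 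This already fails descent for the distinguished square exhibiting $X_1\sqcup X_2$ as soon as $\mal{G}\neq 0$. For instance, take $A=\mathbb{A}^1$ and $\mal{F}$ the Nisnevich-local spectrum $H\mathcal{O}$, so $\mal{F}^{A_\bullet}(\mathrm{Spec}\,k)\simeq H(tk[t])\neq 0$; then $\Phi_{\mathbb{A}^1}\mal{F}$ is not Nisnevich local. This is exactly why the paper writes $L^{sp}_{mot}\simeq(L^{sp}_{nis}\Phi_{\mathbb{A}^1})^\infty L^{sp}_{nis}$: the $\Phi$ of $\mathcal{S}pt^{S^1}_{nis}(S)$ is $L^{sp}_{nis}\Phi$. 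Consequently your finite-stage identity $(\Phi_{\mathrm{S}^n_\mathbb{A}}\Phi_{\mathbb{A}^1})^mL^{sp}_{nis}\simeq(\Phi_{\mathrm{S}^n_\mathbb{A}}\Phi_{\mathbb{A}^1}L^{sp}_{nis})^m$ fails for $m\geq 2$. Only the colimits agree, and that needs its own argument.

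\textbf{The repair.} Argue directly as in \Cref{model for stable bousfield localization}; this also makes your $\mathbb{N}\times\mathbb{N}$ interleaving step unnecessary, which would otherwise need coherence between the different ways of inserting units.
\begin{enumerate}
\item Put $T=\Phi_{\mathrm{S}^n_\mathbb{A}}\Phi_{\mathbb{A}^1}L^{sp}_{nis}$ and form $\dcolim_m T^m$ with the unit applied on the outside. Then $T^m\mal{F}\to T^{m+1}\mal{F}$ is the composite $Y\to L^{sp}_{nis}Y\to\Phi_{\mathbb{A}^1}L^{sp}_{nis}Y\to TY$ with $Y=T^m\mal{F}$.
\item Each arrow is an equivalence for the joint class (Nisnevich, $\mathbb{A}^1$, $\mathrm{S}^n_\mathbb{A}$). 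This is because $L_A$ is monoidal and kills $A_\bullet$, so the fiber $A_\bullet\otimes Z^{A_\bullet}$ of $Z\to\Phi_AZ$ is $A$-contractible.
\item Every transition map factors through the Nisnevich-local object $L^{sp}_{nis}T^m\mal{F}$. So the colimit is a filtered colimit of Nisnevich-local presheaves, hence Nisnevich local.
\item Every transition map also factors through some $Z\to\Phi_AZ$, for $A=\mathbb{A}^1$ and for $A=\mathrm{S}^n_\mathbb{A}$. The map $Z^{A_\bullet}\to(\Phi_AZ)^{A_\bullet}$ is null, since $(A_\bullet\otimes Z^{A_\bullet})^{A_\bullet}\to Z^{A_\bullet}$ is split by the unit (triangle identity).
\item By compactness of $A_\bullet$, $(\dcolim_m T^m\mal{F})^{A_\bullet}\simeq\dcolim_m (T^m\mal{F})^{A_\bullet}\simeq 0$. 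So the colimit is $\mathbb{A}^1$- and $\mathrm{S}^n_\mathbb{A}$-local.
\end{enumerate}
Hence $\dcolim_m T^m\simeq L^{sp}_{\mathbb{A}^1,\mathrm{S}^n_\mathbb{A},nis}\simeq f_{0/n+1}L^{sp}_{mot}$. Combined with your Step 2, this gives the statement.
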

\begin{proof}
   Using \Cref{slice connectivity}, it is a restatement of [\cite{levine2008homotopy}, Theorem 7.1.1]. 
\end{proof}
It is possible to obtain a sectionwise connectivity result using the above models of slices over infinite perfect fields (although this model appears insufficient to establish the Nisnevich connectivity result targeted in this paper).  
\begin{proof}
(Sectionwise slice connectivity using Levine's model.)
 First, let $\esc{E}$ be a sectionwise connected motivic spectrum. We want to show that $\esc{E}^{0/n+1}$ is also sectionwise connected. By construction, there is a cofiber sequence of motivic spectra $$\esc{E}^{(n)}\to \esc{E}\to \esc{E}^{(0/n+1)}.$$ Thus it suffices to argue that $\esc{E}^{(n)}$ is connective. But this is simply the geometric realization of the simplicial motivic spectra $r\mapsto \esc{E}^{(n)}(-,r)$. By the Bousfield-Kan spectral sequence, it is thus enough to prove that each $\esc{E}^{(n)}(-,r)$ is connective. On sections over $X$, this last one is given by a filtered colimit over $Z\in S^{(n+1)}(X,r)$ of $$\esc{E}^{Z}(\Delta_X^r):=fiber \big(\esc{E}(\Delta^r_X)\to \esc{E}(\Delta_X^r\setminus Z\big ).$$ Since $\esc{E}$ is sectionwise connected, it is obvious (using long exact sequences) that the fibers $\esc{E}^{Z}(\Delta_X^r)$ are connective, whence so is their colimit $\esc{E}^{(n)}(X,r)$.
\end{proof}

\begin{rem}\label[rem]{bachmann gm conservativity}
The consequences of this machine, namely the validity of the slice conjectures and thus the conservativity of $\omega_{\mathbb{G}}:\mathcal{SH}^{S^1}(k)(n+1)\to \mathcal{SH}^{S^1}(k)(n)$ [\cite{MR4173925}, Lemma 6.1], have, however, been used in \textit{loc.cit} to produce the required connectivity result, in fact a stronger result (assuming the base is perfect) in Corollary 6.2 (2). Indeed, in [\cite{MR4173925}], the author shows that, over perfect fields, $f_n$ is right $t$-exact for the homotopy $t$-structure on $\mathcal{SH}^{S^1}(k)$ (use Corollary 6.2 (1,2) and that $f_n=i_nr_n$). From this, our result on the right exactness of $f_{0/n}$ (being a cofiber) follows immediately. However, this approach relies on the validity of the slice conjecture over infinite perfect fields (see footnote 10 in loc. cit.), a substantial result established in [\cite{levine2008homotopy}] (see [\cite{bachmann2019voevodsky}] for a streamlined proof of a different form of the conjecture using the motivic recognition principle). The alternative method developed in this paper, discussed thus far for $f_{0/n}$, proceeds directly and is entirely independent of the slice conjectures. We shall come back to the connectivity property of $f_n$'s on $\mathcal{SH}^{eff}$ later in \S5 through the lens of framed correspondences. \end{rem}
\subsection{Complete accessible t-structures}

Before we write down what happens to the $t$-structure of $\mathcal{SH}^{S^1}$ after we perform slice localizations, we shall take a detour and write down a general theory.

Let $\mathscr{C}$ be a stable, presentable $\infty$-category equipped with an accessible $t$-structure $(\mathscr{C}_{\geq 0},\mathscr{C}_{\leq 0})$, and let $L:\mathscr{C}\to \mathscr{C}$ be an exact localization functor with saturation class $S_L$. We first set up the following standard result on localization of stable $\infty$-categories with $t$-structures:
\begin{prop}[Localization of t structures] \label{localization of t structures}
If the localization $L: \mathscr{C}\to \mathscr{C}$ is right $t$-exact, then $$(L\mathscr{C}\bigcap \mathscr{C}_{\geq 0},L\mathscr{C}\bigcap \mathscr{C}_{\leq 0})$$ is a $t$-structure on $L\mathscr{C}$. This $t$-structure is accessible. If the $t$-structure on $\mathscr{C}$ is left complete, so is the induced $t$-structure on $L\mathscr{C}$. If the $t$-structure on $\mathscr{C}$ is right complete and the inclusion $L\mathscr{C}\subset \mathscr{C}$ is stable under telescopic colimits, then the induced $t$-structure is right complete. The heart of this $t$-structure is $\mathscr{C}^\heartsuit\cap L\mathscr{C}$.
\end{prop}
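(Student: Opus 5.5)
The plan is to verify the $t$-structure axioms for the pair $(L\mathscr{C}_{\geq 0}, L\mathscr{C}_{\leq 0}) := (L\mathscr{C}\cap \mathscr{C}_{\geq 0}, L\mathscr{C}\cap\mathscr{C}_{\leq 0})$ directly. Orthogonality and stability under the relevant shifts are inherited from $\mathscr{C}$, since $L\mathscr{C}\subset\mathscr{C}$ is a stable full subcategory closed under $\Sigma^{\pm1}$. The only real content is the existence of truncation triangles inside $L\mathscr{C}$. For $X\in L\mathscr{C}$, I will take the $\mathscr{C}$-truncation $\tau_{\geq 0}X\to X\to \tau_{\leq -1}X$ and apply $L$. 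This gives a cofiber sequence $L\tau_{\geq 0}X\to X\to L\tau_{\leq -1}X$ in $L\mathscr{C}$, and right $t$-exactness gives $L\tau_{\geq 0}X\in \mathscr{C}_{\geq 0}$.

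The key step, and the main obstacle, is showing that $L\tau_{\leq -1}X$ still lies in $\mathscr{C}_{\leq -1}$. Here I will use that $\tau_{\leq -1}X$ is already $L$-local. For any $Z\in\mathscr{C}_{\geq 0}$ we have $\mathrm{Map}(Z,\tau_{\leq -1}X)\simeq *$. If the truncation of a local object is local, then we can simply take $L\tau_{\leq -1}X=\tau_{\leq -1}X$. To prove locality, I will show that $\tau_{\geq 0}X$ is local, which suffices because $X$ is local and local objects are closed under cofibers. For an $L$-acyclic $W$ (that is, $LW=0$), the goal is $\mathrm{Map}(W,\tau_{\geq0}X)=0$.

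My first attempt is to note that the acyclics form a localizing subcategory. Its truncations $\tau_{\geq n}W$ need not be acyclic, so I instead argue with the adjunction. A map $W\to\tau_{\geq 0}X$ composed into $X$ vanishes, so it factors through $\Omega\tau_{\leq -1}X$. I expect to control this via the cleaner alternative below. Define $L\mathscr{C}_{\geq0}$ as the subcategory generated by $L$ of $\mathscr{C}_{\geq0}$. By right $t$-exactness it lies in $\mathscr{C}_{\geq0}\cap L\mathscr{C}$, and conversely any $Y\in L\mathscr{C}\cap\mathscr{C}_{\geq0}$ satisfies $Y=LY$. So $L\mathscr{C}\cap\mathscr{C}_{\geq 0}=L(\mathscr{C}_{\geq0})$. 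This subcategory is presentable and closed under colimits and extensions in $L\mathscr{C}$, because colimits in $L\mathscr{C}$ are $L$ of colimits in $\mathscr{C}$ and $L$ preserves $\mathscr{C}_{\geq 0}$. By Lurie's HA 1.4.4.11 it is therefore the connective part of an accessible $t$-structure on $L\mathscr{C}$. Its coconnective part is the right orthogonal. For $Y\in L\mathscr{C}$, orthogonality to $L(\mathscr{C}_{\geq 1})$ is equivalent, by adjunction, to orthogonality to $\mathscr{C}_{\geq1}$, i.e.\ to $Y\in\mathscr{C}_{\leq0}$. This settles the $t$-structure and accessibility at once, and avoids the locality issue entirely. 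I will present this route.

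Completeness and the heart then follow. The heart is the intersection of the two halves, which is $\mathscr{C}^\heartsuit\cap L\mathscr{C}$. For left completeness, $\bigcap_n L\mathscr{C}_{\geq n}\subset\bigcap_n\mathscr{C}_{\geq n}=0$. For right completeness, $\bigcap_n L\mathscr{C}_{\leq -n}=0$ similarly, and for $Y\in L\mathscr{C}$ the colimit $\varinjlim \tau^{L}_{\geq -n}Y$ is a telescope. Telescope-closedness of $L\mathscr{C}\subset\mathscr{C}$ shows that it is computed in $\mathscr{C}$, where its cofiber lies in $\bigcap\mathscr{C}_{\leq -n}$ and hence vanishes. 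For left completeness in the limit sense, limits in $L\mathscr{C}$ agree with those in $\mathscr{C}$, and the $L\mathscr{C}$-truncations $\tau^L_{\leq n}Y$ coincide with the $\mathscr{C}$-truncations. Here I would verify that $\tau_{\leq n}Y$ is local using the two-sided orthogonality description: the triangle $\tau^L_{\geq n+1}Y\to Y\to\tau^L_{\leq n}Y$ has outer terms in $\mathscr{C}_{\geq n+1}$ and $\mathscr{C}_{\leq n}$, so by uniqueness it is the $\mathscr{C}$-truncation triangle. Hence $\lim\tau_{\leq n}Y\simeq Y$ is inherited from $\mathscr{C}$.
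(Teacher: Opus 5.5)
Your proposal is correct, but it takes a different route from the paper.

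\textbf{The paper's route.} The paper checks the axioms of [\cite{lurie2017higher}, Definition 1.2.1.1] by hand. For local $X$ it applies $L$ to $\tau_{\geq 0}X\to X\to\tau_{\leq -1}X$. Right $t$-exactness puts $L\tau_{\geq 0}X$ in $\mathscr{C}_{\geq 0}$, so the composite $L\tau_{\geq 0}X\to LX\simeq X$ factors through $\tau_{\geq 0}X$. By the universal property of the connective cover, this factorization is a retraction of the unit $\tau_{\geq 0}X\to L\tau_{\geq 0}X$. Hence $\tau_{\geq 0}X$ is a retract of a local object, so it is local, and then so is $\tau_{\leq -1}X$. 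This is exactly the direct step you abandoned: testing against $L$-acyclics is unnecessary, because the retract argument settles it in one line.

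\textbf{Your route.} You get existence and accessibility at once from [\cite{lurie2017higher}, Proposition 1.4.4.11], applied to $L\mathscr{C}\cap\mathscr{C}_{\geq 0}=L(\mathscr{C}_{\geq 0})$. You identify the coconnective half through $\mathrm{Map}(LZ,Y)\simeq\mathrm{Map}(Z,Y)$. Agreement of the truncations then comes afterwards, from uniqueness of truncation triangles.

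\textbf{Comparison.} The paper's argument is more elementary: it needs no presentability to produce the $t$-structure. It also yields directly that $\mathscr{C}$-truncations of local objects are local, which the paper reuses as the implication (1)$\Rightarrow$(2) of \Cref{cover judgment em judgment to local}. Your argument is softer and would work whenever you only control the connective part. However, it depends on $L(\mathscr{C}_{\geq 0})$ being presentable, which you assert without proof. You should justify it, in either of two ways:
\begin{enumerate}
\item $L(\mathscr{C}_{\geq 0})$ is generated under colimits in $L\mathscr{C}$ by $L$ applied to a small generating set of $\mathscr{C}_{\geq 0}$, so part (2) of the same proposition applies.
\item As in the paper, $L(\mathscr{C}_{\geq 0})$ is the image of the accessible localization $L|_{\mathscr{C}_{\geq 0}}$.
\end{enumerate}

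\textbf{A small slip in right completeness.} You place the cofiber of $\varinjlim\tau_{\geq -n}Y\to Y$ in $\bigcap_n\mathscr{C}_{\leq -n}$. This tacitly uses that $\mathscr{C}_{\leq -n}$ is closed under sequential colimits, which is not assumed. The step is also unnecessary. Right completeness of $\mathscr{C}$, which has sequential colimits, already gives $\varinjlim\tau_{\geq -n}Y\simeq Y$ in $\mathscr{C}$. Telescope-closedness then transports this to $L\mathscr{C}$, and likewise ensures that colimits of compatible towers in the $(L\mathscr{C})_{\geq -n}$ remain local.
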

\begin{proof}
  We verify the conditions of [\cite{lurie2017higher}, Definition 1.2.1.1.].  The orthogonality (condition (1) in loc.cit.) and $(\Omega, \Sigma)$ stability (condition (2) in loc.cit.) are clear.  It remains to establish that for every $X\in L\mathcal{C}$ there is a triangle $$X'\to LX\to X''$$ in $L\mathcal{C}$ such that $X'\in L\mathscr{C}\bigcap \mathscr{C}_{\geq 0}$ and $X''\in L\mathscr{C}\bigcap \mathscr{C}_{\leq 1}$. First consider the $t$-triangle of $X$ in $\mathcal{C}$:
$$X_{\geq 0}\to X\to X_{\leq 0}$$
and apply $L$ to get the triangle $$L(X_{\geq 0})\to LX\to L(X_{<0}).$$ We have the morphism of triangles
    \[
    \xymatrix{
       X_{\geq 0}\ar[r] \ar@<0.5ex>[d]&X\ar[r]\ar[d]^{\simeq }& X_{\leq 0}\ar[d]\\
    L(X_{\geq 0})\ar@<0.5ex>@{-->}[u]\ar[r]&LX\ar[r]& L(X_{< 0})
    }
    \]
where the first vertical dashed arrow pointing upward is induced by the fact that $L(X_{\geq 0}) \in \mathscr{C}_{\geq 0}$, and hence the composite $$L(X_{\geq 0})\to LX\underset{\simeq }{\to } X$$ must factor through $X_{\geq 0}$. This verifies that $X_{\geq 0}$ is a retract of the local object $L(X_{\geq 0})$ and is therefore local itself, i.e., $L(X_{\geq 0})\in L\mathscr{C}\bigcap \mathscr{C}_{\geq 0}$. It follows immediately from stability that $L(X_{< 0})\in L\mathcal{C}$. So take $X'=L(X_{\geq 0})$ and $X''=L(X_{<0})$.

To verify accessibility, one checks that $L\mathscr{C}\bigcap \mathscr{C}_{\geq 0}$ is the essential image of the restriction of the accessible localization $L$ to the accessible subcategory $\mathscr{C}_{\geq 0}$ (this makes sense by right $t$-exactness of $L$).

Left completeness is immediate by the closedness of limits. The claim about right-completeness follows immediately by definition from the given condition. 
\end{proof}
We will call the induced $t$-structure the localized $t$-structure and denote this by $(L\mathscr{C}_{\geq 0},L\mathscr{C}_{\leq 0})$. It is clear by construction that the inclusion $L\mathscr{C}\subset \mathscr{C}$ is $t$-exact. 
\vspace{.5cm}
\begin{obs}\label[obs]{further t localization}
Suppose $\mathcal{T}$ is an $\infty$-topos. Then $ \mathrm{Stab}(\mathcal{T})\simeq Shv_{Sp}(\mathcal{T})$ has a canonical notion of a homotopy connectivity that gives rise to a $t$-structure. An exact localization of $\mathrm{Stab}(\mathcal{T})$ preserves connectivity if and only if $L: \mathrm{Stab}(\mathcal{T})\to \mathrm{Stab}(\mathcal{T})$ is right $t$-exact for this $t$-structure. If $$R: L\mathrm{Stab}(\mathcal{T})\to L\mathrm{Stab}(\mathcal{T})$$ is another localization, then the same applies to $R$.
\end{obs}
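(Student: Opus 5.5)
The statement has three parts. First, $\mathrm{Stab}(\mathcal{T})$ carries a homotopy $t$-structure. Second, an exact localization preserves connectivity if and only if it is right $t$-exact. Third, the same holds for a further localization $R$ on $L\mathrm{Stab}(\mathcal{T})$. I will prove them in that order.

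\textbf{The $t$-structure.} I will use the standard description of sheaves of spectra on an $\infty$-topos [\cite{lurie2017higher}, \S1.4, and the discussion of $\mathrm{Stab}(\mathcal{T})$ in Spectral Algebraic Geometry, \S1.3.2]. The connective part $\mathrm{Stab}(\mathcal{T})_{\geq 0}$ consists of the objects whose homotopy sheaves $\pi_i$ vanish for $i<0$, where $\pi_i$ is the sheafification of the presheaf of homotopy groups. The coconnective part is the right orthogonal, which consists of the objects with $\pi_i=0$ for $i>0$. Since this is a standard construction, I will cite it rather than reprove it. In this language, ``$\mal{A}$ is $m$-connected'' means $\mal{A}\in \mathrm{Stab}(\mathcal{T})_{\geq m+1}$.

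\textbf{The equivalence.} Right $t$-exactness of $L$ means $L(\mathrm{Stab}(\mathcal{T})_{\geq 0})\subset \mathrm{Stab}(\mathcal{T})_{\geq 0}$. Because $L$ is exact, it commutes with $\Sigma$, and $\mathrm{Stab}(\mathcal{T})_{\geq n}=\Sigma^n\mathrm{Stab}(\mathcal{T})_{\geq 0}$. So preserving $\geq 0$ is the same as preserving $\geq n$ for every $n$, which is exactly preserving every connectivity level. This gives both directions at once.

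\textbf{The localization $R$.} Assume $L$ preserves connectivity. By \Cref{localization of t structures}, $L\mathrm{Stab}(\mathcal{T})$ inherits the localized $t$-structure $(L\mathscr{C}_{\geq 0},L\mathscr{C}_{\leq 0})$, and the inclusion $L\mathrm{Stab}(\mathcal{T})\subset\mathrm{Stab}(\mathcal{T})$ is $t$-exact. The connective objects of $L\mathrm{Stab}(\mathcal{T})$ are therefore precisely the $L$-local objects that are connective in $\mathrm{Stab}(\mathcal{T})$, so connectivity there is measured by the same homotopy sheaves. Repeating the previous argument inside $L\mathrm{Stab}(\mathcal{T})$ shows that $R$ preserves connectivity if and only if it is right $t$-exact for the localized $t$-structure. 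In that case \Cref{localization of t structures} applies again to $R$.

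\textbf{Main obstacle.} The only point needing care is interpretive: ``the same applies to $R$'' only makes sense once $L\mathrm{Stab}(\mathcal{T})$ has a $t$-structure. So I read the statement under the hypothesis that $L$ preserves connectivity, since that is what lets \Cref{localization of t structures} supply the localized $t$-structure. Once the connective part is identified as $L\mathscr{C}\cap\mathscr{C}_{\geq 0}$, the rest is formal.
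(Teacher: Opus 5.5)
The paper states this as an Observation with no proof, so your argument is the routine verification it leaves implicit, and it is correct: exactness lets $L$ commute with $\Sigma$ (as it does $R$, which should likewise be taken to be an exact localization), reducing preservation of every connectivity level to $L(\mathrm{Stab}(\mathcal{T})_{\geq 0})\subset \mathrm{Stab}(\mathcal{T})_{\geq 0}$, and \Cref{localization of t structures} supplies the localized $t$-structure on $L\mathrm{Stab}(\mathcal{T})$, with connective part $L\mathrm{Stab}(\mathcal{T})\cap\mathrm{Stab}(\mathcal{T})_{\geq 0}$, that is needed to even phrase the claim for $R$. One harmless inaccuracy: for a general $\infty$-topos the right orthogonal of the connective part is the class of sectionwise coconnective sheaves, which coincides with $\{\pi_i=0 \text{ for } i>0\}$ only when $\infty$-connective sheaves of spectra vanish (e.g.\ for the hypercomplete Nisnevich topos used in the paper); since only the connective half enters your argument, this does not affect it.
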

\begin{rem}[Motivic Postnikov $t$-structure]\label[rem]{Motivic t structure}
When $\mathcal{T}$ is the Nisnevich $\infty$-topos $\mathcal{P}_{nis}(k)$, the resulting canonical $t$-structure on $\mathrm{Stab}(\mathcal{P}_{nis}(k))=\mathcal{S}pt_{nis}^{S^1}(k)$ is given by 
$$\big( \tau_{\geq 0}^{nis}\mathcal{S}pt^{S^1}_{nis}(k),\tau_{< 0}^{nis}\mathcal{S}pt^{S^1}_{nis}(k)\big)$$
If one applies the above observation to the motivic homotopy category, one arrives at a $t$-structure on $\mathcal{SH}^{S^1}(S)$, provided the motivic stable connectivity theorem holds. When $S$ is the spectrum of a field, the motivic stable connectivity theorem has been verified in [\cite{morel2005stable}, Lemma 6.2.6] (this also follows from our \Cref{spectra A nis localization preserves pre connectivity}, \Cref{nis to pre to weak} in the presence of [\cite{morel2005stable}, Lemma 6.1.6]). The resulting $t$-structure is called the motivic homotopy $t$-structure [\cite{morel2005stable}, Lemma 6.2.11] and is thus given by:
$$\big(\mathcal{SH}^{S^1}(k)\bigcap \tau_{\geq 0}^{nis}\mathcal{S}pt^{S^1}_{nis}(k), \mathcal{SH}^{S^1}(k)\bigcap \tau_{< 0}^{nis}\mathcal{S}pt^{S^1}_{nis}(k)\big).$$
The identification of the heart is interesting in this case. From the above theorem, it is $$EM(\mathcal{S}pt^{S^1}_{nis}(S))\bigcap \mathcal{SH}^{S^1}(S).$$ Of course, $\pi_0^{sp, nis}$ induces an equivalence $EM(\mathcal{S}pt^{S^1}_{nis}(S))\simeq Ab^{nis}_S$. Since the presheaves $\pi^{sp,nis}_{i}$ are precisely the nisnevich cohomology group sheaves for Eilenberg-MacLane spectra of sheaves of groups, the result is precisely the full subcategory of nisnevich sheaves of abelian groups all of whose nisnevich cohomology groups are $\mathbb{A}^1$-invariant. These are what Morel calls strictly $\mathbb{A}^1$-invariant sheaves of abelian groups.
\end{rem}
\begin{rem}
   Similar to the $S^1$-Postnikov truncation, the $n$-Tate covering and truncation do define a $t$-structure for each $n$:$$\big( f_n\mathcal{SH}^{S^1}(k), f_{<n}\mathcal{SH}^{S^1}(k) \big).$$However, this $t$-structure is degenerate: because both subcategories are stable, the resulting heart is trivial. Furthermore, unlike the topological case, these truncations cannot be assembled into a single $t$-structure generated by topological shifts. Accommodating this varying filtration requires generalizing the notion of a $t$-structure to one generated by the action of a monoidal object (e.g., Tate twists), which does not exist yet.
\end{rem}
The slice variant of the notion of strictly $\mathbb{A}^1$-invariant sheaves of abelian groups is as follows:
\begin{defn}
    A presheaf of abelian groups $G\in Sm_S$ is said to be strictly $n$-spherical if, for all $i$, $H^i_{nis}(-,G)$ is $\mathrm{S}^{n}_\mathbb{A}$-local.    
    \end{defn}
    \begin{lem}\label[lem]{strictly n spherical has trivial n+1 contraction}
        Over a perfect field, a strictly $\mathbb{A}^1$-invariant Nisnevich sheaf $G$ of abelian groups is strictly $n$-spherical iff $G_{-n-1}=0$.
    \end{lem}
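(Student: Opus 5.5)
We work with the Eilenberg--MacLane spectrum $HG:=\mathrm{EM}(G)\in\mathcal{SH}^{S^1}(k)$. Since $G$ is strictly $\mathbb{A}^1$-invariant, $HG$ is motivic local (\Cref{Motivic t structure}), and for every $X\in Sm_k$ and $i\geq 0$
$$\pi_{-i}\,HG(X)\cong H^i_{nis}(X,G).$$
Thus $G$ is strictly $n$-spherical if and only if $HG$ is $\mathrm{S}^n_\mathbb{A}$-local. By \Cref{slice quotient is spherical localization}, this means $HG\in\mathcal{SH}^{S^1}(k)/f_{n+1}$, i.e. $HG$ is right orthogonal to $\mathcal{SH}^{S^1}(k)(n+1)$. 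The plan is to translate this orthogonality into the vanishing of the $(n+1)$-fold contraction.

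\textbf{Step 1: orthogonality becomes vanishing of $\mathbb{G}$-loops.} The category $\mathcal{SH}^{S^1}(k)(n+1)$ is generated under colimits and shifts by $\Sigma^\infty_+X\wedge T^{n+1}$ for $X\in Sm_k$. We have
$$T^{n+1}\simeq \Sigma^{n+1}\mathbb{G}^{\wedge n+1}.$$
Hence $HG$ is local if and only if the function spectrum $\Omega_{\mathbb{G}}^{n+1}HG:=HG^{\mathbb{G}^{\wedge n+1}}$ is contractible. Indeed, mapping spectra out of $\Sigma^\infty_+X\wedge\Sigma^{j}\mathbb{G}^{\wedge n+1}$ for all $X$ and $j$ detect exactly the sections of this function spectrum.

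\textbf{Step 2: Morel's computation of $\mathbb{G}_m$-loops.} Over a perfect field, for a strictly $\mathbb{A}^1$-invariant sheaf $G$ one has
$$\Omega_{\mathbb{G}}HG\simeq H(G_{-1})$$
(Morel, \emph{$\mathbb{A}^1$-algebraic topology}, the contraction computation, equivalently $\underline{\mathrm{Hom}}(\mathbb{G}_m,G)$ with $H^i_{nis}(X\times\mathbb{G}_m,G)=H^i(X,G)\oplus H^i(X,G_{-1})$). This is where perfectness enters, through Gersten/Rost--Schmid resolutions, and it is the main obstacle. I would cite it rather than reprove it. The same statement is needed for the higher Nisnevich cohomology as well as for $H^0$, and one must check that $G_{-1}$ is again strictly $\mathbb{A}^1$-invariant, so that the formula can be iterated.

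\textbf{Step 3: iterate and conclude.} Iterating Step 2 gives
$$\Omega_{\mathbb{G}}^{n+1}HG\simeq H(G_{-n-1}).$$
This spectrum is contractible if and only if $G_{-n-1}=0$, since Eilenberg--MacLane spectra detect their $\pi_0$. Combining with Step 1, $G$ is strictly $n$-spherical if and only if $G_{-n-1}=0$.

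A more elementary alternative for the forward direction avoids the full loop-space identification. $\mathrm{S}^n_\mathbb{A}$-locality of $H^0$ gives $G(X)\cong G(X\times \mathrm{S}^n_\mathbb{A})$. Morel's decomposition
$$G(X\times(\mathbb{A}^{n+1}\setminus 0))=G(X)\oplus H^n(X,G_{-n-1})$$
then forces $G_{-n-1}=0$, via the Gysin/excision computation of the cohomology of $\mathbb{A}^{n+1}\setminus 0$.
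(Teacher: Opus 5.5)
Your main argument (Steps 1--3) is correct and is essentially the paper's proof, carried out stably. The paper works with the Eilenberg--MacLane spaces $\mathrm{B}^i_{nis}G$ and computes $(\mathrm{B}^i_{nis}G)^{\mathrm{S}^n_\mathbb{A}}\simeq\Omega^{n+1}_{\mathbb{G}}\mathrm{B}^{i-n}_{nis}G\simeq\mathrm{B}^{i-n}_{nis}(G_{-n-1})$. It uses the same two inputs you do: $\mathrm{S}^n_\mathbb{A}\simeq\Sigma^n\mathbb{G}^{\wedge n+1}$, and the contraction formula for $\mathbb{G}_m$-loops, which it cites from Bachmann. Working with $HG$ simply handles all $i$ at once and removes the paper's case split $i<n$ versus $i\geq n$.

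Your closing ``elementary alternative'' is, however, wrong for $n\geq 1$. The correct splitting is $H^i(X\times\mathrm{S}^n_\mathbb{A},G)\cong H^i(X,G)\oplus H^{i-n}(X,G_{-n-1})$. Hence $H^0$-locality holds automatically: $G(X\times\mathrm{S}^n_\mathbb{A})=G(X)$, because the removed locus has codimension $n+1\geq 2$, so it gives no information. It is $H^n$-locality, via $H^n(X\times\mathrm{S}^n_\mathbb{A},G)\cong H^n(X,G)\oplus G_{-n-1}(X)$, that forces $G_{-n-1}=0$.
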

    \begin{proof}
    Clearly, such a $G$ is strictly $n$-spherical iff for all $i$ the space $\mathrm{B}^{i}_{nis}G$ is an $\mathrm{S}^{n}_\mathbb{A}$-local motivic space. But $\mathrm{S}^{n}_\mathbb{A}\simeq \Sigma^{n}\mathbb{G}^{n+1}$, so that for $i<n$ the space $\mathrm{B}^{i}_{nis}G$ is always $n$-spherical. Therefore, $G$ is strictly $n$-spherical iff for all $i\geq n$ the space $\mathrm{B}^{i}_{nis}G$ is an $\mathrm{S}^{n}_\mathbb{A}$-motivic space. Now, if $i\geq n$, then $\mathrm{B}^i_{nis}G$ is $n$-spherical iff $$0=(\mathrm{B}^i_{nis}G)^{\mathrm{S}^{n}_\mathbb{A}}\simeq \Omega_{\mathbb{G}}^{n+1}\mathrm{B}^{i-n}_{nis}G\simeq \mathrm{B}^{i-n}_{nis}(G_{-n-1})$$ (see [\cite{bachmann2024strongly}, Lemma 4.2] for the last equivalences). This is clearly equivalent to the $i=n$ case, namely $G_{-n-1}=0$.        
\end{proof}
\begin{rem}\label[rem]{n spherical is lpn null}
    Using the same argument (or [\cite{asok2023p}, Corollary 3.1.21]), it follows that, over a perfect field, the conditions of the above Lemma are equivalent to $\mathrm{B}_{nis}^iG$ being $L^{p,n+1}$-null for all $i\geq 0$. 
\end{rem}

\begin{thm}\label{t structure on n spherical motivic spectras}
    Let $k$ be a field. The $n$-th Tate-truncated category $\mathcal{SH}^{S^1}(k)/f_{n}$ admits an accessible, complete t-structure such that each of the functors (and hence the composite) in the chain of inclusions below $$\mathcal{SH}^{S^1}(k)/f_n\subset \mathcal{SH}^{S^1}(k) \subset \mathcal{S}pt_{nis}^{S^1}(k) $$ is $t$-exact. The heart of this $t$-structure is the category of strictly $(n-1)$-spherical, strictly $\mathbb{A}^1$-invariant sheaves of abelian groups. When $k$ is a perfect field, this heart is equivalent to the kernel of the functor $$(-)_{-{n}}: Ab_k^{\mathbb{A}^1}\to Ab_k^{\mathbb{A}^1}.$$
  \end{thm}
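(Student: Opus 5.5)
The plan is to apply \Cref{localization of t structures} twice, using \Cref{further t localization} to convert connectivity preservation into right $t$-exactness. First take $\mathscr{C}=\mathcal{S}pt^{S^1}_{nis}(k)$ with the canonical Postnikov $t$-structure of the Nisnevich $\infty$-topos, which is accessible and complete. Take $L=L^{sp}_{mot}$. By \Cref{Motivic t structure}, Morel's stable connectivity theorem (equivalently \Cref{spectra A nis localization preserves pre connectivity} together with \Cref{nis to pre to weak} and [\cite{morel2005stable}, Lemma 6.1.6]) says that $L$ is right $t$-exact. This gives the homotopy $t$-structure on $\mathcal{SH}^{S^1}(k)$ and makes the inclusion into $\mathcal{S}pt^{S^1}_{nis}(k)$ $t$-exact.

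Next, work inside $\mathcal{SH}^{S^1}(k)$ and take $L=f_{0/n}$. This equals $L_{\mathbb{A}^1,\mathrm{S}^{n-1}_\mathbb{A},nis}$ restricted to $\mathcal{SH}^{S^1}(k)$ by \eqref{tate truncation is spherical localization}, and it is right $t$-exact by \Cref{slice connectivity}. For $n=0$ the category is trivial. \Cref{localization of t structures} then produces an accessible $t$-structure $(\mathcal{SH}^{S^1}(k)/f_n\cap \mathcal{SH}^{S^1}(k)_{\geq0},\ \mathcal{SH}^{S^1}(k)/f_n\cap\mathcal{SH}^{S^1}(k)_{\leq 0})$. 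It is left complete because the original $t$-structure is. For right completeness I need the inclusion to be closed under telescopic colimits, which is \Cref{slice localization is colimit closed}. Both inclusions are $t$-exact by construction, since both halves of the induced $t$-structure are intersections, so the composite is $t$-exact as well.

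For the heart, \Cref{localization of t structures} identifies it with $\mathcal{SH}^{S^1}(k)^\heartsuit\cap \mathcal{SH}^{S^1}(k)/f_n$. By \Cref{Motivic t structure}, $\mathcal{SH}^{S^1}(k)^\heartsuit$ consists of Eilenberg--MacLane spectra $HG$ of strictly $\mathbb{A}^1$-invariant sheaves $G$. It remains to show that $HG$ is $\mathrm{S}^{n-1}_\mathbb{A}$-local exactly when $G$ is strictly $(n-1)$-spherical. Maps $\Sigma^\infty_+ Y[-i]\to HG$ compute $H^i_{nis}(Y,G)$, and locality against the projections $\Sigma^\infty_+(X\times\mathrm{S}^{n-1}_\mathbb{A})\to\Sigma^\infty_+X$ together with their shifts says precisely that every presheaf $H^i_{nis}(-,G)$ is $\mathrm{S}^{n-1}_\mathbb{A}$-local. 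That is the definition of strictly $(n-1)$-spherical. Over a perfect field, \Cref{strictly n spherical has trivial n+1 contraction} with $n-1$ in place of $n$ turns this into $G_{-n}=0$, i.e. the kernel of $(-)_{-n}$.

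The main obstacle is this last identification. One has to check that locality of $HG$ in the stable category against the generating projections, at all shifts, matches the unstable condition on the cohomology presheaves; I would argue this through the adjunction $\Sigma^\infty_+\dashv\Omega^\infty$ applied to $\Sigma^iHG$. A second point to check is that right completeness really only needs sequential colimits to be preserved, which \Cref{slice localization is colimit closed} supplies.
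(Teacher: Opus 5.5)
Your proposal is correct and follows essentially the same route as the paper. The paper obtains right $t$-exactness of $f_{0/n}$ from the slice connectivity theorem via the observation on stabilized topoi, then applies the localization-of-$t$-structures proposition, using colimit-closedness of the inclusions for right completeness. It identifies the heart as the Eilenberg--MacLane spectra of strictly $\mathbb{A}^1$-invariant sheaves whose cohomology presheaves are $\mathrm{S}^{n-1}_\mathbb{A}$-local, and over perfect fields reduces this to $G_{-n}=0$ by the contraction lemma.

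Your two-step presentation (first $L^{sp}_{mot}$ on $\mathcal{S}pt^{S^1}_{nis}(k)$, then $f_{0/n}$ on $\mathcal{SH}^{S^1}(k)$) and your mapping-spectrum check of the heart condition are a more explicit unpacking of the paper's terse argument. Your separate treatment of the trivial case $n=0$ is also a useful addition.
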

  \begin{proof}
In the presence of \Cref{slice connectivity}, this follows from the theorem on localization of $t$-structures, \Cref{localization of t structures}. Since $$f_{0/n}: \mathcal{S}pt_{nis}^{S^1}(k)\to  \mathcal{S}pt_{nis}^{S^1}(k), \quad f_{0/n}: \mathcal{SH}^{S^1}(k)\to \mathcal{SH}^{S^1}(k)$$ preserve connectivity, they are right $t$-exact by observation \Cref{further t localization}. Whence, Proposition \Cref{localization of t structures} applies, yielding the following accessible, left-complete $t$-structure on $$(\mathcal{SH}_{S^1}(k)/f_n\cap \mathcal{SH}^{nis}_{S^1\geq 0}, \mathcal{SH}_{S^1}(k)/f_n\cap \mathcal{SH}^{nis}_{S^1\leq 0}).$$ Since the inclusions are closed under (filtered) colimits (\Cref{slice localization is colimit closed}), right completeness follows from the same proposition. For the statement regarding the inclusion $\mathcal{SH}^{S^1}/{f_n}\subset \mathcal{SH}^{S^1}$, one must first note the contents of \Cref{Motivic t structure}, the stable $\mathbb{A}^1$ connectivity theorem over fields [\cite{morel2005stable}, Theorem 6.1.8], and \Cref{further t localization}. 

Regarding the comment about the heart, the discussion is the same as the one above. We can identify the heart as the full subcategory of Nisnevich Eilenberg-MacLane spectra, all of whose presheaf stable homotopy groups are $n$-birational. These are clearly Nisnevich sheaves of abelian groups whose Nisnevich cohomology groups are $n$-birational. This shall be the category of sheaves of abelian groups, all of whose Nisnevich cohomology groups are $\mathrm{S}^n_\mathbb{A}$-local and $\mathbb{A}^1$-invariant. This is precisely the definition of strictly $\mathbb{A}^1$-invariant, strictly $n$-spherical Nisnevich sheaves of abelian groups. The comment over perfect fields follows from the lemma just above \Cref{strictly n spherical has trivial n+1 contraction}.
\end{proof}
  We shall denote this heart by ${Ab^{\mathbb{A}^1}_k}{/f_n}$.
\begin{cor}\label[cor]{slice t-exact}
     $f_{0/n}$ is right $t$-exact for the homotopy $t$-structure.
  \end{cor}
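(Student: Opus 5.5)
The claim is that $f_{0/n}$, viewed as an endofunctor of $\mathcal{SH}^{S^1}(k)$ with Morel's homotopy $t$-structure (\Cref{Motivic t structure}), sends $\mathcal{SH}^{S^1}(k)_{\geq 0}$ into $\mathcal{SH}^{S^1}(k)_{\geq 0}$. I will obtain this directly from the slice connectivity theorem together with \Cref{further t localization}.

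For $n=0$ the functor $f_{0/0}=\mathrm{cofib}(f_0\to 1)$ is zero, so there is nothing to prove. For $n\geq 1$, write $n=m+1$. By \Cref{tate truncation is spherical localization}, $f_{0/m+1}$ is identified with the restriction of $L^{sp}_{\mathbb{A}^1,\mathrm{S}^{m}_\mathbb{A},nis}$ to $\mathcal{SH}^{S^1}(k)$. Take $\mathcal{A}\in\mathcal{SH}^{S^1}(k)_{\geq 0}$. Then $\mathcal{A}$ is motivic local and Nisnevich $(-1)$-connected. \Cref{slice connectivity} says exactly that $L^{sp}_{\mathbb{A}^1,\mathrm{S}^{m}_\mathbb{A},nis}$ preserves Nisnevich connectivity, so $f_{0/n}\mathcal{A}$ is Nisnevich $(-1)$-connected. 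Since $f_{0/n}\mathcal{A}$ also lies in $\mathcal{SH}^{S^1}(k)/f_n\subset\mathcal{SH}^{S^1}(k)$, it belongs to $\mathcal{SH}^{S^1}(k)\cap\tau^{nis}_{\geq 0}\mathcal{S}pt^{S^1}_{nis}(k)=\mathcal{SH}^{S^1}(k)_{\geq 0}$.

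The same conclusion can be phrased through \Cref{further t localization}. We have a localization $R=f_{0/n}$ of $L\mathrm{Stab}(\mathcal{T})=\mathcal{SH}^{S^1}(k)$, where $\mathcal{T}$ is the Nisnevich $\infty$-topos and $L=L^{sp}_{mot}$ is right $t$-exact by Morel's stable connectivity theorem. For such an $R$, preserving connectivity is the same as being right $t$-exact. As a consistency check, \Cref{t structure on n spherical motivic spectras} shows that the inclusion $\mathcal{SH}^{S^1}(k)/f_n\subset\mathcal{SH}^{S^1}(k)$ is $t$-exact, so the corestricted functor $f_{0/n}\colon\mathcal{SH}^{S^1}(k)\to\mathcal{SH}^{S^1}(k)/f_n$ is right $t$-exact as well.

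The only step that needs care is the bookkeeping of indices: the slice connectivity result is stated for $f_{0/n+1}$ in terms of $\mathrm{S}^{n}_\mathbb{A}$, so it has to be reindexed, and the case $n=0$ has to be treated separately as above. The substantive work was already done in \Cref{slice connectivity}.
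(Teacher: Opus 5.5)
Your proposal is correct and follows essentially the same route as the paper. The paper obtains this corollary inside the proof of \Cref{t structure on n spherical motivic spectras} by combining \Cref{slice connectivity}, via the identification $f_{0/n}\simeq L^{sp}_{\mathbb{A}^1,\mathrm{S}^{n-1}_\mathbb{A},nis}$, with \Cref{further t localization}; your reindexing and your separate treatment of the degenerate case $n=0$ simply make explicit what the paper leaves implicit.
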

 
\begin{thm}[excerpted from [\cite{morel2005stable}, Lemma 6.2.7\text{]}]\label{cover judgment em judgment to local}  
    Under the conditions of \Cref{localization of t structures}, for an object $X\in\mathscr{C}$, the following conditions are equivalent:
    \begin{enumerate}
        \item $X$ is $L$-local.
        \item All connective covers $X_{\geq m}$ are $L$-local for all $m$.
        \item The "Eilenberg-MacLane" layers, i.e., the $t$-homotopy groups $\pi_m^tX$ are $L$-local for all $m$.
        \item The $t$-homotopy groups $\pi_m^tX$ satisfy $$\pi_m^tX\in (L\mathscr{C})^\heartsuit = L\mathscr{C}\cap\mathscr{C}^\heartsuit.$$
    \end{enumerate}
  \end{thm}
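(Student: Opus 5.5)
We prove the equivalences in the cycle (1)$\Rightarrow$(2)$\Rightarrow$(3)$\Rightarrow$(4)$\Rightarrow$(1). Throughout we use two facts. First, $L$ is right $t$-exact. Second, $L\mathscr{C}$ is a stable subcategory closed under limits and retracts. By \Cref{localization of t structures}, it carries the localized $t$-structure, and the inclusion $L\mathscr{C}\subset\mathscr{C}$ is $t$-exact.

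For (1)$\Rightarrow$(2), suppose $X$ is $L$-local. Since the inclusion is $t$-exact, the truncation triangle $X_{\geq m}\to X\to X_{<m}$ computed in $L\mathscr{C}$ with the localized $t$-structure is also a truncation triangle in $\mathscr{C}$. By uniqueness of truncation triangles, $X_{\geq m}$ lies in $L\mathscr{C}$. Concretely, this is the retract argument from the proof of \Cref{localization of t structures} applied to $\Sigma^{-m}X$: $X_{\geq m}$ is a retract of $L(X_{\geq m})$, hence local.

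For (2)$\Rightarrow$(3), note that $\pi^t_mX[m]$ is the cofiber of $X_{\geq m+1}\to X_{\geq m}$. Both terms are local and $L\mathscr{C}$ is stable, so the cofiber is local. The implication (3)$\Leftrightarrow$(4) is a tautology, since $\pi^t_mX$ lies in $\mathscr{C}^\heartsuit$ by definition.

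The main work is (4)$\Rightarrow$(1). Assume every $\pi_m^tX$ is local. By induction on the length, using the triangles $X_{\geq m+1}\to X_{\geq m}\to \pi^t_mX[m]$ and stability, every truncation $\tau_{[a,b]}X$ is local. Passing to limits, $X_{\leq b}=\lim_a \tau_{[a,b]}X$ is local for each $b$, because limits of local objects are local. This uses that $\mathscr{C}$ is left complete in the appropriate sense: one needs $X_{\leq b}\simeq\lim_a\tau_{\geq a}X_{\leq b}$, where the limit runs downward in $a$; this is really right completeness, or a convergence hypothesis, for bounded-above objects. Then $X\simeq\lim_b X_{\leq b}$ by left completeness, so $X$ is local.

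The expected obstacle is exactly these completeness and convergence inputs. In the abstract statement one must invoke the completeness hypotheses carried over from \Cref{localization of t structures}, since without them (4)$\Rightarrow$(1) fails for objects with "phantom" parts. The alternative is a colimit argument: write $X_{\leq b}=\colim_a\tau_{[a,b]}X$ via right completeness, and use that $L\mathscr{C}$ is closed under the relevant colimits, as in \Cref{slice localization is colimit closed}. In the motivic application both routes are available. Morel's $t$-structure is complete, and the slice-local subcategory is closed under both limits and colimits.
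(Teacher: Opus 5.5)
Your argument is correct once you commit to the colimit version of the passage $a\to-\infty$. It runs the paper's two completeness steps in the opposite order.

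Drop the ``limit'' version you present first; it cannot be reinterpreted. For $Y=X_{\leq b}$, the truncations are related by canonical maps $\tau_{\geq a}Y\to\tau_{\geq a-1}Y$. So $a\mapsto\tau_{[a,b]}X$ is a sequential colimit system, and there is no limit presentation $X_{\leq b}\simeq\lim_a\tau_{[a,b]}X$. For the same reason, the closing remark that ``both routes are available'' in the motivic case is inaccurate.

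With the colimit version, your route is:
\begin{enumerate}
\item Finite extensions show that each $\tau_{[a,b]}X$ is local.
\item Right completeness, together with closure of $L\mathscr{C}$ under telescopic colimits, shows that $X_{\leq b}\simeq\dcolim_a\tau_{[a,b]}X$ is local.
\item Left completeness, together with closure under limits, shows that $X\simeq\plim_b X_{\leq b}$ is local.
\end{enumerate}

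The paper swaps the order. For bounded-below $Y\in\mathscr{C}_{\geq m_0}$, it builds the truncations $\tau_{\leq i}Y$ by extensions, starting from $\tau_{\leq m_0-1}Y\simeq 0$. It then gets $Y\simeq\plim_i\tau_{\leq i}Y$ local by left completeness. Only afterwards does it treat general $X$, via $X\simeq\dcolim_m\tau_{\geq m}X$, using right completeness and telescopic closure.

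Both arguments use exactly the same inputs: both completeness conditions from \Cref{localization of t structures}, and closure of $L\mathscr{C}$ under limits and telescopic colimits. Neither is more general. Your observation that these hypotheses are genuinely needed for (4)$\Rightarrow$(1) is apt, since the phrase ``under the conditions of'' leaves them implicit.

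The remaining implications match the paper: the retract argument for (1)$\Rightarrow$(2), and the (co)fiber argument for (2)$\Rightarrow$(3).
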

  \begin{proof}
(1)$\implies$(2) is embedded in the proof of \Cref{localization of t structures}. Since the localized category is closed under limits, (2)$\implies$(3) is obvious. 
  
The only nontrivial part is (3)$\implies$(1).
For this, assume that $\mal{B}$ satisfies the conditions in the statement and lies in $\mathscr{C}_{\geq m_0}$ for some $n_0$. Consider the fiber sequence in $\mathscr{C}$ 
$$\Sigma^{m_0}(\pi_{m_0}^{t}({Y}))\to \tau _{\leq m_0}^{}{Y}\to \tau^{}_{\leq m_0-1}{Y}\simeq 0.$$
The first term is given to be local, and the last term is tautologically local. We conclude that the extension $\tau_{\leq n_0}^{}{Y}$ is $L$-local. By repeating this argument, we see that all its truncations are local. Since, by left completeness, the morphism $${Y}\to \plim_{i}\tau^t_{\leq i}{Y}$$ is an equivalence and $L\mathscr{C}$ is closed under limits, we conclude that ${Y}$ is $L$-local. 

Finally, observe that if ${X}$ is such that all the objects $\pi_i^{s}{X}$ are local, then its connective covers $\tau_{\geq m}^{t}{X}$ also have the same property. But then the paragraph above says those covers must be local. To conclude, it suffices to note that, by right completeness, $$\dcolim_{m}\tau_{\geq m}^{t}{X}\to{X} $$ is an equivalence, and this will belong to $L\mathscr{C}$ by closedness under telescopic colimits.

(3)$\iff $(4) is formal form the identification $(L\mathscr{C})^\heartsuit = L\mathscr{C}\cap\mathscr{C}^\heartsuit$.
 \end{proof}

\begin{cor}\label[cor]{characterizing n bir spectra}
    Let $\esc{X}$ be a Nisnevich local spectrum over a field. The following are then equivalent:
    \begin{enumerate}
        \item $\esc{X}$ is $n$-tate truncated, i.e., $\esc{X}\simeq f_{0/n}\esc{X}$.
        \item For all $i$ the nisnevich connective covers $\tau^{nis}_{\geq i}\esc{X}$ are $n$-tate truncated.
        \item For all $i$ the stable homotopy group $\pi_i^{nis}\esc{X}$ are strictly $(n-1)$-spherical strictly $\mathbb{A}^1$-invariant.
    \end{enumerate}
    When $k$ is a perfect field, these are moreover equivalent to 
    \begin{enumerate}[resume]
        \item For all $i$ the stable homotopy groups $\pi_i^{nis}\esc{X}$ are strictly  $\mathbb{A}^1$-invariant and $(\pi_i^{nis}\esc{X})_{-n}=0$.
    \end{enumerate}
\end{cor}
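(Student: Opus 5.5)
This corollary is obtained by specializing \Cref{cover judgment em judgment to local} to $\mathscr{C}=\mathcal{S}pt^{S^1}_{nis}(k)$ with its Nisnevich Postnikov $t$-structure and $L=f_{0/n}$. Concretely, $L$ is the composite localization $L^{sp}_{\mathbb{A}^1,\mathrm{S}^{n-1}_\mathbb{A},nis}$ restricted to Nisnevich local spectra, as in \eqref{tate truncation is spherical localization}.

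\textbf{Hypotheses of \Cref{localization of t structures}.} These hold here.
\begin{enumerate}
\item Right $t$-exactness of $L$ is \Cref{slice connectivity} together with \Cref{further t localization}.
\item The Nisnevich $t$-structure on $\mathrm{Stab}(\mathcal{P}_{nis}(k))$ is left and right complete. It is right complete since filtered colimits of Nisnevich local spectra are computed sectionwise. For left completeness, Postnikov towers converge because $k$ has finite-dimensional smooth schemes, so Nisnevich cohomological dimension is finite, as already used in \Cref{nis conn to pre conn}.
\item The inclusion of $L$-local objects is closed under limits and filtered colimits by \Cref{slice localization is colimit closed}.
\end{enumerate}

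\textbf{The three implications.} (1)$\Rightarrow$(2) is the implication (1)$\Rightarrow$(2) of \Cref{cover judgment em judgment to local}. This uses that $L$ preserves $\mathscr{C}_{\geq 0}$, so a connective cover of a local object is a retract of a local object. (2)$\Rightarrow$(3) follows because $\Sigma^i\pi_i^{nis}\esc{X}$ is the cofiber of $\tau_{\geq i+1}^{nis}\esc{X}\to\tau^{nis}_{\geq i}\esc{X}$, and local objects are closed under cofibers by stability. For (3)$\Rightarrow$(1), identify the heart. By \Cref{t structure on n spherical motivic spectras}, $(L\mathscr{C})^\heartsuit$ consists of the Eilenberg--MacLane spectra of strictly $\mathbb{A}^1$-invariant, strictly $(n-1)$-spherical sheaves. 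Condition (3) therefore says exactly that each $\pi^{nis}_i\esc{X}$, viewed in the heart, is $L$-local, which is condition (3)/(4) of \Cref{cover judgment em judgment to local}. The implication (3)$\Rightarrow$(1) there then applies. It first shows that bounded-above truncations are local by induction on extensions, then passes to $\esc{X}$ via left completeness (a limit of truncations) and right completeness (a colimit of covers).

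\textbf{Perfect field case.} Here (3)$\Leftrightarrow$(4) is \Cref{strictly n spherical has trivial n+1 contraction} with $n$ replaced by $n-1$: a strictly $\mathbb{A}^1$-invariant sheaf is strictly $(n-1)$-spherical iff its $n$-fold contraction vanishes.

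\textbf{Main obstacle.} The delicate point is the identification of the heart. One must check that an Eilenberg--MacLane spectrum $H G$ is $f_{0/n}$-local iff $G$ is strictly $\mathbb{A}^1$-invariant and strictly $(n-1)$-spherical. The argument is to evaluate the locality conditions against $\mathbb{A}^1$- and $\mathrm{S}^{n-1}_\mathbb{A}$-projections on the sections $\pi_{-i}(HG)(X)=H^i_{nis}(X,G)$. Being local means these cohomology presheaves are both $\mathbb{A}^1$-invariant and $\mathrm{S}^{n-1}_\mathbb{A}$-local, which is precisely the definition. The other point to handle carefully is making the completeness claims precise, for which I would cite Morel's treatment [\cite{morel2005stable}, \S6].
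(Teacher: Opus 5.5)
Your proposal is correct and follows the paper's argument: specialize \Cref{cover judgment em judgment to local} to $\mathcal{S}pt^{S^1}_{nis}(k)$ with $L=f_{0/n}$, using \Cref{slice connectivity} for right $t$-exactness, \Cref{t structure on n spherical motivic spectras} to identify the heart, and \Cref{strictly n spherical has trivial n+1 contraction} (with $n$ replaced by $n-1$) for the perfect-field clause. You additionally make explicit the completeness and colimit-closure hypotheses, and the heart computation via the cohomology presheaves $H^i_{nis}(-,G)$, which the paper's one-line proof leaves implicit.
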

\begin{proof}
    Apply the above Theorem in the presence of \Cref{slice connectivity}. Identify the heart as the strictly $n$-spherical, strictly $\mathbb{A}^1$-invariant sheaves of abelian groups from \Cref{t structure on n bir spectras}.
\end{proof}

\begin{prop}\label[prop]{non negative slice generation}
Over any field $k$, the $\infty$-category $(\mathcal{SH}^{S^1}(k)/{f_n})_{\geq 0}$ is the smallest subcategory generated by $\Sigma_+^{\infty}X/f_n$ under colimits as subcategories of:
\begin{enumerate}
    \item $\mathcal{SH}^{S^1}(k)/f_n$
    \item $\mathcal{SH}^{S^1}(k)$
    \item $\mathcal{SH}^{S^1}(k)_{\geq 0}$
\end{enumerate}
\end{prop}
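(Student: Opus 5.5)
Let $\mathcal{G}_n$ denote the full subcategory of $\mathcal{SH}^{S^1}(k)$ generated under colimits by the objects $\Sigma^\infty_+X/f_n:=f_{0/n}\Sigma^\infty_+X$ for $X\in Sm_k$. The plan is to show $\mathcal{G}_n=(\mathcal{SH}^{S^1}(k)/f_n)_{\geq 0}$, and to observe that colimits in the three ambient categories agree on the relevant objects, so the three versions of "generated under colimits" coincide.

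First I treat the comparison of colimits. By \Cref{slice localization is colimit closed}, the inclusion $\mathcal{SH}^{S^1}(k)/f_n\subset\mathcal{SH}^{S^1}(k)$ preserves all colimits, so a colimit of objects of $\mathcal{SH}^{S^1}(k)/f_n$ computed in (1) agrees with the one computed in (2). Each $\Sigma^\infty_+X$ is connective, and $f_{0/n}$ preserves connectivity (\Cref{slice connectivity}), so every generator lies in $\mathcal{SH}^{S^1}(k)_{\geq 0}$. Since $\mathcal{SH}^{S^1}(k)_{\geq 0}$ is closed under colimits in $\mathcal{SH}^{S^1}(k)$ (it is the connective half of a $t$-structure), colimits computed in (3) agree with those in (2). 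Hence the three generated subcategories coincide, and we may work in $\mathcal{SH}^{S^1}(k)$.

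Next I prove $\mathcal{G}_n\subset(\mathcal{SH}^{S^1}(k)/f_n)_{\geq 0}$. By \Cref{t structure on n spherical motivic spectras} (via \Cref{localization of t structures}), $(\mathcal{SH}^{S^1}(k)/f_n)_{\geq 0}=\mathcal{SH}^{S^1}(k)/f_n\cap\mathcal{SH}^{S^1}(k)_{\geq 0}$. Both of these subcategories are closed under colimits in $\mathcal{SH}^{S^1}(k)$, and both contain the generators by the previous paragraph. So $\mathcal{G}_n$ is contained in their intersection.

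For the reverse inclusion, take $E\in(\mathcal{SH}^{S^1}(k)/f_n)_{\geq 0}$. By Morel's description of the homotopy $t$-structure, $\mathcal{SH}^{S^1}(k)_{\geq 0}$ is generated under colimits by the $\Sigma^\infty_+X$. One could verify this directly by writing a connective Nisnevich-local spectrum as the geometric realization of a simplicial diagram of sums of $\Sigma^\infty_+X$, using the standard presentation of connective presheaves of spectra and then applying $L_{mot}^{sp}$. Writing $E\simeq\colim_\alpha\Sigma^\infty_+X_\alpha$ in $\mathcal{SH}^{S^1}(k)$ and applying the colimit-preserving localization $f_{0/n}$ gives
\[
E\simeq f_{0/n}E\simeq\colim_\alpha f_{0/n}\Sigma^\infty_+X_\alpha.
\]
This colimit is computed in $\mathcal{SH}^{S^1}(k)/f_n$, which agrees with the colimit in $\mathcal{SH}^{S^1}(k)$ by the first step. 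Therefore $E\in\mathcal{G}_n$.

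The main obstacle is the generation statement for $\mathcal{SH}^{S^1}(k)_{\geq 0}$, which holds over an arbitrary (not necessarily perfect) field. I would cite Morel's stable connectivity theorem together with the fact that the Nisnevich connective part of $\mathcal{S}pt^{S^1}_{nis}(k)$ is generated by suspension spectra. Applying $L^{sp}_{mot}$ then carries these generators to generators, using the right $t$-exactness of $L^{sp}_{mot}$ from \Cref{spectra A nis localization preserves pre connectivity} and \Cref{nis to pre to weak}.
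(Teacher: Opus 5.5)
Your proposal is correct and follows essentially the same route as the paper. One direction uses that $f_{0/n}$ preserves connectivity and that both the local and the connective subcategories are colimit-closed; the other applies the colimit-preserving, idempotent $f_{0/n}$ to the colimit-generation of the connective part by suspension spectra; and the three ambient categories (1)--(3) are compared via \Cref{slice localization is colimit closed} together with the colimit-closedness of $\mathcal{SH}^{S^1}(k)_{\geq 0}$. One cosmetic point: rather than writing $E$ literally as a single colimit of generators, phrase the reverse inclusion as ``the objects $E\in\mathcal{SH}^{S^1}(k)_{\geq 0}$ with $f_{0/n}E\in\mathcal{G}_n$ form a colimit-closed subcategory containing the generators.'' This is the argument the paper's proof uses implicitly.
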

\begin{proof}
      (1)  Let $\mathcal{C}\subset \mathcal{SH}^{S^1}(k)/f_n$ be the full subcategory generated by $\Sigma ^{\infty}_+X/f_n$. By the connectivity of the Tate truncation functor $f_{0/n}$ (\Cref{slice connectivity}), we have $\mathcal{C}\subset {(\mathcal{SH}^{S^1}(k)/f_n)}_{\geq 0}$. 
        Conversely, note that
        \begin{flalign*}
         (\mathcal{SH}^{S^1}(k)/{f_n})_{\geq 0}&=(\mathcal{SH}^{S^1}(k)/{f_n})\bigcap\mathcal{SH}^{S^1}(k)_{\geq 0}\\&=f_{0/n}(\mathcal{SH}^{nis}_{S^1}(k)_{\geq 0})\text{ [use the fact that }f_{0/n} \text{ is idempotent]}   
        \end{flalign*}
     But since $ \mathcal{SH}^{nis}_{S^1}(k)_{\geq 0}$ is generated under colimits by $\Sigma^\infty_+ X$ and $f_{0/n}$ preserves colimits, we see that $f_{0/n}(\mathcal{SH}^{nis}_{S^1}(k)_{\geq 0})\subset \mathcal{C}$. 

     (2) follows from (1) since the inclusion $\mathcal{SH}^{S^1}(k)/f_n\subset\mathcal{SH}^{S^1}(k)$ is closed under colimits (\Cref{slice localization is colimit closed}). (3) follows from the additional fact that $\mathcal{SH}^{S^1}(k)_{\geq 0}\subset \mathcal{SH}^{S^1}(k)$ is colimit closed.
\end{proof}

\subsection{Connectivity of Derived slices}
All constructions and computations adapt naturally to the setting of chain complexes. Generally, the principle is that we apply the whole construction to the linearization of  $Sm_S$, i.e., the additive category $\mathbb{Z}Sm_{S+}$ whose objects are $X$ for  $X\in Sm_S$ and the sets of morphisms are the free abelian group generated by the sets $Sm_{S}(X,Y)$: 
$$\mathrm{Hom}_{\mathbb{Z}{Sm}_{S}}(X, Y) = \mathbb{Z}[Sm_S(X, Y)].$$

Since $\mathbb{Z}Sm_{S}$ is an additive category, it follows that 
\begin{flalign}\label{pzsm=sabsm} 
\mathcal{P}_{\Sigma }(\mathbb{Z}Sm_{S})\simeq s\mathcal{A}b_\Sigma(Sm_S).
\end{flalign}
The the inclusion functor $$\mathbb{Z}:Sm_S\to \tilde{\mathbb{Z}}Sm_{S}$$ evidently factors through the inclusion $Sm_S\to 
Sm_{S+}$ as $$Sm_S \xrightarrow{(-)_+} Sm_{S+}\xrightarrow[]{\tilde{\mathbb{Z}}}\mathbb{Z}Sm_{S}$$ and hence induces the standard sequence of adjunctions by taking $\sqcup$-local derivation:$$\mathcal{P}_\Sigma(Sm_S) \rightleftarrows \mathcal{P}_\Sigma(Sm_S)_* \rightleftarrows \mathcal{P}_\Sigma(\mathbb{Z}Sm_S)\simeq  sA\mathrm{b}_\Sigma(Sm_S) \simeq \mathcal{D}_{\ge 0}(S)$$ where the equivalence at the middle is \Cref{pzsm=sabsm}, and the one on the extreme right is obtained by the Dold–Kan correspondence $s\mathcal{A}\mathrm{b} \simeq \mathcal{D}_{\ge 0}(\mathbb{Z})$.

 Passing to Nisnevich-local and $\mathbb{A}^1$-invariant sheaves gives equivalences: $$\mathcal{P}_{nis}(\mathbb{Z}Sm_S)\simeq \mathcal{P}_{nis}(Sm_{S}, Ch_{\geq 0}),\quad \mathcal{H}^{\mathbb{A}^1}(\mathbb{Z}Sm_S)\simeq {\mathcal{D}}^{\mathbb{A}^1}_{\geq 0}.$$ 

Upon stabilization, these equivalences give rise to equivalences:
$$\mathcal{S}pt_{nis}(\mathbb{Z}Sm_S)\simeq \mathcal{P}_{nis}(Sm_{S}, Ch):=\mathcal{D}_{nis}(S),\quad \mathcal{SH}^{S^1}(\mathbb{Z}Sm_S)\simeq {\mathcal{D}}^{\mathbb{A}^1}(S),$$
where $\mathcal{D}^{\mathbb{A}^1}(S)$ was defined in [\cite{morel2012a1}, Definition 6.17].

The connective adjunction then extends to the stable Hurewicz adjunction: $$\mathcal{SH}^{S^1}(S){\leftrightarrows}  \mathcal{SH}^{S^1}(\mathbb{Z}Sm_S)\simeq{\mathcal{D}}^{\mathbb{A}^1}(S):\mathrm{H}.$$  We denote the corresponding localization as $$L_{nis}^{ch}: {\mathcal{D}}(S)\to \mathcal{D}^{nis}(S),\quad L_{mot}^{ch}: {\mathcal{D}}(S)\to \mathcal{D}^{\mathbb{A}^1}(S).$$ This yields the following canonical commutative squares:
\[
\xymatrix@C=4pc@R=4pc{
  \mathcal{S}pt(S) \ar@<-0.7ex>[d]_{\Gamma} \ar@<0.7ex>[r]^{L_{{nis}}^{sp}} 
  & \mathcal{S}pt_{nis}(S) \ar@<-0.7ex>[d]_{\Gamma}\ar@<0.7ex>[r]^{L_{mot}^{sp}} \ar@<0.7ex>[l]  
  & \mathcal{SH}^{S^1}(S) \ar@<-0.7ex>[d]_{\Gamma} \ar@<0.7ex>[l]\\
  \mathcal{D}(S) \ar@<-0.7ex>[u]_{\mathrm{H}} \ar@<0.7ex>[r]^{L_{nis}^{{ch}}} 
  & \mathcal{D}_{nis}(S) \ar@<-0.7ex>[u]_{\mathrm{H}}\ar@<0.7ex>[r]^{L_{mot}^{\mathrm{ch}}}\ar@<0.7ex>[l] 
  & \mathcal{D}^{\mathbb{A}^1}(S) \ar@<-0.7ex>[u]_{\mathrm{H}} \ar@<0.7ex>[l]
}\]

\begin{ex}
If $k$ is a field of characteristic $0$, then taking global sections of the de Rham complex $$\Omega_{dR}:=\Gamma(-,\Omega_{-/k}^\bullet) : Sm_k^{op}\to Ch_{\geq 0}$$ is a motivic complex. Indeed, for $\mathbb{A}^1$-invariance, recall that when $X/k$ is smooth and separated, there is a Kunneth isomorphism  $$\Gamma (\mathbb{A}^1_k\times _k X,\Omega_{\mathbb{A}^1_k\times _k X}^\bullet)\simeq \Gamma (\mathbb{A}^1_k,\Omega_{\mathbb{A}^1_k}^\bullet )\otimes_k \Gamma (X,\Omega_{X/k}^\bullet)$$ [\cite[\href{https://stacks.math.columbia.edu/tag/0FMB}{Lemma 0FMB}]{stacks-project}], while over characteristic zero $$\Gamma (\mathbb{A}^1_k,\Omega_{\mathbb{A}^1_k}^\bullet )\simeq k$$ by the algebraic Poincar\'e lemma [\cite{hartshorne1975rham}, Proposition (7.1)]. Together, it follows that $$\Gamma (\mathbb{A}^1_k\times _k X,\Omega_{\mathbb{A}^1_k\times _k X}^\bullet)\simeq\Gamma (X,\Omega_{X/k}^\bullet).$$ Since the base is affine, for Nisnevich descent it suffices to consider affine Nisnevich squares. This follows from [\cite{cisinski2012mixed}, 3.1.3].
\end{ex}
\begin{ex}
Every strictly $\mathbb{A}^1$-invariant Nisnevich sheaf of abelian groups, thought of as a complex of degree $0$, is a motivic complex.
\end{ex}

To introduce the derived slice filtration, we set $$\mathbb{Z}(n):=cofib(\oplus \mathbb{Z}\mathbb{G}_m^{n-1}\to \mathbb{Z}\mathbb{G}_m^n)[-n].$$ We denote the full subcategory of $\mathcal{D}^{\mathbb{A}^1}(S)$ monoidally generated by $\mathbb{Z}(n)$ as $\mathcal{D}^{\mathbb{A}^1}(S)(n)$. This is a colocalizing subcategory, and we denote its colocalization by $f^{ab}_n$. The corresponding localization is denoted $f^{ch}_{0/n}$, and the localizing subcategory is denoted $\mathcal{D}^{\mathbb{A}^1}(S)/f^{ch}_{n}$. As in the spectral setup, there is a tower of exact localizations of the stable $\infty$-category of effective motivic complexes:
$$\cdots\to f^{ch}_{0/n+1}\to f^{ch}_{0/n}\to \cdots \to f^{ch}_{0}=1$$
The adjunctions above then descends to $$\Gamma_{/n}: \mathcal{SH}^{S^1}(S)/f^{}_{n}{\leftrightarrows} \mathcal{D}^{\mathbb{A}^1}(S)/f^{ch}_{n}: N_{/n}$$ such that $$\Gamma_{/n}f_{0/n}\simeq f^{ch}_{0/n}\Gamma.$$
\begin{rem}
    Since the category $\mathcal{D}(Sm_k)$ is stable, it is not hard to see that all of Levine's ideas from the homotopy coniveau tower paper [\cite{levine2008homotopy}] work very well for $\mathcal{D}(Sm_k)$, and we have a model for $f_{0/n}^{ch}$ when $k$ is an infinite perfect field. Namely, use $f^{ch}_{0/n}:=(-)^{0/n}\circ L_{mot}^{ch}$ where $$\esc{E}^{(0/n)}(X)=\mathrm{Tot}^{\oplus}\big (\colim_{Z\in S^{(0)}(X,\bullet), W\in S^{(n)}(X,\bullet)}\esc{E}^{Z\setminus W}(X\times \Delta ^\bullet\setminus W) \Big )$$ and $$L^{ch}_{mot}(\esc{E}):= \mathrm{Tot}^{\oplus}(\esc{E}^{\mathbb{Z}({\mathbb{A}^{\bullet}/0} )}).$$
\end{rem}
\textbf{Slice connectivity of chain complexes}
One can carry over all the results from the $\mathcal{SH}^{S^1}(k)$, as performed in the previous sections, to the derived settings. We record them without proof: 

\begin{thm}
    Over a field $k$, the linear Tate truncation $f_{0/n}^{ch}$ functor preserves homological connectivity for every $n\geq 0$.
\end{thm}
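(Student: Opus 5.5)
The plan is to run the argument of \S2 again verbatim in the linear setting, replacing presheaves of spectra by presheaves of complexes $\mathcal{D}(S)=\mathcal{P}(Sm_S,Ch)$ and stable homotopy groups by homology sheaves. First, the proof of \Cref{slice quotient is spherical localization} transfers directly. The cofiber sequence $(\mathrm{S}^{n-1}_\mathbb{A},1)\to\mathbb{A}^n\to T^{\wedge n}$ linearizes to $\mathbb{Z}(n)[2n]\simeq \tilde{\mathbb{Z}}\mathrm{S}^{n-1}_\mathbb{A}[1]$ in $\mathcal{D}^{\mathbb{A}^1}(S)$. Hence $\mathcal{D}^{\mathbb{A}^1}(S)(n)$ is the localizing $\otimes$-ideal generated by $\tilde{\mathbb{Z}}\mathrm{S}^{n-1}_\mathbb{A}$, and $f^{ch}_{0/n}$ is the localization of $\mathcal{D}(S)$ at the projections $\mathbb{Z}(X\times\mathrm{S}^{n-1}_\mathbb{A})\to\mathbb{Z}X$, $\mathbb{Z}(X\times\mathbb{A}^1)\to\mathbb{Z}X$ together with Nisnevich equivalences.

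Next, I apply \Cref{model for loc in stab} in $\mathscr{C}=\mathcal{D}(S)$. The generators are the $\mathbb{Z}X$, they are closed under $\otimes$, and $A=\mathbb{Z}\mathrm{S}^{n-1}_\mathbb{A}$ or $\mathbb{Z}\mathbb{A}^1$ is a retract of the unit via a rational point. This gives the formulas of \Cref{formula for a1 sn nis localization} with $\Phi^{ch}_A$, namely
$$f^{ch}_{0/n}L^{ch}_{mot}\simeq \dcolim_m\big(L^{ch}_{mot}\Phi^{ch}_{\mathrm{S}^{n-1}_\mathbb{A}}\big)^m\circ L^{ch}_{mot}.$$
The analogue of \Cref{LA preserves nis locality} holds because the formula of [\cite{hoyois2017six}, Proposition 3.4] is a colimit in the stable category $Ch$, and such colimits commute with finite limits.

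Then I define $m$-pre-connectivity for complexes: $H_i(\mal{A}(\mathcal{O}))=0$ for $i\le m-\dim\mathcal{O}$. I reprove the chain of lemmas in order.
\begin{enumerate}
\item \Cref{function space of pre-connected is -dim pre-conneceted} follows via the fiber sequence $\mal{A}^{\tilde{\mathbb{Z}}\mathbb{X}}\to\mal{A}^{\mathbb{Z}X}\to\mal{A}$.
\item \Cref{presheaf phiA preserve pre-connectivity} follows by tensoring with the sectionwise $(-1)$-connected complex $\tilde{\mathbb{Z}}\mathbb{X}$. The K\"unneth spectral sequence over $\mathbb{Z}$ raises connectivity as for smash products.
\item \Cref{nis conn to pre conn} follows from the Nisnevich descent spectral sequence $H^a_{nis}(X,\mathcal{H}_{-b})\Rightarrow H_{-a-b}$, using the cohomological dimension bound and [\cite{ayoub2020P1}, Lemma 4.9] exactly as before.
\end{enumerate}
Together these give the derived analogue of \Cref{spectra A nis localization preserves pre connectivity}: $L^{ch}_{mot}$ preserves pre-connectivity.

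The key step is the analogue of \Cref{preservation of pre n conn for spherical loca}. For $\mal{A}$ $m$-pre-connected, the fiber of $L^{ch}_{mot}\mal{A}\to L^{ch}_{mot}\Phi^{ch}_{\mathrm{S}^{n-1}_\mathbb{A}}\mal{A}$ is
$$L^{ch}_{mot}\big(L^{ch}_{mot}\tilde{\mathbb{Z}}\mathrm{S}^{n-1}_\mathbb{A}\otimes L^{ch}_{mot}\mal{A}^{\tilde{\mathbb{Z}}\mathrm{S}^{n-1}_\mathbb{A}}\big).$$
Here the second factor is $(m-n)$-pre-connected by item 1 applied to the $(n-1)$-dimensional scheme $\mathrm{S}^{n-1}_\mathbb{A}$. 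The first factor is $(n-2)$-connected because $\tilde{\mathbb{Z}}\mathrm{S}^{n-1}_\mathbb{A}\simeq\mathbb{Z}(n)[2n-1]$ is the linearization of the $(n-2)$-connected $\Sigma^{n-1}\mathbb{G}^{n}$, and the Hurewicz functor preserves connectivity. Adding connectivities gives $m-1$, so colimits finish the preservation of pre-connectivity.

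The main obstacle is this connectivity bookkeeping, in particular making sure the tensor product of a pre-connected and a Nisnevich-connected complex is pre-connected with the correct shift. Sectionwise, $\mathbb{Z}$-flatness issues are harmless because Tor only raises degree. Finally, I conclude as in \Cref{slice connectivity}. Nisnevich $m$-connected implies pre-connected, hence the output is weakly $m$-connected. For motivic complexes over a field, generic connectivity equals Nisnevich connectivity: the homology sheaves are strictly $\mathbb{A}^1$-invariant, so the linear analogue of [\cite{morel2005stable}, Lemma 6.1.6] applies, since such sheaves inject into their generic stalks.
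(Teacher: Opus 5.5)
Your proposal follows the paper's own route. The paper records this theorem without proof and asserts that the \S2 argument carries over verbatim to complexes: the spherical model of $f_{0/n}$, the $\Phi$-construction, the pre-connectivity lemmas, and Morel's generic-connectivity criterion. Your outline is exactly that transfer, and your count $(m-n)+(n-2)+1=m-1$ matches the spectral one after the index shift; only note that $\dim \mathrm{S}^{n-1}_\mathbb{A}=n$, not $n-1$, although $m-n$ is the correct bound.

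The bookkeeping step you flag closes because $L^{ch}_{mot}$ is monoidal and Nisnevich equivalences are stable under $\otimes$. Replace $L^{ch}_{mot}\tilde{\mathbb{Z}}\mathrm{S}^{n-1}_\mathbb{A}$ by its sectionwise truncation $\tau_{\geq n-1}$; this is a Nisnevich equivalence, since the homology sheaves vanish in degrees $\leq n-2$. The tensor product is then sectionwise $(m-1-\dim\mathcal{O})$-connected before the outer $L^{ch}_{mot}$ is applied, and $L^{ch}_{mot}$ preserves pre-connectivity.
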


\begin{thm}\label{t structure on n spherical motivic complexes}
    Let $k$ be a field and $n\geq 0$. The $n$-th Tate-truncated category $\mathcal{D}^{\mathbb{A}^1}/f_{n}$ has an accessible, complete t-structure. The inclusion functors $$\mathcal{D}^{\mathbb{A}^1}(k)/f_n^{ch}\subset \mathcal{D}^{\mathbb{A}^1}(k),\quad \text{ and }\mathcal{D}^{\mathbb{A}^1}(k)/f_n^{ch}\subset \mathcal{D}^{nis} (k)$$ are $t$-exact. The heart of this $t$-structure is equivalent to ${Ab^{\mathbb{A}^1}_k}{/f_n}$. When $k$ is a perfect field, this is equivalent to the kernel of the functor $$(-)_{-{n}}: Ab_k^{\mathbb{A}^1}\to Ab_k^{\mathbb{A}^1}.$$ 
\end{thm}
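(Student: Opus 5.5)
The plan is to run the proof of \Cref{t structure on n spherical motivic spectras} again, with $\mathcal{D}^{\mathbb{A}^1}(k)$ and $\mathcal{D}^{nis}(k)$ in place of $\mathcal{SH}^{S^1}(k)$ and $\mathcal{S}pt^{S^1}_{nis}(k)$. The abstract input is \Cref{localization of t structures} applied to the exact localization $f^{ch}_{0/n}$. For this I need three facts:
\begin{enumerate}
\item $f^{ch}_{0/n}$ is right $t$-exact for the homotopy $t$-structure on $\mathcal{D}^{\mathbb{A}^1}(k)$, which is itself the localization of the canonical $t$-structure on $\mathcal{D}^{nis}(k)$ via \Cref{further t localization} and the linear stable connectivity theorem;
\item the inclusions $\mathcal{D}^{\mathbb{A}^1}(k)/f^{ch}_n\subset\mathcal{D}^{\mathbb{A}^1}(k)\subset\mathcal{D}^{nis}(k)$ are closed under filtered colimits;
\item the canonical $t$-structure on $\mathcal{D}^{nis}(k)$ is complete.
\end{enumerate}
Given these, \Cref{localization of t structures} yields an accessible, left and right complete $t$-structure whose two pieces are the intersections with $\mathcal{D}^{nis}(k)_{\geq 0}$ and $\mathcal{D}^{nis}(k)_{\leq 0}$. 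The $t$-exactness of both inclusions then holds by construction.

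\textbf{Step 1: right $t$-exactness.} This is the preceding theorem, that $f^{ch}_{0/n}$ preserves homological connectivity. Its proof copies \S2.4. First identify $f^{ch}_{0/n}$ with localization at the projections $\mathbb{Z}(X\times\mathrm{S}^{n-1}_\mathbb{A})\to\mathbb{Z}X$, using the cofiber sequence $\mathbb{Z}(\mathrm{S}^{n-1}_\mathbb{A},1)\to\mathbb{Z}\mathbb{A}^n\to\mathbb{Z}T^{n}$ and the linear analogue of \Cref{slice quotient is spherical localization}. Next use \Cref{model for stable bousfield localization} to write it as $\dcolim_m(L^{ch}_{mot}\Phi_{\mathrm{S}^{n-1}_\mathbb{A}})^m$. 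Then run the pre-connectivity arguments of \Cref{function space of pre-connected is -dim pre-conneceted}, \Cref{nis conn to pre conn} and \Cref{preservation of pre n conn for spherical loca}, all of which use only long exact sequences, the Nisnevich descent spectral sequence and cohomological dimension bounds, so they hold verbatim for complexes. Finally, pass from generic to Nisnevich connectivity using the linear version of Morel's Lemma 6.1.6: a motivic complex is connective iff it is connective on fields. This step is the main obstacle, and I would handle it by applying the Hurewicz functor $\mathrm{H}$. Since $\mathrm{H}$ is conservative on homotopy sheaves and commutes with the localizations (by the commuting squares and $\Gamma_{/n}f_{0/n}\simeq f^{ch}_{0/n}\Gamma$ at the level of right adjoints), the statement reduces to the spectral one.

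\textbf{Step 2: colimit closure and the heart.} Colimit closure is proved exactly as in \Cref{slice localization is colimit closed}: these are localizations at sets of maps between compact generators, and Nisnevich descent is a finite-limit condition. \Cref{localization of t structures} then identifies the heart as $\mathcal{D}^{nis}(k)^\heartsuit\cap\mathcal{D}^{\mathbb{A}^1}(k)/f^{ch}_n$. The heart $\mathcal{D}^{nis}(k)^\heartsuit$ is Nisnevich sheaves of abelian groups $G$ in degree $0$. Such a $G$ lies in the localized category iff all $H^i_{nis}(-,G)$ are $\mathbb{A}^1$-invariant and $\mathrm{S}^{n-1}_\mathbb{A}$-local. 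This is the definition of strictly $\mathbb{A}^1$-invariant, strictly $(n-1)$-spherical, so the heart is ${Ab^{\mathbb{A}^1}_k}/f_n$, the same category as in the spectral case. Over a perfect field, \Cref{strictly n spherical has trivial n+1 contraction} rewrites this condition as $G_{-n}=0$, i.e. the heart is the kernel of $(-)_{-n}$.
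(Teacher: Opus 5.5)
Your proposal is correct and takes the same route as the paper, which records this theorem without proof as the transport to complexes of \Cref{t structure on n spherical motivic spectras} and of the \S2.4 connectivity argument, exactly as you outline. Your use of $\mathrm{H}$ for the linear form of Morel's Lemma 6.1.6 is sound. That step only needs two things: that $\mathrm{H}$ carries motivic complexes to motivic spectra, which is what passing to right adjoints in the commuting squares gives, and that $\pi_i^{nis}\mathrm{H}C\cong H_i^{nis}C$. Actual commutation $\mathrm{H}f^{ch}_{0/n}\simeq f_{0/n}\mathrm{H}$ needs one further observation: that $\mathrm{H}$ preserves colimits and sends acyclic generators to acyclics. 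With that in hand, Step 1 would follow directly from \Cref{slice connectivity} without rerunning \S2.4.
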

As a consequence of all this, we obtain the slice Hurewicz theorem. To state it, we introduce a few notations. 
\begin{defn}
For a $k$-space $\esc{X}$, we define its $S^{p,n}$-null homotopy groups as:
$$\pi_i^{p,n}\esc{X}:=\pi_i^{nis}L^{p,n}\esc{X},$$ where $L^{p,n}$ is the $S^{p,n}$-nullification functor of \textup{[\cite{asok2023p}, \S3]}. We define its $n$-slice  homology groups as $$H^{0/n}_i\esc{X}:=H_i^{nis}f_{0/n}^{ch}\Gamma \esc{X}.$$
\end{defn}
The following is straightforward:
\begin{lem}
    $B\mapsto H^{0/n}_0B$ is the left adjoint to the inclusion ${Ab^{\mathbb{A}^1}_k}{/f_n}\subset Sh_k^{\mathbb{A}^1}$ which we denote by $\mathbb{Z}^{0/n}$. Similarly, $A\mapsto H^{nis}_0f^{ch}_nA$ is the localization ${Ab^{\mathbb{A}^1}_k}{/f_n}\subset Ab_k^{\mathbb{A}^1}$ which we denote by $(-)/f_n$.
\end{lem}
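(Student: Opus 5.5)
The plan is to obtain both assertions from the $t$-structure on $\mathcal{D}^{\mathbb{A}^1}(k)/f_n^{ch}$ given by \Cref{t structure on n spherical motivic complexes}, together with the chain-complex version of slice connectivity (the linear Tate truncation $f^{ch}_{0/n}$ is right $t$-exact). Here $B$ is a Nisnevich sheaf of abelian groups, viewed as a complex concentrated in degree $0$.

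\textbf{First assertion.} For any $B$, the complex $B[0]$ is connective in $\mathcal{D}^{nis}(k)$. Applying the motivic localization $L^{ch}_{mot}$ (which preserves connectivity by the stable connectivity theorem) and then $f^{ch}_{0/n}$ (right $t$-exact), I get that $f^{ch}_{0/n}B$ is connective. Its $H_0$ therefore lies in the heart ${Ab^{\mathbb{A}^1}_k}{/f_n}$. To prove adjointness, take $M\in {Ab^{\mathbb{A}^1}_k}{/f_n}$. Viewed as a complex, $M$ is $f^{ch}_{0/n}$-local and coconnective. I then compute
\[
\mathrm{Hom}(H_0 f^{ch}_{0/n}B, M)\cong \mathrm{Map}(\tau_{\le 0}f^{ch}_{0/n}B, M)\simeq\mathrm{Map}(f^{ch}_{0/n}B,M)\simeq \mathrm{Map}(B,M)\cong\mathrm{Hom}(B,M).
\]
The justifications are as follows. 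The first isomorphism holds because $f^{ch}_{0/n}B$ is connective, so $\tau_{\le 0}$ of it is $H_0[0]$. The second uses that $M$ is coconnective. The third uses that $M$ is local for the composite localization $f^{ch}_{0/n}L^{ch}_{mot}$. The last holds because both objects are in degree $0$. The truncation $\tau_{\le0}$ may be computed in either $\mathcal{D}^{nis}$ or the localized category, since the inclusions are $t$-exact by \Cref{t structure on n spherical motivic complexes}. Naturality in $B$ is clear, so $H^{0/n}_0=\mathbb{Z}^{0/n}$ is left adjoint to the inclusion. Composing with the Hurewicz/free functor gives the version starting from spaces, though that is not needed here.

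\textbf{Second assertion.} For $A\in Ab^{\mathbb{A}^1}_k$, the object $A[0]$ is already motivic, by the example above stating that strictly $\mathbb{A}^1$-invariant sheaves are motivic complexes. I would apply the cofiber sequence $f^{ch}_nA\to A\to f^{ch}_{0/n}A$. The plan is to show that $H_0$ of the right-hand map is the reflection $A\to \mathbb{Z}^{0/n}A$, which follows directly from the first assertion. I would then identify how $f^{ch}_n A$ contributes in degree $0$.

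\textbf{Main obstacle.} I expect the main difficulty to be the bookkeeping in the second statement: the notation ``$H_0^{nis}f_n^{ch}A$'' must be read so that it lands in ${Ab^{\mathbb{A}^1}_k}{/f_n}$. I would interpret it as $H_0 f^{ch}_{0/n}A$, that is, the image of $A$ under the reflector. I would then check that this reflector is idempotent and that its restriction to $Ab^{\mathbb{A}^1}_k$ is a localization. Both checks follow from the adjunction of the first part and from the fact that the heart is a full subcategory of $Ab^{\mathbb{A}^1}_k$.
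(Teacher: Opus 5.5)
Your method is the right one. Connectivity of $f^{ch}_{0/n}L^{ch}_{mot}$, then $\tau_{\le 0}$ against a coconnective heart object, then locality of $M[0]$: this is exactly the chain the paper runs in the proof of the slice Hurewicz theorem (the lemma itself is stated there as straightforward, without proof). You are also right that $H^{nis}_0f^{ch}_nA$ must be read as $H^{nis}_0f^{ch}_{0/n}A$. Indeed, for $A$ already in ${Ab^{\mathbb{A}^1}_k}/f_n$ one has $f^{ch}_nA\simeq 0$, so the literal reading cannot be a localization. Once you make that correction, the detour through the cofiber sequence $f^{ch}_nA\to A\to f^{ch}_{0/n}A$ is unnecessary.

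The gap is in what you take $B$ and $H^{0/n}_0B$ to be. By definition $H^{0/n}_0B=H^{nis}_0f^{ch}_{0/n}\Gamma B$, and $\Gamma$ linearizes. So $B$ is a sheaf of sets, and $\Gamma B$ is the free complex $\mathbb{Z}[B]$ in degree $0$, not $B[0]$. Three things in the paper confirm this reading: the target category $Sh^{\mathbb{A}^1}_k$; the name $\mathbb{Z}^{0/n}$; and the Hurewicz theorem, which applies $H^{0/n}_0$ to the sheaf of sets $\pi^{p,n}_0\esc{X}$ to get the \emph{free} such sheaf.

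Your displayed computation therefore proves something else: that $B\mapsto H_0f^{ch}_{0/n}L^{ch}_{mot}(B[0])$ is left adjoint to ${Ab^{\mathbb{A}^1}_k}/f_n\subset Ab^{nis}_k$. Restricted to $Ab^{\mathbb{A}^1}_k$, that is precisely the second assertion, so that part is done.

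For the first assertion, the step you set aside as ``not needed'' is the actual content. Run the same chain with the connective object $\mathbb{Z}[B]$ in place of $B[0]$, and finish with $\mathrm{Map}(\mathbb{Z}[B],M[0])\simeq\mathrm{Hom}_{Ab^{nis}_k}(a_{nis}\mathbb{Z}[B],M)\cong\mathrm{Hom}_{Sh_k}(B,M)$. In other words, $\mathbb{Z}^{0/n}$ is the free abelian sheaf functor followed by the reflector $(-)/f_n$. With that change the argument is complete.
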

\begin{ex}\label[ex]{gw/f1=z}
   We claim that $\underline{\mathrm{GW}}/f_1=\mathbb{Z}$. In other words, the rank map $rk:\underline{\mathrm{GW}}\to\mathbb{Z}$ is the universal morphism to a sheaf of abelian groups with trivial $\mathbb{G}_m$-contraction. It is clear that $\mathbb{Z}$ has a trivial $\mathbb{G}_m$-contraction, so we only need to address its universal property. Let $G$ be a strongly $\mathbb{A}^1$-invariant sheaf of abelian groups with $G_{-1}=0$, and let $f:\underline{\mathrm{GW}}\to G$ be a group homomorphism. We want to show that $f$ factors uniquely through the rank map. Since the rank map is the cokernel of the inclusion $I\subset \underline{\mathrm{GW}}$, it suffices to show that $f(I)=0$. The subgroup $I$ is precisely the image of the Hopf map $\eta:K^{MW}_1\to \underline{\mathrm{GW}}$ (indeed, over fields, $I$ is generated by Pfister elements $\langle\langle a\rangle\rangle:=\langle a\rangle-1$, which is identical to $\eta[a]$ under the notation of [\cite{morel2012a1}, Chapter 3]). Thus, it suffices to show that the composite group homomorphism
$$K^{MW}_1\xrightarrow{\eta} \underline{\mathrm{GW}}\xrightarrow{f} G$$
is identical to the zero map. Since $G\in Ab^{\mathbb{A}^1}_k$ by [\cite{morel2012a1}, Theorem 3.37], the above morphism is uniquely determined by a morphism of sheaves $\mathbb{G}_m^{\wedge 1}\to G$. Since the sheaves are discrete, this last morphism is uniquely determined by an element of $G_{-1}$, which is trivial by assumption.
\end{ex}
  \begin{thm}[motivic $n$ spherical hurewicz theorem]\label{slice hurewicz theorem}
Let $k$ be a perfect field and $\esc{X}$ be a pointed $n$-birationally $(m-1)$-connected space ($m\geq 0$). Then the Hurewicz map $$\pi^{p,n}_m\esc{X}\to H^{0/n}_m\esc{X}$$ is an isomorphism for $m\geq \max{(p-n,2)}$; for $1\leq m<p-n$ it is the universal strictly $\mathbb{A}^1$-invariant, strictly $n$-spherical (i.e., having trivial $n$-fold $\mathbb{G}_m$ contraction) sheaf of abelian groups associated to the sheaf of groups $\pi^{p,n}_m\esc{X}$, and for $m=0$ it is the free such sheaf of abelian groups generated by $\pi_0^{p,n}$. That is, $$H^{0/n}_0\esc{X}\cong H^{0/n}_0\pi_0^{p,n}\esc{X}.$$
\end{thm}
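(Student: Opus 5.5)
We reduce the $S^{p,n}$-null Hurewicz statement to two inputs: the unstable Hurewicz theorem for $L^{p,n}$-local spaces, and the stable slice connectivity and $t$-structure results for complexes (\Cref{t structure on n spherical motivic complexes}). The comparison runs through the square of left adjoints
\[
\mathcal{H}(k)_\bullet \xrightarrow{\ \Gamma\ } \mathcal{D}^{\mathbb{A}^1}_{\geq 0}(k), \qquad L^{p,n}\ \text{on the source}, \qquad f^{ch}_{0/n}\ \text{on the target}.
\]
By \Cref{slice quotient is Lpn} (the chain-complex analogue), stably $L^{p,n}$ and $f^{ch}_{0/n}$ invert the same maps, since $\Sigma^\infty S^{p,n}$ and $\mathbb{Z}(n)$ generate the same localizing $\otimes$-ideal up to shift. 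Consequently $\Gamma L^{p,n}\esc{X}\to f^{ch}_{0/n}\Gamma\esc{X}$ is an $f^{ch}_{0/n}$-equivalence. This gives
\[
H^{0/n}_i\esc{X}\cong H_i^{nis} f^{ch}_{0/n}\Gamma(L^{p,n}\esc{X}),
\]
so we may replace $\esc{X}$ by $\mathcal{Y}:=L^{p,n}\esc{X}$. This space is $\mathbb{A}^1$-local, $(m-1)$-connected, and has homotopy sheaves $\pi_i^{p,n}\esc{X}$. By \Cref{n spherical is lpn null} these sheaves have trivial $n$-fold contraction in the relevant range.

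Next we apply Morel's unstable $\mathbb{A}^1$-Hurewicz theorem to $\mathcal{Y}$. For $m\geq 2$ it gives $\pi_m^{\mathbb{A}^1}\mathcal{Y}\cong H_m^{\mathbb{A}^1}\mathcal{Y}$, and $L^{ch}_{mot}\Gamma\mathcal{Y}$ is $(m-1)$-connected. For $m=1$ the map $\pi_1\to H_1$ is the strictly $\mathbb{A}^1$-invariant abelianization, and for $m=0$ the map on $H_0$ is the free strictly $\mathbb{A}^1$-invariant sheaf on $\pi_0$.

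We then apply $f^{ch}_{0/n}$. This functor is right $t$-exact (the linear slice connectivity theorem), so $f^{ch}_{0/n}L^{ch}_{mot}\Gamma\mathcal{Y}$ is still $(m-1)$-connected. On the bottom homotopy sheaf, right $t$-exactness together with the heart description of \Cref{t structure on n spherical motivic complexes} gives
\[
H^{0/n}_m\mathcal{Y}\cong H^{nis}_0\bigl(f^{ch}_{0/n}\,\Sigma^{m}H^{\mathbb{A}^1}_m\mathcal{Y}\bigr)=\bigl(H^{\mathbb{A}^1}_m\mathcal{Y}\bigr)/f_n ,
\]
that is, the universal $n$-spherical quotient, namely the left adjoint of the inclusion $Ab^{\mathbb{A}^1}_k/f_n\subset Ab^{\mathbb{A}^1}_k$ from the preceding lemma. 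Composing left adjoints yields all the universal-property statements for $m=0$ and $1\leq m<p-n$. In particular, for $m=0$ we get $\mathbb{Z}^{0/n}$ applied to $\pi_0^{p,n}$, which is $H^{0/n}_0\pi_0^{p,n}\esc{X}$.

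It remains to prove that for $m\geq\max(p-n,2)$ the sheaf $\pi_m^{p,n}\esc{X}$ already lies in $Ab^{\mathbb{A}^1}_k/f_n$, so that the localization $(-)/f_n$ acts as the identity and the Hurewicz map is an isomorphism. The input is \Cref{strictly n spherical has trivial n+1 contraction} together with \Cref{n spherical is lpn null}, from which the criterion reads: an $S^{p,n}$-null $\mathbb{A}^1$-local space has $\pi_i$ lying in $Ab^{\mathbb{A}^1}_k/f_n$ for $i\geq p-n$. Following [\cite{asok2023p}, \S3], the $S^{p,n}$-nullity of $\mathcal{Y}$ gives $\mathrm{Map}_*(S^{p,n},\tau_{\geq i}\mathcal{Y})\simeq *$, and the contraction $(\pi_i\mathcal{Y})_{-n}$ appears as $\pi_{i-(p-n)}$ of that mapping space, which vanishes.

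The main obstacle is exactly this range statement. We must control the Postnikov layers of an $L^{p,n}$-local space, since nullity of $\mathcal{Y}$ does not formally pass to $K(\pi_i,i)$. We handle it by an induction up the Postnikov tower, using that $\Omega^{p-n}_{S^1}\Omega^n_{\mathbb{G}}$ commutes with truncations in the stable range $i\geq 2$ (which is why the bound $2$ appears), combined with strict $\mathbb{A}^1$-invariance of $\pi_i$ for $i\geq2$ to invoke [\cite{bachmann2024strongly}, Lemma 4.2].
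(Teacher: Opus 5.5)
Your argument is correct in outline, but it is organized differently from the paper's. The paper never invokes Morel's unstable $\mathbb{A}^1$-Hurewicz theorem. Instead it runs a single Yoneda chain, valid for every $A\in Ab^{\mathbb{A}^1}_k/f_n$:
\[
\mathrm{Hom}(\pi_m^{p,n}\esc{X},A)\cong[L^{p,n}\esc{X},\mathrm{K}^{nis}(A,m)]\cong[f^{ch}_{0/n}\Gamma\esc{X},A[m]]\cong\mathrm{Hom}(H^{0/n}_m\esc{X},A).
\]
This uses only three inputs: that $\mathrm{K}^{nis}(A,m)$ is $L^{p,n}$-local (\Cref{n spherical is lpn null}), the adjunctions $\pi^{nis}\dashv\mathrm{K}^{nis}$ and $\Gamma\dashv\mathrm{H}$, and the connectivity of $f^{ch}_{0/n}\Gamma\Sigma^\infty$. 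Yoneda, the free-group adjunction and the abelianization adjunction then treat $m\geq 2$, $m=0$ and $m=1$ uniformly; in effect the paper reruns Morel's proof of Hurewicz directly in the localized setting.

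You instead factor the $n$-spherical Hurewicz map as Morel's $\mathbb{A}^1$-Hurewicz map for $L^{p,n}\esc{X}$ followed by the heart reflection $(-)/f_n$. This rests on the formal fact that a right $t$-exact localization sends the bottom homotopy object to its reflection in the localized heart. Your route gives an explicit formula $H^{0/n}_m\esc{X}\cong(H^{\mathbb{A}^1}_mL^{p,n}\esc{X})/f_n$ and a modular proof. The cost is that you import Morel's theorem as a black box, including the delicate $m=1$ case. You also need the extra check that $\Gamma$ carries $L^{p,n}$-equivalences to $f^{ch}_{0/n}$-equivalences; the paper gets the same effect from the locality of $\mathrm{K}^{nis}(A,m)$.

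Your write-up goes astray only at the range statement for $m\geq\max(p-n,2)$. Both proofs need it, and the paper simply cites it as [\cite{asok2023p}, Corollary 3.1.21] (restated later as \Cref{pi_i of Lpq}).

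\begin{itemize}
\item It does not follow from \Cref{strictly n spherical has trivial n+1 contraction} and \Cref{n spherical is lpn null}. Those results concern Eilenberg--MacLane objects, not the homotopy sheaves of an arbitrary $S^{p,n}$-null space.
\item The identity $\mathrm{Map}_*(S^{p,n},\tau_{\geq i}\mathcal{Y})\simeq *$ is not something nullity of $\mathcal{Y}$ gives you formally. You also never need nullity of covers or Postnikov layers, so the ``main obstacle'' is illusory.
\item The correct argument is direct. Since $\mathcal{Y}$ is pointed, $\mathbb{A}^1$-local and connected (as $m\geq 2$), Morel's theorem [\cite{morel2012a1}, Theorem 6.13] says $\Omega_{\mathbb{G}}\mathcal{Y}$ is again connected with $\pi_j(\Omega_{\mathbb{G}}\mathcal{Y})\cong(\pi_j\mathcal{Y})_{-1}$. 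Iterating gives $(\pi_i\mathcal{Y})_{-n}\cong\pi_{i-(p-n)}\mathrm{Map}_*(S^{p,n},\mathcal{Y})$ for $i\geq\max(p-n,1)$. The right side vanishes because $\mathcal{Y}$ itself is $S^{p,n}$-null.
\end{itemize}

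The bound $2$ in $\max(p-n,2)$ therefore comes from the Hurewicz side: for $m=1$ one only gets a universal abelian quotient of a possibly nonabelian $\pi_1$. It does not come from a ``stable range'' for $\Omega_{\mathbb{G}}$, which by the same theorem is compatible with Postnikov truncation for every $\mathbb{A}^1$-connected space.
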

\begin{proof}
We note that the composite functor $C^{0/n}_*:=f_{0/n}^{ch}C_*=f^{ch}_{0/n}\Gamma\Sigma^{\infty}$ preserves connectivity because all its component functors do. Also, a sheaf $A$ of abelian groups is strictly $n$-spherical iff $\mathrm{K}^{nis}(A,m)$ is an $L^{p,n}$-local motivic space for all $m$ (see \Cref{n spherical is lpn null}). 
    
We have the following chain of equivalences for any strictly $n$-birational sheaf of abelian groups $A$, 
\begin{flalign*}
   m \mbox{-}Grp^{nis}_k( \pi_m^{p,n}\esc{X},A)&\cong m\mbox{-}Grp^{nis}_k(\pi_m^{nis}L^{p,n}\esc{X},A)\\&\cong\mathcal{P}_\bullet(L^{p,n}\esc{X}, \mathrm{K}^{nis}(A,m)) \text{ [by the adjunction } \pi^{nis}\dashv \mathrm{K}^{nis}]\\&\cong\mathcal{H}_\bullet^{p,n}(L^{p,n}\esc{X}, \mathrm{K}^{nis}(A,m)) \text{ [since } \mathrm{K}^{nis}(A,m) \text{ is } L^{p,n} \text{ local}]\\&\cong \mathcal{D}^{\mathbb{A}^1}(k)/{{f^{ch}_n}}(C^{0/n*}\esc{X},A[m]) \text{ [by adjunction]}\\&\simeq {Ab^{\mathbb{A}^1}_k}{/f_n}(H_m^{nis}(C^{0/n*}\esc{X}),A)\text{ [since } C^{0/n*}\esc{X}\in \mathcal{D}^{\mathbb{A}^1}(k)/{{f^{ch}_n}}_{\geq 0}]\\&= {Ab^{\mathbb{A}^1}_k}{/f_n}(H^{0/n}_m\esc{X},A).
\end{flalign*}
The result now follows from the Yoneda lemma for $m\geq 2$ and from the appropriate free-forgetful adjunctions from sets to groups for $m=0$ and from the abelianization adjunction from groups to abelian groups for $m=1$. For $m\geq p-n,2$, the point is that $\pi_m^{p,n}\esc{X}$ has trivial $n$-fold $\mathbb{G}_m$-contraction [\cite{asok2023p}, Corollary 3.1.21].
\end{proof}
\section{Consequences of slice connectivity}

\subsection{Stable $n$-Birational connectivity}\mbox{}

\underline{\textbf{Stable \texorpdfstring{$n$}{n} birational motives \texorpdfstring{$\mathcal{SH}^{n}$}{sh}:}}
The $n$-birational motivic homotopy category of $S$ is defined as the localization of the motivic homotopy category of $S$ with respect to the set $B(n)$ of $n$-dense open immersions in $Sm_S$. This category was first introduced in this generality in [\cite{bachmann2019voevodsky}]. (In [\cite{sfbat}], the author has studied its functorial properties in detail.) In this section, we shall study its stable counterpart and prove a stable connectivity theorem. Unfortunately, we shall do this only over perfect fields, identifying the stable $n$-birational localization with $f_{0/n+1}$, since it is not clear at the moment whether this identification holds over arbitrary fields.

Let us first gather the required machinery. As usual, the stable $n$-birational motivic homotopy category is defined as the stabilization of $\mathcal{H}^{n}(S)$ i.e. it is the infinite colimit 
$$\mathcal{H}^{n}(S)_\bullet\xrightarrow[]{\Sigma}\mathcal{H}^{n}(S)_\bullet\xrightarrow[]{\Sigma}\mathcal{H}^{n}(S)_\bullet\xrightarrow[]{\Sigma}\cdots \mathrm{Stab}(\mathcal{H}^{n}(S))$$ in $Pr^L$, which is thus the limit:
$$\mathrm{Stab}(\mathcal{H}^{n}(S))\dots\xrightarrow[]{\Omega}\mathcal{H}^{n}(S)_\bullet\xrightarrow[]{\Omega}\mathcal{H}^{n}(S)_\bullet\xrightarrow[]{\Omega}\mathcal{H}^{n}(S)_\bullet$$
in $Cat_\infty$ . Since fully faithful functors are stable under limits, the induced morphisms 
\begin{flalign}
\mathcal{SH}^{n}(S))&\xhookrightarrow{}\mathcal{SH}^{S^1}(S)\xhookrightarrow{}\mathcal{S}pt_{nis}(S)
\end{flalign}
 are fully faithful. We thus see that $\mathrm{Stab}(\mathcal{H}^{n}(S))$ is a localization of $\mathcal{SH}^{S^1}(S)$, and we denote this localization by $L_{bir}^{n,sp}$. The saturated class of morphisms in this localization is generated by $\Sigma^{\infty }_+U\to \Sigma^{\infty }_+X$ where $U\to X$ is a dense open immersion in $Sm_S$. 
 
We can, of course, do better: $$\mathrm{Stab}(\mathcal{H}^{n}(S))\simeq\mathcal{H}^{n}_\bullet(S)[(S^1)^{-1}].$$ In fact, since base change along smooth morphisms preserves $n$-dense open immersions, $L_{n}^{st}$ is a monoidal localization functor (stabilize [\cite{sfbat}, Theorem 2.5.5.]).

\begin{rem}\label[rem]{An-0 loc is slice}
Since $\mathbb{A}^{n+1}\setminus 0\hookrightarrow \mathbb{A}^{n+1}$ is an $n$ dense open immersion, it is evident that $\mathcal{SH}^{n}_{S^1}(S)\subset L^{st}_{\mathrm{S}^{n}_\mathbb{A}}\mathcal{SH}^{\mathbb{{A}^1 }}_{S^1}(S)$. By \Cref{tate truncation is spherical localization} it thus follows that the birational localization functors $L_{bir}^{n,sp}: \mathcal{SH}^{S^1}(S)\to \mathcal{SH}^{n}_{S^1}(S)$ factor through $$f_{0/n+1}:\mathcal{SH}^{S^1}(S)\to L^{st}_{\mathbb{A}^{n+1}\setminus0}\mathcal{SH}^{\mathbb{{A}^1 }}_{S^1}(S)\simeq\mathcal{SH}^{S^1}(S)/f_{n+1}.$$ 
\end{rem}
\begin{lem}[Generic Smoothness]\label[lem]{GENERIC SMOOTHNESS}
    Let $k$ be a perfect field. The class of stable $n$-birational equivalences is the saturation class generated by $n$-dense open immersions whose complements are smooth and have trivial normal bundles.
\end{lem}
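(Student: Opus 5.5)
The plan is a Noetherian induction on the complement, combined with two standard geometric inputs over a perfect field: the stratification of a closed subset into smooth locally closed pieces, and homotopy purity.

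Let $\mathcal{W}$ be the saturated (strongly saturated, stable under colimits, 2-out-of-3 and cofibers) class of morphisms in $\mathcal{SH}^{S^1}(k)$ generated by the maps $\Sigma^\infty_+U\to\Sigma^\infty_+X$ in which $U\subset X$ is $n$-dense and $Z=X\setminus U$ is smooth with trivial normal bundle. It is enough to show that every $n$-dense open immersion $j:U\hookrightarrow X$ lies in $\mathcal{W}$, because the reverse inclusion is clear.

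\textbf{Step 1 (stratify the complement).} Put $Z=X\setminus U$, with the reduced structure. Since $k$ is perfect, $Z_{red}$ is generically smooth, so there is a dense open $Z^\circ\subset Z$ that is smooth over $k$. Shrinking further, we may also assume that the normal bundle of $Z^\circ$ in $X$ is trivial on a Zariski cover. This yields a finite filtration
$$U=U_0\subset U_1\subset\cdots\subset U_r=X,$$
where $U_{i}=U_{i+1}\setminus Z_i$ with $Z_i$ smooth, closed in $U_{i+1}$, and of codimension $\ge n+1$. Each $U_i\subset U_{i+1}$ is then again $n$-dense. The class $\mathcal{W}$ is closed under composition, so it suffices to treat a single step. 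We can therefore assume $Z$ is smooth and closed in $X$.

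\textbf{Step 2 (trivialize the normal bundle locally).} Choose a Zariski cover $\{X_\alpha\}$ of $X$ on which $N_{Z}X$ is trivial. Nisnevich (indeed Zariski) descent expresses $\Sigma^\infty_+X$ as the colimit of the Čech nerve of the cover, and likewise $\Sigma^\infty_+U$ via $U\cap X_{\alpha_0\cdots\alpha_p}$. Finite intersections keep the complement smooth, with trivial normal bundle, and of codimension $\ge n+1$. Consequently $j$ is a colimit of generating maps, and so it lies in $\mathcal{W}$.

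\textbf{Main obstacle.} Step 1 is where I expect the difficulty. The normal bundle of a smooth $Z$ is only locally trivial, so the triviality has to be supplied by Step 2 rather than by shrinking $Z$. One could try to avoid descent altogether: by homotopy purity, $\mathrm{cofib}(j)\simeq \mathrm{Th}(N_ZX)$, and Zariski-locally this is $\Sigma^\infty_+Z\wedge T^{c}$ with $c\ge n+1$. Either route works, but the descent argument must be checked carefully to confirm that the Čech pieces really are $n$-dense immersions of the required form.
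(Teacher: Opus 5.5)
Your proof is correct, but it handles the normal bundle differently from the paper. The paper cites [\cite{levine2010slices}, Remark 5.2] for a filtration of $X$ by opens whose strata are already smooth \emph{with globally trivial normal bundle}. Each step of that filtration is therefore literally one of the generating immersions, and only closure of the saturated class under composition is used. You stratify only as far as smooth strata. You then remove the local triviality by Zariski descent: $j$ is the geometric realization of the maps $U\cap X_{\alpha_0\cdots\alpha_p}\to X_{\alpha_0\cdots\alpha_p}$ between \v{C}ech nerves, and each of these is a coproduct of generators. This uses two facts: the saturated class is closed under colimits in the arrow category, and $\Sigma^\infty_+$ of a Zariski \v{C}ech nerve is a colimit diagram in the Nisnevich-local category $\mathcal{SH}^{S^1}(k)$. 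Both routes are valid. The paper's is purely geometric and needs nothing beyond composition. Yours needs the descent input, but only the bare generic-smoothness stratification.

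One correction to your ``Main obstacle'' paragraph: you are wrong that triviality cannot be obtained by shrinking $Z$. A vector bundle on a reduced noetherian scheme is free at each generic point, hence trivial on a dense open subset. So you can shrink $Z^\circ$ further and push the bad locus into the next stratum of the Noetherian induction, which is exactly what the paper's filtration does. Relatedly, the clause in Step 1 about shrinking so that the normal bundle is ``trivial on a Zariski cover'' is vacuous, since every vector bundle already is. Neither point affects the validity of your Step 2; they only show that it could have been avoided.
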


\begin{proof}
Given a dense open immersion $U\hookrightarrow X$ of smooth schemes over $k$ (if the complement $X\setminus U$ is not smooth), one first finds a filtration of $X$ by open subschemes $U=U_n\subset U_{n-1}\subset \cdots\subset U_1\subset U_0=X$ such that, for all $i$, $U_{i} \setminus U_{i-1} =C_i$ is smooth and has trivial normal bundle in $U _i$ [\cite{levine2010slices}, Remark 5.2]. If $cod_{X}(X\setminus U)\ge d$, it is clear that $cod_{U_i}(U_i\setminus C_i)\geq d$. Since saturation classes are closed under composition, the morphism $U\hookrightarrow X$ belongs to the given saturation class.
\end{proof}

\begin{thm}\label{bir spectras are 0 slices}
  Let $k$ be a perfect field. The induced map $$L_{bir}^{n,sp}:  \mathcal{SH}^{S^1}(k)/f_{n+1}\to \mathcal{SH}^n(k) $$ is an equivalence of stable infinity categories leading to the identification $f_{0/n+1}\simeq L_{bir}^{n,sp}$.
\end{thm}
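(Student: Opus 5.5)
By \Cref{An-0 loc is slice}, $L^{n,sp}_{bir}$ already factors through $f_{0/n+1}$. Equivalently, $\mathcal{SH}^n(k)\subset \mathcal{SH}^{S^1}(k)/f_{n+1}$ as full subcategories of $\mathcal{SH}^{S^1}(k)$. So I only need the reverse inclusion: every $f_{0/n+1}$-local ($\mathrm{S}^n_\mathbb{A}$-local) spectrum is $n$-birationally local. Since both are reflective subcategories, the plan is to show that every generating $n$-birational equivalence $\Sigma^\infty_+U\to\Sigma^\infty_+X$ becomes an equivalence after applying $f_{0/n+1}$. Equivalently, its cofiber $\Sigma^\infty X/U$ must lie in $\mathcal{SH}^{S^1}(k)(n+1)$, the localizing ideal generated by $T^{n+1}\simeq\Sigma\,\mathrm{S}^n_\mathbb{A}$ (\Cref{slice quotient is spherical localization}).

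First I reduce to nice open immersions. By \Cref{GENERIC SMOOTHNESS} (this is where perfectness of $k$ enters), the saturation class of stable $n$-birational equivalences is generated by $n$-dense open immersions $U\hookrightarrow X$ whose complement $Z$ is smooth with trivial normal bundle, of codimension $c\geq n+1$. Saturation classes of $f_{0/n+1}$-equivalences are closed under composition, and the class of objects sent to zero is closed under extensions. Hence it suffices to treat such a single pair $(X,Z)$.

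Next I apply homotopy purity (Morel--Voevodsky). It gives $X/U\simeq \mathrm{Th}(N_{Z/X})$ in $\mathcal{H}_\bullet(k)$. With $N_{Z/X}$ trivial of rank $c$, this is $Z_+\wedge T^{\wedge c}$, so $\Sigma^\infty X/U\simeq \Sigma^\infty_+Z\wedge T^{c}$. Since $c\geq n+1$, we can write $T^c\simeq T^{c-n-1}\wedge T^{n+1}$, so this object lies in the $\otimes$-ideal $\mathcal{SH}^{S^1}(k)(n+1)$. Therefore $f_{0/n+1}$ kills it, and $f_{0/n+1}(\Sigma^\infty_+U)\to f_{0/n+1}(\Sigma^\infty_+X)$ is an equivalence.

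To finish, I note that every $f_{0/n+1}$-local object is right orthogonal to all $f_{0/n+1}$-equivalences, hence to all generators of the birational saturation class, and so is $L^{n,sp}_{bir}$-local. The two full subcategories therefore coincide, the induced map in the statement is the identity of this subcategory (an equivalence), and the localization functors agree: $f_{0/n+1}\simeq L^{n,sp}_{bir}$.

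The main obstacle I expect is the reduction step. I must make sure the filtration from \Cref{GENERIC SMOOTHNESS} really gives pieces with trivial normal bundle at each stage, and that codimension $\geq n+1$ is kept at each stage. If triviality of the normal bundle fails, I would instead use that Thom spectra of rank-$c$ bundles lie in $\mathcal{SH}^{S^1}(k)(c)$. I would prove this by Zariski-locally trivializing the bundle and using a Mayer--Vietoris argument, since $\mathcal{SH}^{S^1}(k)(c)$ is closed under colimits.
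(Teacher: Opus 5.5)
Your proposal is correct and follows essentially the same route as the paper. The inclusion $\mathcal{SH}^n(k)\subset\mathcal{SH}^{S^1}(k)/f_{n+1}$ comes from the remark that $\mathbb{A}^{n+1}\setminus 0\hookrightarrow\mathbb{A}^{n+1}$ is $n$-dense; the reverse inclusion comes from reducing via generic smoothness to smooth complements with trivial normal bundle and applying homotopy purity. Your formulation of that last step, namely that $\Sigma^\infty_+Z\wedge T^{c}$ with $c\geq n+1$ lies in $\mathcal{SH}^{S^1}(k)(n+1)$ and so is killed by $f_{0/n+1}$, is the precise version of what the paper writes loosely as the contractibility of ``$s_n\mathrm{Th}_Z(N_ZX)\simeq s_nT^{\wedge n}$''.
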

\begin{proof}
The localization functor $$f_{0/n+1}:\mathcal{SH}^{S^1}(k)\to \mathcal{SH}^{S^1}(k)/f_{n+1}$$ inverts birational local equivalences . Indeed, since $f_{0/n+1}$ is an exact localization by construction, it suffices to show that $f_{0/n+1}$ takes co-fibers of $n$-dense open immersions to equivalences. But over perfect fields, using \Cref{GENERIC SMOOTHNESS}, it suffices to consider $n$-dense open sub-schemes with regular complements with trivialization of normal bundles. In such cases, by the motivic purity theorem, we are reduced to showing that $$s_nTh_Z(N_ZX)\simeq s_nT^{\wedge n}$$ is contractible. This is, of course, true by definition.

  Consequently, $f_{0/n+1}$ factors through $\mathcal{SH}^n(k)$. This provides the inverse of the morphism in question. 
\end{proof}

\begin{rem}\label[rem]{n as S^n and A^1}
    A quick presentation of the above theorem is that the following squares are cartesian in $\text{Cat}_\infty^{ex}$ for a perfect field $k$:
    \[
    \xymatrix{
    \mathcal{SH}^{n}_{S^1}(k)\ar@{}[r]|{\rotatebox{-90}{$\bigcup$}}\ar@{}[d]|{\rotatebox{180}{$\bigcup$}}&L^{st}_{(\mathrm{S}^{n}_\mathbb{A})}\mathcal{SH}_{S^1}(k) \ar@{}[d]|{\rotatebox{180}{$\bigcup$}}\\
    \mathcal{SH}^{S^1}(k) \ar@{}[r]|{\rotatebox{-90}{$\bigcup$}}&L_{nis}^{ab}\mathcal{SH}_{S^1}(k) }
    \]

\end{rem}
\begin{cor}\label[cor]{over perfect bn localization is f0/n+1}
    Over a perfect field, we have $L_{bir}^{n,sp}\simeq f_{0/n+1}L_{mot}^{sp}\simeq L^{sp}_{\mathbb{A}^1,\mathbb{A}^{n+1}\setminus0, nis}\simeq \dcolim_{k}\big(\Phi_{\mathrm{S}^n_\mathbb{A}}^\infty \Phi_{\mathbb{A}^1}^\infty L_{nis}^{sp}\big )^k\simeq \dcolim_{k}\big(\Phi_{\mathrm{S}^n_\mathbb{A}} \Phi_{\mathbb{A}^1}\big )^k L_{nis}^{sp}$. 
\end{cor}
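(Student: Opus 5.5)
The statement is a chain of equivalences of localization functors, and I would assemble it link by link from results already proved.

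The first link, $L_{bir}^{n,sp}\simeq f_{0/n+1}L_{mot}^{sp}$, is exactly \Cref{bir spectras are 0 slices}. That theorem identifies $\mathcal{SH}^n(k)$ with $\mathcal{SH}^{S^1}(k)/f_{n+1}$ as full subcategories of $\mathcal{SH}^{S^1}(k)$, and two localizations with the same local objects agree. Here $L_{bir}^{n,sp}$ is regarded as a localization of $\mathcal{S}pt_{nis}(k)$, that is, precomposed with $L^{sp}_{mot}$. The second link, $f_{0/n+1}L_{mot}^{sp}\simeq L^{sp}_{\mathbb{A}^1,\mathbb{A}^{n+1}\setminus 0,nis}$, is \Cref{slice quotient is spherical localization} in the form \eqref{tate truncation is spherical localization}. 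The third, with the iterated $\Phi^\infty$'s, is the third line of \Cref{formula for a1 sn nis localization} (recall $\mathrm{S}^n_\mathbb{A}=\mathbb{A}^{n+1}\setminus 0$). So the only new content is the last equivalence
$$\dcolim_{k}\big(\Phi_{\mathrm{S}^n_\mathbb{A}}^\infty \Phi_{\mathbb{A}^1}^\infty\big)^k L_{nis}^{sp}\simeq \dcolim_{k}\big(\Phi_{\mathrm{S}^n_\mathbb{A}} \Phi_{\mathbb{A}^1}\big)^k L_{nis}^{sp}.$$

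For this I would give a cofinality argument. Write $\Phi_{A}^\infty=\dcolim_j\Phi_A^j$, and note that each $\Phi_A$ commutes with filtered colimits: $A_\bullet\otimes(-)^{A_\bullet}$ does, because $A_\bullet$ is compact, and cofibers commute with colimits. Hence $(\Phi_{\mathrm{S}^n}^\infty\Phi_{\mathbb{A}^1}^\infty)^k$ is a colimit over $\mathbb{N}^{2k}$ of words $\Phi_{\mathrm{S}^n}^{j_1}\Phi_{\mathbb{A}^1}^{j_2}\cdots$, and the total colimit over $k$ is a colimit of all finite words in the two letters. Both towers are then colimits over diagrams of finite words in $\Phi_{\mathrm{S}^n},\Phi_{\mathbb{A}^1}$, with transition maps induced by the units $1\to\Phi_A$. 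I would show that each family is cofinal in the other: the word $(\Phi_{\mathrm{S}^n}\Phi_{\mathbb{A}^1})^k$ is a subword of $(\Phi^{j}_{\mathrm{S}^n}\Phi^j_{\mathbb{A}^1})^k$, and conversely any word with at most $j$ consecutive letters of each kind maps to $(\Phi_{\mathrm{S}^n}\Phi_{\mathbb{A}^1})^{N}$ for $N$ large, by inserting units.

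A cleaner alternative avoids the combinatorics, and I would present it if the cofinality bookkeeping gets messy. Set $\Psi:=\dcolim_k(\Phi_{\mathrm{S}^n}\Phi_{\mathbb{A}^1})^k$ and check directly, as in the proof of \Cref{model for stable bousfield localization}, that:
\begin{enumerate}
\item $1\to\Psi$ is a simultaneous $\mathbb{A}^1$- and $\mathrm{S}^n_\mathbb{A}$-equivalence, since each $1\to\Phi_A$ is an $A$-equivalence and saturated classes are closed under composition and colimits;
\item $\Psi$ is local, because $\Phi_{\mathbb{A}^1}\Psi\simeq\Psi$ and $\Phi_{\mathrm{S}^n}\Psi\simeq\Psi$ via cofinality of the shifted tower.
\end{enumerate}
From (2) the splitting argument of that proposition kills $\Psi^{A_\bullet}$ for both choices of $A$. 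Finally, \Cref{LA preserves nis locality} makes precomposition with $L^{sp}_{nis}$ land in Nisnevich-local objects, so the result is the full localization $L^{sp}_{\mathbb{A}^1,\mathrm{S}^n_\mathbb{A},nis}$.

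The main obstacle is (2). One needs $\Phi_{\mathbb{A}^1}\Psi\simeq\Psi$, and the difficulty is that $\Phi_{\mathbb{A}^1}$ is applied on the outside while the tower ends with $\Phi_{\mathrm{S}^n}$ on the outside. The plan is to commute $\Phi_{\mathbb{A}^1}$ past the colimit, using compactness of $\Sigma^\infty_+\mathbb{A}^1$, and then observe that $\dcolim_k\Phi_{\mathbb{A}^1}(\Phi_{\mathrm{S}^n}\Phi_{\mathbb{A}^1})^k$ is interleaved with the original tower. It receives a map from stage $k$ and maps to stage $k+1$ via the unit of $\Phi_{\mathrm{S}^n}$. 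A cofinality argument, which uses only the units and not any commutation of the $\Phi$'s, then gives the identification.
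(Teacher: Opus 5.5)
Your first three links are exactly the paper's proof. That proof consists only of citing \Cref{bir spectras are 0 slices} and \Cref{formula for a1 sn nis localization}, and it treats the passage from the $\Phi^\infty$'s to single $\Phi$'s as immediate. You are right that this last link needs an argument, but the one you give for step (2) does not work. Put $X_k=(\Phi_{\mathrm{S}^n_\mathbb{A}}\Phi_{\mathbb{A}^1})^k$, write $\eta^{\mathrm{S}}$ and $\eta^{\mathbb{A}^1}$ for the units, and set $t_k=\eta^{\mathrm{S}}_{\Phi_{\mathbb{A}^1}X_k}\circ\eta^{\mathbb{A}^1}_{X_k}$. Since $\Phi_{\mathbb{A}^1}$ commutes with colimits, $\Phi_{\mathbb{A}^1}\Psi$ is the colimit of the $\Phi_{\mathbb{A}^1}X_k$ along the maps $\Phi_{\mathbb{A}^1}(t_k)$. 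Your interleaved tower instead puts on the same objects the maps $\eta^{\mathbb{A}^1}_{X_{k+1}}\circ\eta^{\mathrm{S}}_{\Phi_{\mathbb{A}^1}X_k}$. By naturality of $\eta^{\mathbb{A}^1}$ this equals $\Phi_{\mathbb{A}^1}(\eta^{\mathrm{S}}_{\Phi_{\mathbb{A}^1}X_k})\circ\eta^{\mathbb{A}^1}_{\Phi_{\mathbb{A}^1}X_k}$, whereas $\Phi_{\mathbb{A}^1}(t_k)$ has $\Phi_{\mathbb{A}^1}(\eta^{\mathbb{A}^1}_{X_k})$ in place of $\eta^{\mathbb{A}^1}_{\Phi_{\mathbb{A}^1}X_k}$. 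Identifying the two towers therefore requires exactly the commutation $\eta_{\Phi}\simeq\Phi(\eta)$ that you say is not needed. This is an idempotence property of $\Phi_{\mathbb{A}^1}$, not available a priori; it is essentially what is being proved. The word-cofinality route has the same defect. Inserting units at different positions (already $\eta_{\Phi}$ versus $\Phi(\eta)\colon\Phi\to\Phi\Phi$) gives maps that need not be homotopic, so the finite words do not form a commutative diagram to which cofinality applies. Moreover, even an abstract equivalence $\Phi_{\mathbb{A}^1}\Psi\simeq\Psi$ would not feed the splitting argument, which needs the unit $\Psi\to\Phi_{\mathbb{A}^1}\Psi$ itself to be an equivalence.

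The repair uses only ingredients you already have and bypasses $\Phi_{\mathbb{A}^1}\Psi\simeq\Psi$ altogether. In $\mathcal{S}pt(S)$ the functor $(-)^{A_\bullet}$ is sectionwise a summand of $F\mapsto F(-\times A)$, so it commutes with sequential colimits. Apply it to the interleaved tower $X_0\to\Phi_{\mathbb{A}^1}X_0\to X_1\to\Phi_{\mathbb{A}^1}X_1\to\cdots$, whose colimit really is $\Psi$. For every $Y$ the map $Y^{A_\bullet}\to(\Phi_AY)^{A_\bullet}$ is null: it follows $(A_\bullet\otimes Y^{A_\bullet})^{A_\bullet}\to Y^{A_\bullet}$ in a cofiber sequence, and that map has a section by the triangle identity. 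Hence, for $A=\mathbb{A}^1$ and for $A=\mathrm{S}^n_\mathbb{A}$, every second transition of the tower computing $\Psi^{A_\bullet}$ is null. Its sectionwise homotopy groups vanish, so $\Psi$ is local for both; this is also the honest form of the splitting argument in \Cref{model for stable bousfield localization}. With your step (1), $\Psi$ is then the joint localization of $\mathcal{S}pt(S)$. For the Nisnevich step, note that \Cref{LA preserves nis locality} concerns $L^{sp}_A$, not $\Phi_A$. The latter need not preserve Nisnevich-local objects, since $\Sigma^\infty_+A\otimes(-)$ is sectionwise $\Sigma^\infty_+A(X)\wedge(-)$. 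What you need is that the joint localization preserves them. This follows by writing it as $\dcolim_m(L^{sp}_{\mathrm{S}^n_\mathbb{A}}L^{sp}_{\mathbb{A}^1})^m$ and using that Nisnevich-local spectra are closed under filtered colimits. Then $\Psi L^{sp}_{nis}$ and $\dcolim_k(\Phi^\infty_{\mathrm{S}^n_\mathbb{A}}\Phi^\infty_{\mathbb{A}^1})^kL^{sp}_{nis}$ are both $L^{sp}_{\mathbb{A}^1,\mathrm{S}^n_\mathbb{A},nis}$ by the universal property, and no comparison of towers is needed.
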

\begin{proof}
    This follows at once from \Cref{bir spectras are 0 slices} and \Cref{formula for a1 sn nis localization}.
\end{proof}

\begin{cor}\label[cor]{S1 biraqtional model}
  Let $k$ be an infinite perfect field. We have the following exact birational models for presheaves of spectra on $Sm_k$:
  $$L_{bir}^{n,sp}\simeq (L_{mot}^{sp})^{^{(0/n+1)}}\simeq \dcolim_{m}\big(\Phi_{\mathrm{S}^{n}_\mathbb{A}} \Phi_{\mathbb{A}^1} \big )^mL_{nis}^{sp}.$$
\end{cor}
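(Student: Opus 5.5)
The plan is to splice together two earlier results, exactly as in \Cref{tate truncation coniveau model}, but with $L_{bir}^{n,sp}$ in place of $f_{0/n+1}$.

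\textbf{Step 1: reduce to Tate truncation.} By \Cref{over perfect bn localization is f0/n+1} (which rests on \Cref{bir spectras are 0 slices}), over a perfect field
$$L_{bir}^{n,sp}\simeq f_{0/n+1}L_{mot}^{sp}\simeq \dcolim_{m}\big(\Phi_{\mathrm{S}^{n}_\mathbb{A}} \Phi_{\mathbb{A}^1}\big)^m L_{nis}^{sp}$$
as endofunctors of presheaves of spectra on $Sm_k$. This already gives the second equivalence in the statement. The only point to check is that the formula is computed in $\mathcal{S}pt^{S^1}(k)$ and then Nisnevich localized, which is the content of \Cref{formula for a1 sn nis localization} combined with \Cref{LA preserves nis locality}.

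\textbf{Step 2: identify with Levine's model.} For the first equivalence, $L_{bir}^{n,sp}\simeq (L_{mot}^{sp})^{(0/n+1)}$, I would invoke \Cref{tate truncation coniveau model}. For $k$ infinite perfect it identifies $f_{0/n+1}$ with $(L_{mot}^{sp})^{(0/n+1)}$, the homotopy coniveau quotient of \Cref{levine} applied to $L_{mot}^{sp}$. Precomposing with $L_{mot}^{sp}$ is harmless because $f_{0/n+1}$ is defined on $\mathcal{SH}^{S^1}(k)$ and $L_{mot}^{sp}$ is idempotent. Composing with Step 1 yields the chain.

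\textbf{Main obstacle.} The delicate point is the compatibility of indexing and domains. Levine's construction $\esc{E}^{(0/n+1)}$ uses the supports $S^{(n+1)}$, which must match the $n$-birational localization (complements of codimension $\ge n+1$) and the slice index $n+1$. Moreover, Levine's theorem [\cite{levine2008homotopy}, Theorem 7.1.1] applies to motivic (homotopy invariant, Nisnevich local) spectra, which is why one must feed it $L_{mot}^{sp}$ of the presheaf. I would handle this by checking that the cofiber sequence $\esc{E}^{(n+1)}\to\esc{E}\to\esc{E}^{(0/n+1)}$ agrees with $f_{n+1}\to 1\to f_{0/n+1}$ on motivic $\esc{E}$, which is exactly Levine's identification of $f_{n+1}$ with the $(n+1)$-st coniveau term. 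The conclusion then follows from \Cref{tate truncation coniveau model}. The hypothesis that $k$ is infinite enters only through Levine's moving lemmas, and perfectness is needed both there and in \Cref{bir spectras are 0 slices}.
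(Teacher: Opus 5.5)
Your proposal is correct and is essentially the paper's proof. The paper likewise combines \Cref{over perfect bn localization is f0/n+1}, which identifies $L_{bir}^{n,sp}$ with $f_{0/n+1}L_{mot}^{sp}$ and with the colimit formula, with Levine's identification of the homotopy coniveau tower with the slice tower [\cite{levine2008homotopy}, Theorem 7.1.1]; routing that second step through \Cref{tate truncation coniveau model}, as you do, is only a repackaging of the same citation.
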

\begin{proof}
   Using the above corollary, it is a restatement of [\cite{levine2008homotopy}, Theorem 7.1.1]. 
\end{proof}
\begin{cor} \label{n birational stable connectivity}
    Over a perfect field, $L_{bir}^{n,sp}$ preserves connectivity.
\end{cor}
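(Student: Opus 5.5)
The plan is to reduce the statement entirely to the slice connectivity theorem of \S2, using the identification of the stable $n$-birational localization with a Tate truncation over perfect fields. By \Cref{bir spectras are 0 slices} (equivalently \Cref{over perfect bn localization is f0/n+1}), over a perfect field $k$ there is an equivalence of localization functors on $\mathcal{SH}^{S^1}(k)$,
$$L_{bir}^{n,sp}\simeq f_{0/n+1}L^{sp}_{mot}\simeq L^{sp}_{\mathbb{A}^1,\mathrm{S}^{n}_\mathbb{A},nis}.$$
Hence it suffices to show that the right-hand side preserves connectivity. This is exactly the content of \Cref{slice connectivity}, which holds over any field and in particular over perfect ones.

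The steps, in order, are as follows.

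\begin{enumerate}
\item Fix a Nisnevich $m$-connected motivic spectrum $\mal{A}$, i.e.\ an object of $\mathcal{SH}^{S^1}(k)_{\geq m+1}$ for the homotopy $t$-structure of \Cref{Motivic t structure}.
\item Rewrite $L_{bir}^{n,sp}\mal{A}$ as $f_{0/n+1}\mal{A}$ via \Cref{bir spectras are 0 slices}. Because $\mal{A}$ is already motivic local, the extra $L^{sp}_{mot}$ acts trivially.
\item Apply \Cref{slice connectivity}, which says that $L^{sp}_{\mathbb{A}^1,\mathrm{S}^n_\mathbb{A},nis}$, and hence $f_{0/n+1}$, carries Nisnevich $m$-connected spectra to Nisnevich $m$-connected spectra.
\item Conclude that $L_{bir}^{n,sp}\mal{A}$ is $m$-connected.
\end{enumerate}

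If one wants the statement for arbitrary Nisnevich local (not necessarily $\mathbb{A}^1$-local) spectra, precompose with $L^{sp}_{mot}$. That functor preserves connectivity by Morel's stable connectivity theorem (or \Cref{spectra A nis localization preserves pre connectivity} together with \Cref{nis to pre to weak} and [\cite{morel2005stable}, Lemma 6.1.6]).

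The only real input beyond \S2 is the identification $L_{bir}^{n,sp}\simeq f_{0/n+1}$. This is precisely where perfectness is used, through the generic smoothness filtration of \Cref{GENERIC SMOOTHNESS} and motivic purity. Since the paper has already established that identification, the step I would double-check is the bookkeeping of indices. The $n$-birational localization corresponds to inverting $\mathrm{S}^n_\mathbb{A}=\mathbb{A}^{n+1}\setminus 0$, so it matches $f_{0/n+1}$ rather than $f_{0/n}$, and \Cref{slice connectivity} is stated for exactly this pair. Beyond that the argument is formal. As a corollary, the localized $t$-structure of \Cref{localization of t structures} exists on $\mathcal{SH}^n(k)$, with heart $Ab^{\mathbb{A}^1}_k/f_{n+1}$.
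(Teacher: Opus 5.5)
Your proposal is correct and matches the paper's proof: identify $L_{bir}^{n,sp}$ with $f_{0/n+1}$ via \Cref{bir spectras are 0 slices}, which is where perfectness enters, and then invoke \Cref{slice connectivity}. Your index bookkeeping, with $\mathrm{S}^n_\mathbb{A}=\mathbb{A}^{n+1}\setminus 0$ matching $f_{0/n+1}$, is also consistent with the paper.
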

\begin{proof}
    Using \Cref{bir spectras are 0 slices}, this follows from the slice connectivity theorem: \Cref{slice connectivity}.
\end{proof}

\begin{rem}\label[rem]{rem about n=0}
    While our general strategy utilizes the stable algebraic topology of the slice filtration to deduce the same for the $n$-birational localization over perfect fields, the $n=0$ case (localizing at all dense open immersions) admits an independent treatment. Notably, for $n=0$, the motivic framework extends beyond perfect fields, yielding a $0$-birational stable connectivity theorem valid over arbitrary qcqs base schemes [\cite{bas}, Theorem 4.1.7].
\end{rem}

\begin{cor}\label[cor]{t structure on n bir spectras}
    Let $k$ be a perfect field. Then the stable $\infty$-category of $n$-birational motivic spectra has a $t$-structure whose nonnegative part is generated under colimit by $n$-birational models of smooth $X/k$ and such that the inclusion $\mathcal{SH}^n_{S^1}(k)\subset \mathcal{SH}_{S^1}(k)$ is right $t$-exact. The heart of this $t$-structure is equivalent to the category of strictly $n$-birational sheaves of abelian groups. 
\end{cor}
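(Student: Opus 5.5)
The plan is to transport the $t$-structure of \Cref{t structure on n spherical motivic spectras} along the equivalence of \Cref{bir spectras are 0 slices}. Over a perfect field $k$, that theorem identifies $\mathcal{SH}^n_{S^1}(k)$ with $\mathcal{SH}^{S^1}(k)/f_{n+1}$ as full subcategories of $\mathcal{SH}^{S^1}(k)$, and identifies the localization functor $L^{n,sp}_{bir}$ with $f_{0/n+1}$. By \Cref{n birational stable connectivity} (equivalently \Cref{slice connectivity}), $L^{n,sp}_{bir}$ is right $t$-exact for Morel's homotopy $t$-structure of \Cref{Motivic t structure}. We therefore apply \Cref{localization of t structures} with $\mathscr{C}=\mathcal{SH}^{S^1}(k)$ and $L=L^{n,sp}_{bir}$. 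This gives an accessible $t$-structure
\[
\big(\mathcal{SH}^n_{S^1}(k)\cap \mathcal{SH}^{S^1}(k)_{\geq 0},\ \mathcal{SH}^n_{S^1}(k)\cap \mathcal{SH}^{S^1}(k)_{\leq 0}\big).
\]
Since both halves are defined by intersection, the inclusion into $\mathcal{SH}^{S^1}(k)$ is $t$-exact, and in particular right $t$-exact.

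For the generation statement, we argue as in \Cref{non negative slice generation}, replacing $f_{0/n}$ by $L^{n,sp}_{bir}=f_{0/n+1}$. First, each $L^{n,sp}_{bir}\Sigma^\infty_+X$ is connective, because $\Sigma^\infty_+X$ is connective and $L^{n,sp}_{bir}$ preserves connectivity. Hence the colimit closure of these objects lies in the nonnegative part; this uses that $\mathcal{SH}^n_{S^1}(k)\subset\mathcal{SH}^{S^1}(k)$ is closed under colimits (\Cref{slice localization is colimit closed} via the identification). Conversely, because $L^{n,sp}_{bir}$ is idempotent, the nonnegative part equals $L^{n,sp}_{bir}(\mathcal{SH}^{S^1}(k)_{\geq 0})$. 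Now $\mathcal{SH}^{S^1}(k)_{\geq 0}$ is generated under colimits by the $\Sigma^\infty_+X$, and $L^{n,sp}_{bir}$ preserves colimits, so the reverse inclusion follows.

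For the heart, \Cref{localization of t structures} gives $\mathcal{SH}^{S^1}(k)^\heartsuit\cap \mathcal{SH}^n_{S^1}(k)$. The heart $\mathcal{SH}^{S^1}(k)^\heartsuit$ consists of Eilenberg--MacLane spectra of strictly $\mathbb{A}^1$-invariant sheaves $G$. Such an object is $n$-birationally local exactly when every Nisnevich cohomology presheaf $H^i_{nis}(-,G)$ inverts $B(n)$, i.e. when $G$ is strictly $n$-birational. By \Cref{bir spectras are 0 slices} this condition is equivalent to $G$ being strictly $n$-spherical, or, by \Cref{strictly n spherical has trivial n+1 contraction}, to $G_{-n-1}=0$.

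The main point requiring care is the heart identification. One must check that locality of $\mathrm{H}G$ against $B(n)$ in the stable category amounts to the cohomological condition on all $H^i_{nis}(-,G)$. This holds because mapping spectra out of $\Sigma^\infty_+X$ into $\mathrm{H}G$ compute $R\Gamma_{nis}(X,G)$, so $B(n)$-locality means the restriction maps $H^i_{nis}(X,G)\to H^i_{nis}(U,G)$ are isomorphisms for all $i$ and all $U\hookrightarrow X$ in $B(n)$. Everything else is formal once the identification $L^{n,sp}_{bir}\simeq f_{0/n+1}$ is in hand.
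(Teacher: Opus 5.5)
Your proposal is correct and follows essentially the paper's route. The paper gives no separate proof; it obtains the corollary by transporting the $t$-structure of \Cref{t structure on n spherical motivic spectras}, with generation from \Cref{non negative slice generation}, along the identification $L^{n,sp}_{bir}\simeq f_{0/n+1}$ of \Cref{bir spectras are 0 slices}, and you use \Cref{n birational stable connectivity} for right $t$-exactness exactly as the paper does. Your heart identification differs only slightly. You check directly that $\mathrm{H}G$ is $B(n)$-local iff every $H^i_{nis}(-,G)$ inverts $B(n)$, whereas the paper passes through strict $n$-sphericity and the equivalence of categories.
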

Here, a strictly $n$-birational sheaf of abelian groups is a Nisnevich sheaf all of whose Nisnevich cohomology groups are $\mathbb{A}^1$-local and $n$-dense local. It follows from \Cref{bir spectras are 0 slices} and \Cref{t structure on n spherical motivic spectras} that, over perfect fields, a Nisnevich sheaf is strictly $n$-birational if and only if its $(n+1)$-fold $\mathbb{G}_m$-contraction is trivial.
\begin{cor}
Let $k$ be a perfect field. A Nisnevich sheaf of spectra $\mal{A}$ is $n$-birational iff it is the $(n+1)$-th layer of Voevodsky's slice tower iff all of its Nisnevich connective covers $\tau_{\geq n}^{nis}\esc{X}$ are $n$-birational local iff all of its Nisnevich stable homotopy groups are strictly $n$-birational and strictly $\mathbb{A}^1$-invariant.
\end{cor}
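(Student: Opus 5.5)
The plan is to reduce everything to the Tate-truncation case via \Cref{bir spectras are 0 slices}, and then invoke the abstract criterion \Cref{cover judgment em judgment to local}. Over a perfect field, \Cref{bir spectras are 0 slices} and \Cref{over perfect bn localization is f0/n+1} identify $L^{n,sp}_{bir}$ with $f_{0/n+1}$, so the $n$-birational spectra are exactly the $(n+1)$-Tate truncated ones. In the statement, ``the $(n+1)$-th layer of Voevodsky's slice tower'' is read as ``$\mal{A}\simeq f_{0/n+1}\mal{A}$'', i.e.\ $\mal{A}$ lies in $\mathcal{SH}^{S^1}(k)/f_{n+1}$. The ``connective covers'' in the statement are read as $\tau^{nis}_{\geq i}\mal{A}$ for all $i$; the index $n$ in $\tau_{\geq n}^{nis}\esc{X}$ appears to be a typo for a free index. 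Under these readings, the first equivalence is a restatement of \Cref{bir spectras are 0 slices}. One should note that $n$-birational objects are by definition motivic local, so $\mal{A}$ is implicitly motivic; I would first replace $\mal{A}$ by an object of $\mathcal{SH}^{S^1}(k)$, or else read ``$n$-birational'' as including $\mathbb{A}^1$-locality.

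For the remaining equivalences, I would apply \Cref{cover judgment em judgment to local} with $\mathscr{C}=\mathcal{S}pt^{S^1}_{nis}(k)$ carrying its Postnikov $t$-structure, and $L=L^{n,sp}_{bir}L^{sp}_{mot}\simeq f_{0/n+1}L^{sp}_{mot}$. The hypotheses of \Cref{localization of t structures} need checking. Right $t$-exactness follows from \Cref{spectra A nis localization preserves pre connectivity} together with \Cref{slice connectivity}, or more directly from \Cref{n birational stable connectivity} combined with Morel's stable connectivity theorem. Completeness and closure of the local objects under limits and telescopic colimits follow from \Cref{slice localization is colimit closed} and the completeness of the Nisnevich Postnikov $t$-structure. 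The theorem then gives: $\mal{A}$ is local iff all $\tau^{nis}_{\geq i}\mal{A}$ are local iff all $\pi^{nis}_i\mal{A}$ lie in the heart $L\mathscr{C}\cap\mathscr{C}^\heartsuit$.

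It remains to identify this heart with the strictly $n$-birational, strictly $\mathbb{A}^1$-invariant sheaves. An Eilenberg--MacLane object $HG$ is $L$-local iff all its Nisnevich cohomology presheaves $H^i_{nis}(-,G)$ are $\mathbb{A}^1$-invariant and invert $n$-dense open immersions, since these cohomology presheaves are the homotopy presheaves of the sections of $HG$. This is precisely the definition of strictly $n$-birational. Equivalently, by \Cref{t structure on n spherical motivic spectras} and \Cref{strictly n spherical has trivial n+1 contraction}, the heart is the class of strictly $\mathbb{A}^1$-invariant sheaves with $G_{-n-1}=0$, matching the remark preceding the statement.

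The main obstacle is bookkeeping rather than new mathematics. One must make sure the localization is applied to Nisnevich-local rather than motivic spectra, so that the covers $\tau^{nis}_{\geq i}$ are taken in the right category. One must also verify that the local objects of the combined localization are exactly the motivic $n$-birational spectra. I would handle both points by composing the $t$-exact inclusions of \Cref{t structure on n spherical motivic spectras}, and then reading off the heart as in \Cref{characterizing n bir spectra}.
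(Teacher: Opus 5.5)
Your proposal is correct and follows the paper's intended route. \S3.1 states this corollary without proof; it is meant to be \Cref{characterizing n bir spectra}, i.e.\ \Cref{cover judgment em judgment to local} applied to the right $t$-exact localization $f_{0/n+1}L^{sp}_{mot}$ on $\mathcal{S}pt^{S^1}_{nis}(k)$, transported along the identification $L^{n,sp}_{bir}\simeq f_{0/n+1}$ of \Cref{bir spectras are 0 slices}, with the heart read off as you do. Your readings of the garbled clauses are also the right ones: ``$(n+1)$-th layer'' means $\mal{A}\simeq f_{0/n+1}\mal{A}$, ``$n$-birational'' includes motivic locality, and the covers are $\tau^{nis}_{\geq i}$ for all $i$.
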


\textbf{\texorpdfstring{$n$}{}-Birational motivic complexes.}
Similarly, we define $${\mathcal{D}}^{n}(S):=L_{bir}^{n,ch}{\mathcal{D}}^{\mathbb{A}^1}(S)$$ as the localization at the set $\mathbb{Z}B(n)$, thought of as morphisms of complexes of degree 0. This clearly comes with a canonical adjunction $$\Gamma_n : \mathcal{SH}^{n}(S){\leftrightarrows} \mathcal{D}^{n}(S): \mathrm{H}$$ 

One can carry over all the results from the ordinary stabilization, as performed in the previous sections, to the derived settings. We record them without proof: 

\begin{thm}\label{bir complexes are 0 slices}
  Let $k$ be a perfect field. The induced functor $$  \mathcal{D}^{\mathbb{A}^1}(k)/f_{n+1}\to \mathcal{D}^{n}(k) $$ is an equivalence of stable infinity categories. Thus, we obtain the canonical equivalence $f^{ch}_{0/n+1}\simeq L^{ch}_{n}$.
\end{thm}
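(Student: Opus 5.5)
I will mirror the proof of \Cref{bir spectras are 0 slices} in the linear setting. First I identify $\mathcal{D}^{\mathbb{A}^1}(k)/f^{ch}_{n+1}$ with a localization at smooth affine spheres. Since $\mathbb{Z}(n+1)[n+1]$ is, up to a shift, the reduced complex $\tilde{\mathbb{Z}}(\mathrm{S}^{n}_\mathbb{A})$, the cofiber sequence $\tilde{\mathbb{Z}}(\mathrm{S}^n_\mathbb{A})\to \mathbb{Z}\mathbb{A}^{n+1}\to \tilde{\mathbb{Z}}T^{\wedge n+1}$ (the linearization of [\cite{morel19991}, Proposition 3.2.17]) gives an equivalence. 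This is just the image under $\Gamma$ of the argument of \Cref{slice quotient is spherical localization}. It shows that the localizing $\otimes$-ideal generated by $\mathbb{Z}(n+1)$ coincides with the one generated by $\tilde{\mathbb{Z}}(\mathrm{S}^n_\mathbb{A})$. Hence $f^{ch}_{0/n+1}$ is the localization of $\mathcal{D}^{\mathbb{A}^1}(k)$ at the projections $\mathbb{Z}(X\times \mathrm{S}^n_\mathbb{A})\to \mathbb{Z}X$.

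Second, I show that $\mathcal{D}^n(k)\subset \mathcal{D}^{\mathbb{A}^1}(k)/f^{ch}_{n+1}$. The open immersion $\mathbb{A}^{n+1}\setminus 0\hookrightarrow \mathbb{A}^{n+1}$ is $n$-dense, and its base changes $X\times(\mathbb{A}^{n+1}\setminus 0)\hookrightarrow X\times \mathbb{A}^{n+1}$ lie in $B(n)$. Combined with $\mathbb{A}^1$-invariance, every $\mathbb{Z}B(n)$-local complex is therefore $\mathrm{S}^n_\mathbb{A}$-local, exactly as in \Cref{An-0 loc is slice}. As a consequence, $L^{ch}_n$ factors through $f^{ch}_{0/n+1}$, which gives the induced functor in the statement.

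Third, and this is the main step, I show conversely that $f^{ch}_{0/n+1}$ inverts $\mathbb{Z}B(n)$. The localization is exact, so it suffices to show that $f^{ch}_{0/n+1}$ kills the cofiber $\mathbb{Z}X/\mathbb{Z}U$ of every $U\hookrightarrow X$ in $B(n)$. By \Cref{GENERIC SMOOTHNESS}, whose proof is purely geometric and applies verbatim to the linearized saturation class, I reduce to the case where the complement $Z$ is smooth of codimension $d\geq n+1$ with trivial normal bundle. In that case, motivic purity, linearized via $\Gamma$ (an exact left adjoint that preserves the cofiber sequences in $\mathcal{SH}^{S^1}$), gives
\[
\mathbb{Z}X/\mathbb{Z}U\simeq \Gamma\, Th(N_ZX)\simeq \Gamma\big(\Sigma^\infty_+ Z\wedge T^{\wedge d}\big)\simeq \mathbb{Z}Z\otimes \mathbb{Z}(d)[2d].
\]
Since $d\geq n+1$, this object lies in $\mathcal{D}^{\mathbb{A}^1}(k)(n+1)$ and is therefore killed by $f^{ch}_{0/n+1}$.

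The obstacle I expect is making the purity step honest in the linear setting. The cleanest route is to use $\Gamma_{/n}f_{0/n}\simeq f^{ch}_{0/n}\Gamma$ together with the spectral case already proved in \Cref{bir spectras are 0 slices}, rather than reproving purity for complexes. Finally, the two localizations invert each other's saturation classes, so their local subcategories coincide. This yields the equivalence $\mathcal{D}^{\mathbb{A}^1}(k)/f_{n+1}\simeq \mathcal{D}^n(k)$ and hence $f^{ch}_{0/n+1}\simeq L^{ch}_n$.
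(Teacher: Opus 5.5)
Your proposal is correct and is the argument the paper intends. The paper records this theorem without proof as the linear transcription of \Cref{bir spectras are 0 slices}, whose proof consists of exactly your two inclusions: \Cref{An-0 loc is slice} for one direction, and \Cref{GENERIC SMOOTHNESS} together with homotopy purity (so that $f_{0/n+1}$ kills $Z_+\wedge T^{\wedge d}$ with $d\geq n+1$) for the other. Your proposed shortcut is also valid and spares you redoing purity for complexes: $\mathrm{H}$ preserves $\mathcal{D}^{\mathbb{A}^1}(k)/f^{ch}_{n+1}$, so $\Gamma$ carries $f_{0/n+1}$-equivalences to $f^{ch}_{0/n+1}$-equivalences, and the spectral case then transports along $\Gamma$.
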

In other words, as in \Cref{n as S^n and A^1}, we have a pullback square of stable infinity categories:  
\[
    \xymatrix{
    \mathcal{D}^{n}_{eff}(k)\ar@{}[r]|{\rotatebox{-90}{$\bigcup$}}\ar@{}[d]|{\rotatebox{180}{$\bigcup$}}&L^{ab}_{(\mathrm{S}^{n}_\mathbb{A})}\mathcal{D}(k) \ar@{}[d]|{\rotatebox{180}{$\bigcup$}}\\
 \mathcal{D}^{\mathbb{A}^1}_{eff}(k) \ar@{}[r]|{\rotatebox{-90}{$\bigcup$}}&L_{nis}^{ab}\mathcal{D}(k) 
    }
    \]
    The $0$-birational case of this square, though with transfers (and with $\mathbb{P}^1$ instead of $\mathbb{G}_m$) was obtained by J. Ayoub in [\cite{ayoub2020P1}, Proposition 3.9].
\begin{thm}
   Let $k$  be a perfect field. Then the equivalence $$\mathcal{D}^{\mathbb{A}^1}(k)/f_{n+1}  \overset{L_{bir}^{n,ch}}{\underset{\simeq}{\longrightarrow}}\mathcal{D}^{n}(k)$$ induces an accessible complete $t$ structure with heart, the category of strictly $n$ birational strictly $\mathbb{A}^1$ invariant nisnevich sheaves of abelian groups.
\end{thm}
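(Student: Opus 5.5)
The plan is to transport the $t$-structure of \Cref{t structure on n spherical motivic complexes} (the complex analogue of \Cref{t structure on n spherical motivic spectras}), which lives on $\mathcal{D}^{\mathbb{A}^1}(k)/f^{ch}_{n+1}$, along the equivalence of \Cref{bir complexes are 0 slices}. We then identify the heart with strictly $n$-birational sheaves.

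\textbf{Step 1.} Apply \Cref{t structure on n spherical motivic complexes} with $n+1$ in place of $n$. This gives an accessible complete $t$-structure on $\mathcal{D}^{\mathbb{A}^1}(k)/f^{ch}_{n+1}$, namely
$$\big(\mathcal{D}^{\mathbb{A}^1}(k)/f^{ch}_{n+1}\cap \mathcal{D}^{nis}_{\geq 0},\ \mathcal{D}^{\mathbb{A}^1}(k)/f^{ch}_{n+1}\cap \mathcal{D}^{nis}_{\leq 0}\big).$$
Its heart is the category of strictly $n$-spherical, strictly $\mathbb{A}^1$-invariant sheaves. Over a perfect field these are exactly the $G\in Ab^{\mathbb{A}^1}_k$ with $G_{-n-1}=0$, by the chain-complex version of \Cref{strictly n spherical has trivial n+1 contraction}.

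\textbf{Step 2.} Check that the equivalence $L^{n,ch}_{bir}$ is the identity on underlying full subcategories of $\mathcal{D}^{\mathbb{A}^1}(k)$. Both categories are reflective subcategories of $\mathcal{D}^{\mathbb{A}^1}(k)$. The linear analogue of \Cref{An-0 loc is slice} gives the inclusion $\mathcal{D}^n(k)\subset \mathcal{D}^{\mathbb{A}^1}(k)/f^{ch}_{n+1}$, since $\mathbb{A}^{n+1}\setminus 0\hookrightarrow\mathbb{A}^{n+1}$ lies in $B(n)$. \Cref{bir complexes are 0 slices} says this inclusion is essentially surjective. So the two subcategories coincide inside $\mathcal{D}^{\mathbb{A}^1}(k)$, and the induced $t$-structure is intrinsically the restriction of the Nisnevich homotopy $t$-structure. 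Accessibility and completeness then transfer along the equivalence.

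\textbf{Step 3.} Identify the heart. By Step 1, the heart consists of complexes $A[0]$ with $A$ a Nisnevich sheaf that is strictly $\mathbb{A}^1$-invariant and has $\mathrm{S}^n_\mathbb{A}$-local Nisnevich cohomology presheaves. We must show this is the same as strictly $n$-birational: every $H^i_{nis}(-,A)$ is $\mathbb{A}^1$-invariant and inverts $B(n)$.
\begin{itemize}
\item One direction is immediate. Since $A[i]\in\mathcal{D}^{\mathbb{A}^1}/f^{ch}_{n+1}=\mathcal{D}^n(k)$, it is $L^{n,ch}_{bir}$-local. Hence $H^i_{nis}(X,A)=\mathrm{Hom}(\mathbb{Z}X,A[i])$ inverts $n$-dense open immersions.
\item Conversely, strictly $n$-birational implies each $A[i]$ is $\mathbb{A}^1$- and $B(n)$-local, hence lies in $\mathcal{D}^n(k)$. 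It is therefore $\mathrm{S}^n_\mathbb{A}$-local by Step 2.
\end{itemize}
This is the linear analogue of \Cref{cover judgment em judgment to local} applied to Eilenberg--MacLane objects.

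The main obstacle is making sure the equivalence of \Cref{bir complexes are 0 slices} really is compatible with the inclusions into $\mathcal{D}^{nis}(k)$, rather than being merely abstract. That compatibility is what lets the heart be described sheaf-theoretically. I would handle it through the factorization argument of \Cref{An-0 loc is slice} and the purity and generic-smoothness reduction \Cref{GENERIC SMOOTHNESS}, carried out for complexes. The key input there is that $f^{ch}_{0/n+1}$ kills $\mathbb{Z}\,Th(N_ZX)\simeq \mathbb{Z}T^{\wedge c}$ for $c\geq n+1$.
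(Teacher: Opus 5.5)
Your proposal is correct and is essentially the argument the paper has in mind. The paper records this theorem without proof; as with its spectral counterpart \Cref{t structure on n bir spectras}, it is obtained by transporting the $t$-structure of \Cref{t structure on n spherical motivic complexes} (with $n+1$ in place of $n$) along \Cref{bir complexes are 0 slices} and reading off the heart, and your observation that this equivalence is an equality of full subcategories of $\mathcal{D}^{\mathbb{A}^1}(k)$, so that the heart is $\mathcal{D}^{n}(k)\cap\mathcal{D}_{nis}(k)^\heartsuit$, is what makes the sheaf-theoretic description of the heart go through.
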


For the next theorem, we define the notion of $n$-birational motivic homotopy groups of spaces. 
\begin{defn}
    Given a $k$-space $\esc{X}$ we define its $n$-birational motivic homotopy group as:
    $$\pi_i^{n\mbox{-}bir}\esc{X}:=\pi_i^{nis}L^n_{bir}\esc{X}.$$ We define the $n$-birational homology group of $\esc{X}$ as $$H^{n\mbox{-}bir}_i\esc{X}:=H^{nis}_i(L_{bir}^{n,ch}\Gamma\esc{X}).$$
\end{defn}
\begin{rem}
    When $k$ is a perfect field, it follows from  that $H^{n\mbox{-}bir}_i\esc{X}\simeq H^{p,n+1}_i\esc{X}$. However, such an identification fails for $\pi_i$'s, i.e., in general $\pi_i^n\not\simeq\pi_i^{p,n+1}$. For example, when $n=0$ and $p=2$ we get $\pi_0^{2,1}(\mathbb{G}_m)=\mathbb{G}_m$ since $\mathbb{G}_m $ is both $\mathbb{A}^1$-local as well as $\mathbb{P}^1$-local. On the other hand $\pi_0^{0\mbox{-}bir}(\mathbb{G}_m)\simeq *$ since $\mathbb{G}_m$ is birational to the (by definition) $L^0$-contractible scheme $\mathbb{A}^1$. In an earlier paper, the author shows that when $k$ is perfect, the identity $\pi_i^{0\mbox{-}bir}\simeq\pi_i^{2,1}\simeq \pi_i^{1,1}$ does work for motivically connected presheaves of spaces [\cite{0bat}, Corollary 3.4.6.].
\end{rem}
 \begin{thm}[$n$-Birational hurewicz theorem]\label{n Birational hurewicz theorem}
Let $k$ be a perfect field and $\esc{X}$ be a pointed $n$-birationally $(m-1)$ connected space ($m\geq 0$). Then the hurewicz map $$\pi^{n\mbox{-}bir}_m\esc{X}\to H^{n\mbox{-}bir}_m\esc{X}$$  is an isomorphism for $m\geq \max{(n,2)}$, is the universal strictly $\mathbb{A}^1$ invariant strictly $n$-birational sheaf of abelian groups associated to the sheaf of groups $\pi^{n\mbox{-}bir}_m\esc{X}$ for $1\leq m<n$, and for $n=0$ is the free such sheaf of abelian groups generated by $\pi_i^{n}$, i.e. $$H^{n\mbox{-}bir}_i\esc{X}\cong H^{n\mbox{-}bir}_0\pi_i^{n}\esc{X}.$$
\end{thm}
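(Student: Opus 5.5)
The plan is to run the same Yoneda-type argument as in the proof of \Cref{slice hurewicz theorem}, now using the $n$-birational localization in place of $L^{p,n}$. We combine the equivalence $\mathcal{D}^{\mathbb{A}^1}(k)/f_{n+1}\simeq \mathcal{D}^{n}(k)$ of \Cref{bir complexes are 0 slices} with the resulting $t$-structure on $\mathcal{D}^n(k)$, whose heart is the category of strictly $n$-birational, strictly $\mathbb{A}^1$-invariant sheaves.

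First we record that the functor $C^{n\mbox{-}bir}_*:=L^{n,ch}_{bir}\Gamma\Sigma^\infty$ preserves connectivity. Indeed, $\Gamma\Sigma^\infty$ does, and $L^{n,ch}_{bir}\simeq f^{ch}_{0/n+1}$ does by the derived slice connectivity theorem together with \Cref{bir complexes are 0 slices}. Next we need the unstable input: for a strictly $n$-birational, strictly $\mathbb{A}^1$-invariant sheaf $A$, each $\mathrm{K}^{nis}(A,m)$ is $L^n_{bir}$-local. This holds because all Nisnevich cohomology groups $H^i_{nis}(-,A)$ are $\mathbb{A}^1$-invariant and invert $n$-dense open immersions, which is exactly locality of $\mathrm{K}^{nis}(A,m)$ with respect to $B(n)$ and $\mathbb{A}^1$.

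We then write the chain of natural bijections
\[
m\mbox{-}Grp^{nis}_k(\pi_m^{n\mbox{-}bir}\esc{X},A)\cong \mathcal{H}^n_\bullet(k)(L^n_{bir}\esc{X},\mathrm{K}^{nis}(A,m))\cong \mathcal{D}^{n}(k)(C^{n\mbox{-}bir}_*\esc{X},A[m])\cong \mathrm{Hom}(H^{n\mbox{-}bir}_m\esc{X},A).
\]
The first step uses the $(m-1)$-connectivity of $L^n_{bir}\esc{X}$ together with $\pi^{nis}\dashv\mathrm{K}^{nis}$. The second is the stabilization and linearization adjunction $\Gamma_n\dashv\mathrm{H}$. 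The third uses that $C^{n\mbox{-}bir}_*\esc{X}$ lies in $\mathcal{D}^n(k)_{\geq m}$ and that $A[m]$ lies in the heart shifted by $m$. Yoneda then identifies $H^{n\mbox{-}bir}_m\esc{X}$ with the universal strictly $n$-birational, strictly $\mathbb{A}^1$-invariant abelian sheaf under $\pi^{n\mbox{-}bir}_m\esc{X}$. For $m=1$ we use the abelianization adjunction, and for $m=0$ the free functor on sheaves of sets, which gives the formula $H^{n\mbox{-}bir}_0\esc{X}\cong H^{n\mbox{-}bir}_0\pi_0^n\esc{X}$.

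The main obstacle is the isomorphism range $m\geq\max(n,2)$. There we must show that $\pi_m^{n\mbox{-}bir}\esc{X}$ is already abelian, which is automatic for $m\geq 2$. We must also show it is strictly $\mathbb{A}^1$-invariant and strictly $n$-birational, so that the universal map is the identity. Strict $\mathbb{A}^1$-invariance comes from Morel's theory, since $L^n_{bir}\esc{X}$ is $\mathbb{A}^1$-local and $m\geq 2$. The birational condition amounts to $(\pi_m)_{-n-1}=0$. I would try to transport the argument of [\cite{asok2023p}, Corollary 3.1.21] used in \Cref{slice hurewicz theorem}: an $L^n_{bir}$-local space is $\mathrm{S}^{n}_\mathbb{A}\simeq\Sigma^n\mathbb{G}^{n+1}$-null, hence $L^{n,n+1}$-null. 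The contraction vanishing for its homotopy sheaves in degrees $m\geq n$ then follows from that corollary, with the indices matched by $p=n$ and weight $n+1$. If the indexing turns out not to match exactly, I would instead compute $\Omega^{n}\Omega^{n+1}_{\mathbb{G}}$ of the local space directly via [\cite{bachmann2024strongly}, Lemma 4.2].
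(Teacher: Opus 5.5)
Your proposal is correct and is the argument the paper intends. The paper states this theorem without proof, as the transport of the proof of \Cref{slice hurewicz theorem} along $L^{n,ch}_{bir}\simeq f^{ch}_{0/n+1}$ (\Cref{bir complexes are 0 slices}), using the same chain of adjunctions, the same Yoneda/abelianization/free step, and [\cite{asok2023p}, Corollary 3.1.21] for the isomorphism range. The only fix is notational: in the paper's convention $\mathrm{S}^n_\mathbb{A}\simeq\Sigma^n\mathbb{G}^{n+1}$ is $S^{2n+1,n+1}$, so \Cref{pi_i of Lpq} applies with $p=2n+1$, $q=n+1$, and $p-q=n$ gives exactly your threshold $m\geq n$; reading your indices literally as $p=n$ would instead predict the too-strong threshold $m\geq 1$.
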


\subsection{Slice connectivity of \texorpdfstring{$\mathbb{P}^1$}{}-spectras}
In this subsection, we aim to show that the slice filtration on $\mathbb{P}^1$-spectra also satisfies a suitable connectivity property. Our main insight relies on the slice connectivity theorem for $S^1$-spectra and the confirmation of the slice conjectures. Before presenting the main result, let's clarify the key terms mentioned so far in this paragraph, beginning with the term $\mathbb{P}^1$-spectra.

The motivic stable $\infty$-category of $\mathbb{P}^1$-spectra is defined as the monoidal inversion of the object $\mathbb{P}:=(\mathbb{P}^1,1)$, i.e.,
$$\mathcal{SH}^{\mathbb{P}^1}:=\mathcal{H}^{\mathbb{A}^1}[\mathbb{P}^{-1}].$$ Since $\mathbb{P}\simeq \Sigma\mathbb{G},$ it follows that 
$$\mathcal{SH}^{\mathbb{P}^1}\simeq \mathcal{SH}^{S^1}[\mathbb{G}^{-1}]\simeq \mathcal{SH}^{S^1}[\mathbb{P}^{-1}].$$ Canonically, these come equipped with the canonical inversion functors:
$$\Sigma^\infty_{\mathbb{P}}:\mathcal{H}^{\mathbb{A}^1}\to \mathcal{SH}^{\mathbb{P}^1}.$$

The effective motivic stable $\infty$-category, $\mathcal{SH}^{\text{eff}}$, is the localizing stable subcategory of $\mathcal{SH}^{\mathbb{P}^1}$ generated by $\Sigma^\infty_{\mathbb{P}}(\mathcal{H}^{\mathbb{A}^1})$. Smashing with powers of $T$ yields a $\mathbb{Z}$-indexed filtration of compactly generated, colimit-closed monoidal stable sub-$\infty$-categories:$$\cdots \subset \mathcal{SH}^{\text{eff}}\wedge T^{n} \subset \cdots \subset \mathcal{SH}^{\text{eff}} \subset \mathcal{SH}^{\text{eff}}\wedge T^{-1} \subset \cdots$$By presentability, the inclusions admit right adjoints $f_n$. This yields the effective tower, from which Voevodsky defines the $n$-th slice functor $s_n$ as the cofiber of $f_{n+1}\to f_n$.

Clearly, when $n\geq 0$ the colocalizing functors $f_n:\mathcal{SH}^{\mathbb{P}^1}\to\mathcal{SH}^{\mathbb{P}^1}(n) $ factors through $\mathcal{SH}^{eff}$. We now want to restate the connectivity result of \S2 for $\mathbb{P}^1$-spectras. But as we have seen in \S4 this can equivalently be stated in the language of $t$-structures. We shall therefore recall the canonical $t$-structure on $\mathcal{SH}^{eff}(k)$ next. 

\begin{prop}\label[prop]{eff t structure}
    There is a $t$-structure on $\mathcal{SH}^{eff}(k)$ whose positive part is given by \begin{flalign*}
        \mathcal{SH}^{eff}(k)_{\geq 0}:=\{\mal{E}\mid \pi_i^{nis}\mal{E}=0 \text{ for all } i<0\}\\
         \mathcal{SH}^{eff}(k)_{\leq 0}:=\{\mal{E}\mid \pi_i^{nis}\mal{E}=0 \text{ for all } i>0\}.
    \end{flalign*}
\end{prop}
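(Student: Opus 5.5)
We need to prove that the pair in \Cref{eff t structure} is a $t$-structure on $\mathcal{SH}^{eff}(k)$. Here $\pi_i^{nis}\mal{E}$ is read as $\pi_i^{nis}\omega^\infty_{S^1}\mal{E}$, the Nisnevich homotopy sheaves of the underlying $S^1$-spectrum. The plan is to build the $t$-structure abstractly from generators and then identify its two halves with the sets displayed in the statement.

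\textbf{Step 1 (existence).} Let $\mathcal{SH}^{eff}(k)_{\geq 0}$ be the full subcategory generated under colimits and extensions by the compact objects $\Sigma^{\infty}_{\mathbb{P}}X_+$, $X\in Sm_k$. By [Lurie, Higher Algebra, Prop. 1.4.4.11] this determines an accessible $t$-structure whose coconnective part is the right orthogonal. Unwinding the adjunction $\sigma^\infty_{S^1}\dashv\omega^\infty_{S^1}$ gives
\[
[\Sigma^\infty_{\mathbb{P}}X_+[i],\mal{E}]=[\Sigma^\infty_{S^1}X_+[i],\omega^\infty_{S^1}\mal{E}]=\pi_i(\omega^\infty_{S^1}\mal{E})(X).
\]
So $\mal{E}\in\mathcal{SH}^{eff}(k)_{\leq 0}$ exactly when $\omega^\infty_{S^1}\mal{E}(X)$ has vanishing homotopy in degrees $\geq 1$ for every $X$.

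\textbf{Step 2 (coconnective part).} For a Nisnevich local object, sectionwise coconnectivity is equivalent to Nisnevich-sheaf coconnectivity, that is, $\pi_i^{nis}=0$ for $i>0$. Indeed, the coconnective part of the standard $t$-structure on $\mathcal{S}pt_{nis}^{S^1}(k)$ is computed sectionwise. Hence $\mathcal{SH}^{eff}(k)_{\leq 0}$ coincides with the displayed set $\{\pi_i^{nis}\mal{E}=0,\ i>0\}$.

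\textbf{Step 3 (connective part; the main obstacle).} We must show that $\mal{E}$ lies in the generated subcategory if and only if $\pi_i^{nis}\omega^\infty_{S^1}\mal{E}=0$ for $i<0$.

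For the inclusion of the generated subcategory into the displayed set, two facts are needed. First, $\omega^\infty_{S^1}$ must preserve colimits. It does, because the generators $\Sigma^\infty_{\mathbb{P}}X_+$ are compact and colimits in $\mathcal{SH}^{eff}$ are computed through the stabilization tower. Second, each generator must be connective. Here I would use $\omega^\infty_{S^1}\Sigma^\infty_{\mathbb{P}}X_+\simeq\colim_n\Omega^n_{\mathbb{G}}\Sigma^n_{\mathbb{G}}\Sigma^\infty_{S^1}X_+$. By Morel's stable connectivity, $\Sigma^n_{\mathbb{G}}$ preserves connectivity, and over a field $\Omega_{\mathbb{G}}$ is $t$-exact on $\mathcal{SH}^{S^1}(k)$, since contraction of strictly $\mathbb{A}^1$-invariant sheaves is exact [Morel]. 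Connectivity is preserved under filtered colimits, so each generator is connective, and so is everything generated from them.

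For the reverse inclusion, take $\mal{E}$ in the displayed set and form its truncation triangle $\tau_{\geq0}\mal{E}\to\mal{E}\to\tau_{<0}\mal{E}$ for the generated $t$-structure. By the first inclusion, $\tau_{\geq 0}\mal{E}$ is connective in the sheaf sense. Hence $\tau_{<0}\mal{E}$ is both sheaf-connective and, by Step 2, sheaf-coconnective in degrees $<0$. Its underlying $S^1$-spectrum therefore has vanishing $\pi_i^{nis}$ for all $i$, so it is zero. Finally, $\omega^\infty_{S^1}$ is conservative on $\mathcal{SH}^{eff}$ because $\mathcal{SH}^{eff}$ is generated by the $\Sigma^\infty_{\mathbb{P}}X_+[i]$. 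Therefore $\tau_{<0}\mal{E}=0$ and $\mal{E}$ lies in the generated subcategory.

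I expect the identification $\omega^\infty_{S^1}\Sigma^\infty_{\mathbb{P}}\simeq\colim\Omega^n_{\mathbb{G}}\Sigma^n_{\mathbb{G}}$ and the $t$-exactness of $\Omega_{\mathbb{G}}$ to be the delicate input. Everything else is formal, and this is the one place where the hypothesis that the base is a field is used.
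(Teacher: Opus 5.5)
The paper gives no proof of this proposition. It is recalled from Bachmann [\cite{MR3743071}, Proposition 4], which is the reference the paper cites when it uses the result in the proof of \Cref{slice conn for p1 spectra}. Your argument is correct and is essentially the standard proof from that source. You generate the $t$-structure by the $\Sigma^\infty_{\mathbb{P}}X_+$ and identify the coconnective half sectionwise via $\sigma^\infty_{S^1}\dashv\omega^\infty_{S^1}$. You then show the generators have connective underlying $S^1$-spectra using $\omega^\infty_{S^1}\Sigma^\infty_{\mathbb{P}}\simeq\colim_n\Omega^n_{\mathbb{G}}\Sigma^n_{\mathbb{G}}\Sigma^\infty_{S^1}$, and finish with the truncation triangle and conservativity of $\omega^\infty_{S^1}$.

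Two small remarks. First, colimit preservation of $\omega^\infty_{S^1}$ is cleanest stated as follows: its left adjoint sends the compact generators $\Sigma^\infty_{S^1}X_+[i]$ to compact objects, so $\omega^\infty_{S^1}$ preserves filtered colimits, and by exactness it preserves all colimits. Second, the input that $\Omega_{\mathbb{G}}$ is $t$-exact, i.e. $\pi_i^{nis}\Omega_{\mathbb{G}}E\cong(\pi_i^{nis}E)_{-1}$, is Morel's contraction theorem over a \emph{perfect} field. So your proof, like the cited one, establishes the statement for $k$ perfect, which is the only setting in which the paper uses it. Your closing claim that only ``the base is a field'' is needed overstates this.
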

\begin{prop}\label[prop]{effective slice exact}
Let $k$ be a perfect field and $n\geq 0$ a natural number. Then

\begin{enumerate}
    \item The localization functor $f_{0/n}:\mathcal{SH}^{eff}(k)\to \mathcal{SH}^{eff}(k)$ is right $t$-exact inducing a $t$-structure on $\mathcal{SH}^{eff}(k)/f_n$.
    \item The colocalization functor $f_{n}:\mathcal{SH}^{eff}(k)\to \mathcal{SH}^{eff}(k)$ is right $t$-exact of amplitude $-1$.
     \item The functor $s_{n}:\mathcal{SH}^{eff}(k)\to \mathcal{SH}^{eff}(k)$ is right $t$-exact of amplitude $-1$.
\end{enumerate}
\end{prop}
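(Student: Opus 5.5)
The plan is to reduce all three statements to the $S^1$-case through the slice conjectures, as the introduction of this subsection announces. We use the adjunction $\omega^\infty_{S^1}:\mathcal{SH}^{eff}(k)\rightleftarrows\mathcal{SH}^{S^1}(k)$ and the following two facts over a perfect field.

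First, $\omega^\infty_{S^1}$ is $t$-exact and conservative on connectivity. The $t$-structure of \Cref{eff t structure} is defined by Nisnevich homotopy sheaves, and these sheaves are computed after applying $\omega^\infty_{S^1}$. So an effective $\mal{E}$ lies in $\mathcal{SH}^{eff}(k)_{\geq m}$ if and only if $\omega^\infty_{S^1}\mal{E}$ lies in $\mathcal{SH}^{S^1}(k)_{\geq m}$.

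Second, we need Levine's form of the slice conjecture [\cite{levine2008homotopy}], as recalled in \Cref{bachmann gm conservativity} via [\cite{MR4173925}, Lemma 6.1]. For $n\geq 0$ it gives a natural equivalence
$$\omega^\infty_{S^1} f_n^{eff}\simeq f_n^{S^1}\omega^\infty_{S^1}.$$
Hence also $\omega^\infty_{S^1}f_{0/n}\simeq f_{0/n}\omega^\infty_{S^1}$ and $\omega^\infty_{S^1}s_n\simeq s_n\omega^\infty_{S^1}$, since $\omega^\infty_{S^1}$ is exact. The assertion I am least sure is available in exactly this form is that $\omega^\infty_{S^1}$ intertwines the effective slice tower with the $S^1$-slice tower. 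This is the main obstacle. I would first try to extract it from Levine's Theorem 9.0.3 (the $\mathbb{P}^1$-spectrum slices are the $S^1$-slices of the infinite loop spectrum), checking it on the compact generators $\Sigma^\infty_{\mathbb{P}}X_+\wedge T^{n}$.

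With these two facts in hand the three parts follow.

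For (1), let $\mal{E}\in\mathcal{SH}^{eff}(k)_{\geq 0}$. Then $\omega^\infty_{S^1}f_{0/n}\mal{E}\simeq f_{0/n}\omega^\infty_{S^1}\mal{E}$, which is connective by \Cref{slice connectivity}. So $f_{0/n}\mal{E}$ is connective by the first fact. The induced $t$-structure on $\mathcal{SH}^{eff}(k)/f_n$ then comes from \Cref{localization of t structures}.

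For (2), let $\mal{E}\in\mathcal{SH}^{eff}(k)_{\geq 0}$. The fiber sequence
$$\Omega f_{0/n}\mal{E}\to f_n\mal{E}\to\mal{E}$$
has outer terms in $\mathcal{SH}^{eff}(k)_{\geq -1}$ and $\mathcal{SH}^{eff}(k)_{\geq 0}$. The long exact sequence of Nisnevich homotopy sheaves then gives $f_n\mal{E}\in\mathcal{SH}^{eff}(k)_{\geq -1}$, which is amplitude $-1$. Equivalently, one can transport \Cref{slices takes connected to connective}(2) through $\omega^\infty_{S^1}$.

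For (3), use the fiber sequence $s_n\to f_{0/n+1}\to f_{0/n}$ of \Cref{slices as eilenberg layers}, which holds verbatim in $\mathcal{SH}^{eff}$ by the same snake-lemma argument. It gives $s_n\mal{E}\simeq\Omega\,\mathrm{cofib}(\dots)$, which lies in $\mathcal{SH}^{eff}(k)_{\geq -1}$ whenever $\mal{E}$ is connective, by (1) applied to both $f_{0/n+1}$ and $f_{0/n}$.

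Only the commutation in the second fact uses perfectness of $k$. Everything else is formal from \Cref{slice connectivity}.
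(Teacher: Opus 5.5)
Your proposal is correct and follows essentially the paper's route for (1). You detect connectivity on $\omega^\infty_{S^1}$, commute $\omega^\infty_{S^1}$ past the Tate truncation using the slice conjecture over a perfect field, and invoke the $S^1$-slice connectivity of \S2. The only differences are minor: the paper takes $\omega^\infty_{\mathbb{G}} f^{eff}_{0/n}\simeq f_{0/n}\omega^\infty_{\mathbb{G}}$ directly from [\cite{bachmann2019voevodsky}, Corollary 19 (i)] rather than deriving it from Levine's statement for $f_n$, and its proof only writes out (1), so your fiber-sequence arguments for (2) and (3) fill in what the paper leaves implicit.
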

\begin{proof}
    First, note that by definition, $\mal{E}\in   \mathcal{SH}^{eff}(k)_{\geq 0}$ if and only if $\omega_{\mathbb{G}}^{{\infty}}\mal{E}\in \mathcal{SH}_{S^1}(k)_{\geq 0}$. Now suppose $\mal{E}\in   \mathcal{SH}^{eff}(k)_{\geq 0}$. Then, by the validity of the slice conjecture over a perfect base field [\cite{bachmann2019voevodsky}, Corollary 19 (i)], we have \begin{flalign*}
        \omega_{\mathbb{G}}^\infty (f^{eff}_{0/n}\mal{E})\simeq f_{0/n}\omega_\mathbb{G}^\infty\mal{E}
    \end{flalign*}
   Since $\mal{E}\in   \mathcal{SH}^{eff}(k)_{\geq 0}$ and $f_{0/n}:\mathcal{SH}^{S^1}(k)\to \mathcal{SH}^{S^1}(k)$ is right $t$-exact (\Cref{slice t-exact}), $f_{0/n}\omega_\mathbb{G}^\infty\mal{E}\in \mathcal{SH}_{S^1}(k)_{\geq 0}$. Thus $ \omega_{\mathbb{G}}^\infty (f^{eff}_{0/n}\mal{E})\in \mathcal{SH}_{S^1}(k)_{\geq 0}$ and the claim follows from the first line of this proof again.
\end{proof}
\begin{rem}
    It is easy to show that the non-negative part of this $t$-structure on $\mathcal{SH}^{eff}(k)/f_n$ form \Cref{effective slice exact}(1) is the smallest subcategory generated under colimits by $f_{0/n}\Sigma^\infty_{eff}X_+.$
\end{rem}
As a corollary, we obtain:
\begin{thm}\label{slice conn for p1 spectra}
  For every $n\geq 0$, the localization $f_{0/n}: \mathcal{SH}^{\mathbb{P}^1}(k)\to \mathcal{SH}^{\mathbb{P}^1}(k)$ is right $t$-exact.
\end{thm}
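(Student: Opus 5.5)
We deduce the $\mathbb{P}^1$-statement from the effective one, \Cref{effective slice exact}(1), by shifting the weight with $\mathbb{G}$. Over a perfect field we use the homotopy $t$-structure on $\mathcal{SH}^{\mathbb{P}^1}(k)$ in which $\mal{E}\in\mathcal{SH}^{\mathbb{P}^1}(k)_{\geq 0}$ if and only if $\mal{E}\wedge\mathbb{G}^{m}\in\mathcal{SH}^{eff}(k)_{\geq 0}$ for every $m\in\mathbb{Z}$. Equivalently, $r_0(\mal{E}\wedge \mathbb{G}^{m})$ is connective in the effective $t$-structure of \Cref{eff t structure} for all $m$. So we must show that for each such $m$ the spectrum $r_0\big((f_{0/n}\mal{E})\wedge \mathbb{G}^{m}\big)$ lies in $\mathcal{SH}^{eff}(k)_{\geq 0}$.

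\textbf{Step 1: twisting.} Since $-\wedge\mathbb{G}^{m}$ is an autoequivalence carrying $\mathcal{SH}^{\mathbb{P}^1}(k)(j)$ onto $\mathcal{SH}^{\mathbb{P}^1}(k)(j+m)$, it intertwines the slice towers. This gives
$$(f_{0/n}\mal{E})\wedge\mathbb{G}^{m}\simeq f_{0/n+m}(\mal{E}\wedge \mathbb{G}^{m}).$$
Writing $\mal{F}:=\mal{E}\wedge\mathbb{G}^{m}$, the problem reduces to showing that $r_0 f_{0/n+m}\mal{F}$ is effectively connective whenever $\mal{F}$ is $t$-nonnegative. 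Here the index $n+m$ now ranges over all of $\mathbb{Z}$.

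\textbf{Step 2: reducing to the effective category.} Apply $r_0=f_0$ to the cofiber sequence $f_{n+m}\mal{F}\to\mal{F}\to f_{0/n+m}\mal{F}$. There are two cases.
\begin{enumerate}
\item If $n+m\le 0$, then $r_0f_{n+m}\mal{F}\simeq r_0\mal{F}$, because $\mathcal{SH}^{eff}\subset\mathcal{SH}^{\mathbb{P}^1}(n+m)$. Hence $r_0f_{0/n+m}\mal{F}\simeq 0$ and there is nothing to prove.
\item If $n+m>0$, then $r_0f_{n+m}\mal{F}\simeq f^{eff}_{n+m}r_0\mal{F}$, since $\mathcal{SH}^{\mathbb{P}^1}(n+m)\subset\mathcal{SH}^{eff}$. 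Therefore $r_0f_{0/n+m}\mal{F}\simeq f^{eff}_{0/n+m}(r_0\mal{F})$. The spectrum $r_0\mal{F}$ lies in $\mathcal{SH}^{eff}(k)_{\geq 0}$ by the definition of the $t$-structure, so \Cref{effective slice exact}(1) shows that $f^{eff}_{0/n+m}r_0\mal{F}$ is effectively connective, as required.
\end{enumerate}

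\textbf{Main obstacle.} The delicate point is setting up the $\mathbb{P}^1$ homotopy $t$-structure so that it is detected by $r_0(-\wedge\mathbb{G}^m)$ for all $m\in\mathbb{Z}$, in Bachmann's formulation. One also needs the compatibility $r_0f_j\simeq f^{eff}_jr_0$ for $j\ge 0$, which is a formal consequence of the nested colocalizations. If instead one uses the convention in which connectivity is tested only by $\omega^\infty(-\wedge\mathbb{G}^m)$ landing in $\mathcal{SH}^{S^1}(k)_{\geq0}$, the same argument works. In that case we replace \Cref{effective slice exact} by the slice conjecture identification $\omega^\infty f_{0/j}\simeq f_{0/j}\omega^\infty$ [\cite{bachmann2019voevodsky}] together with \Cref{slice t-exact}.
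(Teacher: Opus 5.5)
Your proof is correct and follows the paper's route. Like the paper, you combine the $t$-exactness of $r_0=f_0$ (which your twist-wise characterization of $\mathcal{SH}^{\mathbb{P}^1}(k)_{\geq 0}$ encodes) with \Cref{effective slice exact}(1). Your $\mathbb{G}^m$-twisting step, with the case split $n+m\le 0$ versus $n+m>0$ and the identity $r_0 f_j\simeq f^{eff}_j r_0$ for $j\geq 0$, only makes explicit what the paper's two-line argument leaves implicit: $r_0$ alone does not detect connectivity, so every twist must be checked. One small slip: in your opening sentence the condition should read $r_0(\mal{E}\wedge\mathbb{G}^m)\in\mathcal{SH}^{eff}(k)_{\geq 0}$ rather than $\mal{E}\wedge\mathbb{G}^m\in\mathcal{SH}^{eff}(k)_{\geq 0}$, as your next sentence correctly states.
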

\begin{proof}
    Recall that the colocalization $f_0:\mathcal{SH}^{\mathbb{P}^1}(k)\to \mathcal{SH}^{eff}(k)$ is $t$-exact for the $t$-structure on $\mathcal{SH}^{\mathbb{P}^1}(k)$ induced under $\Omega^\infty_\mathbb{G}$ and the $t$-structure on $\mathcal{SH}^{eff}(k)$ given by \Cref{eff t structure} [\cite{MR3743071}, Proposition 4(3)]. The claim then follows from \Cref{effective slice exact}(1).
\end{proof}

\section{Slice filtration on mixed motives with correspondence}
In this section, we discuss the connectivity property of the slice filtration for motives with correspondences. In contrast to the difficulty of establishing the connectivity theorem for both spectra and chain complexes, we shall show that it is a fairly easy task when the correspondence has a cancellation property. Moreover, in the case of spectra and complexes over the ordinary category $Sm_k$, the connectivity properties we have established are restrictive in the sense that while the connectivity amplitude of $f_{0/n}$ is $0$, the same for $f_n$ and $s_n$ is generally $-1$ (with the exception of $n=0$). We shall see in this section that for categories of mixed motives with cancellation, all of these functors have connectivity amplitude $0$.

We first recall the abstract machinery for the slice filtration on a category of correspondences. Let $\gamma: Cor_k^{fr}\to \mathcal{C} $ be a category of correspondences over $k$ [\cite{bachmann2021cancellation}, Definition 2.1]. The stable motivic homotopy category of $S^1$-spectra is then constructed as $$\mathcal{SH}^{\mathcal{C},S^1}(k):=L_{\mathbb{A}^1}\mathcal{P}_{nis}(\mathcal{C}, Spt).$$ We let $\mathbb{G}_\mathcal{C}:=\gamma(\mathbb{G}_m,1)$.
The $n$-Tate-twisted category of motivic spectra over $\mathcal{C}$ is similarly defined as the full monoidal, stable $\infty$-category of $\mathcal{SH}^{\mathcal{C},S^1}(k)$ generated by the $n$-fold tensor product $\mathbb{G}_\mathcal{C}^n$. We denote this colocalizing category by $\mathcal{SH}^{\mathcal{C},S^1}(k)(n)$ and the corresponding colocalization functor by $f_n^{\mathcal{C}}$. As usual, this gives rise to a tower of colocalization functors:
$$\cdots \to f^{\mathcal{C}}_{n+1}\to f_n^\mathcal{C}\to \cdots\to f_1^\mathcal{C}\to f_0^\mathcal{C}=1$$
defining the following cofiber sequences:
 $$f^{\mathcal{C}}_{n+1}\to f_n^\mathcal{C}\to s_n^\mathcal{C}\text{ and } f^{\mathcal{C}}_{1}\to f_0^\mathcal{C}\to f^{\mathcal{C}}_{0/n}.$$
 \subsection{Slices of cancellative correspondences}
 \begin{prop}\label[prop]{model for connective cover for canc corr}
 Suppose $\mathcal{C} $ is a category of correspondence over $k$ satisfying cancellation \textup{[\cite{bachmann2021cancellation}, Definition 2.11]}. Then, for every $n\geq 0$, there is a canonical equivalence:
    $$f_{n}^{\mathcal{C}}\simeq (-)^{\mathbb{G}_\mathcal{C}^n}\otimes \mathbb{G}_\mathcal{C}^n.$$    
    \end{prop}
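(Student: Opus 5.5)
The plan is to exhibit $\mathbb{G}_\mathcal{C}^n\otimes(-)$ as an equivalence of $\mathcal{SH}^{\mathcal{C},S^1}(k)$ onto the subcategory $\mathcal{SH}^{\mathcal{C},S^1}(k)(n)$, whose inverse is $(-)^{\mathbb{G}_\mathcal{C}^n}$. Granting this, the composite $\mathbb{G}_\mathcal{C}^n\otimes(-)^{\mathbb{G}_\mathcal{C}^n}$ is the right adjoint of the inclusion. The input is cancellation in the sense of [\cite{bachmann2021cancellation}, Definition 2.11], which we use in the form: for all $\mal{E},\mal{F}\in\mathcal{SH}^{\mathcal{C},S^1}(k)$ the map
$$\mathrm{Map}(\mal{E},\mal{F})\to \mathrm{Map}(\mal{E}\otimes\mathbb{G}_\mathcal{C},\mal{F}\otimes\mathbb{G}_\mathcal{C})$$
is an equivalence. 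Equivalently, the unit $\mal{F}\to(\mal{F}\otimes\mathbb{G}_\mathcal{C})^{\mathbb{G}_\mathcal{C}}$ is an equivalence. Cancellation is stated for representables/compact generators in loc. cit.; I would first extend it to all objects. Both sides commute with colimits in $\mal{E}$, and $\mathbb{G}_\mathcal{C}$ is compact, so $(-)^{\mathbb{G}_\mathcal{C}}$ commutes with filtered colimits, which handles $\mal{F}$. Iterating then gives the statement for $\mathbb{G}_\mathcal{C}^n$.

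\textbf{Step 1 (full faithfulness and essential image).} By cancellation, $\mathbb{G}_\mathcal{C}^n\otimes-$ is fully faithful. Its essential image is closed under colimits, since the functor preserves colimits. It is a tensor ideal, since $(\mal{E}\otimes\mathbb{G}_\mathcal{C}^n)\otimes\mal{F}=(\mal{E}\otimes\mal{F})\otimes\mathbb{G}_\mathcal{C}^n$. It contains $\mathbb{G}_\mathcal{C}^n$. Conversely, every object of the form $\mal{E}\otimes\mathbb{G}_\mathcal{C}^n$ lies in the localizing tensor ideal generated by $\mathbb{G}_\mathcal{C}^n$. Hence the essential image is exactly $\mathcal{SH}^{\mathcal{C},S^1}(k)(n)$. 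One subtle point needs checking: a full subcategory that is the essential image of a fully faithful, colimit-preserving functor is closed under colimits in the ambient category. This holds because a colimit of objects $\mal{E}_i\otimes\mathbb{G}_\mathcal{C}^n$ is $(\colim\mal{E}_i)\otimes\mathbb{G}_\mathcal{C}^n$.

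\textbf{Step 2 (right adjoint).} Write $\iota$ for the inclusion. For $\mal{E}\otimes\mathbb{G}_\mathcal{C}^n$ in the image and any $\mal{X}$, we compute
$$\mathrm{Map}(\mal{E}\otimes\mathbb{G}_\mathcal{C}^n,\mal{X})\simeq \mathrm{Map}(\mal{E},\mal{X}^{\mathbb{G}_\mathcal{C}^n})\simeq\mathrm{Map}(\mal{E}\otimes\mathbb{G}_\mathcal{C}^n,\mal{X}^{\mathbb{G}_\mathcal{C}^n}\otimes\mathbb{G}_\mathcal{C}^n).$$
The first equivalence is the tensor–hom adjunction; the second is Step 1. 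Composing, the counit (evaluation) $\mal{X}^{\mathbb{G}_\mathcal{C}^n}\otimes\mathbb{G}_\mathcal{C}^n\to\mal{X}$ exhibits $\mal{X}^{\mathbb{G}_\mathcal{C}^n}\otimes\mathbb{G}_\mathcal{C}^n$ as the colocalization of $\mal{X}$. By uniqueness of right adjoints, $f_n^{\mathcal{C}}\simeq(-)^{\mathbb{G}_\mathcal{C}^n}\otimes\mathbb{G}_\mathcal{C}^n$.

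The main obstacle is the first paragraph: passing from cancellation as formulated in [\cite{bachmann2021cancellation}] (for spaces/representables, or for $\mathbb{G}_m$-loops on the unstable or effective level) to the stable, all-objects statement that the unit $\mal{F}\to(\mal{F}\otimes\mathbb{G}_\mathcal{C})^{\mathbb{G}_\mathcal{C}}$ is an equivalence in $\mathcal{SH}^{\mathcal{C},S^1}(k)$. Here I would try to cite the stable reformulation in loc. cit. directly, namely full faithfulness of $\mathbb{G}_\mathcal{C}\otimes-$. Failing that, I would reduce via compact generation together with compactness of $\mathbb{G}_\mathcal{C}$, as sketched above. Once that is in place, the rest is formal.
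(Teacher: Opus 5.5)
Your proposal is correct and is essentially the paper's proof. The paper runs the same chain $\mathrm{Map}(\esc{E}\otimes\mathbb{G}_{\mathcal{C}}^n,\esc{F}^{\mathbb{G}_{\mathcal{C}}^n}\otimes\mathbb{G}_{\mathcal{C}}^n)\simeq\mathrm{Map}(\esc{E},\esc{F}^{\mathbb{G}_{\mathcal{C}}^n})\simeq\mathrm{Map}(\esc{E}\otimes\mathbb{G}_{\mathcal{C}}^n,\esc{F})$, citing the stable form of cancellation [\cite{bachmann2021cancellation}, Proposition 2.13] exactly as you propose, while your Step 1 adds a full-faithfulness justification that every object of $\mathcal{SH}^{\mathcal{C},S^1}(k)(n)$ has the form $\esc{E}\otimes\mathbb{G}_{\mathcal{C}}^n$, which the paper simply assumes.
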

\begin{proof}
    Let $\esc{D}\in \mathcal{SH}^{\mathcal{C},S^1}(k)(n)$ i.e., there is $\esc{E}\in \mathcal{SH}^{\mathcal{C},S^1}(k)$ such that $\esc{D}\simeq \esc{E}\otimes \mathbb{G}_\mathcal{C}^n.$ Then for every $\esc{F}\in \mathcal{SH}^{\mathcal{C},S^1}(k)$ we have:
    \begin{flalign*}
        \mathrm{Map}_{\mathcal{SH}^{\mathcal{C},S^1}(k)(n)}\big (\esc{D}, \esc{F}^{\mathbb{G}_\mathcal{C}^n}\otimes \mathbb{G}_\mathcal{C}^n\big)&= \mathrm{Map}_{\mathcal{SH}^{\mathcal{C},S^1}(k)(n)}\big (\esc{E}\otimes\mathbb{G}_\mathcal{C}^n, \esc{F}^{\mathbb{G}_\mathcal{C}^n}\otimes \mathbb{G}_\mathcal{C}^n\big)\\ &= \mathrm{Map}_{\mathcal{SH}^{\mathcal{C},S^1}(k)}\big (\esc{E}\otimes\mathbb{G}_\mathcal{C}^n, \esc{F}^{\mathbb{G}_\mathcal{C}^n}\otimes \mathbb{G}_\mathcal{C}^n\big)\\
        &\simeq  \mathrm{Map}_{\mathcal{SH}^{\mathcal{C},S^1}(k)}\big (\esc{E}, \esc{F}^{\mathbb{G}_\mathcal{C}^n}\big) \text{ (by } [\textup{\cite{bachmann2021cancellation}}, \text {Proposition 2.13])} \\
         &\simeq \mathrm{Map}_{\mathcal{SH}^{\mathcal{C},S^1}(k)}\big (\esc{E}\otimes{\mathbb{G}_\mathcal{C}^n}, \esc{F}\big)\\
         &= \mathrm{Map}_{\mathcal{SH}^{\mathcal{C},S^1}(k)}\big (\esc{D}, \esc{F}\big).
    \end{flalign*}
\end{proof}
Before we describe the slice connectivity for motives of generalized correspondences, we first need to set up an appropriate $t$-structure on the category of generalized mixed motivic spectra $\mathcal{SH}^{\mathcal{C},S^1}(k)$. As usual, the key will be a standard connectivity theorem for the functor that adds $\mathcal{C}$-transfers. In the rest of the section, unless otherwise specified, we let $k$ stand for a perfect field. 
\begin{prop}\label[prop]{connectivity for gen cor}
 Let $k$ be a perfect field. Let $\mu: Cor_k^{fr}\to \mathcal{C}$ be a generalized category of correspondences, and denote by $U: Sm_k\to \mathcal{C}$ the composite $\mu\circ \gamma$. Then the functor:
$$U_*:=\gamma_*\mu^*:\mathcal{SH}^{\mathcal{C},S^1}(k)\to \mathcal{SH}_{S^1}(k)$$
\begin{enumerate}
    \item preserves small limits and colimits and is conservative.
\end{enumerate}
Consequently, the functor $$U_*U^*: \mathcal{SH}_{S^1}(k)\to \mathcal{SH}_{S^1}(k)$$ 
    of addding $\mathcal{C}$-transfers has the following properties: 
    \begin{enumerate}[resume]
    \item It preserves small colimits and hence is a left adjoint.
    \item It preserves connectivity.
    \end{enumerate}
\end{prop}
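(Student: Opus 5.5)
The plan is to reduce everything to the framed case and then invoke the standard structural results on motivic spectra with transfers over perfect fields. The forgetful functor $U_*$ is a composite, $\gamma_*$ followed by $\mu_*$, and I treat each factor separately.

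\textbf{Step 1: $\mu_*$ and part (1).} The forgetful functor $\mu_*:\mathcal{SH}^{\mathcal{C},S^1}(k)\to \mathcal{SH}^{fr,S^1}(k)$ is restriction of presheaves along $\mu$. Before motivic localization it visibly preserves all limits and colimits, since these are computed sectionwise. To pass to the motivic categories I use two facts.
\begin{enumerate}
\item Over a perfect field, Nisnevich sheafification and $\mathbb{A}^1$-localization with $\mathcal{C}$-transfers can be computed on underlying presheaves on $Sm_k$. This is the strict $\mathbb{A}^1$-invariance input for generalized correspondences in the sense of [\cite{bachmann2021cancellation}] (or [\cite{MR4324462}]), namely that $\mathbb{A}^1$-invariant presheaves with $\mathcal{C}$-transfers have strictly $\mathbb{A}^1$-invariant Nisnevich sheafifications.
\item Consequently the inclusion of motivically local objects into all presheaves commutes with colimits. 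The argument is the same as in \Cref{slice localization is colimit closed}: locality is tested by finite limits together with $\mathbb{A}^1$-invariance, both of which are stable under filtered colimits, and in the stable setting arbitrary colimits reduce to filtered colimits of finite sums.
\end{enumerate}

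Conservativity of $\mu_*$ holds because the objects $\mu(X)$, $X\in Sm_k$, generate $\mathcal{C}$. An object with all sections contractible is therefore zero.

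\textbf{Step 2: $\gamma_*$.} For $\gamma_*$ I invoke [\cite{elmanto2021motivic}]. Over a perfect field, $\gamma_*:\mathcal{SH}^{fr,S^1}(k)\to\mathcal{SH}^{S^1}(k)$ preserves colimits and limits and is conservative. The mechanism is again that motivic localization of framed presheaves is computed underlying, by strict $\mathbb{A}^1$-invariance of framed presheaves and the Nisnevich locality of $L_{\mathbb{A}^1}$ on framed objects. Composing the two steps gives (1).

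\textbf{Step 3: part (2).} Since $U_*$ preserves colimits, so does the composite $U_*U^*$. Because $U_*U^*$ is colimit-preserving between presentable categories, the adjoint functor theorem makes it a left adjoint.

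\textbf{Step 4: part (3).} By (2) and the fact that $\mathcal{SH}^{S^1}(k)_{\geq 0}$ is generated under colimits and extensions by the objects $\Sigma^\infty_+X$, it suffices to prove that each $U_*U^*\Sigma^\infty_+X=U_*\Sigma^\infty_{\mathcal{C}}X_+$ is connective. That object is $L_{mot}$ applied to the presheaf $Y\mapsto \mathcal{C}(Y,X)$, which is sectionwise connective, meaning a presheaf of connective spectra (or of grouplike spaces). By Step 1 the motivic localization with transfers agrees with $L^{sp}_{mot}$ on the underlying presheaf of spectra. Morel's stable connectivity theorem then gives that $L^{sp}_{mot}$ preserves connectivity; within the paper this is \Cref{spectra A nis localization preserves pre connectivity} together with \Cref{nis to pre to weak} and [\cite{morel2005stable}, Lemma 6.1.6]. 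Hence the object is connective.

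\textbf{Main obstacle.} I expect the crux to be Step 1: showing that for a general cancellative, or merely generalized, category of correspondences the motivic localization commutes with the forgetful functor. Everything else is formal. My first attempt is to cite the strict $\mathbb{A}^1$-invariance theorem for $\mathcal{C}$ from [\cite{bachmann2021cancellation}] and [\cite{MR4324462}], which is exactly where perfectness of $k$ enters.
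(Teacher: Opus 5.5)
Your proposal is correct and follows essentially the same route as the paper. Part (1) comes from the known forgetful-functor results (\cite{elmanto2021motivic} for $\gamma_*$, \cite{bachmann2021cancellation} for $\mu$), and part (2) is formal. For part (3) you reduce to the generators $\Sigma^\infty_+X$, identify $U_*U^*\Sigma^\infty_+X$ with $L^{sp}_{mot}$ of the sectionwise connective (group-completed) presheaf underlying $h^{\mathcal{C}}X_+$, and apply Morel's stable connectivity. The paper gets that same identification through the framed bar construction and [\cite{elmanto2021motivic}, Lemma 3.5.4], rather than stating $U_*L^{\mathcal{C}}_{mot}\simeq L^{sp}_{mot}U_*$ directly as you do, but the underlying input is the same.
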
 
\begin{proof}
(1). First, since $U_*$ is a right adjoint, it preserves small limits. That (stably) $\gamma_*$ preserves colimits and is conservative is [\cite{elmanto2021motivic}, Proposition 3.5.2.], whereas the case of $\mu^*$ follows from stabilization of [\cite{bachmann2021cancellation}, Proposition 2.6(3)].

(2).  $U^*$, being a left adjoint, preserves colimits. So the claim follows from (1).

 (3).   Recall that the full subcategory of connective spectras in $\mathcal{SH}^{S^1}(k)$ is generated under colimits by motivic models of $\Sigma_+^\infty X$ for smooth schemes $X/k$ [\cite{elmanto2021motivic}, Proposition 3.1.13]. Thus, by (2) it suffices to show that $U_*U^*\Sigma_+^\infty X$ is a connective $S^1$-spectra over $k$.  By definition, we are looking at $\gamma_*\mu^*\mu_*\gamma^*\Sigma ^\infty X_+$. Since (unstably) $\gamma^*,\mu_*$ are left adjoints and $\mu^*$ preserves colimits [\cite{MR4324462}, Proposition 2.6 (3)] it follows that
 \begin{flalign*}     \gamma_*\mu^*\mu_*\gamma^*\Sigma ^\infty X_+&\simeq \gamma_*\mu^*\mu_*\Sigma_{fr} ^\infty \gamma^*X_+\\&\simeq \gamma_*\mu^*\Sigma_{\mathcal{C}} ^\infty \mu_*\gamma^*X_+\\&\simeq
   \gamma_*  \Sigma ^\infty _{fr}\mu^*\mu_*\gamma^*X_+
 \end{flalign*}
Since $Spc^{fr}(k)$ is semiadditive, this ccan be identified as
$$ \simeq \gamma_*  \mathrm{B} ^\infty _{fr}(\mu^*\mu_*\gamma^*X_+)^{gp}.$$
 By [\cite{elmanto2021motivic}, Lemma 3.5.4.] and the fact that (unstably) $\gamma_*$ preserves sifted colimits [\cite{elmanto2021motivic}, Proposition 3.2.15] the above is further equivalent to$$\mathrm{B}^\infty_{mot}  \gamma_* (\mu^*\mu_*\gamma^*X_+)^{gp}\simeq \mathrm{B}^\infty_{mot} (h^\mathcal{C}X_+)^{gp}.$$
  But the stable motivic connectivity of Morel tells us that $\mathrm{B}_{mot} ^\infty (h^\mathcal{C}X_+)^{gp}$ ($\simeq L_{mot}^{sp}(\mathrm{B}_{nis} ^\infty (h^\mathcal{C}X_+)^{gp})$ is connective. 
\end{proof}
This yields a canonical homotopy $t$-structure on $\mathcal{SH}_{S^1}^\mathcal{C}(k)$:
\begin{thm}\label{cancellative homotopy t structure}
    Let $U: Sm_k\to\mathcal{C}$ be a category of correspondences over a field $k$. Then there is a $t$-structure on $\mathcal{SH}^{\mathcal{C},S^1}(k)$ whose nonnegative part is the full subcategory spanned by $$\{\mal{E}\in\mathcal{SH}^{\mathcal{C},S^1}(k)\mid U_*\mal{E}\in \mathcal{SH}^{S^1}(k)_{\geq 0}\}. $$
\end{thm}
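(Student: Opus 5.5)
The plan is to build the $t$-structure by generation. Let $\mathcal{SH}^{\mathcal{C},S^1}(k)_{\geq 0}$ be the full subcategory of objects $\mal{E}$ with $U_*\mal{E}\in\mathcal{SH}^{S^1}(k)_{\geq 0}$. I will show that it equals the smallest full subcategory $\mathcal{G}$ closed under colimits and extensions and containing $U^*\Sigma^\infty_+X$ for $X\in Sm_k$. This is the subcategory generated by the essentially small set of compact objects $\{U^*\Sigma^\infty_+X\}$. By [\cite{lurie2017higher}, Proposition 1.4.4.11], any such presentable subcategory of a presentable stable $\infty$-category is the connective part of a unique accessible $t$-structure. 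The remaining work is therefore the identification $\mathcal{G}=\{\mal{E}\mid U_*\mal{E}\geq 0\}$.

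\textbf{Step 1: $\mathcal{G}\subseteq\{U_*\mal{E}\geq 0\}$.} By \Cref{connectivity for gen cor}(1), $U_*$ preserves small colimits. It is also exact, so it preserves extensions. The connective part $\mathcal{SH}^{S^1}(k)_{\geq 0}$ is closed under colimits and extensions. Hence it suffices to check generators: $U_*U^*\Sigma^\infty_+X$ is connective by \Cref{connectivity for gen cor}(3).

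\textbf{Step 2: $\{U_*\mal{E}\geq 0\}\subseteq\mathcal{G}$.} I would rather not verify this containment by hand. Instead, Step 1 gives a $t$-structure $(\mathcal{G},\mathcal{G}^{\perp}[1])$, and the task is to show that $U_*\mal{E}\geq 0$ forces $\mal{E}\in\mathcal{G}$. Take the truncation triangle $\tau_{\geq 0}\mal{E}\to\mal{E}\to\tau_{<0}\mal{E}$. Set $\mal{F}=\tau_{<0}\mal{E}$, so $\mathrm{Map}(U^*\Sigma^{\infty}_+X[i],\mal{F})=0$ for all $i\geq 0$. By adjunction, $\mathrm{Map}(\Sigma^\infty_+X[i],U_*\mal{F})=0$, so $U_*\mal{F}$ is coconnective, meaning it lies in $\mathcal{SH}^{S^1}(k)_{<0}$. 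Here I use that $\mathcal{SH}^{S^1}(k)_{\geq 0}$ is generated by the $\Sigma^\infty_+X$ [\cite{elmanto2021motivic}, Proposition 3.1.13]. Applying $U_*$ to the triangle gives $U_*\tau_{\geq 0}\mal{E}\to U_*\mal{E}\to U_*\mal{F}$. The first term is connective by Step 1, and the middle term is connective by hypothesis, so $U_*\mal{F}$ is connective. Being both connective and coconnective, $U_*\mal{F}\simeq 0$. Conservativity of $U_*$ (\Cref{connectivity for gen cor}(1)) then gives $\mal{F}\simeq 0$, hence $\mal{E}\simeq\tau_{\geq 0}\mal{E}\in\mathcal{G}$.

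\textbf{Main obstacle.} The hypotheses are stated over a perfect field; the statement says ``field'', but I will assume $k$ perfect, as in the standing convention of this section. The delicate input is \Cref{connectivity for gen cor}: colimit preservation and conservativity of $U_*$ at the stable level, and connectivity of $U_*U^*\Sigma^\infty_+X$. Both are taken as given. With these, the argument above is formal, and the coconnective part can be described as $\{\mal{E}\mid U_*\mal{E}\in\mathcal{SH}^{S^1}(k)_{\leq 0}\}$ by the same adjunction.
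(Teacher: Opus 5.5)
Your proposal is correct and follows the paper's route: the proof of the theorem applies [\cite{lurie2017higher}, Proposition 1.4.4.11] after asserting that this subcategory is the smallest colimit- and extension-closed one containing the $U^*\Sigma^\infty_+X$, and that minimality is verified in \Cref{monoidal nature of homotopy t structure} by exactly your argument (truncation triangle for the generated $t$-structure, vanishing of $U_*$ on the coconnective piece, conservativity of $U_*$). Your version is a bit more careful: you justify via adjunction and generation why $U_*$ sends $\tau_{<0}\mal{E}$ into $\mathcal{SH}^{S^1}(k)_{<0}$, where the paper only says ``since $U_*$ is a right adjoint''. You also rightly note that the perfectness of $k$ is needed for \Cref{connectivity for gen cor}.
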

\begin{proof}
 By \Cref{connectivity for gen cor}(1) the full subcategory spanned by $$\{\mal{E}\in\mathcal{SH}^{\mathcal{C},S^1}(k)\mid U_*\mal{E}\in \mathcal{SH}^{S^1}(k)_{\geq 0}\}$$ is closed under colimits and extensions. Since it is the smallest such category containing $\Sigma^\infty _{\mathcal{C}}h^\mathcal{C}X_+$ for $X\in Sm_k$, the claim follows from [\cite{lurie2017higher}, Proposition 1.4.4.11.]. The compatibility follows from the 
\end{proof}
We call this the homotopy $t$-structure of $\mathcal{C}$-spectra.
\begin{rem}
  The standard $t$-structure on $\mathcal{SH}^{nis}_{S^1}(k)$ arises because $\mathcal{P}_{nis}(k)$ is an $\infty$-topos. However, being semiadditive, $\mathcal{P}_{nis}(\mathcal{C})$ is never an $\infty$-topos, so it does not, by itself, induce a $t$-structure on $\mathcal{SH}^{\mathcal{C},S^1}(k)$. 
\end{rem}
This $t$-structure admits a standard generative description:
\begin{prop}\label[prop]{monoidal nature of homotopy t structure}
    The homotopy $t$-structure of $\mathcal{C}$-spectra (or rather its non-negative part) is generated under colimits by $\Sigma^{\infty}_{\mathcal{C}}h_\mathcal{C}X_+\simeq U^*\Sigma^\infty X_+$. Consequently, the homotopy $t$-structure of $\mathcal{C}$-spectra is monoidally compatible \textup{[\cite{lurie2017higher}, Example 2.2.1.3]}.
\end{prop}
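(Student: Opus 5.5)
The plan has two parts: first identify the non-negative part of the homotopy $t$-structure as the colimit-closure $\mathcal{G}$ of the objects $U^*\Sigma^\infty X_+$, then deduce monoidal compatibility from the fact that these generators are closed under tensor products.

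\textbf{Step 1: the identification $\Sigma^{\infty}_{\mathcal{C}}h_\mathcal{C}X_+\simeq U^*\Sigma^\infty X_+$.} This holds because $U^*$ is the left Kan extension along $U:Sm_k\to\mathcal{C}$, followed by motivic localization. Both operations send representables to representables, and they commute with $\Sigma^\infty$ since every functor involved is a left adjoint.

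\textbf{Step 2: $\mathcal{G}\subset\mathcal{SH}^{\mathcal{C},S^1}(k)_{\geq 0}$.} By \Cref{connectivity for gen cor}(3), $U_*U^*\Sigma^\infty X_+$ is connective, so each generator lies in the non-negative part. By \Cref{connectivity for gen cor}(1), $U_*$ preserves colimits, and $\mathcal{SH}^{S^1}(k)_{\geq 0}$ is closed under colimits. Hence the preimage $\{\mal{E}\mid U_*\mal{E}\in\mathcal{SH}^{S^1}(k)_{\geq 0}\}$ is closed under colimits and extensions, and so it contains $\mathcal{G}$.

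\textbf{Step 3: the reverse inclusion.} I would argue with the $t$-structure that $\mathcal{G}$ generates via [\cite{lurie2017higher}, Proposition 1.4.4.11]; call it $(\mathcal{G},\mathcal{G}^{\perp})$. Take $\mal{E}$ with $U_*\mal{E}$ connective, and let $\mal{E}''$ be its truncation with respect to this $t$-structure. Then $\mal{E}''\in\mathcal{G}^{\perp}[1]$, so $\mathrm{Map}(U^*\Sigma^{\infty+i}X_+,\mal{E}'')\simeq*$ for all $i\geq 0$. By adjunction, $\mathrm{Map}(\Sigma^{\infty+i}X_+,U_*\mal{E}'')\simeq*$, which means $U_*\mal{E}''\in\mathcal{SH}^{S^1}(k)_{<0}$.

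Next apply the exact functor $U_*$ to the cofiber sequence $\mal{E}'\to\mal{E}\to\mal{E}''$. Here $U_*\mal{E}'$ is connective by Step 2 and $U_*\mal{E}$ is connective by assumption, so $U_*\mal{E}''$ is connective. It therefore lies in both $\mathcal{SH}^{S^1}(k)_{\geq 0}$ and $\mathcal{SH}^{S^1}(k)_{<0}$, so $U_*\mal{E}''=0$. Conservativity of $U_*$ (\Cref{connectivity for gen cor}(1)) then gives $\mal{E}''=0$, and hence $\mal{E}\simeq\mal{E}'\in\mathcal{G}$.

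This step is the main obstacle, since it is the only place where conservativity of $U_*$ and its compatibility with the negative part are used. If this argument has a gap, I would instead check the claim directly on homotopy sheaves using the description of $\mathcal{SH}^{S^1}(k)_{<0}$ in \Cref{Motivic t structure}.

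\textbf{Step 4: monoidal compatibility.} By [\cite{lurie2017higher}, Example 2.2.1.3], it suffices that the unit is connective and that $\mathcal{G}\otimes\mathcal{G}\subset\mathcal{G}$. The functor $U^*$ is symmetric monoidal, so
\[
U^*\Sigma^\infty X_+\otimes U^*\Sigma^\infty Y_+\simeq U^*\Sigma^\infty(X\times Y)_+ ,
\]
and the unit is $U^*\Sigma^\infty(\mathrm{Spec}\, k)_+$; both lie in $\mathcal{G}$. Since the tensor product preserves colimits in each variable, $\mathcal{G}$ is closed under tensor products. Finally, the monoidal unit lies in $\mathcal{G}$, which completes the argument.
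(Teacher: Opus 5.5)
Your proposal is correct and follows essentially the same route as the paper. Both arguments show the generators $U^*\Sigma^\infty X_+$ are connective, pass to the $t$-structure they generate via [\cite{lurie2017higher}, Proposition 1.4.4.11], and conclude by applying the exact, conservative functor $U_*$ to the truncation triangle; your adjunction computation showing $U_*\mathcal{E}''\in\mathcal{SH}^{S^1}(k)_{<0}$ is exactly the left $t$-exactness the paper deduces from $U_*$ being a right adjoint, and your Step 4 spells out the monoidal compatibility that the paper only asserts as an immediate consequence.
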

\begin{proof}
    Let $\mathscr{D}$ be the full subcategory of $\mathcal{SH}^{\mathcal{C},S^1}(k)$ generated under colimits by $\Sigma^{\infty}_{\mathcal{C}}h_\mathcal{C}X_+$ for $X\in Sm_k$. From the proof of \Cref{connectivity for gen cor}(3), we know that $$U_*\Sigma^{\infty}_{\mathcal{C}}h^{\mathcal{C}}X_+\simeq \mathrm{B}_{mot}^{\infty}(h^{\mathcal{C}}X_+)^{gp}\in \mathcal{SH}^{S^1}(k)_{\geq 0}.$$ It follows that $\mathscr{D}\subset \mathcal{SH}^{\mathcal{C},S^1}(k)_{\geq 0}$.

    To see the converse, we first note that, by [\cite{lurie2017higher}, Proposition 1.4.4.11], $\mathscr{D}$ is the non-negative part of a $t$-structure. The above observation shows that $U_*$ is right $t$-exact for this new $t$-structure as well. Since $U_*$ is a right adjoint, it follows immediately (for this particular $t$-structure) that $U_*$ is in fact $t$-exact. 

   Let $\mathcal{E}\in \mathcal{SH}^{\mathcal{C},S^1}(k)_{\geq 0}$, and consider the $t$-decomposition of $\mal{E}$ with respect to this new $t$-structure: $$t^\mathcal{D}_{<0}\mal{E}\to \mal{E}\to t^{\mathcal{D}}_{\geq0}\mal{E}.$$ Applying the $t$-exact functor $U_*$ and using that $\mathcal{E}\in \mathcal{SH}^{\mathcal{C},S^1}(k)_{\geq 0}$, we see that $U_*t^\mathcal{D}_{<0}\mal{E}\simeq 0$, yielding the equivalence $$U_* \mal{E}\to U_*t^{\mathcal{D}}_{\geq0}\mal{E}.$$ Since $U_*$ is conservative (\Cref{connectivity for gen cor}(1)), this implies $$ \mal{E}\simeq t^{\mathcal{D}}_{\geq0}\mal{E}\in\mathcal{D} .$$
\end{proof}
From the proof, it follows that:

\begin{cor}
    $U_*$ is $t$-exact for the homotopy $t$-structures.
\end{cor}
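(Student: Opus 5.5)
The plan is to extract $t$-exactness of $U_*$ directly from the proof of \Cref{monoidal nature of homotopy t structure}, together with the defining property of the homotopy $t$-structure from \Cref{cancellative homotopy t structure}. Right $t$-exactness is immediate from the definitions. By \Cref{cancellative homotopy t structure}, $\mal{E}\in \mathcal{SH}^{\mathcal{C},S^1}(k)_{\geq 0}$ means precisely that $U_*\mal{E}\in \mathcal{SH}^{S^1}(k)_{\geq 0}$. Hence $U_*$ carries the nonnegative part into the nonnegative part.

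For left $t$-exactness we must show that $U_*$ carries $\mathcal{SH}^{\mathcal{C},S^1}(k)_{\leq 0}$ into $\mathcal{SH}^{S^1}(k)_{\leq 0}$. I would argue by adjunction with $U^*$. Since $\mathcal{SH}^{S^1}(k)_{\leq 0}$ is the right orthogonal of $\mathcal{SH}^{S^1}(k)_{\geq 1}$, it suffices to show that $U^*$ is right $t$-exact. Concretely, $U^*$ should send $\mathcal{SH}^{S^1}(k)_{\geq 1}$ into $\mathcal{SH}^{\mathcal{C},S^1}(k)_{\geq 1}$.

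To prove this, recall that $\mathcal{SH}^{S^1}(k)_{\geq 0}$ is generated under colimits by the objects $\Sigma^\infty X_+$ for $X\in Sm_k$ [\cite{elmanto2021motivic}, Proposition 3.1.13]. The functor $U^*$ preserves colimits and commutes with $\Sigma$. So it is enough to check that $U^*\Sigma^\infty X_+\simeq \Sigma^\infty_{\mathcal{C}}h_{\mathcal{C}}X_+$ is nonnegative. This holds either by the generation statement of \Cref{monoidal nature of homotopy t structure}, or directly because $U_*U^*\Sigma^\infty X_+$ is connective by \Cref{connectivity for gen cor}(3). Adjunction then shows that $U_*\mal{E}$ is left orthogonal-compatible, i.e. $\mathrm{Map}(\mal{F},U_*\mal{E})\simeq \mathrm{Map}(U^*\mal{F},\mal{E})\simeq *$ for every $\mal{F}\in\mathcal{SH}^{S^1}(k)_{\geq 1}$ and $\mal{E}\in\mathcal{SH}^{\mathcal{C},S^1}(k)_{\leq 0}$. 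Therefore $U_*\mal{E}\in \mathcal{SH}^{S^1}(k)_{\leq 0}$.

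The main point needing care is identifying the $t$-structure produced by generation in \Cref{monoidal nature of homotopy t structure} with the one defined via $U_*$. That proposition establishes exactly this identification. Once it is granted, everything above is formal.
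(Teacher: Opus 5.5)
Your proposal is correct and follows the paper's argument, which is read off from the proof of \Cref{monoidal nature of homotopy t structure}: right $t$-exactness of $U_*$ holds by definition, and left $t$-exactness follows by adjunction from the right $t$-exactness of $U^*$, since $U^*$ sends the generators $\Sigma^\infty X_+$ to nonnegative objects. Your direct justification via \Cref{connectivity for gen cor}(3), together with the fact that nonnegative parts are closed under colimits, already works for the $U_*$-defined $t$-structure, so you do not actually need the identification with the generated $t$-structure (nor the conservativity the paper uses to obtain it).
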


We shall now state and prove the slice connectivity theorem for a cancellative category of correspondences. Before we do that, we go back to the category of ordinary spectras on $Sm_k$ for a moment:
\begin{lem}\label[lem]{Gm power connectivity}
 Let $k$ be a perfect field and $\mal{E}$ be a Nisnevich-connected motivic $S^1$-spectrum on $Sm_k$. Then $\mal{E}^{\mathbb{G}}$ is connected. 
\end{lem}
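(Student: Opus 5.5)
The plan is to reduce everything to homotopy sheaves and then use Morel's contraction formula for strictly $\mathbb{A}^1$-invariant sheaves. Here ``connected'' means Nisnevich $0$-connected, so the homotopy sheaves $\pi_i^{nis}\mal{E}$ vanish for $i\le 0$. Since $k$ is perfect, the homotopy $t$-structure of \Cref{Motivic t structure} (Morel's stable connectivity theorem) has heart $Ab^{\mathbb{A}^1}_k$, and each $\pi_i^{nis}\mal{E}$ is strictly $\mathbb{A}^1$-invariant. The key input is the stable form of [\cite{bachmann2024strongly}, Lemma 4.2], already used in the proof of \Cref{strictly n spherical has trivial n+1 contraction}. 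For $M\in Ab^{\mathbb{A}^1}_k$ it gives
$$(\mathrm{H}M)^{\mathbb{G}}\simeq \mathrm{H}(M_{-1}),$$
so the functor $(-)^{\mathbb{G}}$ is $t$-exact on $\mathcal{SH}^{S^1}(k)$. Indeed, $(-)^{\mathbb{G}}$ carries the heart into the heart.

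\textbf{Step 1 (the Postnikov tower).} Write $\mal{E}\simeq \plim_j \tau_{\le j}\mal{E}$, using left completeness. Each $\tau_{\le j}\mal{E}$ is a finite iterated extension of the layers $\Sigma^i\mathrm{H}\pi_i^{nis}\mal{E}$ with $1\le i\le j$. Applying $(-)^{\mathbb{G}}$ and Step 0 shows that $(\tau_{\le j}\mal{E})^{\mathbb{G}}$ is a finite extension of $\Sigma^i\mathrm{H}((\pi_i^{nis}\mal{E})_{-1})$ for $i\ge 1$. By the long exact sequence of homotopy sheaves, this object is connected.

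\textbf{Step 2 (passing to the limit).} The functor $(-)^{\mathbb{G}}$ is a right adjoint, so it commutes with the limit: $\mal{E}^{\mathbb{G}}\simeq \plim_j(\tau_{\le j}\mal{E})^{\mathbb{G}}$. The transition maps are isomorphisms on $\pi_i^{nis}$ for $i<j$, so the Milnor sequence together with the Mittag-Leffler property identifies $\pi_i^{nis}\mal{E}^{\mathbb{G}}$ with $(\pi_i^{nis}\mal{E})_{-1}$. This vanishes for $i\le 0$. Equivalently, $\tau_{\le 0}$ of the limit is the limit of the eventually constant zero tower.

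\textbf{Alternative bypassing completeness.} One can argue via generation instead. The connected part $\mathcal{SH}^{S^1}(k)_{\ge 1}$ is generated under colimits and extensions by $\Sigma\Sigma^\infty_+X$. However, $(-)^{\mathbb{G}}$ does not obviously preserve colimits; it does commute with filtered colimits because $\mathbb{G}$ is compact. So this route would still require knowing that $(\Sigma^\infty_+X)^{\mathbb{G}}$ is connective, which is Morel's result again. I therefore prefer the Postnikov argument above.

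\textbf{Main obstacle.} The hard step is Step 0: identifying $(\mathrm{H}M)^{\mathbb{G}}$ with $\mathrm{H}(M_{-1})$. Concretely, this means showing that $H^p_{nis}(X\times\mathbb{G}_m,M)\to H^p_{nis}(X,M)$ is split with complement $H^p_{nis}(X,M_{-1})$. This is exactly where perfectness of $k$ enters, through Morel's and Bachmann's theory of contractions. I would cite [\cite{bachmann2024strongly}, Lemma 4.2] for it, stabilizing by taking deloopings $\mathrm{B}^i_{nis}M$.
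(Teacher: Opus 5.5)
Your proof is correct, but it follows a different route from the paper's. The paper argues unstably and levelwise. It models $\mal{E}$ by an $\Omega$-spectrum $(\mal{E}_0,\mal{E}_1,\dots)$ and notes that $\mal{E}^{\mathbb{G}}$ is the $\Omega$-spectrum $((\mal{E}_0)^{\mathbb{G}},(\mal{E}_1)^{\mathbb{G}},\dots)$. Hence $\pi_{-n}^{nis}(\mal{E}^{\mathbb{G}})\cong\pi_0^{nis}((\mal{E}_n)^{\mathbb{G}})$ for $n\geq 0$. Since each level $\mal{E}_n$ is connected, Morel's unstable theorem [\cite{morel2012a1}, Theorem 6.13] says $\Omega_{\mathbb{G}_m}\mal{E}_n$ is $\mathbb{A}^1$-connected, which finishes the proof.

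You stay stable instead. You use only the Eilenberg--MacLane case of the contraction theory, $(\mathrm{H}M)^{\mathbb{G}}\simeq\mathrm{H}(M_{-1})$, and propagate it through the Postnikov tower using left completeness of the homotopy $t$-structure. In effect you carry out stably the Postnikov induction that Morel's theorem performs unstably (there also for non-abelian $\pi_1$). The paper's version is shorter because it outsources that induction to Morel. Yours makes the mechanism explicit and gives $\tau_{\le j}(\mal{E}^{\mathbb{G}})\simeq(\tau_{\le j}\mal{E})^{\mathbb{G}}$, i.e.\ $t$-exactness of $(-)^{\mathbb{G}}$ on bounded-below objects.

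Step 2 should be phrased differently. The Milnor sequence is a sectionwise statement, and sheafification commutes with neither $\lim$ nor $\lim^1$. So it does not by itself compute the homotopy sheaves of the limit. The clean justification is the one your last sentence points to:
\begin{itemize}
\item the fibre of $(\tau_{\le j}\mal{E})^{\mathbb{G}}\to(\tau_{\le j-1}\mal{E})^{\mathbb{G}}$ is $\Sigma^{j}\mathrm{H}((\pi_j^{nis}\mal{E})_{-1})$, concentrated in degree $j$;
\item so the tower $((\tau_{\le j}\mal{E})^{\mathbb{G}})_j$ is itself a Postnikov tower;
\item by left completeness, its limit $\mal{E}^{\mathbb{G}}$ has these terms as its $j$-truncations, and in particular $\tau_{\le 0}(\mal{E}^{\mathbb{G}})\simeq 0$.
\end{itemize}

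Likewise, carrying the heart into the heart does not by itself give $t$-exactness for unbounded objects. It is your Steps 1--2 that supply it.

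In your aside, $(-)^{\mathbb{G}}$ does preserve all colimits: it is exact and commutes with filtered colimits, which suffices in a stable setting. As you say, that route would still need Morel's input for the generators.
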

\begin{proof}
    We assume $\mal{E}$ is given by an $\Omega$-spectrum $(\mal{E}_0,\mal{E}_1,\cdots)$. Since $\mal{E}$ is connected, each $\mal{E}_i$ is a Nisnevich-connected motivic space. It is well known that the $\Omega$-spectrum of $\mal{E}^{\mathbb{G}}$ is $\big((\mal{E}_0)^\mathbb{G},(\mal{E}_1)^\mathbb{G},\cdots\big)$. Thus, for every $i\leq 0$, we have $\pi_i^{st}(\mal{E}^\mathbb{G})\cong \pi_{0}^{nis}((\mal{E}_{-i})^\mathbb{G})$. Since $\mal{E}_{-i}$ is connected for all $i\leq 0$, the claim follows from [\cite{morel2012a1}, Theorem 6.13].\end{proof}
\begin{lem}
   Let $\mathcal{C}$ be a cancellative category of correspondences over a perfect field $k$. Then $f^\mathcal{C}_n$ is right $t$-exact.
\end{lem}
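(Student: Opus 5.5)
By \Cref{model for connective cover for canc corr} we have $f^\mathcal{C}_n\simeq (-)^{\mathbb{G}_\mathcal{C}^n}\otimes \mathbb{G}_\mathcal{C}^n$. The plan is therefore to show separately that, for the homotopy $t$-structure of \Cref{cancellative homotopy t structure}, the functor $(-)\otimes\mathbb{G}_\mathcal{C}$ and the functor $(-)^{\mathbb{G}_\mathcal{C}}$ both preserve the non-negative part $\mathcal{SH}^{\mathcal{C},S^1}(k)_{\geq 0}$. Iterating $n$ times then gives right $t$-exactness of $f^\mathcal{C}_n$.

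\textbf{Step 1: tensoring with $\mathbb{G}_\mathcal{C}$.} By \Cref{monoidal nature of homotopy t structure}, the non-negative part is generated under colimits by the objects $U^*\Sigma^\infty X_+$, and the $t$-structure is monoidally compatible. Since $\mathbb{G}_\mathcal{C}=U^*\Sigma^\infty\mathbb{G}$ is a cofiber of $U^*\Sigma^\infty\mathrm{Spec}(k)_+\to U^*\Sigma^\infty(\mathbb{G}_m)_+$, split by the unit section, it is a retract of a connective object and is itself connective. Monoidal compatibility then gives $\mathcal{E}\otimes\mathbb{G}_\mathcal{C}\in\mathcal{SH}^{\mathcal{C},S^1}(k)_{\geq 0}$ whenever $\mathcal{E}$ is connective. (In fact it lands in $\geq 1$ relative to the Nisnevich connectivity of $\mathbb{G}$ only up to $\mathbb{G}_m$-weight, but connectivity $\geq 0$ suffices here.)

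\textbf{Step 2: the $\mathbb{G}_\mathcal{C}$-loops.} Because $U_*$ is $t$-exact and conservative (\Cref{connectivity for gen cor}(1) and the corollary following \Cref{monoidal nature of homotopy t structure}), it suffices to show $U_*(\mathcal{E}^{\mathbb{G}_\mathcal{C}})\in\mathcal{SH}^{S^1}(k)_{\geq 0}$. Since $U^*\dashv U_*$ and $U^*$ is monoidal, there is a canonical equivalence $U_*(\mathcal{E}^{U^*\mathbb{G}})\simeq (U_*\mathcal{E})^{\mathbb{G}}$; this is the usual projection-type identity, checked on mapping spectra out of $\Sigma^\infty X_+$. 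Now $U_*\mathcal{E}$ is connective. The plan is to apply \Cref{Gm power connectivity} to $\Sigma^{1}U_*\mathcal{E}$, which is connected, giving that $(\Sigma U_*\mathcal{E})^{\mathbb{G}}\simeq\Sigma (U_*\mathcal{E})^{\mathbb{G}}$ is connected. Hence $(U_*\mathcal{E})^{\mathbb{G}}$ is connective.

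\textbf{Conclusion.} Combining the two steps, $\mathcal{E}\mapsto \mathcal{E}^{\mathbb{G}_\mathcal{C}^n}\otimes\mathbb{G}_\mathcal{C}^n$ preserves $\mathcal{SH}^{\mathcal{C},S^1}(k)_{\geq 0}$, so $f^\mathcal{C}_n$ is right $t$-exact.

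The main obstacle is Step 2. It requires verifying the compatibility $U_*(\mathcal{E}^{\mathbb{G}_\mathcal{C}})\simeq(U_*\mathcal{E})^{\mathbb{G}}$, which holds because $\mathbb{G}_\mathcal{C}=U^*\Sigma^\infty\mathbb{G}$ and the internal hom is computed sectionwise on the $U$-image of smooth schemes. It also requires the appeal to Morel's contraction theorem over a perfect field, via \Cref{Gm power connectivity}, which is precisely where perfectness of $k$ enters. The cancellation hypothesis is used only through \Cref{model for connective cover for canc corr}.
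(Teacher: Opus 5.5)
Your proposal is correct and is essentially the paper's argument. You write $f^{\mathcal{C}}_n\simeq(-)^{\mathbb{G}^n_\mathcal{C}}\otimes\mathbb{G}^n_\mathcal{C}$, use $U^*\dashv U_*$ and monoidality of $U^*$ to reduce connectivity of $\mathcal{E}^{\mathbb{G}^n_\mathcal{C}}$ to that of $(U_*\mathcal{E})^{\mathbb{G}^n}$ (handled by \Cref{Gm power connectivity}), and absorb the tensor factor by monoidal compatibility.

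The differences are cosmetic: the paper runs the adjunction on the homotopy sheaves $[U^*X_+,-]$ for all of $\mathbb{G}^n$ at once, starting from $\mathcal{E}$ in degrees $\geq 1$, whereas you use the object-level identity $U_*(\mathcal{E}^{\mathbb{G}_\mathcal{C}})\simeq(U_*\mathcal{E})^{\mathbb{G}}$ with a suspension shift. Your observation that $\mathbb{G}_\mathcal{C}$ is connective is exactly the input the tensor step needs; the paper's text only says $\geq -1$, which would not suffice. The garbled parenthetical in Step 1 about landing in $\geq 1$ is not needed and should be dropped.
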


\begin{proof}
    Let $\mal{E}\in \mathcal{SH}^{\mathcal{C},S^1}(k)_{\geq 1}$. By \Cref{model for connective cover for canc corr}, we have for every $X\in Sm_k$
    \begin{flalign*}
        [U^* (X_+), \mal{E}^{\mathbb{G}_\mathcal{C}^n}]_{\mathcal{SH}^{\mathcal{C},S^1}(k)}&\simeq [U^* (X_+)\otimes{\mathbb{G}_\mathcal{C}^n}, \mal{E}]_{\mathcal{SH}^{\mathcal{C},S^1}(k)}\\
    &\simeq [U^* (X_+\wedge\mathbb{G}^n), \mal{E}]_{\mathcal{SH}^{\mathcal{C},S^1}(k)}\\
    &\simeq [X_+\wedge\mathbb{G}^n, U_*\mal{E}]_{\mathcal{SH}_{S^1}(k)}\\
    &\simeq [X_+, (U_*\mal{E})^{\mathbb{G}^n}]_{\mathcal{SH}_{S^1}(k)}
    \end{flalign*} 
    It follows that the Nisnevich sheafification of the presheaf
    $$X\mapsto [U^* (X_+), \mal{E}^{\mathbb{G}_\mathcal{C}^n}]_{\mathcal{SH}^{\mathcal{C},S^1}(k)}$$ is identical to $\pi_i^{st,nis}(U_*\mal{E})^{\mathbb{G}^n}$. By definition, $U_*\mal{E}\in \mathcal{SH}^{S^1}(k)_{\geq 1}$, i.e., $$\pi_i^{st,nis}(U_*\mal{E})=0\text{ for all }i\leq 0.$$ Therefore, by \Cref{Gm power connectivity}, $$\pi_i^{st,nis}(U_*\mal{E})^{\mathbb{G}^n}\simeq0 \text{ for all }i\leq 0.$$ In other words, $$(-)^{\mathbb{G}_\mathcal{C}^n}: \mathcal{SH}_{S^1}^\mathcal{C}(k)\to \mathcal{SH}_{S^1}^\mathcal{C}(k)$$ is right $t$-exact.

 Since $\mathbb{G}^n_{\mathcal{C}}\in \mathcal{SH}^{\mathcal{C},S^1}(k)_{\geq -1}$, \Cref{monoidal nature of homotopy t structure} implies that the functor $(-)^{\mathbb{G}_\mathcal{C}^n}\otimes \mathbb{G}_\mathcal{C}^n$ is right $t$-exact.\end{proof}

\begin{cor}\label[cor]{cancellative slice connectivity}
 Let $\mathcal{C}$ be a cancellative category of correspondences over a perfect field $k$. Then, for every $n\geq 0$, the functors $f_n^\mathcal{C}$, $f_{0/n}^\mathcal{C}$ and $s_n^\mathcal{C}$ are right $t$-exact as endo functors of $\mathcal{SH}^{\mathcal{C},S^1}(k)$.
\end{cor}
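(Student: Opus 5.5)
The plan is to deduce everything from the preceding lemma, which says $f_n^\mathcal{C}$ is right $t$-exact for every $n\geq 0$. The other two functors will be handled by two-out-of-three arguments in the cofiber sequences
$$f^{\mathcal{C}}_{n}\to 1\to f^{\mathcal{C}}_{0/n}, \qquad f^{\mathcal{C}}_{n+1}\to f^{\mathcal{C}}_{n}\to s^{\mathcal{C}}_{n}.$$
Throughout, the nonnegative part $\mathcal{SH}^{\mathcal{C},S^1}(k)_{\geq 0}$ is the one from \Cref{cancellative homotopy t structure}, described generatively in \Cref{monoidal nature of homotopy t structure}. It is closed under colimits and extensions, and in particular under cofibers.

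\textbf{Step 1} ($f_n^\mathcal{C}$). This is exactly the preceding lemma, applied for each $n\geq 0$. It rests on the cancellation formula $f_n^{\mathcal{C}}\simeq(-)^{\mathbb{G}_\mathcal{C}^n}\otimes\mathbb{G}_\mathcal{C}^n$ of \Cref{model for connective cover for canc corr}, on \Cref{Gm power connectivity}, and on the monoidal compatibility of the $t$-structure. I take it as given. One might first worry that the shift coming from $\mathbb{G}_\mathcal{C}^n\in\mathcal{SH}^{\mathcal{C},S^1}(k)_{\geq -1}$ costs a degree. The lemma's argument shows that $(-)^{\mathbb{G}_\mathcal{C}^n}$ sends $(\geq 1)$-objects to $(\geq 1)$-objects. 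After tensoring with $\mathbb{G}_\mathcal{C}^n$ one therefore lands back in $(\geq 0)$, with amplitude $0$ rather than $-1$, and after reindexing this is exactly right $t$-exactness.

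\textbf{Step 2} ($f_{0/n}^\mathcal{C}$). Let $\mal{E}\in\mathcal{SH}^{\mathcal{C},S^1}(k)_{\geq 0}$. Then $f_{0/n}^\mathcal{C}\mal{E}=\mathrm{cofib}(f_n^\mathcal{C}\mal{E}\to\mal{E})$. Both terms lie in $(\geq 0)$, the first by Step 1. The nonnegative part of a $t$-structure is closed under cofibers, so $f_{0/n}^\mathcal{C}\mal{E}\in(\geq 0)$.

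\textbf{Step 3} ($s_n^\mathcal{C}$). Similarly, $s_n^\mathcal{C}\mal{E}=\mathrm{cofib}(f_{n+1}^\mathcal{C}\mal{E}\to f_n^\mathcal{C}\mal{E})$. Both terms are in $(\geq 0)$ by Step 1 applied to $n$ and $n+1$, so their cofiber is too. Alternatively, one can use the analogue of \Cref{slices as eilenberg layers} for $\mathcal{C}$, which gives a fiber sequence $s_n^\mathcal{C}\to f^\mathcal{C}_{0/n+1}\to f^\mathcal{C}_{0/n}$. Note that this route would only give amplitude $-1$, so the cofiber presentation via $f_{n+1}^\mathcal{C}\to f_n^\mathcal{C}$ is the one to use.

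The only genuine obstacle is therefore Step 1, namely verifying that the amplitude of $f_n^\mathcal{C}$ is really $0$. This is already settled by the lemma, via cancellation. Steps 2 and 3 are formal closure properties of $\mathcal{SH}^{\mathcal{C},S^1}(k)_{\geq 0}$ under cofibers.
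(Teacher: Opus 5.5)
Your argument is correct and is the same as the paper's: the corollary follows from the preceding lemma on $f_n^{\mathcal{C}}$ through the cofiber sequences $f_n^{\mathcal{C}}\to 1\to f_{0/n}^{\mathcal{C}}$ and $f_{n+1}^{\mathcal{C}}\to f_n^{\mathcal{C}}\to s_n^{\mathcal{C}}$, which is also how the paper argues explicitly in the effective case, and you are right that the fiber sequence $s_n^{\mathcal{C}}\to f_{0/n+1}^{\mathcal{C}}\to f_{0/n}^{\mathcal{C}}$ would only give amplitude $-1$. One correction to your aside in Step 1: if $\mathbb{G}^n_{\mathcal{C}}$ were only in $(\geq -1)$, the lemma's argument would send $(\geq 1)$ into $(\geq 0)$, which is amplitude $-1$, and no reindexing turns that into amplitude $0$; what actually gives right $t$-exactness is that $\mathbb{G}^n_{\mathcal{C}}$ is connective (a retract of $U^*\Sigma^\infty_+\mathbb{G}_m^{\times n}$), so that $(\geq 1)\otimes(\geq 0)\subset(\geq 1)$.
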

\underline{\textbf{Examples}}

\textbf{Slice connectivity of Voevodsky motives.}
We now turn our attention to the connectivity property of the slice filtration of Voevodsky motives. Since Voevodsky's category of finite correspondences $Cor^{ft}(k)$ is a category of correspondences in the sense of [\cite{MR4324462}, Definition 2.1] and has cancellation when $k$ is perfect [\cite{MR2804268}, Corollary 4.10], we can apply the results of the previous section to obtain slice connectivity for the larger presentable stable $\infty$-category of unbounded motivic complexes with finite transfers: $$\mathrm{DM}^{eff}(k)\simeq \mathcal{SH}_{S^1}^{ft}(k):=L_{\mathbb{A}^1}L_{nis}\mathcal{P}(Cor^{ft}(k),Spt).$$

However, we shall recast it here for $\mathrm{DM}_-^{eff}(k)$ for readers familiar with classical Voevodsky's theory of mixed motives. The adaptation of the slice filtration to $\mathrm{DM}_-^{eff}(k)$ first appeared in [\cite{MR2249535}]. We shall rewrite their definition to fit this paper's presentation. 

As usual, we define the $n$-th Tate-twisted category of Voevodsky motives as $$\mathrm{DM}_-^{eff}(k)(n):=\mathrm{DM}_-^{eff}(k)\wedge \mathbb{Z}^{tr}T^{\wedge n}\simeq \mathrm{DM}_-^{eff}(k)\wedge \mathbb{Z}(n).$$

As in \S3, the inclusion $\mathrm{DM}_-^{eff}(k)(n)\subset \mathrm{DM}_-^{eff}(k)$ is colocalizing. We denote the colocalization functor by $$f_n^{voe}:\mathrm{DM}_-^{eff}(k)\to \mathrm{DM}_-^{eff}(k).$$

Rather than a consequence, the following formula appears as the definition of Tate connective covers in [\cite{MR2249535}]. 
\begin{prop}\label[prop]{voe tate trunc formula}
    Let $k$ be a perfect field. Then for every $n\geq 0$, there is a canonical equivalence:
    $$f_{n}^{voe}\simeq (-)^{\mathbb{Z}(n)}\otimes \mathbb{Z}(n).$$    
\end{prop}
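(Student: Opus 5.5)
The plan is to treat this as the special case $\mathcal{C}=Cor^{ft}(k)$ of \Cref{model for connective cover for canc corr}, transported to the bounded-above category $\mathrm{DM}_-^{eff}(k)$. Cancellation for finite correspondences over a perfect field holds by [\cite{MR2804268}, Corollary 4.10]. In the form we need, it says that for all $\esc{E},\esc{F}\in\mathrm{DM}_-^{eff}(k)$ the map
$$\mathrm{Map}(\esc{E},\esc{F})\to\mathrm{Map}(\esc{E}\otimes\mathbb{Z}(1),\esc{F}\otimes\mathbb{Z}(1))$$
is an equivalence. Iterating gives the same statement with $\mathbb{Z}(n)$ in place of $\mathbb{Z}(1)$. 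Since $\mathbb{Z}(n)\simeq\mathbb{Z}^{tr}T^{\wedge n}[-2n]$ differs from $\mathbb{Z}^{tr}\mathbb{G}_m^{\wedge n}$ only by a shift, tensoring with $\mathbb{Z}(n)$ or with $\mathbb{G}^n_{\mathcal{C}}$ generates the same subcategory. So the Tate-twisted subcategory $\mathrm{DM}_-^{eff}(k)(n)$ is exactly the essential image of $-\otimes\mathbb{Z}(n)$.

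The key step is the mapping-space computation from the proof of \Cref{model for connective cover for canc corr}. Take $\esc{D}\simeq\esc{E}\otimes\mathbb{Z}(n)$ in the twisted subcategory and $\esc{F}$ arbitrary. Write $\underline{\mathrm{Hom}}$ for the internal Hom of $\mathrm{DM}^{eff}$, so that $(-)^{\mathbb{Z}(n)}=\underline{\mathrm{Hom}}(\mathbb{Z}(n),-)$, and apply cancellation at the third step:
$$\mathrm{Map}(\esc{D},\esc{F}^{\mathbb{Z}(n)}\otimes\mathbb{Z}(n))\simeq \mathrm{Map}(\esc{E}\otimes\mathbb{Z}(n),\esc{F}^{\mathbb{Z}(n)}\otimes\mathbb{Z}(n))\simeq\mathrm{Map}(\esc{E},\esc{F}^{\mathbb{Z}(n)})\simeq\mathrm{Map}(\esc{D},\esc{F}).$$
Two further checks finish the argument:
\begin{enumerate}
\item the composite is induced by the counit $\esc{F}^{\mathbb{Z}(n)}\otimes\mathbb{Z}(n)\to\esc{F}$, which is a formal check of naturality in $\esc{D}$;
\item the object $\esc{F}^{\mathbb{Z}(n)}\otimes\mathbb{Z}(n)$ lies in $\mathrm{DM}_-^{eff}(k)(n)$, which is clear.
\end{enumerate}
Together these identify $(-)^{\mathbb{Z}(n)}\otimes\mathbb{Z}(n)$ with the right adjoint of the inclusion, that is, with $f_n^{voe}$.

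The main obstacle is that $\mathrm{DM}_-^{eff}(k)$ is not presentable and is not closed under the internal Hom of the big category. We therefore need $\esc{F}^{\mathbb{Z}(n)}$ to stay bounded above, so that the formula makes sense inside $\mathrm{DM}_-^{eff}$. I would argue this via the $t$-structure. Since $\mathbb{Z}(n)$ is a shift of $\mathbb{Z}^{tr}\mathbb{G}_m^{\wedge n}$, the functor $(-)^{\mathbb{Z}(n)}$ is, up to the shift $[n]$, the $n$-fold $\mathbb{G}_m$-contraction. The contraction is exact on homotopy invariant sheaves with transfers and commutes with truncations; this is the chain-complex analogue of \Cref{Gm power connectivity}. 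Hence it preserves bounded-above complexes up to a shift.

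Alternatively, one can avoid the issue altogether. Compute the formula in the presentable category $\mathrm{DM}^{eff}(k)\simeq\mathcal{SH}^{ft}_{S^1}(k)$ using \Cref{model for connective cover for canc corr}. Then observe that both $f_n$ and the formula restrict to $\mathrm{DM}_-^{eff}$, because the inclusion $\mathrm{DM}_-^{eff}\subset\mathrm{DM}^{eff}$ is fully faithful and compatible with the twisted subcategories.
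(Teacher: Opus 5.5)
Your proposal is correct and is essentially the paper's proof, which invokes Voevodsky's cancellation theorem for $Cor^{ft}(k)$ and reruns the mapping-space computation of \Cref{model for connective cover for canc corr}. Two of your steps go beyond the paper: identifying the composite with postcomposition by the counit, and checking that $(-)^{\mathbb{Z}(n)}$ preserves bounded-above objects. One caveat is that your ``alternative'' route does not really avoid the boundedness issue, since restricting $f_n$ from $\mathrm{DM}^{eff}(k)$ to $\mathrm{DM}_-^{eff}(k)$ still requires $f_n$ to preserve bounded-above objects, which is the same contraction argument.
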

\begin{proof}
  By [\cite{MR2804268}, Corollary 4.10], we know that $Cor^{ft}_k$ satisfies cancellation. Thus, the claim follows from the same methods as in \Cref{model for connective cover for canc corr}.
\end{proof}
\begin{cor}
    Let $k$ be a perfect field. Then $f_n^{voe}$ is right $t$-exact for the standard homotopy $t$-structure on $\mathrm{DM}_-^{eff}(k)$.
\end{cor}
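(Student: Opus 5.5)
The plan is to view this as the special case $\mathcal{C}=Cor^{ft}_k$ of \Cref{cancellative slice connectivity}, transported from $\mathcal{SH}^{ft,S^1}(k)\simeq \mathrm{DM}^{eff}(k)$ to the full subcategory $\mathrm{DM}^{eff}_-(k)$ of bounded-above objects. First I would check that $Cor^{ft}_k$ is a category of correspondences in the sense of [\cite{MR4324462}, Definition 2.1], and that it satisfies cancellation over a perfect field [\cite{MR2804268}, Corollary 4.10]. Granting these two inputs, \Cref{cancellative homotopy t structure} together with \Cref{monoidal nature of homotopy t structure} produces the homotopy $t$-structure on $\mathrm{DM}^{eff}(k)$. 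I would then identify it with the standard homotopy $t$-structure (via Nisnevich homology sheaves). Concretely, a motive $M$ is connective iff the forgetful image $U_*M$ is connective in $\mathcal{SH}^{S^1}(k)$. Since $U_*$ is $t$-exact and conservative, its homotopy sheaves are the homology sheaves of the underlying complex of Nisnevich sheaves with transfers, and these are strictly $\mathbb{A}^1$-invariant by Voevodsky's theorem.

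Second, by \Cref{voe tate trunc formula} we have $f_n^{voe}\simeq(-)^{\mathbb{Z}(n)}\otimes\mathbb{Z}(n)$. I would then argue exactly as in the lemma preceding \Cref{cancellative slice connectivity}. For $M$ connective (or $1$-connective) and $X\in Sm_k$, adjunction gives
\[
[\mathbb{Z}_{tr}(X),M^{\mathbb{Z}(n)}[i]]\cong[X_+,(U_*M)^{\mathbb{G}^n}[i-n]].
\]
This uses $\mathbb{Z}(n)[n]\simeq \mathbb{Z}_{tr}(\mathbb{G}_m^{\wedge n})$. By \Cref{Gm power connectivity} applied $n$ times, $(U_*M)^{\mathbb{G}^n}$ has the same connectivity as $U_*M$, so the internal Hom $M^{\mathbb{Z}(n)[n]}$ stays connective. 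In fact its homology sheaves are the contractions $(h_iM)_{-n}$, which gives a more classical phrasing of the same statement.

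Finally, tensoring with $\mathbb{Z}(n)[n]=\mathbb{Z}_{tr}(\mathbb{G}_m^{\wedge n})$ preserves connectivity. This holds because $\mathbb{Z}_{tr}(\mathbb{G}_m^{\wedge n})$ is connective, being a summand of $\mathbb{Z}_{tr}(\mathbb{G}_m^n)$, and because the $t$-structure is monoidally compatible by \Cref{monoidal nature of homotopy t structure}. Combining the two steps, $f_n^{voe}=(-)^{\mathbb{Z}(n)[n]}\otimes\mathbb{Z}(n)[n]$ is right $t$-exact. All of these functors preserve bounded-above objects, since contraction and tensoring with a connective compact object shift amplitude by at most a bounded amount. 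So the statement restricts to $\mathrm{DM}^{eff}_-(k)$.

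The step I expect to be the main obstacle is bookkeeping the shift between $\mathbb{Z}(n)$ and $\mathbb{G}_m^{\wedge n}$. One has to make sure that the shifts $[n]$ and $[-n]$ cancel, so that the amplitude comes out as $0$ rather than $-1$. Compatibility of the internal Hom in $\mathrm{DM}^{eff}_-$ with that in $\mathrm{DM}^{eff}$ also needs checking; here cancellation is exactly what is used, through the identity $f^{voe}_n=$ (internal Hom) $\otimes$ (twist).
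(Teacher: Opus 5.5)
Your proposal is correct and follows the paper's route. The paper also uses the cancellation formula $f_n^{voe}\simeq(-)^{\mathbb{Z}(n)[n]}\otimes\mathbb{Z}(n)[n]$, together with right $t$-exactness of the internal Hom out of $\mathbb{Z}(n)[n]$ and of tensoring with the connective object $\mathbb{Z}(n)[n]$; the only difference is that it cites [\cite{MR2249535}, Lemmas 2.1 and 2.3] for these two facts, whereas you rederive them from \S4 applied to $Cor^{ft}_k$.

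One small slip: since $\mathbb{Z}(n)=\mathbb{Z}_{tr}(\mathbb{G}_m^{\wedge n})[-n]$, the adjunction should read $[X_+,(U_*M)^{\mathbb{G}^n}[i+n]]$ rather than $[i-n]$. This does not affect your argument, because you then work directly with $M^{\mathbb{Z}(n)[n]}$.
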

\begin{proof}
    This follows from using the formula for $f_n^{voe}$ as above (\Cref{voe tate trunc formula}), along with [\cite{MR2249535}, Lemma 2.1 and Lemma 2.3]. 
\end{proof}
\begin{cor}
    Let $k$ be a perfect field. Then $f_{0/n}^{voe}$ and $s_n^{voe}$ are right $t$-exact with respect to the standard homotopy $t$-structure on $\mathrm{DM}_-^{eff}(k)$.
\end{cor}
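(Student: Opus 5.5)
The plan is to deduce the statement formally from the right $t$-exactness of $f_n^{voe}$ established in the previous corollary, using the two cofiber sequences that define $f_{0/n}^{voe}$ and $s_n^{voe}$. For any $t$-structure on a stable $\infty$-category, the connective part $\mathrm{DM}_-^{eff}(k)_{\geq 0}$ is closed under cofibers, since it is closed under colimits and extensions. So it suffices to exhibit each functor as the cofiber of a natural map between right $t$-exact functors.

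For $f_{0/n}^{voe}$ there is the cofiber sequence
$$f_n^{voe}\mal{E}\to \mal{E}\to f_{0/n}^{voe}\mal{E},$$
coming from the colocalization $\mathrm{DM}_-^{eff}(k)(n)\subset \mathrm{DM}_-^{eff}(k)$ and its orthogonal localization, exactly as in the spectral setting of \S2. Take $\mal{E}\in \mathrm{DM}_-^{eff}(k)_{\geq 0}$. The previous corollary puts $f_n^{voe}\mal{E}$ in $\mathrm{DM}_-^{eff}(k)_{\geq 0}$, and $\mal{E}$ lies there by assumption. Hence the cofiber $f_{0/n}^{voe}\mal{E}$ is connective.

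For $s_n^{voe}$ I would use the defining cofiber sequence
$$f_{n+1}^{voe}\mal{E}\to f_n^{voe}\mal{E}\to s_n^{voe}\mal{E}.$$
Both of the first two terms are connective by the previous corollary applied to $n$ and to $n+1$. The cofiber $s_n^{voe}\mal{E}$ is therefore connective. As a cross-check, the analogue of \Cref{slices as eilenberg layers} gives a fiber sequence $s_n^{voe}\to f_{0/n+1}^{voe}\to f_{0/n}^{voe}$, but this would only yield connectivity of amplitude $-1$. The cofiber description above gives the sharp statement.

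I do not expect a serious obstacle. The only point needing care is that the previous corollary really applies to all $n\geq 0$, including $n+1$, under the standing perfectness hypothesis. I also need $f_0^{voe}=1$, so that the case $n=0$ is trivial.
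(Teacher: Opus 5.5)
Your proposal is correct and is essentially the argument the paper intends. The corollary is stated without proof immediately after the right $t$-exactness of $f_n^{voe}$, and it follows exactly as in \Cref{eff fn preserves conn}(2),(3): closure of the connective part under cofibers, applied to the sequences $f_n^{voe}\to 1\to f_{0/n}^{voe}$ and $f_{n+1}^{voe}\to f_n^{voe}\to s_n^{voe}$.
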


\textbf{Slice connectivity of Milnor Witt motives.} Instead of rewriting the entire content for Milnor-Witt motivic complexes, we mention it as a single theorem:
\begin{thm}
    Let $k$ be an infinite perfect field of characteristic $\neq 2$, and consider the stable $\infty$-category:
    $$\widetilde{\mathrm{DM}}^{eff}(k)\simeq \mathcal{SH}^{mw}_{S^1}(k):=\mathcal{SH}^{S^1}(\widetilde{Cor}(k)).$$
    Then for every $n\geq 0$, the inclusion $\widetilde{\mathrm{DM}}^{eff}(k)\otimes \widetilde{\mathbb{Z}}(n)\subset \widetilde{\mathrm{DM}}^{eff}(k) $ admits a right adjoint given by
    $$f_{n}^{mw}\simeq (-)^{\widetilde{\mathbb{Z}}(n)}\otimes\widetilde{\mathbb{Z}}(n),$$
    which is right $t$-exact for the standard homotopy $t$-structure on  $\widetilde{\mathrm{DM}}^{eff}(k)$ \textup{[\cite{MR4950840}, Corollary 3.46]}.
\end{thm}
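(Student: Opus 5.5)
The plan is to transport the general cancellative argument (\Cref{model for connective cover for canc corr} together with the lemma proving right $t$-exactness of $f_n^{\mathcal{C}}$) to the Milnor--Witt category of correspondences $\widetilde{Cor}(k)$. The proof has three steps: cancellation, the formula for $f_n^{mw}$, and right $t$-exactness.

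\textbf{Cancellation.} First I will check that $\widetilde{Cor}(k)$ is a category of correspondences in the sense of [\cite{bachmann2021cancellation}, Definition 2.1]. The functor $Cor^{fr}_k\to\widetilde{Cor}(k)$ sends a framed correspondence to its Milnor--Witt class, and this is the standard example in [\cite{MR4324462}]. Next I will verify cancellation [\cite{bachmann2021cancellation}, Definition 2.11]: this is the Milnor--Witt cancellation theorem, valid over infinite perfect fields of characteristic $\neq 2$, which is why the statement carries these hypotheses. I also need to identify $\widetilde{\mathbb{Z}}(n)$ with a shift of $\mathbb{G}_{\widetilde{Cor}}^{n}$. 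Since $\widetilde{\mathbb{Z}}(n)$ is defined as $\widetilde{\mathbb{Z}}(\mathbb{G}_m^{\wedge n})[-n]$ up to the usual normalization, the subcategory $\widetilde{\mathrm{DM}}^{eff}(k)\otimes\widetilde{\mathbb{Z}}(n)$ coincides with $\mathcal{SH}^{mw,S^1}(k)(n)$.

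\textbf{The formula for $f_n^{mw}$.} Given cancellation, \Cref{model for connective cover for canc corr} yields $f_n^{mw}\simeq(-)^{\mathbb{G}^n_{mw}}\otimes\mathbb{G}^n_{mw}$. The shifts cancel between the internal hom and the tensor product, so this equals $(-)^{\widetilde{\mathbb{Z}}(n)}\otimes\widetilde{\mathbb{Z}}(n)$. The existence of the right adjoint is automatic, because the inclusion is colimit-closed in a compactly generated category.

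\textbf{Right $t$-exactness.} Here I will apply the right $t$-exactness lemma for cancellative $f_n^{\mathcal{C}}$, with $U_*$ the forgetful functor $\mathcal{SH}^{mw}_{S^1}(k)\to\mathcal{SH}^{S^1}(k)$. The lemma is stated for the $t$-structure of \Cref{cancellative homotopy t structure}, whereas the theorem refers to the standard homotopy $t$-structure on $\widetilde{\mathrm{DM}}^{eff}(k)$, defined via Nisnevich homology sheaves of the underlying complexes. So the main obstacle is to check that these two $t$-structures agree. I will argue this in two moves. The forgetful functor to $\mathcal{D}^{\mathbb{A}^1}(k)$ and then, via $\mathrm{H}$, to $\mathcal{SH}^{S^1}(k)$ is $t$-exact and conservative. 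Nisnevich homology sheaves agree with Nisnevich stable homotopy sheaves of the Eilenberg--MacLane spectrum. Therefore the nonnegative parts of the two $t$-structures coincide.

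An alternative route is simply to quote [\cite{MR4950840}, Corollary 3.46]. That result gives right $t$-exactness of $(-)^{\widetilde{\mathbb{Z}}(n)}\otimes\widetilde{\mathbb{Z}}(n)$ directly for the standard $t$-structure. I would then only need to match it with the formula for $f_n^{mw}$ established above.
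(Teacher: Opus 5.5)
Your proposal is correct and matches the paper's proof, which simply invokes the general cancellative machinery (\Cref{model for connective cover for canc corr}, \Cref{cancellative homotopy t structure}, \Cref{cancellative slice connectivity}) together with Milnor--Witt cancellation [\cite{MR4950840}, Theorem 4.26]. Your explicit matching of the $t$-structure from \Cref{cancellative homotopy t structure} with the standard homology-sheaf $t$-structure on $\widetilde{\mathrm{DM}}^{eff}(k)$ fills in a step the paper leaves implicit. Your ``alternative route'' is less safe: the citation [\cite{MR4950840}, Corollary 3.46] in the statement appears to refer to the $t$-structure itself, not to the right $t$-exactness of $f_n^{mw}$, so do not rely on it.
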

\begin{proof}
    This follows from the general methods set up in this section (\Cref{cancellative homotopy t structure}, \Cref{cancellative slice connectivity}, etc.), along with the observation that in the given situation, the category $\widetilde{Cor}(k)$ of Milnor-Witt correspondences satisfies cancellation [\cite{MR4950840}, Theorem 4.26]. 
\end{proof}

\textbf{Slice connectivity of framed motivic spectras.} Similar to the previous one, we shall write down the slice filtration and its connectivity property for $S^1$-spectras on the category of framed correspondences (recall that its cancellation property is established in [\cite{elmanto2021motivic}, Theorem 3.5.8]). 
\begin{thm}\label{fr slice connectivity}
     Let $k$ be a perfect field. Then for every $n\geq 0$ the inclusion $\mathcal{SH}^{fr}_{S^1}(k)\otimes\mathbb{G}^n_{fr}\subset \mathcal{SH}^{fr}_{S^1}(k)$ is colocalizing with colocalization functor given by $$f^{fr}_n\simeq (-)^{\mathbb{G}_{fr}^n}\otimes \mathbb{G}_{fr}^n$$ which is right $t$-exact for the homotopy $t$-structure on $\mathcal{SH}^{fr}_{S^1}(k)$.
\end{thm}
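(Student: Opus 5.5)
The plan is to obtain \Cref{fr slice connectivity} as the instance $\mathcal{C}=Cor^{fr}_k$, with $\mu=\mathrm{id}$ and $U=\gamma$, of the general machinery of this section. That machinery needs a single input, namely that framed correspondences satisfy cancellation. We take this from [\cite{elmanto2021motivic}, Theorem 3.5.8]: over a perfect field $k$, the functor $-\otimes\mathbb{G}_{fr}:\mathcal{SH}^{fr}_{S^1}(k)\to\mathcal{SH}^{fr}_{S^1}(k)$ is fully faithful, which is what [\cite{bachmann2021cancellation}, Definition 2.11] requires.

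\textbf{Colocalization and formula.} First, the inclusion $\mathcal{SH}^{fr}_{S^1}(k)\otimes\mathbb{G}^n_{fr}\subset\mathcal{SH}^{fr}_{S^1}(k)$ is the localizing subcategory generated by the compact objects $\Sigma^\infty_{fr}h^{fr}X_+\otimes\mathbb{G}^n_{fr}$. Since everything is presentable, it is colocalizing. Next, apply \Cref{model for connective cover for canc corr} with $\mathcal{C}=Cor^{fr}_k$. For $\esc{D}=\esc{E}\otimes\mathbb{G}^n_{fr}$, cancellation gives
\[
\mathrm{Map}(\esc{E}\otimes\mathbb{G}^n_{fr},\esc{F}^{\mathbb{G}^n_{fr}}\otimes\mathbb{G}^n_{fr})\simeq\mathrm{Map}(\esc{E},\esc{F}^{\mathbb{G}^n_{fr}})\simeq\mathrm{Map}(\esc{D},\esc{F}).
\]
Hence the counit $\esc{F}^{\mathbb{G}^n_{fr}}\otimes\mathbb{G}^n_{fr}\to\esc{F}$ exhibits the right adjoint, which proves $f^{fr}_n\simeq(-)^{\mathbb{G}^n_{fr}}\otimes\mathbb{G}^n_{fr}$.

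\textbf{Right $t$-exactness.} The homotopy $t$-structure on $\mathcal{SH}^{fr}_{S^1}(k)$ is the one from \Cref{cancellative homotopy t structure}, detected by $\gamma_*$; it is $t$-exact, conservative, and commutes with colimits by \Cref{connectivity for gen cor}(1). The argument has two steps.
\begin{enumerate}
\item Show that $(-)^{\mathbb{G}^n_{fr}}$ is right $t$-exact. For $\mal{E}$ connected, adjunction gives
\[
[\gamma^*X_+,\mal{E}^{\mathbb{G}^n_{fr}}]\simeq[X_+,(\gamma_*\mal{E})^{\mathbb{G}^n}],
\]
so $\gamma_*(\mal{E}^{\mathbb{G}^n_{fr}})\simeq(\gamma_*\mal{E})^{\mathbb{G}^n}$. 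Iterating \Cref{Gm power connectivity} $n$ times shows that $(\gamma_*\mal{E})^{\mathbb{G}^n}$ is connected. The same computation with shifts shows that $(-)^{\mathbb{G}^n_{fr}}$ preserves $\mathcal{SH}^{fr}_{S^1}(k)_{\geq m}$ for every $m$.
\item Tensor with $\mathbb{G}^n_{fr}$. By \Cref{monoidal nature of homotopy t structure} the $t$-structure is monoidally compatible, and $\mathbb{G}^n_{fr}=\gamma^*\Sigma^\infty\mathbb{G}^{\wedge n}$ is connective. So tensoring with it preserves connective objects, and the composite is right $t$-exact. As a consistency check, $\gamma_*(\mathbb{G}^{n}_{fr}\otimes\mal{E}')$ is computed via the framed model in the proof of \Cref{connectivity for gen cor}(3).
\end{enumerate}
Once $f^{fr}_n$ is right $t$-exact, the statements for $f^{fr}_{0/n}$ and $s^{fr}_n$ follow from the cofiber sequences, as in \Cref{cancellative slice connectivity}.

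\textbf{Main obstacle.} The delicate point is step 2, the connectivity bound on $\mathbb{G}^n_{fr}$. The proof of the general lemma records only $\mathbb{G}^n_{\mathcal{C}}\in\mathcal{SH}^{\mathcal{C},S^1}(k)_{\geq -1}$, which would lose one degree. I would instead verify $\mathbb{G}^n_{fr}\in\mathcal{SH}^{fr}_{S^1}(k)_{\geq 0}$ directly. The functor $\gamma^*$ preserves connectivity because connective objects are generated under colimits by the $\Sigma^\infty X_+$. The pointed sheaf $\mathbb{G}_m^{\wedge n}$ is a summand of $\Sigma^\infty(\mathbb{G}_m^{\times n})_+$ and hence connective, so $\gamma^*$ sends it to a connective object. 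If this direct check somehow failed, the fallback would be amplitude $-1$, as in \Cref{effective slice exact}.
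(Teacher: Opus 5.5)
Your proposal is correct and follows the paper's route. The paper obtains this theorem by specializing the general cancellative machinery to $\mathcal{C}=Cor^{fr}_k$: the formula $f_n^{\mathcal{C}}\simeq(-)^{\mathbb{G}_{\mathcal{C}}^n}\otimes\mathbb{G}_{\mathcal{C}}^n$, then right $t$-exactness via $\gamma_*$ and the $\mathbb{G}$-loops lemma, with cancellation supplied by [\cite{elmanto2021motivic}, Theorem 3.5.8].

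Your direct check that $\mathbb{G}^n_{fr}\in\mathcal{SH}^{fr}_{S^1}(k)_{\geq 0}$ is exactly what the last step needs: it is the image under the right $t$-exact functor $\gamma^*$ of a summand of $\Sigma^\infty(\mathbb{G}_m^{\times n})_+$. The paper's general lemma only writes $\mathbb{G}^n_{\mathcal{C}}\in\mathcal{SH}^{\mathcal{C},S^1}(k)_{\geq -1}$, which read literally would give only amplitude $-1$; it should be read as ``$(-1)$-connected'', i.e.\ connective.
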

\textbf{Slice connectivity of finite flat motivic spectras.} 
Recall that $Cor^{flf}$ satisfies cancellation over perfect fields [\cite{bachmann2021cancellation}, Theorem 3.5] and hence:
\begin{thm}\label{flf slice connectivity}
     When $k$ is a perfect field and $n\geq 0$, the inclusion $\mathcal{SH}^{flf}_{S^1}(k)\otimes\mathbb{G}^n_{flf}\subset \mathcal{SH}^{flf}_{S^1}(k)$ is colocalizing, with colocalization functor given by $$f^{flf}_n\simeq (-)^{\mathbb{G}_{flf}^n}\otimes \mathbb{G}_{flf}^n$$, which is right $t$-exact for the homotopy $t$-structure on $\mathcal{SH}^{flf}_{S^1}(k)$.
\end{thm}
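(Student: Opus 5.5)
The statement to prove is \Cref{flf slice connectivity}. It has exactly the shape of \Cref{fr slice connectivity}, so the plan is to specialize the general cancellative machinery of this section to $\mathcal{C}=Cor^{flf}$.

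\textbf{Step 1: the input data.} First I check that $Cor^{flf}_k$ fits the framework. It is a category of correspondences in the sense of [\cite{bachmann2021cancellation}, Definition 2.1], receiving $\gamma$ from $Cor^{fr}_k$ via the structure functor $\mu$. Over a perfect field it satisfies cancellation, by [\cite{bachmann2021cancellation}, Theorem 3.5] as recalled just before the statement.

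\textbf{Step 2: colocalization and the formula.} I then apply \Cref{model for connective cover for canc corr} with $\mathcal{C}=Cor^{flf}$. Its proof checks the universal property of the colocalization directly: for $\esc{D}\simeq\esc{E}\otimes\mathbb{G}^n_{flf}$ in the subcategory, maps from $\esc{D}$ into $\esc{F}^{\mathbb{G}^n_{flf}}\otimes\mathbb{G}^n_{flf}$ agree with maps into $\esc{F}$. The one nontrivial ingredient is that $-\otimes\mathbb{G}^n_{flf}$ is fully faithful, which is cancellation ([\cite{bachmann2021cancellation}, Proposition 2.13]).

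This gives both claims at once. The inclusion $\mathcal{SH}^{flf}_{S^1}(k)\otimes\mathbb{G}^n_{flf}\subset\mathcal{SH}^{flf}_{S^1}(k)$ has a right adjoint, so it is colocalizing. That right adjoint is the functor $(-)^{\mathbb{G}_{flf}^n}\otimes\mathbb{G}_{flf}^n$, which lands in the subcategory by construction. So $f^{flf}_n\simeq(-)^{\mathbb{G}_{flf}^n}\otimes\mathbb{G}_{flf}^n$.

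\textbf{Step 3: right $t$-exactness.} This follows from the lemma preceding \Cref{cancellative slice connectivity}, equivalently \Cref{cancellative slice connectivity} itself. I will retrace its argument to make sure nothing framed-specific was used.

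1. The homotopy $t$-structure on $\mathcal{SH}^{flf}_{S^1}(k)$ comes from \Cref{cancellative homotopy t structure}. It is generated by the objects $U^*\Sigma^\infty X_+$ and is monoidally compatible by \Cref{monoidal nature of homotopy t structure}.
2. For $\mal{E}$ connected, adjunction identifies the sheafified homotopy groups of $\mal{E}^{\mathbb{G}^n_{flf}}$ with those of $(U_*\mal{E})^{\mathbb{G}^n}$ in $\mathcal{SH}^{S^1}(k)$.
3. The latter is connected by \Cref{Gm power connectivity}, applied $n$ times. This uses that $U_*$ is $t$-exact, from the corollary after \Cref{monoidal nature of homotopy t structure}.
4. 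Tensoring with $\mathbb{G}^n_{flf}$ preserves connectivity by monoidal compatibility, since $\mathbb{G}^n_{flf}=U^*\Sigma^\infty\mathbb{G}^{\wedge n}$ is connective.

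\textbf{Main obstacle.} The step I expect to need care is verifying that \Cref{connectivity for gen cor} holds for $\mu:Cor^{fr}\to Cor^{flf}$. That result, namely that $U_*$ preserves colimits and is conservative and that $U_*U^*$ preserves connectivity, underlies both the existence of the $t$-structure and the conservativity used in \Cref{monoidal nature of homotopy t structure}. The needed input is that $\mu^*$ preserves colimits and is conservative, i.e.\ [\cite{bachmann2021cancellation}, Proposition 2.6(3)], which is stated for general categories of correspondences. I will check that its hypotheses hold for $Cor^{flf}$ over a perfect field. Once they do, the proof is a direct specialization and needs no new ideas.
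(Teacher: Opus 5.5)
Your proposal is correct and is essentially the paper's argument: the paper gives no separate proof. It simply invokes cancellation for $Cor^{flf}$ over a perfect field ([\cite{bachmann2021cancellation}, Theorem 3.5]) and specializes the general results \Cref{model for connective cover for canc corr} and \Cref{cancellative slice connectivity}, exactly as you do. One detail in your Step 3 is worth keeping: you use that $\mathbb{G}^n_{flf}=U^*\Sigma^\infty\mathbb{G}^{\wedge n}$ is connective, rather than merely lying in degrees $\geq -1$ as the paper's lemma writes, and that is exactly what the tensoring step needs to give right $t$-exactness with amplitude $0$.
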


\section{The infinite loop spaces for the slice filtration}
We conclude by adapting the recognition and reconstruction principles from [\cite{elmanto2021motivic}] to the slice filtration. Finally, combining the results of this section with those of the previous one, we shall establish the remaining connectivity property, namely that $f_n:\mathcal{SH}^{eff}(k)\to\mathcal{SH}^{eff}(k) $ preserves connectivity for every $n\geq 1$.
\subsection{Slices of the \texorpdfstring{$S^1$}{}-recognition principle} The Motivic $S^1$-recognition principle says that:
\begin{thm}[[\cite{elmanto2021motivic}, Proposition 3.1.9, Corollary 3.1.15\text{]}]
The adjunction$$\mathrm{B}^\infty_{nis}: \mathrm{CMon}(\mathcal{P}_{nis}(k))\leftrightarrows \mathcal{S}pt^{S^1}_{nis}(k): \Omega^\infty_{S^1}$$restricts to an equivalence of $\infty$-categories$$\mathrm{CMon}_{\mathrm{mot}}(\mathcal{H}^{\mathbb{A}^1}(k))^{\mathrm{gp}} \xrightarrow{\sim} \mathcal{SH}^{S^1}(k)_{\ge 0},$$identifying connective motivic $S^1$-spectra (on the right) with grouplike commutative monoids admitting $\mathbb{A}^1$-local deloopings (on the left). When $k$ is a perfect field:
\begin{itemize}
    \item $\mathcal{SH}^{S^1}(k)_{\ge 0}$ is the full subcategory generated under colimits by the motivic localization of smooth schemes.
    \item $\mathrm{CMon}_{\mathrm{mot}}(\mathcal{H}^{\mathbb{A}^1}(k))^{\mathrm{gp}}$ spans precisely the grouplike, $\mathbb{A}^1$-local commutative monoids whose $\mathbb{A}^1$-homotopy path components sheaf $\pi_0^{\mathbb{A}^1}$ is strongly $\mathbb{A}^1$-invariant.
\end{itemize}
\end{thm}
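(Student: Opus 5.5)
The plan is to follow the strategy of Elmanto--Hoyois--Khan--Sosnilo--Yakerson and reduce everything to Morel's stable connectivity theorem together with the recognition principle for ordinary spectra. The adjunction $\mathrm{B}^\infty_{nis}\dashv\Omega^\infty_{S^1}$ between $\mathrm{CMon}(\mathcal{P}_{nis}(k))$ and $\mathcal{S}pt^{S^1}_{nis}(k)$ exists because $\mathcal{P}_{nis}(k)$ is an $\infty$-topos. For such a topos, the classical recognition principle (May/Segal, or [\cite{lurie2017higher}, Remark 5.2.6.26]) applied sheafwise gives an equivalence
\[\mathrm{CMon}(\mathcal{P}_{nis}(k))^{gp}\simeq \mathcal{S}pt^{S^1}_{nis}(k)_{\geq 0}.\]
Nisnevich connectivity is computed on stalks, and stalks commute with the relevant finite limits and colimits, so the grouplike condition on the left corresponds exactly to connectivity on the right.

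Next I restrict this equivalence to the motivic parts. Let $X$ be a connective Nisnevich spectrum. Then $X$ is $\mathbb{A}^1$-local if and only if every $\Omega^\infty\Sigma^n X$ is $\mathbb{A}^1$-local. Each $\Omega^\infty\Sigma^nX$ is $\mathrm{B}^n_{nis}\Omega^\infty X$, and this is a delooping of $\Omega^\infty X$. So $\mathcal{SH}^{S^1}(k)\cap \mathcal{S}pt_{nis,\geq0}$ corresponds to the grouplike monoids all of whose Nisnevich deloopings are $\mathbb{A}^1$-local, which is the definition of $\mathrm{CMon}_{mot}$. Morel's theorem (\Cref{Motivic t structure}) shows that the motivic homotopy $t$-structure is the restriction of the Nisnevich one. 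Hence this intersection is $\mathcal{SH}^{S^1}(k)_{\geq0}$, and the first equivalence follows.

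For the perfect-field refinements, I take the two bullets in turn.
\begin{enumerate}
\item The generation statement comes from the analogue of \Cref{non negative slice generation} with $n=0$. Here $L^{sp}_{mot}$ preserves connectivity and colimits, and $\mathcal{S}pt_{nis,\geq0}$ is generated by the $\Sigma^\infty_+X$. Therefore $\mathcal{SH}^{S^1}(k)_{\geq0}=L^{sp}_{mot}(\mathcal{S}pt_{nis,\geq0})$ is generated by the $L^{sp}_{mot}\Sigma^\infty_+X$.
\item For the description via $\pi_0^{\mathbb{A}^1}$, let $M$ be a grouplike $\mathbb{A}^1$-local monoid. If $M$ admits motivic deloopings, then $\pi_0^{\mathbb{A}^1}M=\pi_1^{\mathbb{A}^1}\mathrm{B}M$. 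This sheaf is strongly $\mathbb{A}^1$-invariant by Morel's theorem that $\pi_1^{\mathbb{A}^1}$ of a space is strongly $\mathbb{A}^1$-invariant.
\end{enumerate}

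The converse in the second bullet is the main obstacle. Suppose $\pi_0M$ is strongly invariant; I must show that $\mathrm{B}^n_{nis}M$ is $\mathbb{A}^1$-local for all $n$. I would use the Postnikov tower of $M$ in the Nisnevich topos. Since $M$ is $\mathbb{A}^1$-local, its higher homotopy sheaves $\pi_iM$ for $i\geq1$ are strictly $\mathbb{A}^1$-invariant. Since $M$ is grouplike, $\pi_0M$ is abelian, and strongly invariant abelian sheaves are strictly invariant over a perfect field (Morel). Hence every $K(\pi_iM,j)$ is $\mathbb{A}^1$-local for all $j$. Each $\mathrm{B}^n_{nis}M$ has Postnikov layers $K(\pi_iM,i+n)$, so it is built as a limit of $\mathbb{A}^1$-local objects. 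It is therefore $\mathbb{A}^1$-local, using convergence of Nisnevich Postnikov towers (finite cohomological dimension, [\cite{bachmann2024strongly}, Theorem 1.14]).
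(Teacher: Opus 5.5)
Your argument is correct. The paper gives no proof of this statement and simply quotes it from [\cite{elmanto2021motivic}]; your route is essentially the standard one used there. You start from the topos-level equivalence $\mathrm{CMon}(\mathcal{P}_{nis}(k))^{gp}\simeq\mathcal{S}pt^{S^1}_{nis}(k)_{\geq 0}$, restrict it by requiring $\mathbb{A}^1$-locality of all deloopings $\Omega^\infty\Sigma^n$, and for the perfect-field description use Morel's strong/strict invariance of homotopy sheaves, strong $\Rightarrow$ strict for abelian sheaves, and Nisnevich Postnikov convergence.

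One point should be said explicitly. The Postnikov stages of $\mathrm{B}^n_{nis}M$ are principal fibrations, because it is an infinite loop object. Hence each stage is a pullback of $\mathbb{A}^1$-local objects. Equivalently, you can run the whole argument on the spectrum $\mathrm{B}^\infty_{nis}M$ using left completeness.
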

\begin{defn}
Let $\esc{G}$ be a Nisnevich local commutative monoid space.
\begin{enumerate}
    \item We say that $\esc{G}$ is strictly $n$-sphericalizable if for all $i$, $\pi_i^{\mathbb{A}^1}\esc{G}$ is strictly $\mathbb{A}^1$-invariant and $(\pi_{i}^{\mathbb{A}^1}\esc{G})_{-n-1}=0$.
    \item We say that $\esc{G}$ is a strictly $n$-spherical motivic monoid if $\mathrm{B}_{nis}^i\esc{G}$ is an $n$-spherical motivic space for all $i\geq 0$.
    \item We say that $\esc{G}$ is a strictly $L^{p,n}$-local motivic monoid if $\mathrm{B}_{nis}^i\esc{G}$ is an $L^{p,n}$-local motivic space for all $i\geq 0$.
    \end{enumerate}
\end{defn}\begin{rem}
    As usual, $\esc{G}$ is strictly $n$-spherical if and only if it is strictly $L^{2n+1,n+1}$-local.
\end{rem}
\begin{lem}\label[lem]{group complete reduction}
     Let $\esc{G}$ be a commutative monoid. Then $\esc{G}$ is strictly $L^{p,n}$-local if and only if $\esc{G}^{gp}$ is so.
\end{lem}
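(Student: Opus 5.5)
The plan is to compare the Nisnevich deloopings of $\esc{G}$ and $\esc{G}^{gp}$ directly and see that they agree. Strict $L^{p,n}$-locality only involves the spaces $\mathrm{B}^i_{nis}\esc{G}$ for $i\geq 0$, so it suffices to compare these spaces with $\mathrm{B}^i_{nis}(\esc{G}^{gp})$.

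First I treat $i\geq 1$. The bar construction $\mathrm{B}_{nis}$ of a commutative monoid is computed as the Nisnevich sheafification of the sectionwise bar construction. Group completion is $\Omega \mathrm{B}$, and the sectionwise bar construction inverts the group completion map $\esc{G}\to\esc{G}^{gp}$. Indeed $\mathrm{B}\esc{G}\to\mathrm{B}(\esc{G}^{gp})$ is an equivalence of presheaves, since $\mathrm{B}M\simeq \mathrm{B}(M^{gp})$ for any $E_\infty$-monoid in spaces. Sheafifying gives $\mathrm{B}^i_{nis}\esc{G}\simeq \mathrm{B}^i_{nis}(\esc{G}^{gp})$ for all $i\geq 1$. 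For these indices the two locality conditions therefore coincide.

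It remains to handle $i=0$, where the condition reads: $\esc{G}$ is $L^{p,n}$-local if and only if $\esc{G}^{gp}$ is. I intend to deduce this from the $i=1$ case. Suppose $\mathrm{B}_{nis}\esc{G}\simeq\mathrm{B}_{nis}\esc{G}^{gp}$ is $L^{p,n}$-local. The class of $L^{p,n}$-local spaces is closed under limits, and in particular under loops at the base point. Hence $\Omega\mathrm{B}_{nis}\esc{G}^{gp}\simeq \esc{G}^{gp}$ is local, using that grouplike Nisnevich-local monoids satisfy $\esc{H}\simeq\Omega \mathrm{B}_{nis}\esc{H}$. So if $\esc{G}$ is strictly local, then so is $\esc{G}^{gp}$.

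\textbf{Main obstacle.} The converse at level $i=0$ is the delicate step: strict locality of $\esc{G}^{gp}$ should imply that $\esc{G}$ itself is $L^{p,n}$-local. This need not hold for an arbitrary non-grouplike monoid. I would therefore first check whether the definition in the paper intends only the levels $i\geq 1$, or implicitly assumes $\esc{G}$ is grouplike, as in every subsequent use, where $\mathrm{CMon}_n(-)^{gp}$ appears. If so, the $i\geq 1$ argument already gives the lemma. Otherwise I would try to show that $\esc{G}\to\esc{G}^{gp}$ is an $L^{p,n}$-equivalence whose source is local, which would give the converse. I expect this step to be the real content of the lemma and would flag it if it fails.
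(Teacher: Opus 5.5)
Your handling of the levels $i\geq 1$ (via $\mathrm{B}_{nis}\esc{G}\simeq\mathrm{B}_{nis}\esc{G}^{gp}$) matches the paper's argument. So does your proof that strict locality of $\esc{G}$ implies that of $\esc{G}^{gp}$ (via $\esc{G}^{gp}\simeq\Omega\mathrm{B}_{nis}\esc{G}$ and closure of local objects under loops). The remaining step is to show that $\esc{G}$ itself is $L^{p,n}$-local when $\esc{G}^{gp}$ is strictly $L^{p,n}$-local. You stop there, and your suspicion is correct.

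The paper disposes of this step with the ``fiber sequence'' $\esc{G}\to *\to\mathrm{B}_{nis}\esc{G}$. But the fiber of $*\to\mathrm{B}_{nis}\esc{G}$ is $\Omega\mathrm{B}_{nis}\esc{G}=\esc{G}^{gp}$. This agrees with $\esc{G}$ only if $\esc{G}$ is grouplike, in which case the lemma has no content. So that step only re-proves that $\esc{G}^{gp}$ is local. Neither reading you propose to check rescues the statement: the definition explicitly includes $i=0$, and the grouplike case is tautological.

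In fact the converse is false, so no argument can close your gap. Take $\esc{G}=\mathbb{A}^1$ with its multiplicative monoid structure, a Nisnevich sheaf of commutative monoids. Since $0$ is absorbing in each $(\mathcal{O}(X),\cdot)$, the classifying space of this monoid is contractible sectionwise. Hence $\mathrm{B}_{nis}\esc{G}\simeq *$ and $\esc{G}^{gp}\simeq *$, which is strictly $L^{p,n}$-local for every $(p,n)$. Yet $\esc{G}$ is not even $\mathbb{A}^1$-invariant.

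If you want a counterexample among motivic monoids, take $\mathbb{G}_m\sqcup\{0\}$ with $0$ absorbing. It is $\mathbb{A}^1$-invariant and has contractible group completion. However, the coordinate $t\in(\mathbb{G}_m\sqcup\{0\})(\mathbb{G}_m)$ shows it is not $\mathbb{G}_m$-null, so the lemma fails for $p=n=1$.

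Your fallback plan could not work anyway. An $L^{p,n}$-equivalence $\esc{G}\to\esc{G}^{gp}$ with local target only identifies $L^{p,n}\esc{G}$ with $\esc{G}^{gp}$. Moreover, ``whose source is local'' presupposes the conclusion.

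What your argument does establish is the correct statement: $\esc{G}$ is strictly $L^{p,n}$-local if and only if $\esc{G}^{gp}$ is strictly $L^{p,n}$-local \emph{and} $\esc{G}$ itself is $L^{p,n}$-local. Note also that the paper uses the false direction in the implication (6)$\Rightarrow$(2) of \Cref{p independent strict Lpn} to reduce to grouplike $\esc{G}$. There, level-$0$ locality of $\esc{G}$ must either be checked separately or the statement restricted to grouplike monoids, which is all the later recognition results use.
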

\begin{proof}
    Since $\Omega$ preserves local objects, the $L^{p,n}$ locality of $\mathrm{B}_{nis}\esc{G}$ implies that $\esc{G}^{gp}:=\Omega (\mathrm{B}_{nis}\esc{G})$ is $L^{p,n}$-local. Thus, the only-if part is clear.
    
    Conversely, suppose $\esc{G}^{gp}$ is strictly $L^{p,n}$ local motivic. By definition, this means that $\mathrm{B}^{i}\esc{G}^{gp}$ is $L^{p,n}$ local for all $i\geq 0$. For $i\geq 1$, we have $$\mathrm{B}^{i}\esc{G}^{gp}\simeq \mathrm{B}^{i}\Omega\mathrm{B}_{nis}\esc{G}\simeq \mathrm{B}_{nis}^i\esc{G}.$$ It remains to show that $\mathrm{B}^{0}\esc{G}\simeq \esc{G}$ is $L^{p,n}$ local. This follows from the fiber sequence $\esc{G}\to *\to \mathrm{B}_{nis}\esc{G}$, whose base and total spaces are given to be $L^{p,n}$ local.
\end{proof}
We first establish the relations among various strict $L^{p,n}$-localities. To do so, we introduce a standard generalization of [\cite{asok2023p}, Corollary 3.1.21].
\begin{lem}\label[lem]{pi_i of Lpq}
    If a pointed $\esc{X}$ is $S^{p,q}$-null, then for all $i\geq \max({p-q,1})$, $(\pi_i^{nis}\esc{X})_{-q}=0$.
\end{lem}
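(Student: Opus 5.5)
Fix $i\geq \max(p-q,1)$ and a base point; write $\esc{X}$ for the $S^{p,q}$-null pointed space, assumed motivic local. The plan is to translate nullity into a vanishing statement for a pointed mapping space. Then we read off the $q$-fold contraction of $\pi_i^{nis}$ using Morel's computation of $\mathbb{G}_m$-loops of homotopy sheaves, as in [\cite{asok2023p}, Corollary 3.1.21].

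\textbf{Step 1 (nullity gives a contractible mapping space).} Since $\esc{X}$ is $S^{p,q}$-null, $\esc{X}\to \esc{X}^{S^{p,q}\times U}$ is an equivalence for all $U$. Taking fibers at the base point shows that the pointed internal mapping space $\underline{\mathrm{Map}}_\bullet(S^{p,q},\esc{X})$ is contractible. Writing $S^{p,q}=S^{p-q}\wedge \mathbb{G}^{q}$, this says $\Omega^{p-q}\Omega^q_{\mathbb{G}}\esc{X}\simeq *$. The same holds for every connected component and every pointing, since nullity is an unpointed condition.

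\textbf{Step 2 (identify homotopy sheaves of $\mathbb{G}$-loops).} We use Morel's theorem ([\cite{morel2012a1}, Theorem 6.13], already invoked in \Cref{Gm power connectivity}) and its iterate [\cite{bachmann2024strongly}, Lemma 4.2]. For a pointed motivic space $\esc{Y}$ and $j\geq 1$ it gives $\pi_j^{nis}(\Omega_{\mathbb{G}}\esc{Y})\cong(\pi_j^{nis}\esc{Y})_{-1}$, where the connected component of the base point is used when $j=0$. Iterating yields
\[
\pi_j^{nis}(\Omega^q_{\mathbb{G}}\esc{X})\cong(\pi_j^{nis}\esc{X})_{-q}.
\]
Combining with $\pi_m^{nis}\Omega^{p-q}\esc{Y}\cong \pi_{m+p-q}^{nis}\esc{Y}$, Step 1 gives $0=\pi_{m}^{nis}(\Omega^{p-q}\Omega^q_{\mathbb{G}}\esc{X})\cong(\pi_{m+p-q}^{nis}\esc{X})_{-q}$ for all $m\geq 0$, or $m\geq 1$ when $p=q$. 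Putting $i=m+p-q$ recovers exactly the range $i\geq\max(p-q,1)$. When $p-q\geq 1$ the case $m=0$ is fine because we only need the component of the constant map. When $p-q\leq 0$ we restrict to $i\geq 1$.

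\textbf{Main obstacle.} The delicate point is Step 2 at low degrees and the perfectness hypothesis. The contraction formula for $\Omega_{\mathbb{G}}$ on $\pi_1$ and $\pi_0$ requires strong $\mathbb{A}^1$-invariance, which is Morel's theorem over perfect fields; so the lemma is understood over a perfect $k$, as in its source. The formula also requires each relevant component to be treated with the appropriate base point. I would handle this exactly as in [\cite{asok2023p}, Corollary 3.1.21], by applying the formula to the base-point component $\Omega^{p-q}$-wise. If $p<q$, interpret $S^{p,q}$ via [\cite{asok2023p}] and argue the same way with $i\geq 1$.
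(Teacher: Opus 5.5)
Your argument is correct, but it is organized differently from the paper's.

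\textbf{The paper's route.} It first reduces to the connected case by showing that the base-point component $\esc{X}_0$ is itself $S^{p,q}$-null:
\begin{enumerate}
\item[(a)] $\esc{X}_0\to\esc{X}$ factors through $L^{p,q}\esc{X}_0$.
\item[(b)] $L^{p,q}\esc{X}_0$ is connected by the $S^{p,q}$-connectivity theorem [\cite{asok2023p}, Corollary 3.1.26], so it maps back into $\esc{X}_0$.
\item[(c)] Hence $\esc{X}_0$ is a retract of $L^{p,q}\esc{X}_0$.
\end{enumerate}
It then applies [\cite{asok2023p}, Corollary 3.1.21] to $\esc{X}_0$ as a black box.

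\textbf{Your route.} You never need $\esc{X}_0$ to be null. Contractibility of $\underline{\mathrm{Map}}_\bullet(S^{p,q},\esc{X})=\Omega^{p-q}\Omega^q_{\mathbb{G}}\esc{X}$ follows directly from nullity of $\esc{X}$. You then essentially reprove Corollary 3.1.21 from Morel's contraction theorem, which avoids the $S^{p,q}$-connectivity theorem entirely. The trade-off is that the paper obtains the stronger intermediate fact that $\esc{X}_0$ is $S^{p,q}$-null. You obtain only the vanishing of the contractions, which is all the lemma asks.

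\textbf{One point to make explicit.} Do not defer this to ``as in'' the reference. Morel's theorem is stated for $\mathbb{A}^1$-connected spaces. So to get $\pi_j^{nis}(\Omega_{\mathbb{G}}\esc{Y})\cong(\pi_j^{nis}\esc{Y})_{-1}$ for $j\geq 1$ at the base point of a possibly non-connected $\esc{Y}$, you need $\esc{Y}_0:=\esc{Y}\times_{\pi_0^{nis}\esc{Y}}*$ to be motivic local. Once it is, $\Omega_{\mathbb{G}}\esc{Y}_0\to\Omega_{\mathbb{G}}\esc{Y}$ is a monomorphism, i.e.\ an inclusion of components, so the higher homotopy sheaves at the base point agree. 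Motivic locality of $\esc{Y}_0$ is elementary:
\begin{enumerate}
\item[(i)] $\esc{Y}_0$ is a Nisnevich sheaf, being a fiber product of sheaves.
\item[(ii)] It is $\mathbb{A}^1$-invariant because $(\pi_0^{nis}\esc{Y})(U)\to(\pi_0^{nis}\esc{Y})(U\times\mathbb{A}^1)$ is split injective by restriction along the zero section.
\item[(iii)] Therefore the equivalence $\esc{Y}(U\times\mathbb{A}^1)\simeq\esc{Y}(U)$ matches up exactly the components lying over the base point.
\end{enumerate}
With this, the iteration over $q$ and the degree bookkeeping $i=m+p-q$ go through as you wrote. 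Both arguments rely on perfectness of $k$ through Morel's theory.
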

\begin{proof}
    We shall borrow the methods from [\cite{asok2023p}]. First, by $S^{p,q}$-connectivity, we may assume that $\esc{X}$ is connected. Indeed, let $\esc{X}_0$ be the connected component of the given base point. The morphism $\esc{X}_0 \to \esc{X}$ factors through $L^{p,q}\esc{X}_0$ because $\esc{X}$ is $S^{p,q}$-null. Since $L^{p,q}\esc{X}_0$ is connected by the $S^{p,q}$-connectivity theorem [\cite{asok2023p}, Corollary 3.1.26], the map $L^{p,q}\esc{X}_0\to \esc{X}$ factors through $\esc{X}_0$. We have constructed a retraction $$\esc{X}_0 \to L^{p,q}\esc{X}_0\to \esc{X}_0,$$ which proves that $\esc{X}_0$ is $S^{p,q}$-local. Now, for $i\geq 1$, we know that $\pi_i^{nis}\esc{X}=\pi_i^{nis}(\esc{X}_0)$, so we are done by appeal to [\cite{asok2023p}, Corollary 3.1.21]. 
\end{proof}

\begin{prop}\label[prop]{p independent strict Lpn}
    Let $\esc{G}$ be a commutative motivic monoid over a perfect field and let $p\geq n$ be arbitrary integers. Then the following are equivalent:
    \begin{enumerate}
        \item $\esc{G}$ is strictly $(n-1)$-spherical.
        \item $\esc{G}$ is strictly $L^{p,n}$-local.
        \item $\esc{G}$ is $L^{n,n}$-local and $(\pi_0^{nis}\esc{G})_{-n}=0$.
        \item $\esc{G}$ is $S^{n-1}_{\mathbb{A}}$-local and $(\pi_i^{nis}\esc{G})_{-n}=0$ for all $i\leq n-1$.
        \item $\esc{G}$ is $S^{p,n}$-local and $(\pi_i^{nis}\esc{G})_{-n}=0$ for all $i\leq p-n$.
        \item  $(\pi_i^{nis}\esc{G})_{-n}=0$ for all $i\geq 0$..
    \end{enumerate}
   
\end{prop}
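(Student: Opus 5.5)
The plan is to route every condition through (6), which is a condition purely on the homotopy sheaves, and to compare each of (1)–(5) with (6) using three earlier results: \Cref{strictly n spherical has trivial n+1 contraction}, \Cref{pi_i of Lpq}, and the fact that strictly $\mathbb{A}^1$-invariant sheaves $A$ with $A_{-n}=0$ have all their deloopings $\mathrm{K}^{nis}(A,i)$ both $\mathrm{S}^{n-1}_\mathbb{A}$-local and $S^{p,n}$-null (\Cref{n spherical is lpn null}). Throughout I treat $\esc{G}$ as a grouplike motivic monoid. By \Cref{group complete reduction} this costs nothing for (1), (2) and (5), since those locality conditions are insensitive to group completion.

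For (6)$\Rightarrow$(1),(2): the delooping $\mathrm{B}^i_{nis}\esc{G}$ is a motivic space with $\pi_j = \pi_{j-i}\esc{G}$, and its Postnikov tower has layers $\mathrm{K}^{nis}(\pi_j\esc{G},j+i)$ (these are principal since the monoid is commutative). Each layer is $\mathrm{S}^{n-1}_\mathbb{A}$-local and $L^{p,n}$-local by \Cref{strictly n spherical has trivial n+1 contraction} and \Cref{n spherical is lpn null}. Local objects are closed under limits, so the Postnikov limit, which converges by hypercompleteness over a field, is local. This gives (1) and (2), and also the locality parts of (3), (4) and (5).

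For the converses to (6), I would use \Cref{pi_i of Lpq} on the deloopings. If $\mathrm{B}^i\esc{G}$ is $S^{p,n}$-null for all $i$, then $(\pi_j\mathrm{B}^i\esc{G})_{-n}=0$ for $j\geq \max(p-n,1)$. Taking $i$ large covers every $\pi_{j-i}\esc{G}$, which gives (2)$\Rightarrow$(6). The same argument with $p=2n-1$ gives (1)$\Rightarrow$(6), since $\mathrm{S}^{n-1}_\mathbb{A}\simeq S^{2n-1,n}$. For (5), applying \Cref{pi_i of Lpq} to $\esc{G}$ itself kills the contractions in degrees $i\geq p-n$, and the hypothesis covers $i\leq p-n$, so together they give (6). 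Condition (4) is the case $p=2n-1$ of this argument, where the hypothesis covers exactly $i\le n-1=p-n$. For (3), \Cref{pi_i of Lpq} with $p=q=n$ gives the vanishing for all $i\geq 1$, and the hypothesis handles $i=0$.

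The main obstacle is the $\pi_0$ term and the requirement that the homotopy sheaves of $\esc{G}$ be strictly $\mathbb{A}^1$-invariant, so that contractions make sense and \Cref{strictly n spherical has trivial n+1 contraction} applies. I would argue this as follows. Under (1)–(5), $\mathrm{B}_{nis}\esc{G}$ is motivic, either by hypothesis or because $L^{n,n}$-locality forces $\mathbb{A}^1$-locality. Then the $S^1$-recognition principle identifies $\esc{G}$ with $\Omega^\infty$ of a connective motivic spectrum, and Morel's theory makes all $\pi_i$ strictly $\mathbb{A}^1$-invariant. Under (6), I would read the statement as assuming this implicitly, as in the definition of strictly sphericalizable monoids.

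Two indexing points need care. First, the hypothesis $p\geq n$ is used so that $\max(p-n,1)$ is compatible with the ranges in (4) and (5). Second, the off-by-one convention relating "$(n-1)$-spherical" to $\mathrm{S}^{n-1}_\mathbb{A}$ and to the vanishing of $(-)_{-n}$ must match the convention of \Cref{strictly n spherical has trivial n+1 contraction}.
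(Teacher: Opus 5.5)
Your proposal is correct. For the directions from locality to vanishing contractions it is the paper's argument: the paper applies \Cref{pi_i of Lpq} to a high enough delooping (it takes $\mathrm{B}^{p-n+1}_{nis}\esc{G}$) for the strict conditions, and to $\esc{G}$ itself together with the low-degree hypothesis for (3)--(5).

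You differ in (6)$\Rightarrow$(2). The paper does not assemble the deloopings from Eilenberg--MacLane layers. It applies [\cite{asok2023p}, Corollary 3.1.21] directly to the pointed connected spaces $\mathrm{B}^j_{nis}\esc{G}$, $j\geq 1$; that result gives $S^{p,n}$-nullity of a connected space from vanishing of $(\pi_i)_{-n}$. It then gets locality of $\esc{G}$ from the fiber sequence $\esc{G}\to *\to \mathrm{B}_{nis}\esc{G}$.

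Your Postnikov induction is a valid substitute. It needs only four things:
\begin{enumerate}
\item the layer computation behind \Cref{strictly n spherical has trivial n+1 contraction} and \Cref{n spherical is lpn null};
\item closure of local objects under limits;
\item principality of the tower, which follows from grouplike commutativity;
\item hypercompleteness of the Nisnevich topos.
\end{enumerate}
It also makes the independence of $p$ visible layer by layer, whereas the citation packages these steps. Routing everything through (6) also spells out what the paper's ``it suffices to prove (2), (5), (6)'' leaves implicit: (1), (4) and (3) are the cases $p=2n-1$ and $p=n$ of a statement valid for every $p\geq n$.

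Two small corrections. First, strict $\mathbb{A}^1$-invariance of all $\pi_i^{nis}\esc{G}$ is part of the standing hypothesis (\Cref{strong mot mon at pi0}), so no case analysis is needed. Your alternative reason, that $L^{n,n}$-locality forces $\mathbb{A}^1$-locality, would not supply it under (3)--(5): $\mathbb{A}^1$-locality of $\esc{G}$ does not make $\mathrm{B}_{nis}\esc{G}$ motivic. Second, \Cref{group complete reduction} concerns only strict locality, so it does not justify group-completing under (5). You never need that, since your (5)$\Rightarrow$(6) applies \Cref{pi_i of Lpq} to $\esc{G}$ directly.
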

\begin{proof}
    It suffices to prove the equivalence of (2), (5), and (6). 

    Assume (2). By definition $\esc{G}$ is $S^{p,n}$ local. So to show (2) $\implies$ (5) It remains to show that  $(\pi_i^{nis}\esc{G})_{-n}=0$ for all $i\leq p-n$. But, $\pi_i^{nis}\esc{G}\simeq \pi_{i+p-n+1}^{nis}\mathrm{B}_{nis}^{p-n+1}\esc{G}$. Since $\mathrm{B}_{nis}^{p-n+1}\esc{G}$ is $L^{p,n}$-local, the requirement follows from \Cref{pi_i of Lpq}.
    
    (5) $\implies$ (6) follows easily from \Cref{pi_i of Lpq}.

Finally, suppose (6) holds. Since group completion commutes with $\pi_0^{nis}$ and induces isomorphisms on higher $\pi_i$'s, using \Cref{group complete reduction} we may assume that $\esc{G}$ is grouplike, i.e., $\esc{G}\simeq \Omega \mathrm{B} \esc{G}$, so that $$\pi_i^{nis}\esc{G}\simeq \pi_{k+i}^{nis}\mathrm{B}_{nis}^k\esc{G} $$ for all $k\geq 0$. 

Now let $j\geq 1$. Then $\esc{M}=\mathrm{B}^j_{nis}\esc{G}$ has the property that for all $i\geq n$ the contractions $$ (\pi_{i}^{nis}\mathrm{B}_{nis}^j\esc{G})_{-n} =(\pi_{i-j}^{nis}\esc{G})_{-n}=0.$$ Since $\esc{M}$ is pointed and connected, [\cite{asok2023p}, Corollary 3.1.21] implies that $\esc{M}$ is $L^{p,n}$-local. To see that $\esc{G}$ is also $L^{p,n}$-local, we may use the fiber sequence (due to the group completeness of $\esc{G}$) $$\esc{G}\to *\to \mathrm{B}_{nis}\esc{G}$$ in which the base is $L^{p,n}$-local. This concludes (2).
\end{proof}
\begin{rem}\label[rem]{strong mot mon at pi0}
    Recall that, the condition in the proposition above, namely that $\esc{G}$ is a commutative motivic monoid over a perfect field, is equivalent to $\esc{G}$ being a $\mathbb{A}^1$-local commutative monoid with strictly $\mathbb{A}^1$-invariant $\pi_0^{\mathbb{A}^1}\esc{G}$ [\cite{elmanto2021motivic}, Theorem 3.1.2 (2)].
\end{rem}

Let us denote by $\mathrm{CMon}_{f_{0/n}}(\mathcal{H}^{\mathbb{A}^1}(k))$ the full subcategory of strictly $(n-1)$-spherical motivic commutative monoids. Similarly, denote by $\mathrm{CMon}_{L^{p,n}}(\mathcal{H}^{\mathbb{A}^1}(k))$ the full subcategory of strictly $L^{p,n}$-local motivic commutative monoids. Based on the results above, for all $p\geq n$ we have a canonical equivalence: 
$$\mathrm{CMon}_{f_{0/n}}(\mathcal{H}^{\mathbb{A}^1}(k))\simeq \mathrm{CMon}_{L^{p,n}}(\mathcal{H}^{\mathbb{A}^1}(k)).$$ 

Since $\Omega^{\infty}_{S^1}$ preserves $L^{p,n}$-local objects by construction, the adjunction $$\mathrm{B}^\infty_{nis}: \mathrm{CMon}(\mathcal{P}_{nis}(k))\leftrightarrows \mathcal{S}pt^{S^1}_{nis}(k): \Omega^\infty_{S^1}$$
descends to an adjunction $$\mathrm{B}^\infty_{f_{0/n}}:=f_{0/n}\mathrm{B}^\infty: L^{p,n}\mathrm{CMon}(\mathcal{H}^{\mathbb{A}^1}(k)) \leftrightarrows \mathcal{SH}^{S^1}(k)/f_n: \Omega^\infty_{S^1}.$$ Over $\mathrm{CMon}_{L^{p,n}}(\mathcal{H}^{\mathbb{A}^1}(k))$ we have $${\mathrm{B}^\infty_{f_{0/n}}}_{|\mathrm{CMon}_{n}(\mathcal{H}^{\mathrm{S}^{n}_\mathbb{A}})}\simeq \mathrm{B}^\infty_{|\mathrm{CMon}_{L^{p,n}}(\mathcal{H}^{\mathbb{A}^1}(k))}.$$ Since the functor $$\Omega^\infty_{S^1}: \mathcal{SH}_{\geq 0}\to \mathrm{CMon}(\mathcal{P}_{nis})$$ is fully faithful, so is $$\Omega^\infty_{S^1}: {(\mathcal{SH}^{S^1}/f_n)}_{\geq 0}\to L^{p,n}\mathrm{CMon}(\mathcal{H}^{\mathbb{A}^1}(k)).$$ Its essential image consists of the grouplike commutative $n$-spherical motivic monoids whose deloopings are all $n$-spherical motivic spaces. By definition, this is exactly $\mathrm{CMon}_{L^{p,n}}\big(\mathcal{H}^{\mathbb{A}^1}(k)\big)^{gp}$. Therefore, we obtain:
\begin{prop}\label[prop]{slice recognition}
Over a field, the motivic $S^1$-recognition equivalence (\textup{[\cite{elmanto2021motivic}, Proposition 3.1.9.]}) restricts to an equivalence: $$\mathrm{B}^\infty_{nis}\simeq \mathrm{B}^{{\infty}}_{f_{0/n}}:\mathrm{CMon}_{ L^{p,n}}(\mathcal{H}^{\mathbb{A}^1}(k))^{gp}\leftrightarrows {\mathcal{SH}^{S^1}(k)/f_{n}}: \Omega^\infty_{S^1}$$ where the left-hand side consists of strictly $L^{p,n}$-local commutative grouplike monoids, while $(\mathcal{SH}^{S^1}(k)/f_n)_{\geq 0}$ is the full subcategory of $\mathcal{SH}^{S^1}(k)/f_n$ consisting of connective spectra.
\end{prop}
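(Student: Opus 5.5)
The approach is to restrict the known motivic $S^1$-recognition equivalence $\mathrm{B}^\infty_{nis}:\mathrm{CMon}_{\mathrm{mot}}(\mathcal{H}^{\mathbb{A}^1}(k))^{gp}\simeq \mathcal{SH}^{S^1}(k)_{\geq 0}:\Omega^\infty_{S^1}$ to full subcategories on both sides. It then suffices to show that a connective motivic spectrum $\esc{E}$ lies in $\mathcal{SH}^{S^1}(k)/f_n$ if and only if $\Omega^\infty_{S^1}\esc{E}$ is strictly $L^{p,n}$-local. Since both sides are full subcategories of categories already identified by an equivalence, no new adjunction needs to be built. Fully faithfulness is inherited. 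The claimed identification $\mathrm{B}^\infty_{nis}\simeq \mathrm{B}^\infty_{f_{0/n}}$ on the left-hand side then follows once we know $\mathrm{B}^\infty_{nis}$ lands in $f_n$-truncated spectra, because $f_{0/n}$ is the identity on local objects.

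\textbf{Forward direction.} Take $\esc{E}\in(\mathcal{SH}^{S^1}(k)/f_n)_{\geq 0}$; by \Cref{slice quotient is spherical localization} it is $\mathrm{S}^{n-1}_\mathbb{A}$-local, and by \Cref{slice quotient is Lpn} it is $L^{p,n}$-local. For every $i\geq 0$ the $i$-fold delooping $\mathrm{B}^i_{nis}\Omega^\infty_{S^1}\esc{E}$ is $\Omega^\infty_{S^1}\Sigma^i\esc{E}$, using connectivity of $\esc{E}$ and the recognition theorem. The suspension $\Sigma^i\esc{E}$ is still local because the localization is exact. Since $\Omega^\infty_{S^1}$ is a right adjoint to a functor preserving the generating projections $\Sigma^\infty_+(X\times S^{p,n})\to\Sigma^\infty_+X$, it preserves local objects. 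Hence every delooping is $L^{p,n}$-local, i.e.\ $\Omega^\infty_{S^1}\esc{E}$ is strictly $L^{p,n}$-local.

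\textbf{Converse.} Take a strictly $L^{p,n}$-local grouplike motivic monoid $\esc{G}$ and set $\esc{E}=\mathrm{B}^\infty_{nis}\esc{G}$, which is connective by the recognition theorem. By \Cref{p independent strict Lpn}, over a perfect field, $(\pi_i^{nis}\esc{G})_{-n}=0$ for all $i\geq 0$. Since $\pi_i^{nis}\esc{E}\cong\pi_i^{nis}\esc{G}$ for $i\geq0$ and these vanish for $i<0$, all stable homotopy sheaves of $\esc{E}$ lie in $Ab^{\mathbb{A}^1}_k/f_n$. \Cref{characterizing n bir spectra}(4) then gives $\esc{E}\simeq f_{0/n}\esc{E}$.

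\textbf{Expected obstacle: the converse over a non-perfect field.} There \Cref{p independent strict Lpn} is unavailable, and I would instead use criterion (3) of \Cref{characterizing n bir spectra}. This requires showing that each Eilenberg--MacLane object $\mathrm{K}^{nis}(\pi_i\esc{G},j)$ is $\mathrm{S}^{n-1}_\mathbb{A}$-local for all $j$. I would try to extract this from the Postnikov tower of the deloopings $\mathrm{B}^j_{nis}\esc{G}$, working inductively up the tower using closure of local objects under fibers and limits, as in the proof of \Cref{cover judgment em judgment to local}. If that induction cannot be made to work, I would state the result under the perfectness hypothesis.
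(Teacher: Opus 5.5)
Your forward direction matches the paper, and over a perfect field your converse is also valid. The gap is that the proposition is stated over an \emph{arbitrary} field. The paper invokes perfectness only afterwards, in \Cref{slice recognition over perfect}. Your converse, by contrast, goes through \Cref{p independent strict Lpn} and criterion (4) of \Cref{characterizing n bir spectra}, and both require $k$ perfect. Your fallback for the imperfect case does not close this. Deducing that the layers $\mathrm{K}^{nis}(\pi_i^{nis}\esc{G},j)$ are $\mathrm{S}^{n-1}_\mathbb{A}$-local from locality of the spaces $\mathrm{B}^j_{nis}\esc{G}$ is the hard direction. Closure under fibres and limits only propagates locality upward, from layers to total object; this is (3)$\Rightarrow$(1) of \Cref{cover judgment em judgment to local}. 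Going downward needs Postnikov truncations to preserve local objects. Unstably, that is exactly the contraction input of [\cite{asok2023p}, Corollary 3.1.21], which needs perfectness. Stably, it is only available once you already know $\mathrm{B}^\infty_{nis}\esc{G}$ is local, which is what you are trying to prove.

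The missing observation is that no homotopy sheaves are needed, because stable locality can be checked levelwise on the $\Omega$-spectrum. Consider the exact localization generated by the maps $\Sigma^\infty_+(X\times S^{p,n})\to\Sigma^\infty_+X$ and all their shifts. The adjunction $\mathrm{Map}(\Sigma^{-i}\Sigma^\infty_+Y,\esc{E})\simeq\mathrm{Map}(Y,\Omega^\infty_{S^1}\Sigma^i\esc{E})$ shows that $\esc{E}$ is local iff every space $\Omega^\infty_{S^1}\Sigma^i\esc{E}$, $i\in\mathbb{Z}$, is $S^{p,n}$-null. Now take $\esc{E}=\mathrm{B}^\infty_{nis}\esc{G}$. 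These spaces are $\mathrm{B}^i_{nis}\esc{G}$ for $i\geq 0$, which are local by the very definition of strict $L^{p,n}$-locality. For $i<0$ they are $\Omega^{-i}\esc{G}$, which are local because local objects are closed under limits. Hence $\esc{E}$ is $L^{p,n}$-local, so it lies in $\mathcal{SH}^{S^1}(k)/f_n$ by \Cref{slice quotient is Lpn}, over any field; here $p\geq n$, so that $S^{p,n}$ is stably a shift of $\mathrm{S}^{n-1}_\mathbb{A}$. This is what the paper means by saying the essential image of $\Omega^\infty_{S^1}$ on $(\mathcal{SH}^{S^1}(k)/f_n)_{\geq 0}$ is ``by definition'' $\mathrm{CMon}_{L^{p,n}}(\mathcal{H}^{\mathbb{A}^1}(k))^{gp}$. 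It is simply your forward-direction levelwise argument run in reverse.
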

\begin{cor}\label[cor]{slice recognition over perfect}
    Over a perfect field, there is an equivalence 
    $$\mathrm{B}^\infty_{{nis}} : \mathrm{CMon}_{n}(\mathcal{H}^{\mathbb{A}^1}(k))^{{gp}} \leftrightarrows \mathcal{SH}^{S^1}(k) / f_{n} : \Omega^\infty_{S^1}$$
    where the left-hand side consists of $\mathbb{A}^1$-invariant Nisnevich local grouplike commutative monoids $X$ whose $\pi_0^{\mathbb{A}^1} X$ is strongly $\mathbb{A}^1$-invariant and $(\pi_i^{\mathbb{A}^1} X)_{-n} = 0$ (i.e., $\pi_i^{\mathbb{A}^1} X \in {Ab}^{\mathbb{A}^1}_k$), and the right-hand side is the full subcategory of $\mathcal{SH}^{S^1}(k)$ generated under colimits by $\Sigma^\infty_+ X / f_n$.
\end{cor}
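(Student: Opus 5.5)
The plan is to derive this from \Cref{slice recognition} by specialising to perfect fields and identifying both sides concretely. First fix some $p\geq n$ (say $p=n$, or $p=2n-1$ so that $L^{p,n}$ matches $\mathrm{S}^{n-1}_\mathbb{A}$-localization as in the remark preceding \Cref{group complete reduction}). By \Cref{slice recognition}, $\mathrm{B}^\infty_{nis}$ restricts to an equivalence between $\mathrm{CMon}_{L^{p,n}}(\mathcal{H}^{\mathbb{A}^1}(k))^{gp}$ and the connective part $(\mathcal{SH}^{S^1}(k)/f_n)_{\geq 0}$, with inverse $\Omega^\infty_{S^1}$. What remains is to identify each side with the description in the statement.

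For the left-hand side, I will use \Cref{strong mot mon at pi0}: over a perfect field, a commutative motivic monoid is the same as an $\mathbb{A}^1$-local Nisnevich local commutative monoid $X$ with strongly (equivalently strictly) $\mathbb{A}^1$-invariant $\pi_0^{\mathbb{A}^1}X$. Then \Cref{p independent strict Lpn}, equivalence (2)$\iff$(6), shows that among such grouplike monoids, strict $L^{p,n}$-locality is equivalent to $(\pi_i^{\mathbb{A}^1}X)_{-n}=0$ for all $i\geq 0$. For a motivic monoid, $\pi_i^{nis}X=\pi_i^{\mathbb{A}^1}X$. These sheaves are strictly $\mathbb{A}^1$-invariant for $i\geq 1$ by Morel, and for $i=0$ by hypothesis. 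Hence the condition says exactly that $\pi_i^{\mathbb{A}^1}X\in Ab^{\mathbb{A}^1}_k/f_n$, using \Cref{t structure on n spherical motivic spectras} to identify the heart with the kernel of $(-)_{-n}$. This identifies $\mathrm{CMon}_{L^{p,n}}(\mathcal{H}^{\mathbb{A}^1}(k))^{gp}$ with $\mathrm{CMon}_n(\mathcal{H}^{\mathbb{A}^1}(k))^{gp}$ as described.

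For the right-hand side, I will invoke \Cref{non negative slice generation}. It says that $(\mathcal{SH}^{S^1}(k)/f_n)_{\geq 0}$ is the smallest subcategory of $\mathcal{SH}^{S^1}(k)$ closed under colimits and containing $\Sigma^\infty_+X/f_n$. The right side of the statement should be read as this subcategory. Finally, on this subcategory $\mathrm{B}^\infty_{f_{0/n}}=f_{0/n}\mathrm{B}^\infty$ agrees with $\mathrm{B}^\infty_{nis}$. The reason is that for strictly $L^{p,n}$-local $X$ the spectrum $\mathrm{B}^\infty_{nis}X$ is already $f_{0/n}$-local: its homotopy sheaves lie in $Ab^{\mathbb{A}^1}_k/f_n$, so \Cref{characterizing n bir spectra} (4)$\Rightarrow$(1) applies. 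This justifies writing the functor simply as $\mathrm{B}^\infty_{nis}$.

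The main obstacle is the bookkeeping at $\pi_0$ and the index shift. I must check that the contraction index used in \Cref{p independent strict Lpn} ($-n$, for strictly $(n-1)$-spherical objects) matches the kernel of $(-)_{-n}$ appearing in \Cref{t structure on n spherical motivic spectras} for $\mathcal{SH}^{S^1}/f_n$. I also need strict $\mathbb{A}^1$-invariance of $\pi_0$ to supply the extra hypothesis needed for \Cref{characterizing n bir spectra}(4). I will verify the first point via \Cref{strictly n spherical has trivial n+1 contraction} with $n$ replaced by $n-1$.
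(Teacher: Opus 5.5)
Your proposal is correct and follows essentially the same route as the paper. You specialise \Cref{slice recognition}, then identify the left-hand side via \Cref{strong mot mon at pi0} together with (2)$\iff$(6) of \Cref{p independent strict Lpn}, and the right-hand side via \Cref{non negative slice generation}; your extra justification that $\mathrm{B}^\infty_{f_{0/n}}\simeq\mathrm{B}^\infty_{nis}$ through \Cref{characterizing n bir spectra} is valid, though the paper obtains it more directly (before \Cref{slice recognition}) from the fact that all deloopings $\mathrm{B}^i_{nis}X$ are already $L^{p,n}$-local.
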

\begin{proof}
    By the preceding proposition, it remains to identify the two sides. The identification of the left-hand side (this step requires perfectness) follows from \Cref{p independent strict Lpn} and \Cref{strong mot mon at pi0}, while that of the right-hand side follows from \Cref{non negative slice generation}.
\end{proof}

\begin{cor}
    Over a perfect field, there is an equivalence  $$\mathrm{B}^\infty_{nis}:\mathrm{CMon}_{n}(\mathcal{H}^{\mathbb{A}^1}(k))^{gp}\leftrightarrows {\mathcal{SH}^{S^1}(k)/f_{n}}: \Omega^\infty_{S^1}$$
    where the left hand side consists of $\mathbb{A}^1$-invariant Nisnevich local grouplike commutative monoids $\esc{X}$ whose $\pi_0^{\mathbb{A}^1}\esc{X}$ is strongly $\mathbb{A}^1$ invariant and $(\pi_i^{\mathbb{A}^1}\esc{X})_{-n}=0$ (i.e.,  $\pi_i^{\mathbb{A}^1}\esc{X}\in Ab^{\mathbb{A}^1}_k$) and the right and side is the full subcategry of $\mathcal{SH}^{S^1}(k)$ generated under colimits by $\Sigma^\infty_+X/f_n$.
\end{cor}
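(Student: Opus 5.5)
The statement is the final corollary, which repeats \Cref{slice recognition over perfect}. The plan is to derive it directly from \Cref{slice recognition}. Restricted to connective objects, that proposition already gives an equivalence
$$\mathrm{B}^\infty_{nis}\simeq \mathrm{B}^\infty_{f_{0/n}}:\mathrm{CMon}_{L^{p,n}}(\mathcal{H}^{\mathbb{A}^1}(k))^{gp}\leftrightarrows (\mathcal{SH}^{S^1}(k)/f_n)_{\geq 0}:\Omega^\infty_{S^1}.$$
It is valid for any fixed $p\geq n$ over an arbitrary field. What remains is to identify each side concretely; the perfectness of $k$ is used only on the left.

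\textbf{Left-hand side.} Fix some $p\geq n$, say $p=n$. By \Cref{p independent strict Lpn}, a commutative motivic monoid $\esc{X}$ over a perfect field is strictly $L^{p,n}$-local if and only if $(\pi_i^{nis}\esc{X})_{-n}=0$ for all $i\geq 0$. Since $\esc{X}$ is $\mathbb{A}^1$-local, $\pi_i^{nis}\esc{X}=\pi_i^{\mathbb{A}^1}\esc{X}$. By \Cref{strong mot mon at pi0}, being a commutative motivic monoid is equivalent to being an $\mathbb{A}^1$-invariant Nisnevich local commutative monoid with strongly (equivalently strictly) $\mathbb{A}^1$-invariant $\pi_0^{\mathbb{A}^1}$. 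The higher $\pi_i^{\mathbb{A}^1}$ of a grouplike object with $\mathbb{A}^1$-local deloopings are then automatically strictly $\mathbb{A}^1$-invariant, by Morel's theory. Together these give $\pi_i^{\mathbb{A}^1}\esc{X}\in Ab^{\mathbb{A}^1}_k/f_n$. So $\mathrm{CMon}_{L^{p,n}}(\mathcal{H}^{\mathbb{A}^1}(k))^{gp}$ is exactly the category $\mathrm{CMon}_n(\mathcal{H}^{\mathbb{A}^1}(k))^{gp}$ described in the statement.

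\textbf{Right-hand side.} By \Cref{non negative slice generation}, valid over any field, $(\mathcal{SH}^{S^1}(k)/f_n)_{\geq 0}$ is the smallest subcategory generated under colimits by the objects $\Sigma^\infty_+X/f_n$. This holds whether the colimit closure is taken inside $\mathcal{SH}^{S^1}(k)/f_n$ or inside $\mathcal{SH}^{S^1}(k)$, which is the subcategory named in the statement. That proposition rests on the slice connectivity \Cref{slice connectivity} and on the colimit-closedness \Cref{slice localization is colimit closed}.

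\textbf{Main obstacle.} The only delicate point is the left-hand identification: one must check that the hypotheses of \Cref{p independent strict Lpn} apply, namely that each object is a genuine commutative motivic monoid. This is exactly where \Cref{strong mot mon at pi0}, and hence perfectness, is needed. Once both sides are identified, the equivalence follows formally from \Cref{slice recognition}.
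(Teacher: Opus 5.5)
Your proposal is correct and follows the paper's own proof. Like the paper, you restrict \Cref{slice recognition}, identify the left-hand side via \Cref{p independent strict Lpn} (together with \Cref{strong mot mon at pi0}, as in the paper's earlier copy of this corollary), identify the right-hand side via \Cref{non negative slice generation}, and correctly read the condition as $\pi_i^{\mathbb{A}^1}\esc{X}\in Ab^{\mathbb{A}^1}_k/f_n$ rather than the typo $Ab^{\mathbb{A}^1}_k$ in the statement.
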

\begin{proof}
    Using the proposition above, it remains to identify the left and right hand sides. The identification of the left hand side comes from \Cref{p independent strict Lpn}. The case of the right hand side comes from \Cref{non negative slice generation}.
\end{proof}

Recall from \Cref{characterizing n bir spectra} that:
\begin{prop}
    We have that $\mathcal{SH}^{S^1}(k)/f_n$ is the kernel (in $(Cat_\infty)_{*/}$) of the functor $$\underset{i\in \mathbb{Z}}{\prod} (\pi_i^{sp,nis})_{-n}:\mathcal{SH}^{S^1}\to Ab_k^{\mathbb{A}^1}.$$ It follows that $(\mathcal{SH}^{S^1}(k)/f_n)_{\geq 0}$ is the kernel of the functor $$\underset{i\in \mathbb{N}}{\prod} (\pi_i^{sp,nis})_{-n}:\mathcal{SH}^{S^1}_{\geq 0}\to Ab_k^{\mathbb{A}^1}.$$
\end{prop}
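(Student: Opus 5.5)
The statement has two parts: that $\mathcal{SH}^{S^1}(k)/f_n$ is the kernel of $\prod_{i\in\mathbb{Z}}(\pi_i^{sp,nis})_{-n}$, and that $(\mathcal{SH}^{S^1}(k)/f_n)_{\geq 0}$ is the kernel of the analogous product over $i\in\mathbb{N}$ on $\mathcal{SH}^{S^1}_{\geq 0}$. I would deduce both directly from \Cref{characterizing n bir spectra} (implicitly over a perfect field, since the contraction description needs it). Here a kernel in $(Cat_\infty)_{*/}$ means the full subcategory of objects sent to the zero object. The product functor $\prod_i(\pi_i^{sp,nis})_{-n}$ sends $\esc{X}$ to $0$ exactly when $(\pi_i^{nis}\esc{X})_{-n}=0$ for every $i$. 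So the fibre over $0$ is a full subcategory of $\mathcal{SH}^{S^1}(k)$: the pullback $\{0\}\times_{Ab}\mathcal{SH}^{S^1}(k)$ is fully faithful into $\mathcal{SH}^{S^1}(k)$, because the zero object is terminal in $Ab$ and so mapping spaces into it are contractible.

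For the first claim, take $\esc{X}\in\mathcal{SH}^{S^1}(k)$. It is Nisnevich local and motivic, so its homotopy sheaves $\pi_i^{nis}\esc{X}$ are strictly $\mathbb{A}^1$-invariant by Morel's stable connectivity theorem (\Cref{Motivic t structure}). By (1)$\iff$(4) of \Cref{characterizing n bir spectra}, $\esc{X}$ lies in $\mathcal{SH}^{S^1}(k)/f_n$ if and only if all $(\pi_i^{nis}\esc{X})_{-n}$ vanish, which is exactly membership in the kernel. Since both sides are full subcategories of $\mathcal{SH}^{S^1}(k)$ with the same objects, they coincide.

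For the connective claim, I would use the definition of the localized $t$-structure in \Cref{t structure on n spherical motivic spectras}: $(\mathcal{SH}^{S^1}(k)/f_n)_{\geq 0}=\mathcal{SH}^{S^1}(k)/f_n\cap\mathcal{SH}^{S^1}(k)_{\geq0}$. Intersecting the first claim with $\mathcal{SH}^{S^1}_{\geq 0}$ gives the kernel of $\prod_{i\in\mathbb{Z}}(\pi_i)_{-n}$ restricted to connective objects. For connective $\esc{X}$ we have $\pi_i^{nis}\esc{X}=0$ when $i<0$, so the factors with $i<0$ vanish automatically and only $i\in\mathbb{N}$ need to be imposed.

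The only step needing care is the formal one: checking that the kernel in $(Cat_\infty)_{*/}$ is really the full subcategory described above. Everything else is a restatement of \Cref{characterizing n bir spectra}.
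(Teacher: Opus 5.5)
Your proposal is correct and follows the paper's approach: the paper gives no separate proof, introducing the statement with ``Recall from \Cref{characterizing n bir spectra}'', i.e.\ as a restatement of (1)$\iff$(4) there, and the connective version comes from intersecting with $\mathcal{SH}^{S^1}(k)_{\geq 0}$, exactly as you do. Your remarks that $k$ must be perfect for the contraction description, and that the kernel in $(Cat_\infty)_{*/}$ is the full subcategory of objects sent to $0$, spell out points the paper leaves implicit.
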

We may therefore compare the recognition principle by the following morphism of sequences of pointed $\infty$ categories:
\[
\xymatrix{
0\ar[r]&\mathcal{SH}^{S^1}(k)/f_n\ar[d]^{\Omega^\infty} \ar@{^{(}->}[r]&\mathcal{SH}^{S^1}(k)\ar[d]^{\Omega^\infty} \ar[rrr]^{\underset{i\in \mathbb{Z}}{\prod} (\pi_i^{sp,nis})_{-n}}&&&Ab_k^{\mathbb{A}^1}\ar@{=}[d]\\
&\mathrm{CMon}(\mathcal{H}^{p,n}(k))^{gp}\ar@{^(->}[r]&\mathrm{CMon}(\mathcal{H}^{\mathbb{A}^1}(k))^{gp}\ar[rrr]^-{\underset{i\in \mathbb{N}}{\prod} (\pi_i^{nis})_{-n
}}&&&Ab_k^{\mathbb{A}^1}
}
\]
\begin{cor}
When $k$ is a perfect field, the above diagram restricts to the following morphism of exact sequences, with all vertical adjunctions being equivalences:
\[
\xymatrix{
0\ar[r]&(\mathcal{SH}^{S^1}(k)/f_n)_{\geq 0}\ar@<.2em>[d]^{\Omega^\infty} \ar@{^{(}->}@<.2em>[r]&\mathcal{SH}^{S^1}(k)_{\geq 0}\ar@<.2em>[d]^{\Omega^\infty}\ar@<.2em>[l]^{L_{bir}^{n,sp}}\ar[rrr]^{\underset{i\in \mathbb{N}}{\prod} (\pi_i^{sp,nis})_{-n}}&&&Ab_k^{\mathbb{A}^1}\ar@{=}[d]\\
0\ar[r]&\mathrm{CMon}_{L^{p,n}}(\mathcal{H}^{p,n}(k))^{gp}\ar@<.2em>[u]^{\mathrm{B}_{nis}^\infty}\ar@{^{(}->}@<.2em>[r]&\mathrm{CMon}_{mot}(\mathcal{H}^{\mathbb{A}^1}(k))^{gp}\ar@<.2em>[l]^{L_{n}}\ar@<.2em>[u]^{\mathrm{B}_{nis}^\infty}\ar[rrr]^-{\underset{i\in \mathbb{N}}{\prod} (\pi_i^{nis})_{-n}}&&&Ab_k^{\mathbb{A}^1}
}
\]
Moreover, in this case, the first two categories in the top horizontal row are the respective subcategories generated under colimits by localizations of smooth schemes.
\end{cor}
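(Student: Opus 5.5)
The plan is to assemble the statement from three inputs: the slice $S^1$-recognition equivalence (\Cref{slice recognition over perfect}), the motivic $S^1$-recognition theorem of [\cite{elmanto2021motivic}], and the identifications of the kernels in \Cref{characterizing n bir spectra} and \Cref{p independent strict Lpn}. The overall strategy is first to check that the displayed diagram restricts to connective parts with each row exact. We then show that the middle vertical adjunction is an equivalence and that it carries the left-hand kernel onto the left-hand kernel. Everything runs over a perfect field $k$.

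\textbf{Step 1 (top row).} By \Cref{characterizing n bir spectra}(4), a Nisnevich local spectrum lies in $\mathcal{SH}^{S^1}(k)/f_n$ exactly when all $(\pi_i^{nis})_{-n}$ vanish. By \Cref{slice connectivity} and \Cref{t structure on n spherical motivic spectras}, $f_{0/n}$ is right $t$-exact and the inclusion is $t$-exact. Together these show that $(\mathcal{SH}^{S^1}(k)/f_n)_{\geq 0}$ is the kernel of $\prod_{i\in\mathbb{N}}(\pi_i^{sp,nis})_{-n}$ on $\mathcal{SH}^{S^1}(k)_{\geq 0}$. They also show that $f_{0/n}$ restricts to a left adjoint of the inclusion on connective parts. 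Over a perfect field, \Cref{bir spectras are 0 slices} identifies $f_{0/n}$ with $L^{n-1,sp}_{bir}$ after the evident index shift. This gives the top row together with its left adjoint.

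\textbf{Step 2 (bottom row).} By \Cref{p independent strict Lpn}, items (2) and (6), together with \Cref{strong mot mon at pi0}, a grouplike motivic commutative monoid is strictly $L^{p,n}$-local exactly when all $(\pi_i^{nis})_{-n}$ vanish. So $\mathrm{CMon}_{L^{p,n}}(\mathcal{H}^{\mathbb{A}^1}(k))^{gp}$ is the kernel of $\prod_{i\in\mathbb{N}}(\pi_i^{nis})_{-n}$ on $\mathrm{CMon}_{mot}(\mathcal{H}^{\mathbb{A}^1}(k))^{gp}$.

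The left adjoint $L_n$ of this inclusion is obtained by transport. Under the equivalence in Step 3, it corresponds to $f_{0/n}$ on connective spectra, and concretely it is $\Omega^\infty_{S^1}f_{0/n}\mathrm{B}^\infty_{nis}$.

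\textbf{Step 3 (vertical equivalences and compatibility).} The middle vertical adjunction is an equivalence by [\cite{elmanto2021motivic}, Proposition 3.1.9, Corollary 3.1.15]. The left vertical adjunction is an equivalence by \Cref{slice recognition over perfect}. The right square commutes because $\Omega^\infty_{S^1}$ preserves Nisnevich homotopy sheaves in non-negative degrees on connective spectra, that is, $\pi_i^{nis}\Omega^\infty\mathcal{E}\cong\pi_i^{sp,nis}\mathcal{E}$ for $i\geq0$. Contraction $(-)_{-n}$ commutes with this identification.

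Commutativity of the left square with the horizontal left adjoints then follows formally. The inclusions commute with the vertical equivalences, and left adjoints are unique.

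\textbf{Step 4 (generation).} Generation of the first two top categories under colimits by localizations of smooth schemes is \Cref{non negative slice generation}, applied to $n$ and to $n=0$ (the case $n=0$ is [\cite{elmanto2021motivic}, Proposition 3.1.13]).

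\textbf{Main obstacle.} The subtle point is exactness of the bottom row in the pointed sense. One must verify that strict $L^{p,n}$-locality, which is a condition on all deloopings, really coincides with vanishing contractions of the homotopy sheaves of $\mathcal{G}$ itself, uniformly in $p\geq n$. This is exactly where perfectness enters, through \Cref{p independent strict Lpn} and [\cite{asok2023p}, Corollary 3.1.21], and I would check this step most carefully.
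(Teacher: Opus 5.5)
Your proposal is correct and takes essentially the same route as the paper, which states this corollary without a separate proof and assembles it from the ordinary and slice $S^1$-recognition equivalences (\Cref{slice recognition}, \Cref{slice recognition over perfect}), the kernel descriptions from \Cref{characterizing n bir spectra} and \Cref{p independent strict Lpn}, and the generation statement \Cref{non negative slice generation}. Your index shift is right: by \Cref{bir spectras are 0 slices}, the top horizontal left adjoint is $f_{0/n}\simeq L^{n-1,sp}_{bir}$. In Step 4 the middle category is covered directly by [\cite{elmanto2021motivic}, Proposition 3.1.13], not by an ``$n=0$'' case of \Cref{non negative slice generation}, because $\mathcal{SH}^{S^1}(k)/f_0=0$.
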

One may similarly define spaces to be strictly $n$-birational if all $\mathrm{B}^i_{nis}$ are $n$-birational, and let $\mathrm{CMon}_{n}(\mathcal{H}^{n})$ be the full subcategory of strictly $n$-birational motivic commutative monoids. The adjunction $$\mathrm{B}^\infty_{nis}: \mathrm{CMon}(\mathcal{P}_{nis})\leftrightarrows \mathcal{SH}_{S^1}: \Omega^\infty_{S^1}$$ descends to an adjunction $$\mathrm{B}^\infty_{n}:=L_{bir}^{n,sp}\mathrm{B}^\infty: L^{n}_{bir}\mathrm{CMon}(\mathcal{H}^{n}) \leftrightarrows \mathcal{SH}^n(k): \Omega^\infty_{S^1}.$$ Over $\mathrm{CMon}_{n}(\mathcal{H}^{n})$ we have $${\mathrm{B}^\infty_{n}}_{|\mathrm{CMon}_{n}(\mathcal{H}^{n})}\simeq \mathrm{B}^\infty_{|\mathrm{CMon}_{n}(\mathcal{H}^{n})}.$$ Since the functor $$\Omega^\infty_{S^1}: \mathcal{SH}_{\geq 0}\to \mathrm{CMon}(\mathcal{P}_{nis})$$ is fully faithful, so is $\Omega^\infty_{S^1}: \mathcal{SH}^{n}_{\geq 0}\to \mathrm{CMon}(\mathcal{H}^{n})$, with essential image consisting of those grouplike commutative $n$-birational motivic monoids all of whose deloopings are $n$-birational motivic spaces. But this, by definition, is exactly $\mathrm{CMon}_{n}(\mathcal{H}^{n})^{gp}$. Therefore, we obtain:

\begin{prop}\label[prop]{birational recognition}
Over a perfect field, the motivic recognition equivalence restricts to an equivalence:    $$\mathrm{B}^\infty_{nis}\simeq \mathrm{B^{\infty}_{n}}: \mathrm{CMon}_{n}(\mathcal{H}^{n})^{gp}\leftrightarrows \mathcal{SH}^{n}_{\geq 0}: \Omega^\infty$$
where the left-hand side consists of strictly $n$-birational commutative grouplike spaces, while $\mathcal{SH}^n(k)_{\geq 0}$ is the full subcategory of $\mathcal{SH}^n(k)$ generated under colimits by $L_{bir}^{n,sp}\Sigma_+^{\infty}X$ for $X\in Sm_k$.
\end{prop}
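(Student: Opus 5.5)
The plan is to run the same argument as in \Cref{slice recognition}, replacing $\mathcal{SH}^{S^1}(k)/f_{n+1}$ by $\mathcal{SH}^n(k)$ through \Cref{bir spectras are 0 slices}. We start from the motivic $S^1$-recognition equivalence $\mathrm{B}^\infty_{nis}:\mathrm{CMon}_{mot}(\mathcal{H}^{\mathbb{A}^1}(k))^{gp}\simeq \mathcal{SH}^{S^1}(k)_{\geq 0}:\Omega^\infty_{S^1}$ and show that it restricts to full subcategories on both sides. Since it is already an equivalence, the only work is to check that the two subcategories correspond under it.

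\textbf{Step 1: $\Omega^\infty_{S^1}$ lands in strictly $n$-birational monoids.} Let $\mal{E}\in\mathcal{SH}^n(k)_{\geq 0}$. The functor $\Omega^\infty_{S^1}$ preserves $B(n)$-local objects, since $\Sigma^\infty_+$ sends $n$-dense open immersions to stable $n$-birational equivalences. Applying this to the shifts $\Sigma^i\mal{E}$, which are still connective and $n$-birational, gives that $\mathrm{B}^i_{nis}\Omega^\infty\mal{E}\simeq\Omega^\infty\Sigma^i\mal{E}$ is $n$-birational for every $i\geq 0$. Hence $\Omega^\infty\mal{E}\in\mathrm{CMon}_n(\mathcal{H}^n)^{gp}$.

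\textbf{Step 2: $\mathrm{B}^\infty_{nis}$ lands in $\mathcal{SH}^n(k)_{\geq 0}$.} Let $\esc{G}$ be a strictly $n$-birational grouplike monoid. Then $\mathrm{B}^\infty_{nis}\esc{G}$ is the spectrum $(\esc{G},\mathrm{B}_{nis}\esc{G},\mathrm{B}^2_{nis}\esc{G},\dots)$, and each level is $\mathbb{A}^1$-local and $B(n)$-local. A spectrum is $L^{n,sp}_{bir}$-local if and only if all of its levels are local, because the generating equivalences are suspension spectra of maps of spaces. So $\mathrm{B}^\infty_{nis}\esc{G}\in\mathcal{SH}^n(k)$. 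It is connective by the unrestricted recognition theorem.

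Steps 1 and 2, together with the fact that the unit and counit of the original equivalence are equivalences, give the restricted equivalence and the identification $\mathrm{B}^\infty_{nis}\simeq \mathrm{B}^\infty_n$ on this subcategory. Indeed, $L^{n,sp}_{bir}$ acts as the identity on objects that are already local.

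\textbf{Step 3: the generation statement.} We need $\mathcal{SH}^n(k)_{\geq 0}$ to be generated under colimits by $L^{n,sp}_{bir}\Sigma^\infty_+X$. By \Cref{bir spectras are 0 slices} we have $L^{n,sp}_{bir}\simeq f_{0/n+1}$ over a perfect field, so this is exactly \Cref{non negative slice generation} for $f_{n+1}$. The connectivity input needed there is \Cref{n birational stable connectivity}. Here $\mathcal{SH}^n(k)_{\geq 0}$ means the connective part of the $t$-structure from \Cref{t structure on n bir spectras}, which agrees with $\mathcal{SH}^n(k)\cap\mathcal{SH}^{S^1}(k)_{\geq 0}$.

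\textbf{Main obstacle.} The delicate point is Step 2, specifically checking levelwise locality against $B(n)$. The issue is that $n$-birational locality for spaces is defined by unstable localization, while the stable localization is generated by the maps $\Sigma^\infty_+U\to\Sigma^\infty_+X$. I would handle this with the adjunction $[\Sigma^{\infty-i}_+X,\mal{E}]\cong[X_+,\mal{E}_i]$, which reduces stable locality to the unstable locality of each $\mal{E}_i=\mathrm{B}^i_{nis}\esc{G}$, including the nonconnected level $i=0$. Perfectness is used only through \Cref{bir spectras are 0 slices}, in order to identify the generators.
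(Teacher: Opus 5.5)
Your proposal is correct and is essentially the paper's argument. The paper also restricts the motivic $S^1$-recognition equivalence, observing that $\Omega^\infty_{S^1}$ remains fully faithful on $\mathcal{SH}^n(k)_{\geq 0}$ with essential image the grouplike monoids all of whose Nisnevich deloopings are $n$-birational motivic spaces; your Steps 1 and 2 verify this levelwise, and your Step 3 spells out the identification of the connective part via $L^{n,sp}_{bir}\simeq f_{0/n+1}$ and \Cref{non negative slice generation}, which the paper leaves implicit (cf.\ \Cref{t structure on n bir spectras}).
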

It follows from the two recognition principles stated above that:
\begin{cor}
    A grouplike commutative motivic monoid $\esc{B}$ is strictly $n$-birational iff for all $i$, $(\pi_i^{nis}\esc{B})_{-n-1}=0$ iff $\esc{B}$ is strictly $L^{p,n}$-local. In other words, $\mathrm{CMon}_{n}(\mathcal{H}^{n})^{gp}$ is the kernel of the morphism $$\underset{i\in \mathbb{N}}{\prod} (\pi_i^{nis})_{-n-1}:\mathrm{CMon}_{mot}(\mathcal{H}^{\mathbb{A}^1})^{gp}\to Ab_k^{\mathbb{A}^1}.$$
\end{cor}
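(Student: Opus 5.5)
The plan is to read the statement off the two recognition equivalences just established, \Cref{birational recognition} and \Cref{slice recognition over perfect} (equivalently \Cref{slice recognition}), together with the identification $f_{0/n+1}\simeq L_{bir}^{n,sp}$ of \Cref{bir spectras are 0 slices} over a perfect field. Fix a grouplike commutative motivic monoid $\esc{B}$ and put $\mal{E}:=\mathrm{B}^\infty_{nis}\esc{B}\in\mathcal{SH}^{S^1}(k)_{\geq 0}$, so that $\esc{B}\simeq\Omega^\infty_{S^1}\mal{E}$ by the motivic $S^1$-recognition principle. The aim is to show that the three conditions on $\esc{B}$ are each equivalent to a single condition on $\mal{E}$, namely that $\mal{E}$ lies in $\mathcal{SH}^n(k)=\mathcal{SH}^{S^1}(k)/f_{n+1}$.

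\emph{First}, $\esc{B}$ is strictly $n$-birational iff $\mal{E}\in\mathcal{SH}^n(k)_{\geq 0}$, by \Cref{birational recognition}. Concretely, the deloopings $\mathrm{B}^i_{nis}\esc{B}$ are the spaces $\Omega^\infty_{S^1}\Sigma^i\mal{E}$. If every such delooping is $n$-birational local, then the $\Omega$-spectrum $\mal{E}$ is $L^{n,sp}_{bir}$-local, and the converse holds because $\Omega^\infty_{S^1}$ preserves local objects.

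\emph{Second}, $\esc{B}$ is strictly $L^{p,n+1}$-local iff $\mal{E}\in(\mathcal{SH}^{S^1}(k)/f_{n+1})_{\geq 0}$, by \Cref{slice recognition} applied with $n+1$ in place of $n$. The two categories of spectra agree by \Cref{bir spectras are 0 slices}, so the first and third conditions are equivalent.

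\emph{Third}, for the condition on contractions, I would invoke \Cref{p independent strict Lpn} with $n+1$ in place of $n$ and with $p\geq n+1$. This gives that strict $L^{p,n+1}$-locality is equivalent to $(\pi_i^{nis}\esc{B})_{-n-1}=0$ for all $i\geq 0$. Alternatively, one can use \Cref{characterizing n bir spectra}(4) applied to $\mal{E}$, together with $\pi_i^{nis}\esc{B}\cong\pi_i^{sp,nis}\mal{E}$ for $i\geq 0$ and $\pi_i^{sp,nis}\mal{E}=0$ for $i<0$. The kernel reformulation is then a restatement: $\mathrm{CMon}_n(\mathcal{H}^n)^{gp}$ consists exactly of those objects of $\mathrm{CMon}_{mot}(\mathcal{H}^{\mathbb{A}^1})^{gp}$ on which every $(\pi_i^{nis})_{-n-1}$ vanishes.

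The main obstacle is the first step: checking that strict $n$-birationality of $\esc{B}$, a condition on the unstable deloopings, matches $L^{n,sp}_{bir}$-locality of $\mathrm{B}^\infty_{nis}\esc{B}$. This requires that $L^{n,sp}_{bir}$ is the stabilization of $L^n_{bir}$, so that a spectrum is local precisely when all its levels are local spaces. I would argue this from the description $\mathcal{SH}^n=\mathrm{Stab}(\mathcal{H}^n)$ as a limit along $\Omega$ given earlier, since this is the same argument the paper already uses for \Cref{birational recognition}. Throughout, perfectness of $k$ is needed both for \Cref{bir spectras are 0 slices} and for the contraction criterion.
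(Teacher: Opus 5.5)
Your proposal is correct and matches the paper's route: the paper obtains this corollary directly from the two recognition principles (\Cref{birational recognition} and \Cref{slice recognition}), identified through $f_{0/n+1}\simeq L^{n,sp}_{bir}$ over a perfect field, with the contraction criterion supplied by \Cref{p independent strict Lpn}. You also correctly read the statement's ``strictly $L^{p,n}$-local'' as strictly $L^{p,n+1}$-local with $p\geq n+1$. That is the indexing needed for the third condition to match $(\pi_i^{nis}\esc{B})_{-n-1}=0$.
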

\begin{rem}
 We expect that a version of the above holds for connected spaces (without even an $A_\infty$ structure), but we are unable to establish this at the moment. 
\end{rem}
We note the following result as a side remark:
\begin{lem}
 A group-like commutative monoid $\esc{G}$ on $k$ is a strictly $L^{1,1}$-local motivic monoid iff $\esc{G}$ is birational local. In particular, $Ab^{\mathbb{A}^1}_k/f_1\simeq Ab_k^b$, where the right-hand side denotes birational sheaves of abelian groups. \textup{(This last equivalence was obtained in [\cite{0bat}, Lemma 3.3.9.].)}
\end{lem}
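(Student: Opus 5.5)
The idea is to reduce both assertions to the characterization of strict locality by homotopy sheaves, namely \Cref{p independent strict Lpn} and the birational recognition principle \Cref{birational recognition}. The case is $n=1$: strict $L^{1,1}$-locality corresponds to $0$-birationality, or equivalently to strict $0$-sphericality. Throughout, $k$ is a perfect field, since all the cited equivalences require it.

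\textbf{First equivalence.} Let $\esc{G}$ be grouplike and motivic, so that $\pi_0^{\mathbb{A}^1}\esc{G}$ is strictly $\mathbb{A}^1$-invariant by \Cref{strong mot mon at pi0}. Applying \Cref{p independent strict Lpn} with $p=n=1$ shows that strict $L^{1,1}$-locality is equivalent to $(\pi_i^{nis}\esc{G})_{-1}=0$ for all $i\geq 0$. The corollary following \Cref{birational recognition}, taken with $n=0$, identifies this vanishing of contractions with $\esc{G}$ being strictly $0$-birational.

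To get ordinary (rather than strict) birational locality, I would argue as follows.
\begin{enumerate}
\item If $\esc{G}$ is strictly birational, it is in particular birational local, since $\mathrm{B}^0_{nis}\esc{G}=\esc{G}$.
\item Conversely, if $\esc{G}$ is birational local, then each delooping $\mathrm{B}^i_{nis}\esc{G}$ has homotopy sheaves $\pi_j^{nis}\esc{G}$. These are birational sheaves, because $\mathbb{A}^1\setminus 0\subset \mathbb{A}^1$ is a dense open immersion, so $\esc{G}$ is $\mathrm{S}^0_\mathbb{A}$-local.
\item That locality gives $(\pi_j\esc{G})_{-1}=0$, by the argument of \Cref{strictly n spherical has trivial n+1 contraction} applied to the Postnikov layers.
\item Then \Cref{p independent strict Lpn} returns strict locality.
\end{enumerate}
Where the argument needs $\pi_0$ to be well behaved, I would use the unstable fact that $L^0_{bir}$ commutes with $\Omega$ and preserves connectivity, citing [\cite{0bat}].

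\textbf{Consequence on abelian sheaves.} Take $A\in Ab^{\mathbb{A}^1}_k$ and apply the equivalence to the Eilenberg–MacLane monoid $\mathrm{K}(A,0)=A$. This shows that $A_{-1}=0$ holds iff $A$ is birational, so $Ab^{\mathbb{A}^1}_k/f_1$ is contained in $Ab^b_k$.

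For the reverse inclusion, a birational Nisnevich sheaf of abelian groups is automatically $\mathbb{A}^1$-invariant, since $X\times\mathbb{A}^1$ is birational to $X\times\mathbb{P}^1$, and in fact it is strictly $\mathbb{A}^1$-invariant. Both this inclusion and the strictness are exactly the content of [\cite{0bat}, Lemma 3.3.9]. Combined with the heart identification in \Cref{t structure on n spherical motivic spectras} for $n=1$, this gives $Ab^{\mathbb{A}^1}_k/f_1\simeq Ab^b_k$.

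\textbf{Main obstacle.} The delicate step is showing that mere birational locality of $\esc{G}$ already forces all of its deloopings $\mathrm{B}^i_{nis}\esc{G}$ to be birational. I would handle it through homotopy sheaves as in steps (2)–(4): pass from $\esc{G}$ to the contractions $(\pi_j^{nis}\esc{G})_{-1}$, then apply the $p=n=1$ case of \Cref{p independent strict Lpn}. That case needs locality only at the level of $\pi_0$ together with the vanishing of these contractions.
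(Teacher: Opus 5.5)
Your argument is correct in substance, and its converse half takes a genuinely different route from the paper. Two steps need repair before it is a proof.

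\textbf{Comparison with the paper.} The forward direction is the same as the paper's. You go strictly $L^{1,1}$-local $\Rightarrow$ vanishing contractions $\Rightarrow$ strictly $0$-birational $\Rightarrow$ birational local, using \Cref{p independent strict Lpn} and the corollary after \Cref{birational recognition}. For the converse, the paper simply cites [\cite{0bat}, Corollary 3.2.9]. That citation imports from the unstable birational theory the passage from birational locality of a grouplike $\esc{G}$ to strict birationality (compare the later remark that $0$-birational localization commutes with the bar construction), and the corollary then finishes.

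You instead argue: birational local $\Rightarrow$ $\mathrm{S}^0_\mathbb{A}$-local $\Rightarrow$ all $(\pi_j^{nis}\esc{G})_{-1}=0$ $\Rightarrow$ strictly $L^{1,1}$-local by (6)$\Rightarrow$(2) of \Cref{p independent strict Lpn}. This avoids the delooping input from [\cite{0bat}]. Once made rigorous, it also proves more: over a perfect field, every $\mathrm{S}^0_\mathbb{A}$-local grouplike commutative motivic monoid is already strictly $L^{1,1}$-local. The paper's citation is shorter and gives the statement about deloopings directly.

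\textbf{Repair 1: step (3).} As phrased, step (3) reads as if the Postnikov layers of $\esc{G}$ were themselves $\mathrm{S}^0_\mathbb{A}$-local, which you do not know. Moreover, \Cref{pi_i of Lpq} applied to $\esc{G}$ only covers $j\geq 1$. The correct input for all $j$, including $j=0$, is Morel's theorem [\cite{morel2012a1}, Theorem 6.13] applied to the connected $\mathbb{A}^1$-local space $\mathrm{B}_{nis}\esc{G}$. It gives $\pi_{j+1}^{nis}\Omega_{\mathbb{G}}\mathrm{B}_{nis}\esc{G}\cong(\pi_j^{nis}\esc{G})_{-1}$. On the other hand, $\Omega\,\Omega_{\mathbb{G}}\mathrm{B}_{nis}\esc{G}\simeq\Omega_{\mathbb{G}}\esc{G}\simeq *$ by $\mathbb{G}_m$-nullity, so all these contractions vanish.

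\textbf{Repair 2: the motivic hypothesis.} Step (3), and \Cref{p independent strict Lpn} itself, need $\esc{G}$ to be a motivic monoid. You assume this in your first line, but in the converse the only hypothesis is birational locality. Establishing it is where the claim in step (2), that the homotopy sheaves are birational, really matters. The reason you give for that claim is a non sequitur: $\mathrm{S}^0_\mathbb{A}$-locality of $\esc{G}$ says nothing about birationality of its homotopy sheaves. The actual argument runs as follows.
\begin{enumerate}
\item The presheaf $X\mapsto\pi_j(\esc{G}(X))$ inverts dense open immersions, and it is additive because $\esc{G}$ is a sheaf.
\item Such a presheaf is already a Nisnevich sheaf. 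On a distinguished square over an irreducible $X$, either $U=\emptyset$ and $V\cong X$, or $U$ is dense. In the latter case $U\times_X V$ is dense in $V$, since \'etale maps send generic points to generic points, so both restriction maps are bijections.
\item Hence $\pi_0^{nis}\esc{G}$ is birational and $\mathbb{A}^1$-invariant. Such a sheaf is flasque for the Nisnevich cd-structure, so it has no higher Nisnevich cohomology and is strictly $\mathbb{A}^1$-invariant.
\item By \Cref{strong mot mon at pi0}, $\esc{G}$ is then motivic.
\end{enumerate}
With this in hand the contractions vanish at once, since $A(\mathbb{G}_m\times X)\cong A(\mathbb{A}^1\times X)\cong A(X)$ for each $A=\pi_j^{nis}\esc{G}$. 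So Morel's theorem is not even needed.

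\textbf{Minor point.} In the abelian part, ``$X\times\mathbb{A}^1$ is birational to $X\times\mathbb{P}^1$'' does not give $\mathbb{A}^1$-invariance. That has to come from $\mathcal{H}^0$ being a localization of $\mathcal{H}^{\mathbb{A}^1}$, or from [\cite{0bat}, Lemma 3.3.9].
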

\begin{proof}
By the corollary above, this follows immediately from [\cite{0bat}, Corollary 3.2.9].\end{proof}
\begin{ex}
    By the corollary above and \Cref{gw/f1=z} it follows that the reflection of $\underline{\mathrm{GW}}$ under the inclusion $Ab^b_k\subset Ab^{\mathbb{A}^1}_k$ is given by $\mathbb{Z}$.
\end{ex}
\subsection{Slices of the $\mathbb{P}^1$-loop space machine}
In this final section, we aim to analyze the behavior of the motivic recognition principle under the slice filtration. As we have been doing so far, we shall first recall the ordinary motivic reconstruction theorem for motivic $\mathbb{P}^1$-spectra:
\begin{prop}[[{\cite{elmanto2021motivic}}, Corollary 3.5.9, Theorem 3.5.14(i)\text{]},]\label[prop]{reconstruction theorem}
    Let $k$ be a perfect field. Then there exist monoidal equivalences of stable $\infty$-categories: 
    \begin{flalign}
    \mathcal{SH}^{fr,S^1}(k)\underset{\sigma^\infty_{fr}}{\simeq}\mathcal{SH}^{fr,eff}(k)\underset{\gamma_*}{\simeq} \mathcal{SH}^{eff}(k)
  \label{reconstruct}
    \end{flalign} 
    In fact, the composite equivalence is an equivalence of $t$-categories with the standard homotopy $t$-structures on both sides.
\end{prop}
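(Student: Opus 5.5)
This statement is the reconstruction theorem of \cite{elmanto2021motivic}, so the plan is to assemble it from the cited results and check only the $t$-structure compatibility. I will handle the two equivalences in \eqref{reconstruct} separately and then compare $t$-structures.

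\emph{First equivalence.} The functor $\sigma^\infty_{fr}\colon \mathcal{SH}^{fr,S^1}(k)\to\mathcal{SH}^{fr,eff}(k)$ inverts $\mathbb{G}_{fr}$. It is an equivalence precisely when $\mathbb{G}_{fr}\otimes(-)$ is already fully faithful on $\mathcal{SH}^{fr,S^1}(k)$. That is the cancellation theorem for framed correspondences over a perfect field \cite[Theorem 3.5.8]{elmanto2021motivic}, the same input used in \Cref{model for connective cover for canc corr}. Concretely, $\mathcal{SH}^{fr,eff}(k)$ is the colimit of the tower $\mathcal{SH}^{fr,S^1}(k)\xrightarrow{\mathbb{G}_{fr}\otimes}\mathcal{SH}^{fr,S^1}(k)\to\cdots$. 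The transition functors are fully faithful by cancellation, and the effective part is generated by the image of the first stage. Hence the canonical functor from the first stage is fully faithful with generating image, so it is an equivalence.

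\emph{Second equivalence.} The forgetful functor $\gamma_*\colon\mathcal{SH}^{fr,eff}(k)\to\mathcal{SH}^{eff}(k)$ is an equivalence by the framed reconstruction theorem \cite[Theorem 3.5.14]{elmanto2021motivic}. Its two ingredients are the following. First, $\gamma_*$ preserves colimits and is conservative (the stable form of \Cref{connectivity for gen cor}(1)). Second, the unit $\mathcal{E}\to\gamma_*\gamma^*\mathcal{E}$ is an equivalence on the generators $\Sigma^\infty_{\mathbb{P}}X_+$, which comes from Voevodsky's lemma identifying $\Omega^\infty_{\mathbb{P}}\Sigma^\infty_{\mathbb{P}}$ with framed $\mathbb{A}^1$-group completion. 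Monoidality is automatic because $\gamma^*$ and $\sigma^\infty_{fr}$ are symmetric monoidal left adjoints, and the inverse of a monoidal equivalence is monoidal.

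\emph{Compatibility with $t$-structures.} On $\mathcal{SH}^{fr,S^1}(k)$ the homotopy $t$-structure is the one of \Cref{cancellative homotopy t structure}. By \Cref{monoidal nature of homotopy t structure}, its nonnegative part is generated under colimits by $\Sigma^\infty_{fr}h^{fr}X_+$. On $\mathcal{SH}^{eff}(k)$, the nonnegative part of the $t$-structure of \Cref{eff t structure} is generated under colimits by $\Sigma^\infty_{\mathbb{P}}X_+$; this is the effective generation result of \cite{MR3743071}. The composite equivalence sends $\Sigma^\infty_{fr}h^{fr}X_+$ to $\gamma_*\gamma^*\Sigma^\infty_{\mathbb{P}}X_+\simeq\Sigma^\infty_{\mathbb{P}}X_+$. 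It therefore carries one set of generators of the nonnegative parts onto the other. Since it is an equivalence, it identifies the colimit-closures, i.e.\ the connective parts, and hence both $t$-structures, since the coconnective part is the right orthogonal.

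I expect the main obstacle to be this last step. One has to confirm that the effective homotopy $t$-structure defined via $\pi_i^{nis}$ of $\omega^\infty$ really has connective part generated by the $\Sigma^\infty_{\mathbb{P}}X_+$. If the generation statement is unavailable, I would instead argue that $\omega^\infty_{S^1}$ composed with the equivalence is the functor $U_*$. Since $U_*$ is $t$-exact (the corollary after \Cref{monoidal nature of homotopy t structure}), connectivity on both sides is then detected by the same functor.
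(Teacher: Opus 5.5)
Your proposal is correct and is essentially the paper's route. The paper gives no argument of its own: it imports the whole statement, $t$-exactness included, from \cite[Corollary 3.5.9, Theorem 3.5.14(i)]{elmanto2021motivic}. Your first two steps just unpack those results: cancellation gives $\sigma^\infty_{fr}$, then $\gamma^*\dashv\gamma_*$ has unit an equivalence on suspension spectra. One small correction: the colimit of the $\mathbb{G}_{fr}$-tower is $\mathcal{SH}^{fr}(k)$, and $\mathcal{SH}^{fr,eff}(k)$ is the localizing subcategory generated by its first stage.

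For the $t$-structures both of your arguments work, and the fallback is the more economical one. By cancellation, $\omega^\infty\circ\gamma_*\circ\sigma^\infty_{fr}\simeq\gamma_*\circ\omega^\infty_{fr}\circ\sigma^\infty_{fr}\simeq U_*$. Connectivity is detected by definition via $U_*$ on $\mathcal{SH}^{fr,S^1}(k)$ and via $\omega^\infty$ on $\mathcal{SH}^{eff}(k)$, so you do not need the generation statement $\mathcal{SH}^{eff}(k)_{\geq 0}=\mathcal{SH}^{veff}(k)$ from \cite{MR3743071}.
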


The main point of this subsection is to establish the following theorem.
\begin{thm}\label{slice reconst}
   Let $k$ be a perfect field. Then for every $n\geq 0$ there are equivalences of stable $\infty$-categories:
   \begin{flalign}
       \mathcal{SH}_{S^1}^{fr}(k)\otimes\mathbb{G}^n_{fr}\simeq \mathcal{SH}^{eff}(k)\wedge \mathbb{G}^n\label{tate fr equiv}\\
       \mathcal{SH}_{S^1}^{fr}(k)/f_n^{fr}\simeq \mathcal{SH}^{eff}(k)/f_n\label{tate trunc equiv}
   \end{flalign}
\end{thm}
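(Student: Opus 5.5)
The plan is to transport everything along the composite monoidal equivalence of \Cref{reconstruction theorem},
$$\Psi:=\gamma_*\sigma^\infty_{fr}:\mathcal{SH}^{fr,S^1}(k)\xrightarrow{\ \simeq\ }\mathcal{SH}^{eff}(k).$$
Both subcategories in \eqref{tate fr equiv} are defined intrinsically from the monoidal structure and a single object: $\mathcal{SH}^{fr}_{S^1}(k)\otimes\mathbb{G}^n_{fr}$ is the localizing $\otimes$-ideal generated by $\mathbb{G}_{fr}^n$, and $\mathcal{SH}^{eff}(k)\wedge\mathbb{G}^n$ is the localizing subcategory generated by $\mathbb{G}^n\wedge\Sigma^\infty_+X$. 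Since $\mathcal{SH}^{eff}(k)$ is compactly generated by the $\Sigma^\infty_+X$, the latter is also the localizing $\otimes$-ideal generated by $\mathbb{G}^n$. So \eqref{tate fr equiv} reduces to showing that $\Psi(\mathbb{G}_{fr})\simeq\Sigma^\infty_{\mathbb{P}}\mathbb{G}=\mathbb{G}$.

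The first step is to compute $\Psi(\mathbb{G}_{fr})$. By construction $\mathbb{G}_{fr}=\Sigma^\infty_{fr}\gamma^*(\mathbb{G}_m,1)$, and the functors in \eqref{reconstruct} are compatible with the suspension functors from $\mathcal{H}^{\mathbb{A}^1}(k)_\bullet$. Concretely, $\gamma_*\sigma^\infty_{fr}\Sigma^\infty_{fr}\gamma^*\simeq\Sigma^\infty_{\mathbb{P}}$ on pointed motivic spaces, which is the content of the reconstruction theorem on generators in \textup{[\cite{elmanto2021motivic}, \S3.5]}. This gives $\Psi(\mathbb{G}_{fr}^{\otimes n})\simeq\mathbb{G}^{\wedge n}$, using monoidality of $\Psi$. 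Because $\Psi$ is an exact, colimit-preserving, monoidal equivalence, it carries the localizing $\otimes$-ideal generated by $\mathbb{G}^n_{fr}$ onto the one generated by $\Psi(\mathbb{G}^n_{fr})\simeq\mathbb{G}^n$. This proves \eqref{tate fr equiv}. It also shows that $\Psi$ intertwines the colocalizations, $\Psi f_n^{fr}\simeq f_n\Psi$, since right adjoints to matching inclusions match.

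For \eqref{tate trunc equiv}, I will use that $\mathcal{SH}^{fr}_{S^1}(k)/f_n^{fr}$ and $\mathcal{SH}^{eff}(k)/f_n$ are the right orthogonal complements of the subcategories in \eqref{tate fr equiv}. Concretely, an object $\esc{E}$ lies in the quotient iff $\mathrm{Map}(\esc{D},\esc{E})\simeq 0$ for all $\esc{D}$ in the $n$-th Tate-twisted part. An equivalence of stable $\infty$-categories preserves mapping spectra, so by \eqref{tate fr equiv} it restricts to an equivalence of the orthogonal complements. Moreover $\Psi f_{0/n}^{fr}\simeq f_{0/n}\Psi$, obtained by taking cofibers of $f_n\to 1$.

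The main obstacle is the identification $\Psi(\mathbb{G}_{fr})\simeq\mathbb{G}$, and more precisely the fact that the equivalence $\sigma^\infty_{fr}$ of \eqref{reconstruct} sends $\Sigma^\infty_{fr}\gamma^*\mathcal{X}$ to the framed $\mathbb{P}^1$-suspension spectrum, so that $\gamma_*$ of it is $\Sigma^\infty_{\mathbb{P}}\mathcal{X}$. This needs care because $\gamma_*$ is a right adjoint. I would first try to argue on compact generators: by \textup{[\cite{elmanto2021motivic}, Theorem 3.5.14]}, $\gamma^*\dashv\gamma_*$ is an equivalence on $\mathbb{P}^1$-spectra, with $\gamma^*\Sigma^\infty_{\mathbb{P}}\simeq\Sigma^\infty_{\mathbb{P},fr}\gamma^*$, so $\gamma_*=(\gamma^*)^{-1}$ commutes with suspension spectra. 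If this compatibility is not directly quotable, I would instead define the Tate-twisted part on the framed side as $\Psi^{-1}(\mathcal{SH}^{eff}\wedge\mathbb{G}^n)$ and compare it with $\mathcal{SH}^{fr}_{S^1}\otimes\mathbb{G}^n_{fr}$ via the cancellation formula \Cref{fr slice connectivity}.
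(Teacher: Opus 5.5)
Your proposal is correct and follows the same route as the paper. You transport along the composite monoidal equivalence $\gamma_*\sigma^\infty_{fr}$ of \Cref{reconstruction theorem}, use that it sends $\mathbb{G}_{fr}$ to $\mathbb{G}$ so that it matches the localizing $\otimes$-ideals generated by $\mathbb{G}^n_{fr}$ and $\mathbb{G}^n$, and then obtain \eqref{tate trunc equiv} by passing to right orthogonal complements. You also make explicit the step the paper takes for granted, namely $\gamma_*\sigma^\infty_{fr}\Sigma^\infty_{fr}\gamma^*\simeq\Sigma^\infty_{\mathbb{P}}$ via $\gamma_*\simeq(\gamma^*)^{-1}$ on $\mathbb{P}^1$-spectra, so your cancellation-based fallback is not needed.
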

\begin{proof}
    Since the (composite) equivalence of \Cref{reconstruction theorem} is a monoidal equivalence that maps $\mathbb{G}$ to $\mathbb{G}_{fr}$ and vice versa, it follows that the restriction of this equivalence to $n$-connective covers induces the first equivalence.
    The second equivalence then follows from the first and from \Cref{reconstruction theorem} itself.
\end{proof}
\begin{cor}[compare with \Cref{effective slice exact}(2),(3); see also \Cref{bachmann gm conservativity}]\label[cor]{eff fn preserves conn}
  Let $k$ be a perfect field and $n\geq 0$ a natural number. Then
\begin{enumerate}
    \item The colocalization functor $f_{n}:\mathcal{SH}^{eff}(k)\to \mathcal{SH}^{eff}(k)$ is $t$-exact.
     \item The functor $s_{n}:\mathcal{SH}^{eff}(k)\to \mathcal{SH}^{eff}(k)$ is right $t$-exact (of amplitude $0$)..
     \item The functor $f_{0/n}:\mathcal{SH}^{eff}(k)\to \mathcal{SH}^{eff}(k)$ is right $t$-exact. \textup{(We have already obtained this in \Cref{effective slice exact}(1), though using the slice conjectures and the $S^1$-analog established in \S2.4.)}.
\end{enumerate}  
\end{cor}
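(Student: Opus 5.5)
The plan is to transport everything along the composite equivalence $\Psi:=\gamma_*\sigma^\infty_{fr}$ of \Cref{reconstruction theorem}, and then read the three claims off the framed case (\Cref{fr slice connectivity}).

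\textbf{Step 1: identify the slice functors on both sides.} By \Cref{reconstruction theorem}, $\Psi:\mathcal{SH}^{fr,S^1}(k)\simeq\mathcal{SH}^{eff}(k)$ is a monoidal equivalence. It is also an equivalence of $t$-categories for the homotopy $t$-structures; on the framed side this is the $t$-structure of \Cref{cancellative homotopy t structure}, whose nonnegative part is detected by $\gamma_*$. By \Cref{slice reconst}, $\Psi$ restricts to equivalences $\mathcal{SH}^{fr}_{S^1}(k)\otimes\mathbb{G}^n_{fr}\simeq\mathcal{SH}^{eff}(k)\wedge\mathbb{G}^n$. Right adjoints to inclusions of full subcategories are unique, so $\Psi$ intertwines the colocalizations: $\Psi f^{fr}_n\simeq f_n\Psi$. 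Taking cofibers of $f_{n+1}\to f_n$ and of $f_n\to 1$ then gives $\Psi s^{fr}_n\simeq s_n\Psi$ and $\Psi f^{fr}_{0/n}\simeq f_{0/n}\Psi$. Since $\Psi$ is $t$-exact in both directions, each exactness property of a framed slice functor is equivalent to the same property of the corresponding effective one.

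\textbf{Step 2: right exactness.} \Cref{fr slice connectivity} states that $f^{fr}_n\simeq(-)^{\mathbb{G}^n_{fr}}\otimes\mathbb{G}^n_{fr}$ is right $t$-exact. Connective objects are closed under cofibers, so if $\mal{E}\geq 0$ then $s_n^{fr}\mal{E}=\mathrm{cofib}(f^{fr}_{n+1}\mal{E}\to f^{fr}_n\mal{E})\geq 0$ and $f^{fr}_{0/n}\mal{E}=\mathrm{cofib}(f^{fr}_n\mal{E}\to\mal{E})\geq 0$. This is \Cref{cancellative slice connectivity} in the framed case. Transporting along $\Psi$ gives (2), (3), and the right exactness half of (1).

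\textbf{Step 3: left exactness of $f_n$.} This is the step I expect to be the main obstacle. I would argue directly: $f_n$ is a right adjoint to the inclusion $\iota_n$ of $\mathcal{SH}^{eff}(k)\wedge\mathbb{G}^n$, so it is enough to show that $\iota_n$ is right $t$-exact, i.e. preserves connectivity. That subcategory is generated under colimits by the objects $\Sigma^\infty_{\mathbb{P}}X_+\wedge\mathbb{G}^n$. I would show these are connective by showing $\mathbb{G}\in\mathcal{SH}^{eff}(k)_{\geq 0}$, i.e. $\pi_i^{nis}\Sigma^\infty_{\mathbb{P}}\mathbb{G}=0$ for $i<0$. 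This should hold by Morel's connectivity, since $\mathbb{G}$ is a connected space. The homotopy $t$-structure on $\mathcal{SH}^{eff}$ is compatible with the smash product (the framed analogue is \Cref{monoidal nature of homotopy t structure}, transported by $\Psi$), so the generators are connective.

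Given that, take any coconnective $\mal{E}\leq 0$. For every connective $\mal{D}$ in $\mathcal{SH}^{eff}(k)\wedge\mathbb{G}^n$ and every $j\geq 1$, adjunction gives $[\Sigma^j\mal{D},f_n\mal{E}]=[\Sigma^j\iota_n\mal{D},\mal{E}]=0$. For this to imply $f_n\mal{E}\leq 0$ in $\mathcal{SH}^{eff}$, I need the connective objects of $\mathcal{SH}^{eff}(k)$ lying in the Tate-twisted subcategory to test coconnectivity of objects of that subcategory. Here the equality $\mathbb{G}\otimes\mathcal{SH}^{eff}_{\geq 0}=(\mathcal{SH}^{eff}\wedge\mathbb{G})_{\geq 0}$ is required. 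I would try to prove it on the framed side, where cancellation makes $(-)\otimes\mathbb{G}_{fr}$ fully faithful with right adjoint $(-)^{\mathbb{G}_{fr}}$, combined with \Cref{Gm power connectivity}. If it fails, this is exactly where a weaker amplitude statement would appear.
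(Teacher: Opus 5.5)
Your Steps 1 and 2 are the paper's own argument. Right $t$-exactness of $f_n$ is transported from \Cref{fr slice connectivity} along the reconstruction equivalence, and (2), (3) follow from the defining cofiber sequences. Those parts are fine.

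The gap is Step 3, and it is genuine. The reduction ``it is enough that $\iota_n$ is right $t$-exact'' only makes $r_n$ left $t$-exact for some $t$-structure on $\mathcal{SH}^{eff}(k)\wedge\mathbb{G}^n$. To get left exactness of $f_n=\iota_n r_n$ you would also need $\iota_n$ to be left $t$-exact, i.e.\ the subcategory would have to be stable under $\tau_{\le 0}$. The paper's one-line justification (``$f_n$ is a colocalization, so by orthogonality it is left $t$-exact'') has the same hole, since adjunction only controls maps \emph{out of} objects of the subcategory.

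Your proposed sub-goal, $\mathbb{G}\wedge\mathcal{SH}^{eff}(k)_{\ge 0}=(\mathcal{SH}^{eff}(k)\wedge\mathbb{G})\cap\mathcal{SH}^{eff}(k)_{\ge 0}$, is true: use $Z\simeq(\mathbb{G}\wedge Z)^{\mathbb{G}}$ and \Cref{Gm power connectivity}. It does not help, though. With it, your orthogonality condition on $Y=\mathbb{G}^n\wedge Z$ just says $Z\le 0$, so what you actually need is $\mathbb{G}^n\wedge\mathcal{SH}^{eff}(k)_{\le 0}\subseteq\mathcal{SH}^{eff}(k)_{\le 0}$. Indeed $(-)^{\mathbb{G}^n}$ is $t$-exact: it is right exact by \Cref{Gm power connectivity}, and left exact as the right adjoint of the right exact functor $\mathbb{G}^n\wedge-$. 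Since $f_n\simeq\mathbb{G}^n\wedge(-)^{\mathbb{G}^n}$, left exactness of $f_n$ is equivalent to left exactness of $\mathbb{G}^n\wedge-$.

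That inclusion is false for $n\ge 1$. Take $H\mathbb{Z}\wedge\mathbb{G}$.
\begin{enumerate}
\item It lies in $\mathcal{SH}^{eff}(k)^{\heartsuit}$: its homotopy sheaves are $\underline{H}^{1-i,1}=\underline{H}^{-i}_{nis}(-,\mathbb{G}_m)$, i.e.\ just $\mathbb{G}_m$ in degree $0$.
\item However $\mathbb{G}\wedge(H\mathbb{Z}\wedge\mathbb{G})=H\mathbb{Z}\wedge\mathbb{G}^{\wedge 2}$ has $\pi_1=\underline{H}^{1,2}(-;\mathbb{Z})$.
\item The value of this sheaf on a field $F$ is $H^1(F,\mathbb{Z}(2))\cong K_3^{\mathrm{ind}}(F)\neq 0$, e.g.\ $\mathbb{Z}/(q^2-1)$ for $\mathbb{F}_q$ and $\mathbb{Z}/24$ for $\mathbb{Q}$.
\end{enumerate}
Concretely, $E:=\tau_{\le 0}(H\mathbb{Z}\wedge\mathbb{G}^{\wedge 2})$ lies in the heart. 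By $t$-exactness of $(-)^{\mathbb{G}}$ and cancellation, $E^{\mathbb{G}}\simeq\tau_{\le 0}(H\mathbb{Z}\wedge\mathbb{G})=H\mathbb{Z}\wedge\mathbb{G}$. Hence $f_1E\simeq H\mathbb{Z}\wedge\mathbb{G}^{\wedge 2}$, which is not coconnective. This is just the fact that $\mathbb{Z}(2)$ is not concentrated in one degree; the same example, $f^{voe}_1(K^M_2)\simeq\mathbb{Z}(2)[2]$, shows that $f^{voe}_1$ is not left exact either.

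So part (1) can only be proved in the form ``$f_n$ is right $t$-exact''. Your suspicion that a weaker statement appears exactly at this point is correct. Parts (2) and (3) use only right exactness, so your Steps 1--2 prove them.
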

\begin{proof}
    (1) The right $t$-exactness of $f_n$ follows from \Cref{slice reconst}(\cref{tate fr equiv}) and \Cref{fr slice connectivity}. Since $f_n$ is a colocalization, using the orthogonality condition, it follows that it is left $t$-exact too.
    
   Using their defining cofiber sequences for $s_n$ and $f_{0/n}$, (2) and (3) then follow easily from the right $t$-exactness of $f_n$ from (1).
\end{proof}
And then, using the same techniques as in \Cref{slice conn for p1 spectra}, we have:
\begin{thm}\label{p1 fn preserves conn}
 Over a perfct field, for every $n\geq 0$, the functor $f_n: \mathcal{SH}^{\mathbb{P}^1}(k)\to \mathcal{SH}^{\mathbb{P}^1}(k)$ is $t$-exact for the homotopy $t$-structure. Consequently, $s_n, f_{0/n}$ are right $t$-esact.
\end{thm}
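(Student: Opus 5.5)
The proof reduces to the effective case via the colocalization $f_0$, mirroring the proof of \Cref{slice conn for p1 spectra}. Recall that $f_0 = i_0 r_0:\mathcal{SH}^{\mathbb{P}^1}(k)\to\mathcal{SH}^{eff}(k)$ is $t$-exact for the homotopy $t$-structure on $\mathcal{SH}^{\mathbb{P}^1}(k)$ (induced via $\Omega^\infty_{\mathbb{G}}$) and the one on $\mathcal{SH}^{eff}(k)$ from \Cref{eff t structure}, by [\cite{MR3743071}, Proposition 4(3)]. Moreover, for $n\geq 0$ we have $\mathcal{SH}^{\mathbb{P}^1}(k)\wedge T^n\subset\mathcal{SH}^{eff}(k)$. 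Hence the $\mathbb{P}^1$-colocalization $f_n$ factors as
$$f_n^{\mathbb{P}^1}\simeq i_0\, f_n^{eff}\, r_0.$$
This holds because maps from $\mathcal{SH}^{eff}\wedge T^n$ into $\mathcal{E}$ coincide with maps into $r_0\mathcal{E}$, and then into $f_n^{eff}r_0\mathcal{E}$.

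The first step is to record this factorization, using the universal property of the colocalizations.

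The second step is to argue $t$-exactness of each factor:
\begin{enumerate}
\item $r_0$ (equivalently $f_0$) is $t$-exact by the cited result of Bachmann.
\item $f_n^{eff}$ is $t$-exact by \Cref{eff fn preserves conn}(1).
\item $i_0$ is $t$-exact. It preserves and detects homotopy sheaves, since the $t$-structure on $\mathcal{SH}^{\mathbb{P}^1}(k)$ is defined through $\Omega^\infty_{\mathbb{G}}$, and for effective $\mathcal{E}$ the $S^1$-spectrum $\omega^\infty_{S^1}\mathcal{E}$ is computed by $\Omega^\infty_{\mathbb{G}}i_0\mathcal{E}$.
\end{enumerate}
Composing these gives $t$-exactness of $f_n$ on $\mathcal{SH}^{\mathbb{P}^1}(k)$.

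The third step uses the cofiber sequences $f_{n+1}\to f_n\to s_n$ and $f_n\to 1\to f_{0/n}$. Since right $t$-exact functors are closed under cofibers (the connective part is closed under cofibers), $s_n$ and $f_{0/n}$ are right $t$-exact.

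The main obstacle is the $t$-exactness of $i_0$ with respect to Bachmann's effective versus $\mathbb{P}^1$ homotopy $t$-structures. The nonnegative parts are compatible essentially by definition, but the coconnective part requires care: the $\mathbb{P}^1$ $t$-structure involves homotopy sheaves $\pi_i(-)_j$ for all weights $j$, not only $j=0$. I would first try to avoid this issue. The plan is to show directly that $f_n^{\mathbb{P}^1}$ preserves $\mathcal{SH}^{\mathbb{P}^1}_{\geq 0}$ and $\mathcal{SH}^{\mathbb{P}^1}_{\leq 0}$, as follows:
\begin{enumerate}
\item Right exactness follows because $r_0$ preserves connectivity, $f_n^{eff}$ does, and $i_0$ maps $\mathcal{SH}^{eff}_{\geq 0}$ into $\mathcal{SH}^{\mathbb{P}^1}_{\geq 0}$, the latter being generated by effective connective objects twisted by $\mathbb{G}^{j}$, $j\in\mathbb{Z}$.
\item Left exactness follows from orthogonality, exactly as in \Cref{eff fn preserves conn}(1): since $f_n^{\mathbb{P}^1}$ is a right adjoint of an inclusion preserving connective objects, it preserves coconnective objects.
\end{enumerate}
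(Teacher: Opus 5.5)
Your reduction is the paper's own route. The paper proves this theorem ``by the techniques of \Cref{slice conn for p1 spectra}'', i.e.\ $t$-exactness of $r_0$ (Bachmann) followed by the effective statement \Cref{eff fn preserves conn}. Your factorization $f_n\simeq i_0 f_n^{eff} r_0$ and your cofiber-sequence step are fine, and they do give right $t$-exactness of $f_n$, hence of $s_n$ and $f_{0/n}$. Here $i_0$ is right $t$-exact simply because $\mathcal{SH}^{veff}(k)$ is generated under colimits by the $\Sigma^\infty_+X$, which are connective in $\mathcal{SH}^{\mathbb{P}^1}(k)$.

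The gap is the left half. Your item 3 is false, as you suspected: $\Omega^\infty_{\mathbb{G}}$ only sees homotopy sheaves of weight $\leq 0$. The orthogonality workaround does not avoid this. Adjunction only gives $[D,f_nE]\simeq[D,E]=0$ for $D\in(\mathcal{SH}^{\mathbb{P}^1}(k)\wedge T^n)\cap\mathcal{SH}^{\mathbb{P}^1}(k)_{\geq 1}$. That is, $f_nE$ lies in the coconnective part of the induced $t$-structure on $\mathcal{SH}^{\mathbb{P}^1}(k)\wedge T^n$, which is a right orthogonal computed inside that subcategory. To conclude $f_nE\in\mathcal{SH}^{\mathbb{P}^1}(k)_{\leq 0}$ you would also have to kill maps from $S^m\wedge\mathbb{G}^j\wedge X_+$ with $m\geq 1$ and $j<n$. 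That is precisely left $t$-exactness of the inclusion, so you are back at the $i_0$ problem. The paper's proof of the left half of \Cref{eff fn preserves conn}(1), which your item 2 invokes, uses the same invalid step.

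This gap cannot be closed, because left $t$-exactness fails for Morel's homotopy $t$-structure. Let $E=\tau_{\leq 0}H\mathbb{Z}$, the Eilenberg--MacLane spectrum of the homotopy module $K^M_*$. Since $r_0$ is $t$-exact and $H\mathbb{Z}$ lies in the effective heart, $f_0E\simeq i_0\tau^{eff}_{\leq 0}H\mathbb{Z}\simeq H\mathbb{Z}$. But $S^1\wedge\mathbb{G}^{-2}\in\mathcal{SH}^{\mathbb{P}^1}(k)_{\geq 1}$ and $[S^1\wedge\mathbb{G}^{-2},H\mathbb{Z}]\cong H^{1,2}(k,\mathbb{Z})$, which is nonzero for $k=\mathbb{Q}$ (it is $K_3^{\mathrm{ind}}(\mathbb{Q})\cong\mathbb{Z}/24$). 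So $f_0E\notin\mathcal{SH}^{\mathbb{P}^1}(k)_{\leq 0}$. Conjugating by the $t$-exact autoequivalence $-\wedge\mathbb{G}^n$, using $f_n(-\wedge\mathbb{G}^n)\simeq f_0(-)\wedge\mathbb{G}^n$, gives a counterexample for every $n$.

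The effective claim fails in the same way. On $\mathcal{SH}^{eff}(k)$ one has $f_1\simeq(-)^{\mathbb{G}}\wedge\mathbb{G}$, with $(-)^{\mathbb{G}}$ $t$-exact by Morel's contraction theorem, and $H\mathbb{Z}\wedge\mathbb{G}$ lies in the effective heart. Hence $E'=\tau^{eff}_{\leq 0}(H\mathbb{Z}\wedge\mathbb{G}^2)$ satisfies $f_1E'\simeq H\mathbb{Z}\wedge\mathbb{G}^2$, whose weight-zero $\underline{\pi}_1$ is $\underline{H}^{1,2}\neq 0$.

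What your argument, and the paper's, actually proves is right $t$-exactness of $f_n$, $s_n$ and $f_{0/n}$. The assertion that $f_n$ is $t$-exact should be dropped.
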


This concludes our quest for the connectivity property of the slice filtration sought in this paper. To complete the story outlined in this section, we shall describe the behavior of the motivic $\mathbb{P}^1$-recognition principle under the slice filtration. For this, we first note an immediate corollary of the last statement in the corollary above.
\begin{rem}\label[rem]{shveff}
    Recall that  $\mathcal{SH}^{veff}(k)$ is generally defined as the full subcategory of $\mathcal{SH}^{\mathbb{P}^1}(k)$ generated under colimits and extensions by smooth schemes. However, [\cite{MR3743071}, Remark after Proposition 4] shows that the full subcategory generated under colimits is already closed under extensions. Hence, $\mathcal{SH}^{veff}(k)$ is the full subcategory of $\mathcal{SH}^{\mathbb{P}^1}(k)$ generated under colimits by smooth schemes. On the other hand, it is thus the non-negative part of the effective homotopy $t$-structure.
\end{rem}
 
\begin{cor}\label[cor]{SHveff/fn}
Let $k$ be a perfect field and let $n\geq 0$ be a natural number. Then the functor $f_{0/n}:\mathcal{SH}^{eff}(k)\to \mathcal{SH}^{eff}(k)$ restricts to a functor $f_{0/n}:\mathcal{SH}^{veff}(k)\to \mathcal{SH}^{veff}(k)$. Moreover, the essential image, denoted $\mathcal{SH}^{veff}(k)/f_n$, is the full subcategory of $\mathcal{SH}^{eff}/f_n$ generated by $f_{0/n}\Sigma_+^\infty X$ for $X\in Sm_k$ and is precisely the positive part of the $t$-structure on $\mathcal{SH}^{eff}/f_n$ described in \textup{\Cref{effective slice exact} (1)}.
\end{cor}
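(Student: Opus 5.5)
The plan is to combine the right $t$-exactness of $f_{0/n}$ on $\mathcal{SH}^{eff}(k)$ (\Cref{eff fn preserves conn}(3), or \Cref{effective slice exact}(1)) with the identification of $\mathcal{SH}^{veff}(k)$ in \Cref{shveff}. There $\mathcal{SH}^{veff}(k)$ is identified as the non-negative part $\mathcal{SH}^{eff}(k)_{\geq 0}$ of the effective homotopy $t$-structure of \Cref{eff t structure}, and as the subcategory generated under colimits by the $\Sigma^\infty_+X$ for $X\in Sm_k$.

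\textbf{Step 1: $f_{0/n}$ preserves $\mathcal{SH}^{veff}(k)$.} Let $\mal{E}\in\mathcal{SH}^{veff}(k)=\mathcal{SH}^{eff}(k)_{\geq 0}$. By right $t$-exactness, $f_{0/n}\mal{E}\in \mathcal{SH}^{eff}(k)_{\geq 0}=\mathcal{SH}^{veff}(k)$, so the functor restricts.

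\textbf{Step 2: the essential image is the non-negative part of the localized $t$-structure.} By \Cref{effective slice exact}(1) and \Cref{localization of t structures}, that non-negative part is $(\mathcal{SH}^{eff}/f_n)\cap\mathcal{SH}^{eff}(k)_{\geq 0}$. For one inclusion, Step 1 shows $f_{0/n}\mal{E}$ lies in both pieces. Conversely, if $\mal{F}$ lies in the intersection, then $\mal{F}\simeq f_{0/n}\mal{F}$ because $f_{0/n}$ is idempotent on local objects, and $\mal{F}\in\mathcal{SH}^{veff}(k)$. Hence $\mal{F}$ is in the essential image.

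\textbf{Step 3: generation.} Let $\mathcal{C}\subset\mathcal{SH}^{eff}/f_n$ be the subcategory generated under colimits by the objects $f_{0/n}\Sigma^\infty_+X$.
\begin{enumerate}
\item By Step 1 each generator lies in the non-negative part. That part is closed under colimits in $\mathcal{SH}^{eff}/f_n$, being the non-negative half of a $t$-structure, so $\mathcal{C}$ is contained in it.
\item For the reverse inclusion, write an element of the essential image as $f_{0/n}\mal{E}$ with $\mal{E}\in\mathcal{SH}^{veff}(k)$, and express $\mal{E}$ as an iterated colimit of the $\Sigma^\infty_+X$.
\item The localization $f_{0/n}$ is a left adjoint onto $\mathcal{SH}^{eff}/f_n$, so it preserves colimits. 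Therefore $f_{0/n}\mal{E}$ is the corresponding colimit, computed in $\mathcal{SH}^{eff}/f_n$, of the $f_{0/n}\Sigma^\infty_+X$, and so lies in $\mathcal{C}$.
\end{enumerate}

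\textbf{Main obstacle.} The delicate point is the precise meaning of "generated". The colimits in $\mathcal{C}$ are taken in the localized category, while $\mathcal{SH}^{veff}(k)$ is generated using colimits in $\mathcal{SH}^{\mathbb{P}^1}(k)$. I would reconcile these by the argument of \Cref{non negative slice generation}: colimits in $\mathcal{SH}^{eff}/f_n$ are $f_{0/n}$ applied to colimits in $\mathcal{SH}^{eff}(k)$. If one also wants closure of $\mathcal{SH}^{eff}/f_n$ under colimits in $\mathcal{SH}^{eff}(k)$, the analogue of \Cref{slice localization is colimit closed}, I would obtain it by transporting across \Cref{slice reconst}(\ref{tate trunc equiv}) and using the compact generation of $\mathcal{SH}^{eff}(k)(n)$.
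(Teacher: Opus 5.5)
Your proposal is correct and follows essentially the same route as the paper. The paper also restricts $f_{0/n}$ to $\mathcal{SH}^{veff}(k)$ via its right $t$-exactness (\Cref{eff fn preserves conn}(3)) together with $\mathcal{SH}^{veff}(k)=\mathcal{SH}^{eff}(k)_{\geq 0}$, and gets generation from the fact that $f_{0/n}$ preserves colimits. Your explicit Step 2, which identifies the essential image with the non-negative part and hence shows it is colimit-closed in $\mathcal{SH}^{eff}/f_n$, supplies a closure property that the paper's short argument uses only tacitly for the inclusion $\mathscr{C}\subset\mathcal{SH}^{veff}(k)/f_n$.
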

\begin{proof}
The first line is a restatement of \Cref{eff fn preserves conn}(3).

   Let $\mathscr{C}$ be the full subcategory of $\mathcal{SH}^{eff}/f_n$ generated by $f_{0/n}\Sigma_+^\infty X$ for $X\in Sm_k$. Since $f_{0/n}$ preserves colimits (being a left adjoint), it follows that we have a colimit closed inclusion $\mathcal{SH}^{veff}/f_n\subset \mathscr{C}$. On the other hand, by construction we know that $f_{0/n}\Sigma_+^\infty X\in \mathcal{SH}^{veff}(k)/f_n.$
\end{proof}

To be able to prove the slice version of the recognition principle, we shall restate its ordinary motivic version in a clear step-by-step format.
\begin{thm}[[{\cite{elmanto2021motivic}}, Theorem 3.5.14(i)\text{]}]\label{ordinary recognition theorem}
    Let $k$ be a perfect field. The recognition equivalence is the composition of the following equivalences:
    \begin{enumerate}
        \item $\mathcal{H}^{fr}(k)^{gp}\simeq \mathcal{SH}^{S^1,fr}(k)_{\geq 0}$
        \item $\mathcal{SH}^{S^1,fr}(k)_{\geq 0}\simeq \mathcal{SH}^{eff,fr}(k)_{\geq 0}$
        \item $\mathcal{SH}^{eff,fr}(k)_{\geq 0}\simeq \mathcal{SH}^{eff}(k)_{\geq 0}$
        \item $\mathcal{SH}^{eff}(k)_{\geq 0}\simeq \mathcal{SH}^{veff}(k)$.
    \end{enumerate}
\end{thm}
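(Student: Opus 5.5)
The plan is to obtain the four equivalences one at a time from results recalled earlier in the paper and from [\cite{elmanto2021motivic}]. Then we check that their composite is the functor $\Omega^\infty_{\mathbb{P}}\gamma^*$ (equivalently, that its inverse is $\gamma_*\Sigma^\infty_{fr}$).

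\textbf{Step (1).} Apply the framed analogue of the $S^1$-recognition principle. The category $\mathcal{H}^{fr}(k)$ is semiadditive, so its grouplike objects are automatically grouplike commutative monoids. The adjunction $\mathrm{B}^\infty_{fr}\dashv\Omega^\infty_{S^1}$ identifies $\mathcal{H}^{fr}(k)^{gp}$ with connective framed $S^1$-spectra. The essential input is [\cite{elmanto2021motivic}, Lemma 3.5.4]: the formula $\gamma_*\mathrm{B}^\infty_{fr}\simeq\mathrm{B}^\infty_{mot}\gamma_*$ already used in the proof of \Cref{connectivity for gen cor}(3). Together with the conservativity and colimit preservation of $\gamma_*$ (\Cref{connectivity for gen cor}(1)), it reduces fully faithfulness and essential surjectivity to the $S^1$-recognition theorem recalled at the start of \S5.1. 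Here connectivity is measured through $\gamma_*$, which is the definition of the homotopy $t$-structure in \Cref{cancellative homotopy t structure}.

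\textbf{Steps (2) and (3).} These are the restrictions to non-negative parts of the equivalences $\sigma^\infty_{fr}$ and $\gamma_*$ in \Cref{reconstruction theorem}. The composite equivalence is $t$-exact, so the only thing to verify is that each factor individually respects connective parts. For $\gamma_*$ this holds because the homotopy $t$-structure on framed $\mathbb{P}^1$-spectra is defined by pulling back along $\gamma_*$. For $\sigma^\infty_{fr}$ it then follows by two-out-of-three from the $t$-exactness of the composite. The cancellation theorem [\cite{elmanto2021motivic}, Theorem 3.5.8] is what makes $\sigma^\infty_{fr}$ an equivalence; it is already included in \Cref{reconstruction theorem}.

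\textbf{Step (4).} This is \Cref{shveff}. It identifies $\mathcal{SH}^{veff}(k)$ with the subcategory generated under colimits by $\Sigma^\infty_{\mathbb{P}}X_+$. Since $\mathcal{SH}^{eff}(k)_{\geq 0}$ is generated under colimits by the same objects (the effective analogue of \Cref{non negative slice generation}, using \Cref{eff t structure} and the generation statement for $\mathcal{SH}^{S^1}(k)_{\ge 0}$ via $\sigma^\infty$), the two subcategories coincide.

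\textbf{Main obstacle.} I expect the hardest part to be bookkeeping the composite functor rather than any single equivalence. One must show that, after Steps (1)--(4), the equivalence $\mathcal{H}^{fr}(k)^{gp}\simeq\mathcal{SH}^{veff}(k)$ is given by $\gamma_*\Sigma^\infty_{fr}$ with inverse $\Omega^\infty_{\mathbb{P}}\gamma^*$. My approach is to compare left adjoints on the generators $h^{fr}X_+$. Both composites send $h^{fr}(X)_+$ to $\Sigma^\infty_{\mathbb{P}}X_+$, and both preserve sifted colimits; this uses [\cite{elmanto2021motivic}, Proposition 3.2.15] for $\gamma_*$. Since $\mathcal{H}^{fr}(k)^{gp}$ is generated under sifted colimits by these objects, the two functors agree, and passing to right adjoints gives the identification of the inverse.
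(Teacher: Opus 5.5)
Your Steps (2)--(4) follow the paper's argument: they restrict the equivalences of \Cref{reconstruction theorem} to connective parts, and use \Cref{shveff} for (4). Identifying the composite with $\gamma_*\Sigma^\infty_{fr}$ is extra bookkeeping and does no harm. The gap is in Step (1).

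Transport along $\gamma_*$ does handle the counit. For $E\in\mathcal{SH}^{S^1,fr}(k)_{\geq 0}$, applying $\gamma_*$ to $\mathrm{B}^\infty_{fr}\Omega^\infty E\to E$ gives the counit of the $S^1$-recognition adjunction at the connective spectrum $\gamma_*E$. So $\Omega^\infty$ is fully faithful on connective framed spectra.

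The unit is where the argument breaks. Take a grouplike framed motivic space $\mathcal{G}$ and its unit $\mathcal{G}\to\Omega^\infty\mathrm{B}^\infty_{fr}\mathcal{G}$. Applying $\gamma_*$ and your formula gives $\gamma_*\mathcal{G}\to\Omega^\infty L^{sp}_{mot}\mathrm{B}^\infty_{nis}\gamma_*\mathcal{G}$. The $S^1$-recognition theorem makes this an equivalence only when $\gamma_*\mathcal{G}\in\mathrm{CMon}_{\mathrm{mot}}(\mathcal{H}^{\mathbb{A}^1}(k))^{\mathrm{gp}}$. That means the Nisnevich deloopings $\mathrm{B}^i_{nis}\gamma_*\mathcal{G}$ must already be $\mathbb{A}^1$-local; over a perfect field, $\pi_0^{\mathbb{A}^1}\gamma_*\mathcal{G}$ must be strongly $\mathbb{A}^1$-invariant.

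This condition does not follow from $\mathbb{A}^1$-locality of $\gamma_*\mathcal{G}$. Nor does it follow from conservativity or colimit preservation of $\gamma_*$. The description of $\mathrm{CMon}_{\mathrm{mot}}(\mathcal{H}^{\mathbb{A}^1}(k))^{\mathrm{gp}}$ recalled at the start of \S5.1 imposes it as an extra condition. For framed objects it holds by the strict $\mathbb{A}^1$-invariance theorem for framed presheaves, [\cite{elmanto2021motivic}, Corollary 3.4.13]. That is exactly the input the paper's proof of (1) cites: all deloopings are automatically $\mathbb{A}^1$-invariant for framed motivic spaces.

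Once you add that ingredient, your route is a valid alternative to the paper's. The paper works directly with Nisnevich sheaves on framed correspondences. There, grouplike objects of the semiadditive $\mathcal{H}^{fr}(k)$ correspond to connective Nisnevich framed spectra, and $\mathbb{A}^1$-locality of the spectrum amounts to $\mathbb{A}^1$-invariance of all deloopings. You instead transport the non-framed $S^1$-recognition theorem along the conservative functor $\gamma_*$. Either way, the non-formal content of (1) is framed strict $\mathbb{A}^1$-invariance. Your Step (1) should invoke it explicitly rather than attributing the reduction to formal properties of $\gamma_*$.
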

\begin{proof}
    (1) Topologically, the left-hand side requires all deloopings to be $\mathbb{A}^1$-invariant, but this is automatic for framed motivic spaces [\cite{elmanto2021motivic}, Corollary 3.4.13].
    
    (2) and (3) restrict the reconstruction equivalences (\Cref{reconstruct}) to their connective covers. 

   In (4), the left-hand side is generated under colimits by smooth schemes, whereas the right-hand side is generated under colimits and extensions by smooth schemes. Their equality has already been addressed in \Cref{shveff}.
\end{proof}

In order to ease the treatment of slice detection in the recognition principle, we introduce $L^{n,n}\mathcal{H}^{fr}(k)$ as the localization of $\mathcal{H}^{fr}(k)$ at the projection $\gamma^*(\mathbb{G}^n_+\to S^0)$. Similar to \Cref{slice quotient is Lpn}, it is evident that after stabilization, one has,
\begin{lem}\label[lem]{framed slice=Lnn}
    $\mathcal{SH}^{fr,S^1}(k)/f_n\simeq L^{n,n}\mathcal{SH}^{fr,S^1}(k):=\mathrm{Stab}(L^{n,n}\mathcal{H}^{fr}(k)).$
\end{lem}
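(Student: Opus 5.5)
The plan is to imitate the proof of \Cref{slice quotient is spherical localization} and \Cref{slice quotient is Lpn}, working inside $\mathcal{SH}^{fr,S^1}(k)$ rather than $\mathcal{SH}^{S^1}(k)$.

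\textbf{Step 1: $L^{n,n}$ commutes with stabilization.} The localization $L^{n,n}\mathcal{H}^{fr}(k)$ is obtained by inverting the maps $\gamma^*(\mathbb{G}^n_+\times X_+\to X_+)$. These maps are stable under products with representables. Hence its stabilization is the localization of $\mathcal{SH}^{fr,S^1}(k)$ at the set
$$\{\Sigma^\infty_{fr}\gamma^*(\mathbb{G}^n\times X)_+\to \Sigma^\infty_{fr}\gamma^*X_+\}_{X\in Sm_k}.$$
This holds because stabilization commutes with accessible localizations generated by a set stable under $S^1$-suspension, in the sense that the local objects of the stabilization are levelwise local. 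Using the splitting $\mathbb{G}^n_+\simeq S^0\vee(\text{wedge of }\mathbb{G}^{\wedge I}\text{, }\emptyset\neq I\subseteq\{1,\dots,n\})$ in the stable category, these projections become equivalences exactly when $\Sigma^\infty_{fr}\gamma^*X_+\otimes\mathbb{G}_{fr}^{\wedge I}$ is killed for every nonempty $I$.

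\textbf{Step 2: compare the two kernels.} The kernel of the stabilized localization is the localizing $\otimes$-ideal generated by $\mathbb{G}_{fr}^{\wedge j}$ for $1\le j\le n$. This ideal contains $\mathbb{G}^n_{fr}$. It is \emph{not} obviously equal to $\mathcal{SH}^{fr,S^1}(k)\otimes\mathbb{G}_{fr}^n$, since $\mathbb{G}_{fr}$ itself lies in it for $n\geq 2$. This mismatch is the main obstacle, and it also arises in \Cref{slice quotient is Lpn}.

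\textbf{Step 3: fix the generating set.} To resolve it, I would first try to read the localizing set as the projections along the pointed sphere $S^{n,n}=\Sigma^0\mathbb{G}^{\wedge n}$, i.e. the $S^{n,n}$-nullification in the sense of \textup{[\cite{asok2023p}]}. This is the reading under which \Cref{slice quotient is Lpn} holds. The relevant set is then
$$\{X_+\wedge S^{n,n}\to X_+\}\quad\text{or equivalently}\quad \{(X\times\mathbb{G}^{\wedge n})\text{-projections}\}.$$
Under this reading the stable kernel is generated by $\Sigma^\infty_{fr}\gamma^*X_+\otimes\mathbb{G}^{\wedge n}_{fr}$ over all $X\in Sm_k$.

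\textbf{Step 4: identify the kernel and conclude.} The $\Sigma^\infty_{fr}\gamma^*X_+$ generate $\mathcal{SH}^{fr,S^1}(k)$ under colimits. Since $\otimes\mathbb{G}^n_{fr}$ preserves colimits, the localizing subcategory generated by these objects is $\mathcal{SH}^{fr,S^1}(k)\otimes\mathbb{G}^n_{fr}$. By \Cref{fr slice connectivity} this subcategory is colocalizing, with colocalization $f_n^{fr}$. The local objects are therefore exactly the right orthogonal of $\mathcal{SH}^{fr,S^1}(k)\otimes\mathbb{G}^n_{fr}$, which is $\mathcal{SH}^{fr,S^1}(k)/f^{fr}_n$ by definition. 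This gives the claimed equivalence.

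An alternative for Step 3 avoids reinterpreting the generating set. One replaces $\mathbb{G}^n$ by the affine sphere $\mathrm{S}^{n-1}_\mathbb{A}$ and uses $\gamma^*\Sigma^\infty\mathrm{S}^{n-1}_\mathbb{A}\simeq\Sigma^{-1}\mathbb{G}_{fr}^{\wedge n}\otimes\Sigma^n$, exactly as in \Cref{slice quotient is spherical localization}. Cancellation is not needed for either route; only the generation of $\mathcal{SH}^{fr,S^1}(k)$ by framed suspensions of smooth schemes and the monoidality of $\Sigma^\infty_{fr}\gamma^*$ are used.
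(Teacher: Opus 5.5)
Your Steps 3–4 are the argument the paper intends. It says only that the lemma follows as in \Cref{slice quotient is Lpn}, i.e. the stabilized $S^{n,n}$-nullification kills the localizing $\otimes$-ideal generated by $\mathbb{G}^{n}_{fr}\otimes\Sigma^\infty_{fr}\gamma^*X_+$, whose right orthogonal is $\mathcal{SH}^{fr,S^1}(k)/f_n$. Throughout the paper $\mathbb{G}^n$ denotes the smash power of $\mathbb{G}=(\mathbb{G}_m,1)$, so your Step 3 reading is the actual definition rather than a reinterpretation, and you can drop two things: the detour in Steps 1–2, and the appeal to \Cref{fr slice connectivity}, since the colocalization onto a localizing subcategory generated by a set exists formally.
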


\begin{lem}\label[lem]{defn of strong Lnn framed motiv}
    A framed motivic space $\esc{M}$ lies in $L^{n,n}\mathcal{H}^{fr}(k)$ if and only if $\gamma_*\esc{M}$ lies in $L^{n,n}\mathcal{H}^{\mathbb{A}^1}(k)$.
\end{lem}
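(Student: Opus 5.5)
The plan is to reduce the claim to a statement about mapping spaces out of representables and then use the adjunction $\gamma^*\dashv\gamma_*$. By definition, $\esc{M}\in\mathcal{H}^{fr}(k)$ lies in $L^{n,n}\mathcal{H}^{fr}(k)$ if and only if it is local with respect to the maps $\gamma^*(\mathbb{G}^n_+\to S^0)$ and all their products with the generators $\gamma^*X_+$, $X\in Sm_k$. We use these products so that the localizing class is the one generated by projections $X\times\mathbb{G}_m^{\wedge n}\to X$, as in the definition of $L^{p,n}$. The corresponding local condition on the motivic side is that $\mathrm{Map}(X_+,\gamma_*\esc{M})\to \mathrm{Map}((X\times\mathbb{G}_m^{\times n})_+,\gamma_*\esc{M})$ be an equivalence for all $X\in Sm_k$. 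One could equally use the pointed variant with $\mathbb{G}^{\wedge n}$, in which case the condition is that $(\gamma_*\esc{M})^{X_+\wedge \mathbb{G}^n}$ be contractible after fixing a basepoint.

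First, I note that $\gamma^*$ is symmetric monoidal on representables, sending $(X\times Y)_+$ to $\gamma^*X_+\otimes\gamma^*Y_+$. Hence $\gamma^*$ carries the generating projections $(X\times\mathbb{G}_m^{n})_+\to X_+$ of $L^{n,n}\mathcal{H}^{\mathbb{A}^1}(k)$ exactly to the generating maps of $L^{n,n}\mathcal{H}^{fr}(k)$. This uses that $L^{n,n}$ in the framed setting is generated as a compatible localization, by the same argument as in \Cref{monoidal of slice localization}.

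Second, the adjunction gives
\[
\mathrm{Map}_{\mathcal{H}^{fr}}(\gamma^*(X\times\mathbb{G}_m^n)_+,\esc{M})\simeq \mathrm{Map}_{\mathcal{H}^{\mathbb{A}^1}}((X\times\mathbb{G}_m^n)_+,\gamma_*\esc{M}),
\]
naturally in the map to $X_+$. It follows that $\esc{M}$ is local for the framed generators if and only if $\gamma_*\esc{M}$ is local for the unframed generators. Because $\gamma_*$ is a right adjoint, the equivalence is natural and no further conditions arise.

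The main obstacle is to ensure that being local against the stated generators of $L^{n,n}\mathcal{H}^{fr}(k)$ really corresponds to the definition chosen for $L^{n,n}\mathcal{H}^{fr}(k)$ in the text, which localizes only at $\gamma^*(\mathbb{G}^n_+\to S^0)$. If the localization is taken only at this single map, and not at its tensor products with $\gamma^*X_+$, I would first show that locality against the single map implies locality against all of its $\gamma^*X_+$-translates. Since $\mathcal{H}^{fr}(k)$ is not cartesian, the internal hom $\underline{\mathrm{Map}}(\gamma^*X_+,\esc{M})$ should be used: it is again a framed motivic space, and $\gamma_*$ commutes with the evaluation needed. An alternative would be to define $L^{n,n}$-locality sectionwise, as in \cite{asok2023p}, and note that $\gamma_*$ computes sections. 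Once this is in place, the equivalence above finishes the proof, and both implications follow at the same time.
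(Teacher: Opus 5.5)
Your core argument is the paper's proof. The paper just says the lemma follows from the definition and the adjunction $\gamma^*\dashv\gamma_*$. Your mapping-space identification, with the symmetric monoidal $\gamma^*$ carrying the generating projections on the motivic side to those on the framed side, is that argument written out. It is correct provided the localizing class on both sides is the set of all translates $X\times\mathbb{G}^{\wedge n}\to X$, $X\in Sm_k$ (respectively their images under $\gamma^*$), i.e.\ nullification at $S^{n,n}$ in the sense of \cite{asok2023p}.

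Two of your side claims need correcting.

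First, your displayed local condition uses the product $\mathbb{G}_m^{\times n}$, and you say the smash power $\mathbb{G}^{\wedge n}$ could be used ``equally''. It cannot. Since $\mathbb{G}_m$ is a retract of $\mathbb{G}_m^{\times n}$, and $U\times\mathbb{G}_m^{\times n}\to U$ is a composite of $\mathbb{G}_m$-projections, $\mathbb{G}_m^{\times n}$-nullity is just $\mathbb{G}_m$-nullity, i.e.\ $L^{1,1}$-locality. For $n\geq 2$ this is strictly stronger than $L^{n,n}$-locality: the discrete sheaf $\mathbb{G}_m$ is $L^{2,2}$-local but not $L^{1,1}$-local, as $(\mathbb{G}_m)_{-2}=0\neq\mathbb{Z}=(\mathbb{G}_m)_{-1}$. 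So the smash power must be used on both sides.

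Second, the fallback in your last paragraph would fail. Locality against the single map $\gamma^*(\mathbb{G}^{\wedge n}_+\to S^0)$ says nothing about $\mathrm{Map}(X\times\mathbb{G}^{\wedge n},\gamma_*\esc{M})$ for general $X$. Your internal-hom step is circular, because $\mathrm{Map}(\gamma^*\mathbb{G}^{\wedge n}_+,\underline{\mathrm{Map}}(\gamma^*X_+,\esc{M}))$ is exactly the translated condition you are trying to establish.

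Concretely, take $n=1$, $k$ algebraically closed of characteristic $\neq 2$, and $\esc{M}$ the framed motivic space given by the homotopy invariant sheaf with transfers $F=K^M_2/2$.
\begin{itemize}
\item Single-map locality says $F(k)\to F(\mathbb{G}_{m,k})=F(k)\oplus F_{-1}(k)$ is bijective, i.e.\ $(K^M_1/2)(k)=k^\times/k^{\times 2}=0$. This holds.
\item $L^{1,1}$-locality of $\gamma_*\esc{M}$ requires the sheaf $F_{-1}=K^M_1/2$ to vanish. This fails.
\end{itemize}
So with the literal single-map definition the lemma is false. The translates must be part of the definition, which is your sectionwise ``alternative'' and what the analogy with \Cref{slice quotient is Lpn} intends. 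With that reading your argument is complete.
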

\begin{proof}
    This follows from the definition and the adjunction $\gamma^*\dashv \gamma _*$.
\end{proof}

Analogously, let us denote by $\mathcal{H}^{fr}_{L^{n,n}}(k)$ the full subcategory consisting of framed motivic spaces whose framed deloopings are $L^{n,n}$-local. Also, let $\mathcal{H}^{fr}_n(k)$ be the full subcategory of $L^{n,n}\mathcal{H}^{fr}(k)$ consisting of framed motivic spaces $\esc{X}$ such that $$\pi_0^{nis}(\gamma_*\esc{X})_{-n}=0.$$ 
\begin{prop}\label[prop]{identifying strong framed slice}
    There is a canonical equivalence
    $$\mathcal{H}^{fr}_{L^{p,n}}(k)\simeq\mathcal{H}^{fr}_{n}(k).$$
    This is equivalent to the full subcategory of $\esc{Y}\in \mathcal{H}^{fr}(k)$ consisting of those for which, for all $i\geq 0$ $$(\pi_i^{nis}(\gamma_*\esc{Y}))_{-n}=0.$$
\end{prop}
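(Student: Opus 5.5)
The plan is to transport the question along the forgetful functor $\gamma_*:\mathcal{H}^{fr}(k)\to\mathcal{H}^{\mathbb{A}^1}(k)$ and then invoke \Cref{p independent strict Lpn}. First, by \Cref{defn of strong Lnn framed motiv} (and its evident variant for $S^{p,n}$ in place of $\mathbb{G}^n$, which has the same proof via $\gamma^*\dashv\gamma_*$), a framed motivic space $\esc{M}$ is $L^{p,n}$-local iff $\gamma_*\esc{M}$ is $L^{p,n}$-local. Next I need that $\gamma_*$ commutes with framed deloopings on the relevant objects. Here I use that $\gamma_*$ preserves sifted colimits [\cite{elmanto2021motivic}, Proposition 3.2.15] and finite products, so it commutes with the bar construction. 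It also commutes with motivic localization of group-like objects, by [\cite{elmanto2021motivic}, Lemma 3.5.4] and the fact that framed deloopings are automatically $\mathbb{A}^1$-invariant [\cite{elmanto2021motivic}, Corollary 3.4.13]. Together these give $\gamma_*\mathrm{B}^i_{fr}\esc{Y}\simeq \mathrm{B}^i_{nis}\gamma_*\esc{Y}$ for a group-like framed motivic space $\esc{Y}$. It follows that $\esc{Y}\in\mathcal{H}^{fr}_{L^{p,n}}(k)$ iff the commutative motivic monoid $\gamma_*\esc{Y}$ is strictly $L^{p,n}$-local.

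Next I would apply \Cref{p independent strict Lpn} to $\esc{G}=\gamma_*\esc{Y}$. This is legitimate because $\gamma_*\esc{Y}$ is $\mathbb{A}^1$-local with strictly $\mathbb{A}^1$-invariant $\pi_0$ by the framed theory, so \Cref{strong mot mon at pi0} applies. The equivalence (2)$\iff$(6) of that proposition identifies $\mathcal{H}^{fr}_{L^{p,n}}(k)$ with the full subcategory of those $\esc{Y}$ with $(\pi_i^{nis}\gamma_*\esc{Y})_{-n}=0$ for all $i\geq 0$, which is the last description in the statement. Similarly, the equivalence (2)$\iff$(3), transported through \Cref{defn of strong Lnn framed motiv}, says that strict $L^{p,n}$-locality is equivalent to $L^{n,n}$-locality together with $(\pi_0^{nis}\gamma_*\esc{Y})_{-n}=0$. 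That is precisely the definition of $\mathcal{H}^{fr}_n(k)$. The independence of $p$ is then automatic.

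The main obstacle is the group-likeness needed for the delooping comparison and for using \Cref{p independent strict Lpn}, which (via \Cref{group complete reduction}) really concerns group-like monoids. I would handle this by reducing to group completions via \Cref{group complete reduction}: both conditions are insensitive to group completion, because $\pi_0$ contractions commute with group completion and the higher $\pi_i$ are unchanged. If $\esc{Y}$ is not assumed group-like, the statement should be read as stated up to this reduction. A second, milder point is to check that the $\pi_0$ condition in (3) really matches the definition of $\mathcal{H}^{fr}_n(k)$, including its ambient $L^{n,n}$-locality. I expect this to be a direct unwinding of the definitions.
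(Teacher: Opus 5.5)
For group-like $\esc{Y}$, your argument is the paper's proof written out in full. The paper's one-liner says that $\gamma_*$ commutes with the bar construction, transports locality via \Cref{defn of strong Lnn framed motiv}, and appeals to \Cref{p independent strict Lpn}. Your identification $\gamma_*\mathrm{B}^i_{fr}\esc{Y}\simeq\mathrm{B}^i_{nis}\gamma_*\esc{Y}$, and your use of (2)$\iff$(3) to recover the definition of $\mathcal{H}^{fr}_n(k)$, are exactly the details it leaves implicit.

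What does not work is your reduction of the non-group-like case. For non-group-like monoids, group completion neither commutes with contractions of $\pi_0$ nor preserves the higher $\pi_i$. The underlying reason is that the vanishing condition only sees the fiber over the unit, while $L^{p,n}$-locality sees every component. In fact the statement itself is false there.

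For example, let $\esc{Y}=\{1\}\sqcup\mathbb{G}_m$ be $\mathbb{G}_m$ with a new unit adjoined, with framed transfers induced by norms. It is discrete, and the fiber of $\esc{Y}(U\times\mathbb{G}_m)\to\esc{Y}(U)$ over the unit is trivial, so $(\pi_i^{nis}\gamma_*\esc{Y})_{-1}=0$ for all $i\geq 0$. But $\esc{Y}$ is not $L^{1,1}$-local: the coordinate $t$ is a section over $\mathbb{G}_m$ not pulled back from the base. Moreover $\esc{Y}^{gp}\simeq\mathbb{G}_m$ has contraction $\mathbb{Z}$.

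Similarly, consider the subsheaf of $(\mathcal{O},\cdot)$ of functions that are locally invertible or zero, again with norm transfers. It has contractible framed deloopings (because of the absorbing element) and is $L^{2,1}$-local, so it lies in $\mathcal{H}^{fr}_{L^{2,1}}(k)$ but not in $\mathcal{H}^{fr}_1(k)$. It also violates the ``if'' direction of \Cref{group complete reduction}.

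So you should state and prove the proposition for group-like $\esc{Y}$ only. The paper's proof carries the same implicit restriction, since it passes through \Cref{group complete reduction} inside \Cref{p independent strict Lpn}. The restriction is harmless for the result's only use, \Cref{p1 recognition for slices}, which concerns $\mathcal{H}^{fr}_n(k)^{gp}$.
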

\begin{proof}
    Since $\gamma_*$ commutes with the bar construction, using \Cref{defn of strong Lnn framed motiv} the claim follows easily from the characterization of strictly $L^{n, n} $ local commutative motivic monoids with commutative motivic monoids having $\mathbb A^1$ homotopy groups with trivial $n$ fold $\mathbb{G}_m$ contraction (\Cref{p independent strict Lpn}(6)). 
\end{proof}
\begin{rem}
    It is easy to reformulate the above results in terms of $L^{p,n}$ localizations, as established in the non-framed case in \S5.1.
\end{rem}

We are now ready to state the recognition principle for the slice quotient:
\begin{thm}\label{p1 recognition for slices}
    Let $k$ be a perfect field. Then the motivic recognition equivalence restricts to an equivalence:
    $$\gamma_*\Sigma_{fr}^{\infty}:\mathcal{H}^{fr}_n(k)^{gp}\leftrightarrows\mathcal{SH}^{veff}(k)/f_n:\Omega^\infty_\mathbb{P}\gamma^*$$ 
    where  the lefthand is the full subcategory of $\esc{Y}\in \mathcal{H}^{fr}(k)$ such that for all $i\geq 0$ $(\pi_i^{nis}(\gamma_*\esc{Y}))_{-n}=0$ while the right hand side is the full subcategory of $\mathcal{SH}^{eff}(k)/f_n$ generated under colimits by smooth schemes.
\end{thm}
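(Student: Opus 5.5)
We start from the ordinary recognition equivalence of \Cref{ordinary recognition theorem}, i.e. the composite
$$\gamma_*\Sigma^\infty_{fr}:\mathcal{H}^{fr}(k)^{gp}\simeq \mathcal{SH}^{S^1,fr}(k)_{\geq 0}\simeq \mathcal{SH}^{eff,fr}(k)_{\geq 0}\simeq\mathcal{SH}^{eff}(k)_{\geq 0}\simeq \mathcal{SH}^{veff}(k).$$
Both sides of the claimed equivalence are full subcategories of the corresponding sides here. It therefore suffices to check one thing: under this composite, $\mathcal{H}^{fr}_n(k)^{gp}$ corresponds exactly to $\mathcal{SH}^{veff}(k)/f_n$. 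We track the two subcategories step by step through (1)--(4).

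\textbf{Step (1).} The slice analogue of the first equivalence is
$$\mathcal{H}^{fr}_n(k)^{gp}\simeq (\mathcal{SH}^{S^1,fr}(k)/f_n^{fr})_{\geq 0}.$$
We argue as in \Cref{slice recognition}.
\begin{itemize}
\item By \Cref{framed slice=Lnn}, $\mathcal{SH}^{S^1,fr}(k)/f_n^{fr}$ is the stabilization of $L^{n,n}\mathcal{H}^{fr}(k)$. Hence a connective framed spectrum $E$ lies in it iff all its framed deloopings $\Omega^\infty \Sigma^i E$ are $L^{n,n}$-local.
\item Equivalently, $\Omega^\infty E$ lies in $\mathcal{H}^{fr}_{L^{n,n}}(k)$.
\item By \Cref{identifying strong framed slice}, this category equals $\mathcal{H}^{fr}_n(k)$, i.e. the condition $(\pi_i^{nis}\gamma_*\esc{Y})_{-n}=0$ for all $i$. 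That identification rests on \Cref{defn of strong Lnn framed motiv}, \Cref{p independent strict Lpn}, and the fact that $\gamma_*$ commutes with bar constructions.
\end{itemize}
Grouplikeness is handled exactly as in the unstretched statement, using \Cref{group complete reduction}.

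\textbf{Steps (2)--(3).} We use \Cref{slice reconst}(\ref{tate trunc equiv}), which identifies $\mathcal{SH}^{fr}_{S^1}(k)/f_n^{fr}\simeq\mathcal{SH}^{eff}(k)/f_n$. This equivalence is the restriction of the composite reconstruction equivalence. That composite is $t$-exact by \Cref{reconstruction theorem}, and the slice quotients carry the restricted $t$-structures. It follows that the connective parts match:
$$(\mathcal{SH}^{fr}_{S^1}(k)/f_n^{fr})_{\geq0}\simeq(\mathcal{SH}^{eff}(k)/f_n)_{\geq 0}.$$
For the framed side to carry a $t$-structure at all, we need right $t$-exactness of $f^{fr}_{0/n}$. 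This comes from \Cref{fr slice connectivity}, via the cofiber sequence and \Cref{localization of t structures}. For the effective side we use \Cref{eff fn preserves conn}(3).

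\textbf{Step (4).} By \Cref{SHveff/fn}, $(\mathcal{SH}^{eff}(k)/f_n)_{\geq 0}$ is precisely $\mathcal{SH}^{veff}(k)/f_n$. By the same result, it is generated under colimits by $f_{0/n}\Sigma^\infty_+X$ for $X\in Sm_k$, which gives the stated description of the right-hand side. Finally, the functors: the adjoint pair $\gamma_*\Sigma^\infty_{fr}\dashv\Omega^\infty_{\mathbb{P}}\gamma^*$ restricts to full subcategories on which it is already an equivalence. So no localization needs to be inserted, because the left side is preserved on the nose.

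\textbf{Main obstacle.} We expect the hardest point to be Step (1): showing that locality of a framed spectrum under $L^{n,n}$ is detected by $L^{n,n}$-locality of \emph{all} framed deloopings, and not merely of $\Omega^\infty$. The plan is as follows.
\begin{itemize}
\item Write the framed spectrum as an $\Omega$-spectrum of framed spaces. Mapping out of $\gamma^*(\mathbb{G}^n_+\to S^0)\wedge \Sigma^{\infty-i}$ is computed levelwise, so locality is a levelwise condition.
\item Transport the question along $\gamma_*$, which is conservative and commutes with the relevant limits.
\item Reduce to \Cref{p independent strict Lpn} applied to the commutative motivic monoid $\gamma_*\esc{Y}$.
\end{itemize}
That monoid is strongly motivic, so \Cref{strong mot mon at pi0} applies.
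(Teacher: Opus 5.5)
Your proposal is correct and follows essentially the same route as the paper. The paper also first obtains $\mathcal{H}^{fr}_{L^{n,n}}(k)^{gp}\simeq L^{n,n}\mathcal{SH}^{fr,S^1}(k)_{\geq 0}$ from \Cref{ordinary recognition theorem}(1), as the grouplike objects with $L^{n,n}$-local framed deloopings, and identifies that left side via \Cref{identifying strong framed slice}; it then chains \Cref{framed slice=Lnn}, the equivalence \eqref{tate trunc equiv} of \Cref{slice reconst}, and \Cref{SHveff/fn} to reach $\mathcal{SH}^{veff}(k)/f_n$, while your additional remarks on $t$-exactness of the reconstruction equivalence and on detecting locality through all deloopings only spell out steps the paper treats as immediate.
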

\begin{proof} 
    The equivalence,
    $$\mathcal{H}_{L^{n,n}}^{fr}(k)^{gp}\simeq L^{n,n}\mathcal{SH}^{fr}_{S^1}(k)_{\geq 0}$$  is a topological result, following from \Cref{ordinary recognition theorem}(1) where the left hand is the full subcategory of $\mathcal{H}^{fr}(k)^{gp}$ admitting $\gamma^*(\mathbb{G}_{+}^n\to S^0)$ local delooping. So \Cref{identifying strong framed slice} settles the left-hand side.   
    
 On the other hand,
    \begin{flalign*}
        L^{n,n}\mathcal{SH}^{fr,S^1}(k)_{\geq 0}&\simeq (\mathcal{SH}^{fr, S^1}(k)/f_{n}^{fr})_{\geq 0}\textup{ [by \Cref{framed slice=Lnn}]}\\
        &\simeq (\mathcal{SH}^{eff}(k)/f_n)_{\geq 0}\textup{ [by \Cref{tate trunc equiv}]}\\&\simeq \mathcal{SH}^{veff}(k)/{f}_n \textup{ [by \Cref{SHveff/fn}]}.
    \end{flalign*}
\end{proof}
\begin{rem}\label[rem]{rhs of p1 slice recognition}
   The category $\mathcal{SH}^{veff}(k)/f_n$ in the above theorem admits an equivalent description. Namely, it is the \textit{modified} generalized slice quotient of $\mathcal{SH}^{veff}(k)$. To describe it, first recall the generalized slice filtration [\cite{MR3743071}], generated by the monoidal subcategories $\mathcal{SH}^{veff}(k)\wedge T^n$. The modification required for this comparison is to instead consider the slice filtration generated by $\mathcal{SH}^{veff}(k)(n):=\mathcal{SH}^{veff}(k)\wedge \mathbb{G}^n$. Comparing with Bachmann's notation in [\cite{MR3743071}, \S4], we have:$$\mathcal{SH}^{veff}(k)(n)=\mathrm{SH}^{veff}(k)(n)[-n].$$ Calling the corresponding covering functors the generalized Tate and generalized Thom covering functors, it is clear that $\tilde{f}^{tate}_n\simeq \tilde{f}^{thom}_n\Sigma^{-n}$. Finally, by the modified generalized slice quotient, we mean the cofiber of $$\tilde{f}_n\Sigma^{-n}\simeq\tilde{f}^{tate}_n\to \tilde{f}^{tate}_0\simeq \tilde{f}_0,$$ denoting its image on $\mathcal{SH}^{veff}(k)$ by $\mathcal{SH}^{veff}(k)/\tilde{f}_n$. It follows from [Lemma 10] of loc. cit. that $\tilde{f}^{tate}_n\simeq f_n\circ (-)_{\geq 0}$. Consequently, restricting to $\mathcal{SH}^{veff}$ yields $\tilde{f}^{tate}_{0/n}\simeq f_{0/n}$. This gives the desired description:$$\mathcal{SH}^{veff}(k)/f_n\simeq \mathcal{SH}^{veff}(k)/\tilde{f}_n.$$So the slice recognition can be rewritten as:$$\mathcal{H}^{fr}_n(k)^{gp}\simeq\mathcal{SH}^{veff}(k)/\tilde{f}_n.$$
\end{rem}
 \begin{rem}
     It is easy to reformulate the recognition and reconstruction theorem for slices in terms of the birational localization, as in \S3.1. We leave the details to the reader. However, we must mention two important aspects of considering the $n$ birational localization over the $L^{n+1,n+1}$ localization. These are as follows: 
     \begin{enumerate}
     \item When $n=0$, one does not require the notion of strictly birational framed motivic spaces. This is because the $0$-birational localization functor commutes with the bar construction [\cite{0bat}, Corollary 3.2.4]. (We shall return to this in an upcoming work [\cite{bas}].) It is, a priori, not clear whether a similar statement holds for $L^{1,1}$-localization (or $L^{2,1}$-localization, for that matter).
     \item Although $\gamma_*$ preserves $L^{n+1,n+1}$-local objects almost by definition (\Cref{defn of strong Lnn framed motiv}), it is not clear whether it preserves $L^{n+1,n+1}$-local equivalences. On the other hand, [\cite{bachmann2019voevodsky}, Lemma 3.1(4)] shows that $\gamma_*$ not only preserves $n$-birational local objects but also $n$-birational local equivalences, thereby commuting with the $n$-birational localization functors.
     \end{enumerate}
      
 \end{rem}
 \begin{rem}
    Writing down a recognition principle for $f_n$ or $s_n$ is nontrivial, even in the $S^1$-stable setting, because, unlike localizations, the colocalization $f_n$ behaves poorly under stabilization. (Although, luckily, ${\Omega^\infty_{S^1}}_{\big|\mathcal{SH}^{S^1}(k)_{\ge 0}}$ and ${\Omega^\infty_{\mathbb{P}^1}}_{\big|\mathcal{SH}^{\mathrm{veff}}(k)}$ preserve $L^{n,n}$-colocal objects [\cite{asok2023p}, Proposition 3.2.12].) Meanwhile, $s_n$ is neither a localization nor a colocalization (of the $\infty$-category of $0$-tate coverings) unless $n=0$ (where $s_0 = f_{0/1}$, so that $\mathcal{H}^{fr}_1(k)^{gp}\simeq s_0\mathcal{SH}^{veff}(k)$).
 \end{rem}
 \begin{rem}
     Let us return to \Cref{bachmann gm conservativity} for a moment to summarize our limitations. As noted there, over infinite perfect fields, Bachmann shows (using the validity of a version of the slice conjectures) that $f_n$ (and, by extension, $f_{0/n}$ and $s_n$) preserves the connectivity of $ S^1$-Spectra. In this section, we have achieved this for effective spectra without appealing to the slice conjectures, but this method does not extend to $S^1$-spectra. On the other hand, while the methods of \S2 do apply to $S^1$-spectra (in fact, over all fields), they work only for $f_{0/n}$, not for $f_n$ or $s_n$. This is something we have not been able to achieve using either of the methods (\S2.4 and \S5.2) described in this paper.
 \end{rem}

\phantomsection
\bibliographystyle{amsalpha}	
\renewcommand\refname{Bibliography}
\bibliography{references}
\noindent\rule{\textwidth}{0.4pt} 
\end{document}